\newtheorem{result}{Result}{\bf}{\it}
\newtheorem{theorem}{Theorem}[section]
\newtheorem{proposition}{Proposition}[section]
\newtheorem{lemma}{Lemma}[section]
\newtheorem{definition}{Definition} [section]
\newtheorem{notation}{Notation}[section]
\newtheorem{remark}{Remark}[section]
\newtheorem{exemple}{Exemple}[section]
\begin{document}
\title[Partitions and geometry]{Combinatorial theory of permutation-invariant random matrices~II: \\ Cumulants, freeness and Levy processes.}
\author[Franck Gabriel]{Franck Gabriel \\ E-mail: \parbox[t]{0.45\linewidth}{\texttt{franck.gabriel@normalesup.org}}}
\email{franck.gabriel@normalesup.org}
\address{Université Pierre et Marie Curie (Paris 6)\\ Laboratoire de Probabilités et Modèles Aléatoires\\ 4, Place Jussieu\\  F-75252 Paris Cedex 05. \emph{Present address:}  Mathematics Institute, \\ University of Warwick,\\ Gibbet Hill Rd, Coventry CV4 7AL }

\begin{abstract}
The $\mathcal{A}$-tracial algebras are algebras endowed with multi-linear forms, compatible with the product, and indexed by partitions. Using the notion of $\mathcal{A}$-cumulants, we define and study the $\mathcal{A}$-freeness property which generalizes the independence and freeness properties, and some invariance properties which model the invariance by conjugation for random matrices. A central limit theorem is given in the setting of $\mathcal{A}$-tracial algebras. A generalization of the normalized moments for random matrices is used to define convergence in $\mathcal{A}$-distribution: this allows us to apply the theory of $\mathcal{A}$-tracial algebras to random matrices. This study is deepened with the use of $\mathcal{A}$-finite dimensional cumulants which are related to some dualities as the Schur-Weyl's duality. This gives a unified and simple framework in order to understand families of random matrices which are invariant by conjugation in law by any group whose associated tensor category is spanned by partitions, this includes for example the unitary groups  or the symmetric groups. Among the various by-products, we prove that unitary invariance and convergence in distribution implies convergence in $\mathcal{P}$-distribution. Besides, a new notion of strong asymptotic invariance and independence are shown to imply $\mathcal{A}$-freeness. Finally, we prove general theorems about convergence of matrix-valued additive and multiplicative L\'{e}vy processes which are invariant in law by conjugation by the symmetric group. Using these results, a unified point of view on the study of matricial L\'{e}vy processes is given. 

\keywords{notions of freeness \and random matrices \and cumulants \and Schur-Weyl-Jones duality  \and  partitions algebras \and easy orthogonal groups \and L\'{e}vy processes}
\end{abstract}

\maketitle

\tableofcontents{}

\section{Introduction}

This is the second article of a series of three in which we generalize the notions of independence and freeness in order to define a notion of $\mathcal{A}$-freeness in the setting of $\mathcal{A}$-tracial algebras. We also generalize the notion of cumulants for random matrices of fixed size using some dualities. This setting unifies classical and free probabilities and allows us to study random matrices which are not asymptotically invariant in law by conjugation by the unitary group but by smaller groups of the unitary group. In the first article \cite{Gab1}, the reader will find the combinatorial tools needed; in this article, he will find the study of $\mathcal{A}$-tracial algebras and applications to random matrices; the article \cite{Gab3} uses the previous result and focuses on the study of general random walks on the symmetric group and the construction of the $\mathfrak{S}(\infty)$-master field.

\subsection{Small reminder about random matrices}

\label{sec:review}
Random matrices are random variables which take values in a set of matrices $\mathcal{M}_{n,m}(\mathbb{C})$. We will only consider square random matrices: $M$ is of size $N$ if $M$ is a square matrix of size $N \times N$ and we will write $M \in \mathcal{M}_{N}(\mathbb{C})$. Actually any random matrix $M$ of size $N$ in this article is supposed to be in ${L}^{\infty^-}(\Omega) \otimes \mathcal{M}_{N}(\mathbb{C})$ where $(\Omega, \mathcal{A}, \mathbb{P})$ is fixed: this means that for any $i$, $j$ in $\{1,...,N\}$ and any positive integer $k$, $\mathbb{E}[\mid\! M_{i,j}\!\mid^{k}]<\infty$.

From now on, when a letter indexed by $N$ represents a matrix, this matrix is of size $N$. Let $(M_N)_{N \in \mathbb{N}}$ be a sequence of random matrices. Following our convention, for any $N$, $M_N \in \mathcal{M}_N(\mathbb{C})$. In the usual method of moments, one is interested in the convergence of $\frac{1}{N}Tr(\left(M_N\right)^{k})$ or the convergence of the mean moments $\mathbb{E}\left[\frac{1}{N}Tr\left(\left(M_N\right)^{k}\right)\right]$. This is justified by the fact that a random matrix $M_N$ of size $N$ has $N$ random eigenvalues: $\lambda_{1}(M_N),..., \lambda_{N}(M_N)$ and for any integer $k \in \mathbb{N}$: 
\begin{align*}
\frac{1}{N} Tr\left(\left(M_N\right)^{k}\right) = \frac{1}{N} \sum_{i=1}^{N} \lambda_{i}(M_N)^{k}. 
\end{align*}
Let the empirical eigenvalues distribution of $M_N$ be $\eta_{M_N} = \frac{1}{N}\sum_{i=1}^{N}\delta_{\lambda_{i}(M_N)}$. For any integer $k$:
\begin{align*}
\frac{1}{N}Tr\left(\left(M_N\right)^{k}\right) = \int_{\mathbb{C}} z^{k} \eta_{M_N}(dz). 
\end{align*}

Let us suppose, just until next theorem, that for any integer $N$, $M_N$ is symmetric or Hermitian. Then $\eta_{M_N}$ is a measure supported by the real line. Using the Carleman's continuity theorem, Theorem $2.2.9$ in \cite{Tao}, one can use the convergence of the moments or the mean moments to prove that, in probability or in expectation, the random measures $(\eta_{M_N})_{N \in \mathbb{N}}$ converge when $N$ goes to infinity. Likewise, we can apply similar arguments for unitary or orthogonal matrices. 

\begin{theorem} 
\label{convergencemean}
Let $(M_N)_{N \in \mathbb{N}}$ be a sequence of random matrices such that for any positive integer $k$, 
\begin{align*}
\mathbb{E}\left[\frac{1}{N} Tr\left(\left(M_N\right)^{k}\right)\right] 
\end{align*}
converges when $N$ goes to infinity. 
\begin{enumerate}
\item If for any integer $N$, $M_N$ is a unitary or orthogonal matrix then there exists $\mu$ a probability measure on the circle such that the mean empirical eigenvalues distribution $\mathbb{E}[\eta_{M_N}]$ of $M_N$ converges to $\mu$ as $N$ tends to infinity. 
\item If for any integer $N$, $M_N$ is symmetric or Hermitian (resp. skew-symmetric or skew-hermitian), under a condition of uniform subgaussianity on $(\mathbb{E}[\eta_{M_N}])_{N \in \mathbb{N}}$, the measure $\mathbb{E}[\eta_{M_N}]$ converges to a probability measure, named $\mu$, supported by the real line (resp. the imaginary line) as $N$ goes to infinity. 
\end{enumerate}
Besides, for any integer $k$: 
\begin{align*}
\int_{\mathbb{C}} z^{k} \mu(dz) = \lim_{N \to \infty}  \mathbb{E}\left[\frac{1}{N} Tr\left(\left(M_N\right)^{k}\right)\right].
\end{align*}
If the moments also converge in probability and not only in expectation, the convergence of the empirical eigenvalues distributions holds in probability. 
\end{theorem}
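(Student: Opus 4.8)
The plan is to reduce everything to the classical moment problem and its continuity theorems. First I would treat the Hermitian/symmetric case. Since $M_N$ is Hermitian, $\eta_{M_N}$ is a random probability measure on $\mathbb{R}$, hence $\mathbb{E}[\eta_{M_N}]$ is a deterministic probability measure on $\mathbb{R}$, and its $k$-th moment is exactly $\mathbb{E}[\frac1N Tr((M_N)^k)]$ by the spectral identity recalled just above the statement. By hypothesis these moments converge; call the limit $m_k$. The uniform subgaussianity assumption gives a bound of the form $|m_k|\le (Ck)^{k/2}$ (or $m_{2k}\le (Ck)^k$), which is exactly the Carleman-type growth condition ensuring that $(m_k)_k$ is the moment sequence of a \emph{unique} probability measure $\mu$ on $\mathbb{R}$, and that convergence of moments to $m_k$ forces weak convergence $\mathbb{E}[\eta_{M_N}]\Rightarrow\mu$. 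This is precisely the content of Carleman's continuity theorem (Theorem 2.2.9 in \cite{Tao}) applied to the sequence of deterministic measures $\mathbb{E}[\eta_{M_N}]$. The skew-Hermitian case is identical after multiplying by $i$, which carries the spectrum to $\mathbb{R}$ and the measure back to the imaginary axis; one only has to track the factors of $i^k$ in the moments.

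Next I would handle the unitary/orthogonal case. Here the eigenvalues lie on the unit circle, so $\eta_{M_N}$ and hence $\mathbb{E}[\eta_{M_N}]$ are probability measures on the compact set $\mathbb{T}$; no growth hypothesis is needed since all moments are bounded by $1$. The subtlety is that $\frac1N Tr((M_N)^k)=\int z^k\,d\eta_{M_N}$ only controls the \emph{nonnegative} Fourier coefficients $\widehat{\mu_N}(-k)$, $k\ge 0$; but for a probability measure on $\mathbb{T}$ one has $\widehat{\mu}(k)=\overline{\widehat{\mu}(-k)}$, so knowing the moments for all $k\ge 1$ together with the total mass (the $k=0$ moment is $1$) determines \emph{all} Fourier coefficients. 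Convergence of every Fourier coefficient of a sequence of probability measures on a compact abelian group implies weak convergence to the measure with those Fourier coefficients (by Stone--Weierstrass, trigonometric polynomials are dense in $C(\mathbb{T})$, plus tightness which is automatic on the compact $\mathbb{T}$), so $\mathbb{E}[\eta_{M_N}]\Rightarrow\mu$ for a unique probability measure $\mu$ on $\mathbb{T}$ whose moments are the prescribed limits. The displayed moment formula then holds by passing to the limit in $\int z^k\,d\mathbb{E}[\eta_{M_N}]=\mathbb{E}[\frac1N Tr((M_N)^k)]$.

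Finally, for the "in probability" refinement: suppose $\frac1N Tr((M_N)^k)\to m_k$ in probability for each $k$. Each $\eta_{M_N}$ is a random element of the (Polish) space of probability measures on $\mathbb{R}$ or $\mathbb{T}$ with the weak topology. In the compact case weak convergence in probability to the deterministic $\mu$ is equivalent to $\int f\,d\eta_{M_N}\to\int f\,d\mu$ in probability for $f$ ranging over a countable convergence-determining family, e.g. the monomials $z\mapsto z^k$ (and their conjugates), which is exactly the hypothesis; in the Hermitian case one additionally invokes the uniform subgaussianity to get tightness of the random measures, then combines tightness with convergence of moments in probability along subsequences to conclude. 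The main obstacle, and the only place real work is needed, is verifying that the uniform subgaussianity assumption yields the Carleman growth bound on the limiting moments and that it transfers to tightness of the random empirical measures (rather than just their means); the unitary/orthogonal case and the algebraic moment identities are essentially bookkeeping once the continuity theorems are invoked.
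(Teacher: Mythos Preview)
Your proposal is correct and matches the paper's approach: the paper does not give a detailed proof of this theorem but states it as a known fact, invoking Carleman's continuity theorem (Theorem~2.2.9 in \cite{Tao}) for the Hermitian/symmetric case and saying ``similar arguments'' handle the unitary/orthogonal case. Your write-up simply fills in those details --- the Stone--Weierstrass argument on $\mathbb{T}$ with the conjugate-symmetry observation $\int z^{-k}\,d\mu=\overline{\int z^{k}\,d\mu}$ for real measures on the circle, and the Carleman growth bound from uniform subgaussianity --- so there is no divergence from the paper's intended route.
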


Studying the asymptotic of mean moments of random matrices, Voiculescu discovered the property of asymptotic freeness of unitary invariant random matrices~\cite{Voicu}. 
\begin{theorem}
\label{th:voicu}
Let $(M_N)_{N \in \mathbb{N}}$ and $(L_N)_{N \in \mathbb{N}}$ be two sequences of random matrices. Let us suppose that for any $N$, $M_N$ and $L_N$ are independent, and that $L_N$ is invariant by conjugation by the unitary group. Then $(M_N)_{N \in \mathbb{N}}$ and $(L_N)_{N \in \mathbb{N}}$ are asymptotically free. This means that for any polynomials $P_1(X)$, $Q_1(X)$, ..., $P_k(X)$, $Q_k(X)$, if the limit of the first moment of $P_1(M_N)$, $Q_1(L_N)$, ..., $P_k(M_N)$, $Q_k(L_N)$, is equal to zero then:
\begin{align*}
\lim_{N \to \infty} \mathbb{E}\left[\frac{1}{N} {Tr}\left(P_1(M_N)Q_1(L_N) ... P_k(M_N)\right)\right]&=0,\\
 \lim_{N \to \infty} \mathbb{E}\left[\frac{1}{N} {Tr}\left(P_1(M_N)Q_1(L_N) ... P_k(M_N)Q_k (L_N)\right)\right] &= 0.
\end{align*}
\end{theorem}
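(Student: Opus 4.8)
The plan is to reduce the statement to the classical computation of mixed moments of a unitary-invariant matrix against a deterministic (or independent) one, using the Weingarten calculus. First I would set $A_N^{(i)} = P_i(M_N)$ and $B_N^{(i)} = Q_i(L_N)$, and observe that by independence and the unitary invariance of $L_N$ one may replace $L_N$ by $U_N L_N U_N^*$ with $U_N$ Haar-distributed on $\mathrm{U}(N)$ and independent of everything; this does not change the joint law, so
\begin{align*}
\mathbb{E}\left[\tfrac1N \mathrm{Tr}\bigl(A_N^{(1)} U_N B_N^{(1)} U_N^* \cdots A_N^{(k)} U_N B_N^{(k)} U_N^*\bigr)\right]
\end{align*}
is the quantity to analyze. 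Expanding in coordinates and integrating over $U_N$ with the Weingarten function $\mathrm{Wg}_N$ produces a sum over pairs of permutations $\sigma,\tau \in \mathfrak{S}_k$, whose summands factor into products of normalized traces of the $A$'s along the cycles of one permutation and of the $B$'s along the cycles of the other, weighted by $N^{\#\text{cycles}} \mathrm{Wg}_N(\sigma\tau^{-1})$.

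Next I would invoke the asymptotics of the Weingarten function: $\mathrm{Wg}_N(\pi) = N^{-k-|\pi|}(\mathrm{Moeb}(\pi) + O(N^{-2}))$, where $|\pi|$ is the minimal number of transpositions in $\pi$. Combining the power of $N$ coming from the number of cycles of $\sigma$ and $\tau$ with the power from $\mathrm{Wg}_N$, a standard genus-type inequality shows that the only summands surviving in the $N\to\infty$ limit are the \emph{non-crossing} pairs, i.e. those for which $\sigma$, $\tau$ and the full cycle $\gamma = (1\,2\,\cdots\,k)$ are in a geodesic configuration; for these the limit of the summand is the product over blocks of the limiting normalized traces $\varphi(A^{(i_1)}\cdots)$ and $\varphi(B^{(j_1)}\cdots)$. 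Thus the limit equals the alternating-moment formula that \emph{defines} freeness of two families whose individual limiting distributions are those of $(M_N)$ and $(L_N)$.

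Finally I would impose the centering hypothesis: $\lim_N \mathbb{E}[\tfrac1N\mathrm{Tr}(P_i(M_N))] = 0$ and $\lim_N \mathbb{E}[\tfrac1N\mathrm{Tr}(Q_i(L_N))] = 0$ for all $i$, which says exactly that each $A^{(i)}$, $B^{(i)}$ has vanishing limiting trace. Plugging into the non-crossing moment formula, every surviving term contains at least one factor $\varphi$ of a single centered element (because an alternating word in centered elements has no block of the non-crossing partition that is a full ``interval'' escaping a singleton — one shows this by induction on $k$, peeling off an innermost block), so the whole limit vanishes; this gives both displayed identities, the second one being the length-$2k$ alternating word and the first the length-$2k-1$ one. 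The main obstacle is the bookkeeping in the genus inequality that isolates the non-crossing terms and the inductive argument showing that a non-crossing pairing of a fully alternating centered word always forces a singleton block — everything else (the Weingarten expansion, the asymptotics, passing limits inside finite sums) is routine once the combinatorial core is set up; alternatively one can bypass the Weingarten calculus entirely and quote Theorem~\ref{th:voicu} directly, but since the point here is presumably to re-prove it within the $\mathcal{A}$-tracial framework I would instead phrase the cycle/permutation expansion in the language of the partition algebra developed in \cite{Gab1}.
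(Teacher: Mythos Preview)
Your Weingarten-calculus argument is essentially correct as a sketch of the classical proof, and the combinatorial core (genus inequality isolating non-crossing pairs, then the ``innermost interval block forces a singleton'' induction) is the standard route. However, this is not how the paper handles it.

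In the paper, Theorem~\ref{th:voicu} is quoted as Voiculescu's result and not re-proved directly; instead it is recovered as the special case $\mathcal{A}=\mathfrak{S}$ of Theorem~\ref{Lemain}. The proof there bypasses the Weingarten expansion entirely: by independence and the $\mathcal{G}(\mathcal{A})$-invariance of $(L_j^N)$, the averaged tensor
\[
\int_{\mathcal{G}(\mathcal{A})(N)} g^{\otimes k_1+k_2}\,\mathbb{E}\bigl[M_{i_1}^N\otimes\cdots\otimes L_{j_{k_2}}^N\bigr]\,(g^{-1})^{\otimes k_1+k_2}\,dg
\]
factors as a tensor product of the $M$-part and the $L$-part. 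Since the finite-dimensional $\mathcal{A}$-cumulants are \emph{defined} as the coordinates of this averaged tensor in the partition basis (Definition after Theorem~\ref{duality}), the factorization of cumulants and the vanishing of mixed cumulants are immediate---no genus inequality, no Weingarten asymptotics, no non-crossing combinatorics. One then passes to the limit via Theorem~\ref{th:lienlimite} and identifies $\mathfrak{S}$-freeness with Voiculescu freeness through Theorem~\ref{th:critere}.

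What each approach buys: yours gives the explicit non-crossing moment formula and is self-contained once the Weingarten asymptotics are granted; the paper's argument is a two-line tensor factorization that works uniformly for every $\mathcal{A}\in\{\mathfrak{S},\mathcal{B},\mathcal{B}s,\mathcal{H},\mathcal{P}\}$, which is the whole point of the $\mathcal{A}$-tracial framework. If you want to align with the paper, drop the coordinate expansion and argue directly at the level of the averaged tensor product.
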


This leads to the definition of freeness in the abstract setting of algebras. Let $A$ be an algebra, $\phi$ be a linear form on $A$. Two sub-algebras $A_1, A_2$  of $A$ are free if for any $a_1, b_1, ..., a_k, b_k $ which are alternatively in $A_1$ and $A_2$ such that $\phi(a_i) = 0 = \phi(b_i)$ for any $i$, then $\phi(a_1b_1 ... a_kb_k ) = \phi(a_1b_1 ... a_k) =0$. 

One goal of this article is to generalize this theorem and the notion of freeness for random matrices which are not invariant by conjugation by the unitary group but {\em are invariant by subgroups of the unitary group. }
\subsection{Layout of the article}

\subsubsection{The $\mathcal{A}$-tracial algebras}
In Section \ref{sec:Atracialalge}, we extend the structure of algebra on $A$ and define the notion of $\mathcal{P}$-tracial algebra. This is an algebra endowed with some multi-linear observables, called moments, which are indexed by partitions in $\mathcal{P} := \cup_{k \in \mathbb{N}} \mathcal{P}_k$ and are in some sense tracial and compatible with the multiplication on $A$. We generalize this by defining a notion of $\mathcal{A}$-tracial algebra for any $\mathcal{A}\in \{ \mathfrak{S}, \mathcal{B}, \mathcal{B}s, \mathcal{H}, \mathcal{H}\}$. Each letter represents a subset of the sets of partitions $\cup_{k \geq 0}\mathcal{P}_k$.

\begin{result}
A notion of $\mathcal{A}$-freeness is defined for $\mathcal{A}$-tracial algebras and two characterizations of this notion are given, one uses a notion of $\mathcal{A}$-cumulants and another uses a notion of $\mathcal{A}$-exclusive-moments.  (Theorem \ref{theorem:autreformulation})
\end{result}

\begin{result}
The law of the sum or the product of $\mathcal{A}$-free elements are computed explicitely (Theorems \ref{th:calculloi} and \ref{th:Rtransfoprod})
\end{result}

Using these results about $\mathcal{A}$-freeness, we can prove a central limit theorem. 

\begin{result}
A central limit theorem, given in Theorem \ref{th:tcl}, holds for $\mathcal{A}$-tracial algebras: the limit is called a $\mathcal{A}$-Gaussian.
\end{result}

Since we can choose which set of partitions $\mathcal{A}$ we consider, the question of restriction and extension of structure arises naturally. 

\begin{result}
There exists a natural notion of restriction and extension of structure which allows to change the set $\mathcal{A}$ to another set $\mathcal{A}'$. This allows us to define a natural notion of $\mathcal{G}(\mathcal{A})$-invariance, where $\mathcal{G}(\mathcal{A})$ is a family of groups associated with the set $\mathcal{A}$. All these notions behave well with the notion of $\mathcal{A}$-freeness.  (Section \ref{sec:restiexten})
\end{result}
 
In Theorem \ref{th:critere} ,we also study the different links between all the different notions of freeness: Voiculescu freeness and $\mathcal{A}$-freeness when $\mathcal{A} \in \{ \mathfrak{S}, \mathcal{B}, \mathcal{B}s, \mathcal{H}, \mathcal{H}, \mathcal{P}\}$. 

\begin{result}
The $\mathcal{B}$-freeness or $\mathcal{B}s$-freeness implies the $\mathfrak{S}$-freeness, which,  under some hypothesis, implies Voiculescu freeness. 

Yet, $\mathcal{P}$-freeness or $\mathcal{H}$-freeness does not imply in general the $\mathcal{B}$-, $\mathcal{B}s$- or $\mathfrak{S}$-freeness. In fact, there exists a simple criterion which allows us to show that some $\mathcal{P}$-free subalgebras are not $\mathfrak{S}$-free.
\end{result}

We also introduce the notion of {\em transpose operation} for $\mathcal{A}$-tracial algebras, which allows us to prove the following result, given in Theorem \ref{th:transpofree}. 

\begin{result}
Any unitary-invariant family, which satisfies a certain hypothesis of factorization, is free (in the sense of Voiculescu) from it's transpose family. 
\end{result}

The last main result of Section \ref{sec:Atracialalge}, given in Section \ref{sec:Classicetfreecumu}, is the following fact. 

\begin{result}
One can retrieve: 
\begin{enumerate}
\item classical cumulants by considering the $\mathcal{A}$-cumulants of special elements, called classical elements, in a $\mathcal{A}$-tracial algebra, 
\item free cumulants by considering the $\mathcal{A}$-cumulants of special elements, called deterministic $U$-invariant elements, in a $\mathcal{A}$-tracial algebra. 
\end{enumerate} 
\end{result}

\subsubsection{Application to random matrices}
In Section \ref{sec:basicmatrices}, we apply the setting of $\mathcal{A}$-tracial algebras to the study of the asymptotics of random matrices. We will not state all the results since they are transcriptions of the results dealing with $\mathcal{A}$-tracial algebras, we will give only a summary. 

\begin{result}
There exists a notion of {\em $\mathcal{A}$-moments} for random matrices. This allows us to define the {\em convergence in $\mathcal{A}$-distribution} which generalizes the notion of convergence in distribution for random matrices. A new notion of {\em asymptotic $\mathcal{A}$-factorization property} allows us to study the convergence in probability of the $\mathcal{A}$-moments. Notions of {\em asymptotic $\mathcal{A}$-cumulants} and {\em asymptotic $\mathcal{A}$-exclusive moments} are defined and are used in order to define a notion of {\em asymptotic $\mathcal{A}$-freeness} for random matrices. The $\mathcal{A}$-law of the sum and product of random matrices, which are asymptotic $\mathcal{A}$-free, is computed. A notion of {\em asymptotic $\mathcal{G}(\mathcal{A})$-invariance} is defined. The links between the different notions of asymptotic freeness are studied. We retrieve the result that unitary invariant random matrices are, under some factorization property, asymptotically free in the sense of Voiculescu.
\end{result}

\subsubsection{The dualities and finite-dimensional world}
In Section \ref{duali}, we use the dualities between the set of partitions $\mathcal{A}$ and the family of groups $\mathcal{G}(\mathcal{A})$, like the Schur-Weyl duality, in order to define finite-dimensional observables which approximate the asymptotic observables (Theorem \ref{th:lienlimite}).

\begin{result}
There exists a notion of {\em finite dimensional $\mathcal{A}$-cumulants} and {\em $\mathcal{A}$-exclusive moments} for random matrices of fixed size. The random matrices converge in $\mathcal{A}$-distribution if and only if the finite dimensional  $\mathcal{A}$-cumulant or $\mathcal{A}$-exclusive moments converge. If so, both converge to their asymptotic counterpart. 
\end{result} 

Using these results, we can prove different properties of sequences of random matrices which are invariant in law by conjugation by $\mathcal{G}(A)(N)$ (Theorems \ref{th:invarianceasymp}, \ref{th:invarianceasymp2}). 

\begin{result}
If a sequence of random matrices, which are invariant in law by conjugation by the family of groups $\mathcal{G}(A)$, converges in $\mathcal{A}$-distribution, it converges in $\mathcal{P}$-distribution. 
Besides, when the matrices are unitary invariant, we have simple formulas which link the limits of the $\mathcal{P}$-moments, the $\mathcal{P}$-exclusive moments and the free cumulants. 
\end{result}

These results allow us to give the asymptotic of the moments of the entries of $\mathcal{G}(\mathcal{A})$-invariant matrices (Theorem \ref{th:entrees})

\begin{result}
The first order asymptotic of the moments of the entries of $\mathcal{G}(\mathcal{A})$-invariant random matrices which converge in $\mathcal{A}$-distribution is known. 
\end{result}

Finally, we give a link between the notion of classical cumulants and finite-dimensional cumulants: this gives a Schur-Weyl duality interpretation of classical cumulants (Theorem \ref{cumulantmagique}).

\begin{result}
Classical cumulants can be obtained by computing the finite-dimensional cumulants of matrices with diagonal independent identically distributed entries. 
\end{result}

\subsubsection{Independence and invariance}
It remains to exhibit families of random matrices which are $\mathcal{A}$-invariant: this is done in Section \ref{sec:GAinvetindep}. 

\begin{result}
Let us consider two families of random matrices which are independent, converge in $\mathcal{A}$-distribution and such that one family is $\mathcal{G}(\mathcal{A})$-invariant. These two families are asymptotically $\mathcal{A}$-free (Theorem \ref{Lemain}). 
\end{result}

This theorem only applies when the second family is $\mathcal{G}(\mathcal{A})$-invariant: this is not the case for many families such as general Wigner matrices. This is why we introduce the notion of asymptotic strong $\mathcal{G}(\mathcal{A})$-invariance. 

\begin{result}
There exists a notion of asymptotic strong $\mathcal{G}(\mathcal{A})$-invariance such that if we consider two families of random matrices which are independent, converge in $\mathcal{P}$-distribution and such that one family is asymptotically strongly $\mathcal{G}(\mathcal{A})$-invariant, then the two families are asymptotically $\mathcal{A}$-free (Theorem \ref{Main1}).
\end{result} 

\subsubsection{L\'{e}vy processes}
In Section \ref{sec:Levy}, we study, with the tools developed in the article, sequences of matricial L\'{e}vy processes: one of the main result is Theorem \ref{convergencegenerale}.

\begin{result}
A sequence of $\mathcal{G}(\mathcal{A})$-invariant matricial additive (or multiplicative) L\'{e}vy processes converges in $\mathcal{P}$-distribution if and only if its generator converges in $\mathcal{A}$-distribution. There exists a simple criterion in order to know if the convergence holds in probability.
\end{result}

This result is generalized for the $*$-$\mathcal{P}$-distribution in Section \ref{sec:*convergence}. Using this general result, we retrieve the convergence in distribution, and even in $\mathcal{P}$-distribution of Hermitian and unitary Brownian motions. This leads us to a matricial Wick formula (Theorem \ref{Wickmat}).

\begin{result}
There exists a matricial Wick formula. For a Gaussian vector of symmetric of Hermitian random matrices, $\mathbb{E}[M_1\otimes ... \otimes M_k]$ can be written as a sum over special pairings.  
\end{result}

The results on Brownian motions allow us to develop an intuition which is used in order to prove the following result given in Theorem \ref{th:approximationgaussienne}. 

\begin{result}
The $\mathcal{P}$-distribution of any $\mathcal{P}$-Gaussian element can be approximated by sequences of random matrices which are explicitly given.
\end{result}

As a consequence of our general theorem about convergence in $\mathcal{P}$-distribution of matricial L\'{e}vy processes, we show the following result in Section \ref{sec:infinitelydivisible}.

\begin{result}
The proof of the approximations of free multiplicative and additive L\'{e}vy processes by matricial Hermitial and unitary L\'{e}vy processes given in  \cite{Flor}, \cite{Caba} and \cite{Cebron2} can be handled in a similar combinatorial way which also implies the convergence in probability without using any concentration of measure arguments. 
The same proofs allow us to generalize these results in order to have approximations by matricial symmetric and orthogonal L\'{e}vy processes. 
\end{result}
 
 \subsubsection{Algebraic fluctuations}
The notions of asymptotic $\mathcal{A}$-observables and $\mathcal{A}$-cumulants defined in this the previous sections are generalized in order to study more precisely the asymptotics of the $\mathcal{A}$-moments and $\mathcal{A}$-finite dimensional cumulants. We generalize the previous sections in this setting. 

\begin{result}
What is done for the first order asymptotic can be done with the higher order of fluctuations. In particular, a notion of $\mathcal{A}$-freeness of higher order is defined and it is shown that random matrices which are  $\mathcal{G}(\mathcal{A})$-invariant are asymptotically $\mathcal{A}$-free up to higher orders. 
\end{result}
 
As noticed by C. Male, some of the results can be seen as extensions in the setting of $\mathcal{A}$-tracial algebras of results in \cite{Camille}. The differences between the two independent approaches, of this article and \cite{Camille}, lie in the combinatorial objets studied (partitions here and graphs in \cite{Camille}), the use of dualities in the present article, and the observables which are mostly considered (mostly cumulants in this article and mostly connected exclusive moments in \cite{Camille}).  In the work in progress \cite{GabCebronGuill}, the link between the theory of $\mathcal{P}$-tracial algebras and the theory of traffics in \cite{Camille} is inverstigated. 

\subsection{Basic definitions on partitions}
\label{sec:basic}
Once the layout of the article given, we briefly review the basic definitions and results, proved in \cite{Gab1}, on partitions that we will need later. Let $k$ be a positive integer, $\mathcal{P}_k$ is the set of partitions of $\{1,...,k, 1',...,k'\}$. There exists a graphical notation for partitions as illustrated in Figure \ref{fig:1}. Let $p \in \mathcal{P}_k$, let us consider $k$ vertices in a top row, labelled from $1$ to $k$ from left to right and $k$ vertices in a bottom row, labelled from $1'$ to $k'$ from left to right. Any edge between two vertices means that the labels of the two vertices are in the same block of the partition $p$. Using this graphical point of view, the set of permutations of $k$ elements, namely $\mathfrak{S}_k$, is a subset of $\mathcal{P}_k$: if $ \sigma$ is a permutation, we associate the partition $\{\{i,\sigma(i)'\} | i \in \{1,...,k\}\}$. 

\begin{figure}
\centering
\resizebox{0.75\textwidth}{!}{
  \includegraphics[width=6pt]{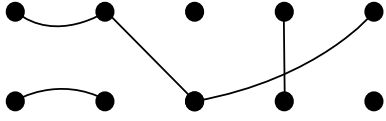}
}
\caption{The partition $\left\{ \{1',2' \}, \{1,2,3',5 \}, \{3 \}, \{4',4\},\{5' \} \right\}$.}
\label{fig:1}       
\end{figure}

 Let $p$ and $p'$ be two partitions in $\mathcal{P}_k$. The set $\mathcal{P}_k$ has some interesting structures we are going to explain: 
\begin{enumerate}
\item{\bf Transposition: } the partition $^{t}p$ is obtained by flipping along an horizontal line a diagram representing $p$. 
\item {\bf Order: } $p'$ is coarser that $p$, denoted $p \trianglelefteq p'$, if any block of $p$ is included in a block of $p'$. 
\item {\bf Supremum: } $p\vee p'$ is obtained by putting a diagram representing $p'$ over one representing $p$. 
\item {\bf Multiplication: } $p \circ p'$ is obtained by putting a diagram representing $p'$ above one representing $p$, identifying the lower vertices of $p'$ with the upper vertices of $p$, erasing the vertices in the middle row, keeping the edges obtained by concatenation of edges passing through the deleted vertices. It has this nice property: if $p \circ p' \in \mathfrak{S}_k$ then $p$ and $p'$ are in $\mathfrak{S}_k$. Doing so, we remove a certain number of connected components, number which is denoted by $\kappa(p,p')$.
\item {\bf A family of multiplications :} $\mathbb{C}[\mathcal{P}_k(N)]$ is the algebra in which the multiplication is given by $pp'=N^{\kappa(p,p')} p \circ p'.$
\item {\bf Neutral element: } the partition $\mathrm{id}_k = \{\{i,i'\}| i \in \{1,...,k\}\}$ is a neutral element for the multiplication $\circ$. Often we will denote it by $\mathrm{id}$ when there can not be any confusion. 
\item {\bf Height function: }  ${\sf nc}(p)$ is the number of blocks of $p$. 
\item {\bf Cycle: } a cycle is a block of $p\vee id$: ${\sf nc}(p\vee id)$ is the number cycles of $p$. A partition $p$ is {\em irreducible} if ${\sf nc}(p \vee id) = 1$. A partition which is, up to a permutation of the columns, of the form ${\mathrm{id}_k} \otimes p'$ with $p'$ an irreducible partition, is {\em weakly irreducible}. 
\item {\bf Representation: } there exists an important representation of $\mathbb{C}[\mathcal{P}_k(N)]$ on $(\mathbb{C}^{N})^{\otimes k}$. Let $(e_i)_{i=1}^{N}$ be the canonical basis of $\mathbb{C}^{N}$ and let $E_{i}^{j}$ be the matrix which sends $e_j$ on $e_i$ and any other element of the canonical basis on $0$. For any $I\!=\!(i_1,...,i_k,i_{1'},...,i_{k'})$ in $\{1,...,N\}^{2k}$, we define ${\sf Ker}(I)$ the partition such that two elements $a$ and $b$ of $\{1,...,k,1',...,k'\}$ is in a block of ${\sf Ker}(I)$ if and only if $i_a = i_b$. We define: 
\begin{align*}
\rho_N(p) = \sum_{I = (i_1,...,i_k,i_{1'},...,i_{k'}) \in \{1,...,N\}^{2k}| p \trianglelefteq {\sf Ker}(I) } E_{i_{1'}}^{i_1} \otimes ... \otimes E_{i_{k'}}^{i_k}. 
\end{align*}
The application $\rho_N$ is a representation of $\mathbb{C}[\mathcal{P}_k(N)]$.
\item {\bf Tensor product: } the partition $p \otimes p' \in \mathcal{P}_{2k}$ is obtained by putting a diagram representing $p'$ on the right of a diagram representing $p$. It satisfies the identity $\rho_{N}(p \otimes p') = \rho_N(p) \otimes \rho_N(p')$. 
\item {\bf Extraction: } the extraction of $p$ to a symmetric subset $I$ of $\{1,...,k,1',...,k'\}$ is obtained by erasing the vertices which are not in $I$ and relabelling the remaining vertices. It is denoted by $p_{I}$ (definition set before Definition $3.2$ of \cite{Gab1}). 
\item {\bf Set of factorizations: } for any set $X$, $\mathcal{E}(X)$ is the notation for the set of sets of $X$. For any $I \in \mathcal{E}(X)$,  let $I^{c}$ be the complement of $I$ in $X$. Tthe set of factorizations of $p$, namely $\mathfrak{F}_{2}(p)$, is the set of $(p_1,p_2,I) \in \mathcal{P} \times \mathcal{P} \times \mathcal{E}(\{1,...,k,1',...,k'\})$ such that $p_{ I} = p_1$, $p_{ I^{c}} = p_2$  and ${\sf nc}(p_1)+{\sf nc}(p_2) = {\sf nc}(p)$.
\item {\bf Left- and right- part: } if $k' \leq k$, the left-part of $p$, denoted by $p^{l}_{k'}$ is the extraction of $p$ to $\{1,...,k'\}$ and the right part of $p$, denoted by $p^{r}_{k'}$ is the extraction of $p$ to $\{k'+1,...,k\}$. 
\end{enumerate}

Besides, let us remark that the permutations in $\mathfrak{S}_k$ can be seen as a subset of $\mathcal{P}_k$. In particular, we denote by $(i,j)$ the transposition which changes $i$ with $j$:
\begin{align*}
(i,j) = \{\{i,j'\},\{i',j\}\} \cup \{ \{u,u' \}, u \in \{1,...,k\}\setminus \{i,j\}\}. 
\end{align*} 
We will also use the Weyl contraction $[i,j]$ defined as: 
\begin{align*}
[i,j] = \{\{i,j\},\{i',j'\}\} \cup \{ \{u,u' \}, u \in \{1,...,k\}\setminus \{i,j\}\}. 
\end{align*}

In \cite{Gab1}, we defined a distance (Definition $2.2$ of \cite{Gab1}) and a geodesic order on $\mathcal{P}_k$ (Definition $2.4$ of \cite{Gab1}). For any partitions $p$ and $p'$ in $\mathcal{P}_k$, the distance between $p$ and $p'$~is: 
\begin{align*}
d(p,p') = \frac{{\sf nc}(p) + {\sf nc}(p')}{2} - {\sf nc}(p\vee p'). 
\end{align*}
We wrote that $p \leq p'$ if $d(\mathrm{id}_k,p)+d(p,p') = d(\mathrm{id}_k, p')$: this defines an order on $\mathcal{P}_k$. The set $\{p| p\leq p'\}$ is denoted by $[\mathrm{id}_k, p']$. We denoted also by ${\sf d}(p',p)$ the defect of $p'$ not being in a geodesic between $\mathrm{id}_k$ and $p$: 
\begin{align*}
{\sf df}(p',p) = d(\mathrm{id}_k, p)-d(\mathrm{id}_k, p') -d(p',p).
\end{align*}

The order $\leq$ allowed us to define, in Definition 4.2 of \cite{Gab1}, a notion of $\mathcal{R}$-transform on $(\bigoplus_{k=0}^{\infty}\mathbb{C}[\mathcal{P}_k])^{*}$. This distance and the order was studied deeply in \cite{Gab1}: for example these notions allowed us to define a notion of Kreweras complement of $p$ in $p_0$, denoted by ${\sf K}_{p_0}(p)$ for any partition $p$ and $p_0$ in $\mathcal{P}_k$ (Definition 3.14 of \cite{Gab1}).

\section{$\mathcal{A}$-tracial algebras and $\mathcal{A}$-freeness}
\label{sec:Atracialalge}

\subsection{$\mathcal{P}$-tracial algebras}
Let us define the {\em insertion} of a partition $p \in \mathcal{P}_k$ in $\mathcal{P}_l$ where $l \geq k$. Let us consider $(i_1,...,i_k)$ a $k$-uple of distinct and increasing elements of $\{1,...,l\}$. The insertion ${\sf Ins}^{l}_{(i_1,...,i_k)}(p)$ is the partition: 
\begin{align*}
\left\{ \{ i_{j} | j \in \{1,...,k\} \cap b \} \cup \{ i'_{j} | j' \in \{1',...,k'\} \cap b\}  | b \in p \right\} \cup \{\{i,i'\} | i \notin \{i_1,...,i_k\}\}.
\end{align*}
\begin{definition}
A $\mathcal{P}$-tracial algebra $(A,(m_p)_{p \in \mathcal{P}})$ is the data of an algebra $A$ and a family $(m_p)_{p \in \mathcal{P}}$ such that for any $k > 0$ and any $p \in \mathcal{P}_k$, $m_p : A^{k} \to \mathbb{C}$ is a $k$-linear form and:
\begin{enumerate}
\item for any $k > 0$, any $p \in \mathcal{P}_k$, any permutation $\sigma \in \mathfrak{S}_k$, for any $a_1,..., a_k \in A$:
\begin{align*}
m_{p}(a_{1},...,a_{k}) = m_{\sigma \circ p \circ \sigma^{-1}}\left(a_{\sigma^{-1}(1)}, ..., a_{\sigma^{-1}(k)}\right), 
\end{align*}
\item for any $k > 0$, any $p \in \mathcal{P}_k$, any $(a_i^{(1)})_{i=1}^{n_1}$, ..., $(a_i^{(k)})_{i=1}^{n_k}$ where $a_{i}^{(j)} \in A$, if we denote ${\bold n} = \sum_{i=1}^{k} n_i$, ${\bold i} = (1,1+n_1,...,1+n_1+...n_{k-1})$ and $\sigma = (1,...,n_1)(n_1+1,...,n_1+n_2) ...(n_1+...+n_{k-1}+1, {\bold n})$:  
\begin{align*}
m_{p}\left(\prod_{i=1}^{n_1} a_{i}^{(1)}, ..., \prod_{i=1}^{n_k} a_{i}^{(k)} \right) = m_{ {\sf Ins}^{{\bold n}}_{\bold i}(p) \circ \sigma } (a^{(1)}_1,...,a^{(1)}_{n_1},...,a^{(k)}_1,...,a^{(k)}_{n_k}).
\end{align*}
The family $(m_p)_{p \in \mathcal{P}}$ is called the $\mathcal{P}$-moments. 
\end{enumerate}
\end{definition}

In particular, $m_{id_1}(.)$ define a tracial linear form on $A$ since, for any $a$ and $b$, by applying the axioms 2., 1. and again 2.: 
\begin{align*}
m_{id_1}(ab) = m_{(1,2)}(a,b) = m_{(1,2)}(b,a) = m_{id_{1}}(ba).
\end{align*}
We will always use the convention $m_{\emptyset} = 1$.

\begin{definition}
If for any partitions $p_1 \in \mathcal{P}_{k_1}$ and $p_2 \in \mathcal{P}_{k_2}$, any $(a_1,...,a_{k_1+k_2})$ in $A^{k_1+k_2}$, $m_{p_1\otimes p_2} (a_1,...,a_{k_1+k_2}) = m_{p_1}(a_1,...,a_{k_1}) m_{p_2} (a_{k_1+1}, ..., a_{k_1 + k_2})$, we say that $A$ is a deterministic $\mathcal{P}$-tracial algebra. If $A_1$ is a family of elements of $A$, we say that $A_1$ is deterministic if the algebra generated by $A_1$, namely $<A_1>$, is deterministic. 
\end{definition}

\begin{remark}
\label{rq:facto}
Actually, using the axioms of $\mathcal{P}$-tracial algebras, a family $A_1$ of elements of $A$ is deterministic if and only if the factorization property of the moments holds when one replaces $A$ by $A_1$. 
\end{remark}

\begin{exemple} \label{ex:Ptracial}Let us give some important examples of $\mathcal{P}$-tracial algebras. 
\begin{description}
\item[\textbf{1-Matrices:} ] Let $A = \mathcal{M}_N(\mathbb{C})$ and for any $k \geq 0$, any $p \in \mathcal{P}_k$ and any $(M_1,...,M_k) \in A^{k}$, we consider: 
\begin{align}
\label{eq:normalized}
m_{p}(M_1,...,M_k) = \frac{1}{N^{{\sf nc}(p \vee \mathrm{id}_k)}}{\sf Tr}^{k}\left[(M_1\otimes... \otimes M_k)\rho_{N}(^{t}p)\right]. 
\end{align} 
It is a deterministic $\mathcal{P}$-tracial algebra. 
\item[\textbf{2-Random variables I:} ] Let $A = L^{\infty^{-}}(\Omega, \mathcal{A}, \mathbb{P})$ be the algebra of random variables whose finite moments are all finite. We consider for any $k \geq 0$, any $p \in \mathcal{P}_k$ and any $(X_1,...,X_k) \in A^{k}$: 
\begin{align*}
m_{p}[X_1,...,X_k] = \mathbb{E}\left[\prod_{i=1}^{k} X_i\right].
\end{align*}
It is not a deterministic $\mathcal{P}$-tracial algebra. 
\item[\textbf{3-Random variables II:} ] Let $A = L^{\infty^{-}}(\Omega, \mathcal{A}, \mathbb{P})$ and for any $k \geq 0$, any $p \in \mathcal{P}_k$ and any $(X_1,...,X_k) \in A^{k}$, we consider: 
\begin{align*}
m_{p}[X_1,...,X_k] =\prod_{b \in p \vee \mathrm{id}_k} \mathbb{E}\left[\prod_{i\in b \cap \{1,...,k\}} X_{i}\right].
\end{align*}
It is a deterministic $\mathcal{P}$-tracial algebra. 
\item[\textbf{3-Random matrices:} ]Let $A = L^{\infty^{-}}(\Omega, \mathcal{A},\mathbb{P}) \otimes \mathcal{M}_N(\mathbb{C})$ and for any $k \geq 0$, any $p \in \mathcal{P}_k$ and any $(M_1,...,M_k) \in A^{k}$, we consider: 
\begin{align}
\label{eq:normalized2}
\mathbb{E}m_{p}(M_1,...,M_k) =  \frac{1}{N^{{\sf nc}(p \vee \mathrm{id}_k)}}\mathbb{E}\left[{\sf Tr}^{k}\left[(M_1\otimes... \otimes M_k)\rho_{N}(^{t}p)\right]\right]. 
\end{align} 
It is not a deterministic $\mathcal{P}$-tracial algebra. 
\item[\textbf{4-$\mathcal{P}$-tracial distribution algebra: }] Let $(B,(m^{B}_{p})_{p \in \mathcal{P}})$ be a $\mathcal{P}$-tracial algebra, let $n \geq 0$  and let $(b_1,...,b_n) \in B^{n}$. We can associate another $\mathcal{P}$-tracial algebra given by $A = \mathbb{C}\{X_1,...,X_n\}$ (the non-commutative polynomial algebra) and, for any $k \geq 0$, any $p \in \mathcal{P}_k$ and any $(P_1,...,P_k) \in A^{k}$: 
\begin{align*}
m^{(b_1,...,b_n)}_{p}(P_1,...,P_k) = m^{B}_{p}(P_1(b_1,...,b_n), ..., P_k(b_1,...,b_n)). 
\end{align*}
The $\mathcal{P}$-tracial algebra $\left(\mathbb{C}\{X_1,...,X_n\}, (m^{(b_1,...,b_n)}_{p})_{p \in \mathcal{P}}\right)$ is called the $\mathcal{P}$-distribu\-tion of $(b_1,...,b_n)$. It is a deterministic $\mathcal{P}$-tracial algebra if and only if the algebra generated by $(b_1,...,b_n)$ endowed with $(m^{B}_{p})_{p \in \mathcal{P}}$ is a deterministic $\mathcal{P}$-tracial algebra. 
\end{description}
\end{exemple}

This last example allows us to define a notion of convergence in $\mathcal{P}$-distribution: we will use the same notations in the following definition. 
 
\begin{definition}
\label{def:conv}
For any $ N \geq 0$, let $\left(A_N, (m_{p}^{N})_{p \in \mathcal{P}}\right)$ be a  $\mathcal{P}$-tracial algebra and let $(a^{N}_i)_{i \in I}$ be elements of $A_N$. The sequence $(a^{N}_i)_{i \in I}$ converges in $\mathcal{P}$-distribution if for any $n \geq 0$, $(i_1,...,i_n) \in I^n$ and any $p \in \mathcal{P}$, $m_{p}^{(a_{i_1}^{N}, ..., a_{i_n}^{N})}$ converges as $N$ goes to infinity. 
\end{definition}

Let $(A, (m_{p})_{p \in \mathcal{P}})$ be a $\mathcal{P}$-tracial algebra. We will define two useful triangular transformations of the linear forms. Let us recall some notions from \cite{Gab1}. 

\begin{definition}
Let $k \geq 0$, $p$ and $p'$ in $\mathcal{P}_k$, the partition $p'$ is {\em coarser-compatible} with $p$ if $p'$ is coarser than $p$ and ${\sf nc}(p' \vee \mathrm{id}_k)={\sf nc}(p \vee \mathrm{id}_k)$. If so, we denote $p' \dashv p$. 

The partition $p'$ if {\em finer-compatible} with $p$ if $p'$ is finer than $p$ and ${\sf nc}(p') - {\sf nc}(p' \vee \mathrm{id}_k) ={\sf nc}(p) - {\sf nc}(p \vee \mathrm{id}_k) $. If so, we denote $p' \sqsupset p$. 
\end{definition}

The relations $\dashv$ and $\sqsupset$ are orders on $\mathcal{P}$ and any strictly decreasing chain is finite. 

\begin{definition}
The  $\mathcal{P}$-exclusive moments $(m_{p^{c}})_{p \in \mathcal{P}}$ are the unique multi-linear forms such that, for any $k \geq 0$, any $p \in \mathcal{P}_k$, any $(a_1,...,a_k) \in A^{k}$:
\begin{align*}
m_{p}(a_1,...,a_k) = \sum_{p' \in \mathcal{P}_k | p' \dashv p} m_{p'^{c}}(a_1,...,a_k). 
\end{align*}
The $\mathcal{P}$-cumulant linear forms $(\kappa_{p})_{p \in \mathcal{P}}$ are the unique multi-linear forms such that,  for any $k \geq 0$, any $p \in \mathcal{P}_k$, any $(a_1,...,a_k) \in \mathcal{A}^{k}$: 
\begin{align*}
m_{p}(a_1,...,a_k) = \sum_{p' \in \mathcal{P}_k | p' \leq p} \kappa_{p'}(a_1,...,a_k). 
\end{align*}
\end{definition}

In \cite{Gab1}, Equation (22), we proved the following result. 

\begin{proposition}
\label{prop:lienmomexclucumu}
For any $k \geq 0$, any $p \in \mathcal{P}_k$, any $(a_1,...,a_k) \in A^{k}$, 
\begin{align*}
m_{p^{c}} (a_1,...,a_k) = \sum_{ p' \in \mathcal{P}_k | p' \sqsupset p } \kappa_{p'}(a_1,...,a_k). 
\end{align*}
\end{proposition}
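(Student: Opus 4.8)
The plan is to obtain the formula by a \emph{double triangular inversion}: the exclusive moments are built from the moments through the coarser-compatible order $\dashv$, the cumulants are built from the moments through the geodesic order $\leq$, and composing the two will reduce everything to one combinatorial statement about how $\dashv$, $\sqsupset$ and $\leq$ interlock below a fixed $p\in\mathcal{P}_k$. Concretely, I would set $\nu_p:=\sum_{p'\sqsupset p}\kappa_{p'}$. Since the exclusive moments are by definition the \emph{unique} family with $m_p=\sum_{q\dashv p}m_{q^c}$ for every $p$ (a well-posed system, because every strictly $\dashv$-decreasing chain is finite), it is enough to verify $m_p=\sum_{q\dashv p}\nu_q$; then $m_{p^c}=\nu_p$ follows from uniqueness, with no linear-independence property of the $\kappa_p$ required. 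Expanding, $\sum_{q\dashv p}\nu_q=\sum_{q\dashv p}\sum_{r\sqsupset q}\kappa_r$, while the defining relation of the cumulants gives $m_p=\sum_{r\leq p}\kappa_r=\sum_{r\in[\mathrm{id}_k,p]}\kappa_r$.

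So the statement is equivalent to the following lemma: \emph{for every $p\in\mathcal{P}_k$, the sets $\{r:r\sqsupset q\}$, with $q$ ranging over all partitions with $q\dashv p$, form a partition of the interval $[\mathrm{id}_k,p]$.} The pivot I would keep at hand is that, substituting $d(x,y)=\tfrac12({\sf nc}(x)+{\sf nc}(y))-{\sf nc}(x\vee y)$ and ${\sf nc}(\mathrm{id}_k)=k$, the relation $r\leq p$ becomes
\[
{\sf nc}(r)-{\sf nc}(r\vee\mathrm{id}_k)+{\sf nc}(p\vee\mathrm{id}_k)={\sf nc}(r\vee p).
\]
For the $\subseteq$ direction and disjointness, given $r\leq p$ the candidate is $q:=r\vee p$: granting ${\sf nc}\big((r\vee p)\vee\mathrm{id}_k\big)={\sf nc}(p\vee\mathrm{id}_k)$, the displayed identity gives at once $q\dashv p$ and $r\sqsupset q$, and any $q'$ with $q'\dashv p,\ r\sqsupset q'$ is coarser than $p$ and $r$, hence $r\vee p\trianglelefteq q'$, and the compatibility conditions force ${\sf nc}(q')={\sf nc}(r\vee p)$, so $q'=r\vee p$. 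For the $\supseteq$ direction, if $q\dashv p$ and $r\sqsupset q$ then $r\vee p\trianglelefteq q$ and the compatibility conditions give ${\sf nc}(q)={\sf nc}(r)-{\sf nc}(r\vee\mathrm{id}_k)+{\sf nc}(p\vee\mathrm{id}_k)$; supermodularity of ${\sf nc}$ on the partition lattice, applied to $r\vee p$ and $r\vee\mathrm{id}_k$ together with ${\sf nc}(r\vee p\vee\mathrm{id}_k)\leq{\sf nc}(p\vee\mathrm{id}_k)$ and the fact that $r$ refines $(r\vee p)\wedge(r\vee\mathrm{id}_k)$, yields ${\sf nc}(r\vee p)\leq{\sf nc}(q)$; hence $q=r\vee p$ and the displayed identity holds, i.e. $r\leq p$.

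The hard point — the only one not reducible to formal manipulation and submodularity — is the remaining input used above: ${\sf nc}\big((r\vee p)\vee\mathrm{id}_k\big)={\sf nc}(p\vee\mathrm{id}_k)$ whenever $r\leq p$, equivalently that $r$ refines $p\vee\mathrm{id}_k$, i.e. along a geodesic from $\mathrm{id}_k$ to $p$ no intermediate partition ever merges two distinct cycles of $p$. The inequality $\leq$ is trivial from $p\trianglelefteq r\vee p$; the reverse inequality is exactly the cycle-by-cycle decomposition of geodesics, which is part of the detailed study of the geodesic order in \cite{Gab1} and is the same circle of ideas behind the Kreweras complement ${\sf K}_p(r)$ there (indeed $r\mapsto q=r\vee p$ is the natural map dual to taking Kreweras complements inside $[\mathrm{id}_k,p]$). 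Granting this, the Lemma holds, and then $\sum_{q\dashv p}\nu_q=\sum_{q\dashv p}\sum_{r\sqsupset q}\kappa_r=\sum_{r\leq p}\kappa_r=m_p$, so by the uniqueness of the exclusive moments $m_{p^c}=\nu_p=\sum_{p'\sqsupset p}\kappa_{p'}$, which is the assertion.
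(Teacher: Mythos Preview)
Your approach is correct and is essentially the intended one: the paper does not reprove the proposition here but cites Equation~(22) of \cite{Gab1}, where the same decomposition
\[
[\mathrm{id}_k,p]\;=\;\bigsqcup_{q\dashv p}\{r:r\sqsupset q\}
\]
is established. So your reduction to this combinatorial lemma, followed by the uniqueness argument for the exclusive moments, matches the paper's route exactly.

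One remark, however: you flag the equality ${\sf nc}\big((r\vee p)\vee\mathrm{id}_k\big)={\sf nc}(p\vee\mathrm{id}_k)$ for $r\leq p$ as the ``hard point'' requiring outside input from the geodesic theory in \cite{Gab1}. In fact it is free from the very supermodularity chain you already wrote down for the $\supseteq$ direction. With no hypothesis on $r$ one has
\[
{\sf nc}(r\vee p)+{\sf nc}(r\vee\mathrm{id}_k)\ \leq\ {\sf nc}(r\vee p\vee\mathrm{id}_k)+{\sf nc}\big((r\vee p)\wedge(r\vee\mathrm{id}_k)\big)\ \leq\ {\sf nc}(r\vee p\vee\mathrm{id}_k)+{\sf nc}(r)\ \leq\ {\sf nc}(p\vee\mathrm{id}_k)+{\sf nc}(r),
\]
and your displayed reformulation of $r\leq p$ says precisely that the two ends are equal. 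Hence every intermediate inequality is an equality, and in particular ${\sf nc}(r\vee p\vee\mathrm{id}_k)={\sf nc}(p\vee\mathrm{id}_k)$. So the $\subseteq$ direction (set $q=r\vee p$, check $q\dashv p$ and $r\sqsupset q$) needs nothing beyond supermodularity of ${\sf nc}$; your proof is therefore self-contained and you can drop the appeal to Kreweras complements.
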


Using the cumulants, we can give a new characterization of the fact that a $\mathcal{P}$-tracial algebra is deterministic or not. The proof is similar to the proof of Proposition $4.5$ of \cite{Gab1}, thus we will not provide a proof for it. 
\begin{lemma}
\label{lemme:detercumu}
The $\mathcal{P}$-tracial algebra $A$ is deterministic if and only if for any $k_1$, $k_2 \geq 0$, any partitions $p_1 \in \mathcal{P}_{k_1}$ and $p_2 \in \mathcal{P}_{k_2}$, any $(a_1,...,a_{k_1+k_2}) \in A^{k_1+k_2}$, $$\kappa_{p_1\otimes p_2} (a_1,...,a_{k_1+k_2}) = \kappa_{p_1}(a_1,...,a_{k_1}) \kappa_{p_2} (a_{k_1+1}, ..., a_{k_1 + k_2}).$$ 
\end{lemma}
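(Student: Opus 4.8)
I want to prove Lemma \ref{lemme:detercumu}, namely that $A$ is deterministic (meaning the $\mathcal{P}$-moments factorize over tensor products of partitions) if and only if the $\mathcal{P}$-cumulants $\kappa_p$ factorize over tensor products. Since the moments are obtained from cumulants via $m_p = \sum_{p' \le p} \kappa_{p'}$ and conversely the cumulants are obtained from moments by Möbius-type inversion along the order $\le$, the whole argument is a combinatorial matching of these two triangular expansions. The key structural fact I will need is how the interval $[\mathrm{id}_k, p_1 \otimes p_2]$ of the order $\le$ decomposes. I claim (and this is where I would invoke the distance formula $d(p,p') = \frac{{\sf nc}(p)+{\sf nc}(p')}{2} - {\sf nc}(p\vee p')$ together with properties of $\otimes$, as in the proof of Proposition 4.5 of \cite{Gab1}) that a partition $p'$ lies in $[\mathrm{id}_{k_1+k_2}, p_1\otimes p_2]$ if and only if $p' = p_1' \otimes p_2'$ with $p_1' \in [\mathrm{id}_{k_1}, p_1]$ and $p_2' \in [\mathrm{id}_{k_2}, p_2]$; this uses that $\mathrm{id}_{k_1+k_2} = \mathrm{id}_{k_1}\otimes \mathrm{id}_{k_2}$, that ${\sf nc}$ and ${\sf nc}(\cdot \vee \mathrm{id})$ are additive under $\otimes$, and that $\vee$ commutes with $\otimes$, so that distances add: $d(\mathrm{id},p') = d(\mathrm{id}_{k_1},p_1') + d(\mathrm{id}_{k_2},p_2')$ and similarly $d(p', p_1\otimes p_2)$. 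Hence the defining inequality for $p'\le p_1\otimes p_2$ splits into the two inequalities $p_1'\le p_1$ and $p_2'\le p_2$.

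For the "if" direction, assume the cumulants factorize. Then for $p_1 \in \mathcal{P}_{k_1}$, $p_2\in\mathcal{P}_{k_2}$ and $(a_1,\dots,a_{k_1+k_2})$,
\begin{align*}
m_{p_1\otimes p_2}(a_1,\dots,a_{k_1+k_2}) &= \sum_{p' \le p_1\otimes p_2} \kappa_{p'}(a_1,\dots,a_{k_1+k_2}) \\
&= \sum_{p_1'\le p_1}\ \sum_{p_2'\le p_2} \kappa_{p_1'\otimes p_2'}(a_1,\dots,a_{k_1+k_2}),
\end{align*}
and applying the factorization hypothesis to each $\kappa_{p_1'\otimes p_2'}$ and then splitting the double sum as a product gives $m_{p_1}(a_1,\dots,a_{k_1})\, m_{p_2}(a_{k_1+1},\dots,a_{k_1+k_2})$, which by Remark \ref{rq:facto} (factorization of moments) is exactly the statement that $A$ is deterministic.

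For the "only if" direction, assume $A$ is deterministic, i.e. the moments factorize. I proceed by induction on $k = k_1 + k_2$ to show $\kappa_{p_1\otimes p_2} = \kappa_{p_1}\otimes\kappa_{p_2}$ (as multilinear forms evaluated on the arguments). Using $\kappa_{p_1\otimes p_2} = m_{p_1\otimes p_2} - \sum_{p' < p_1\otimes p_2}\kappa_{p'}$, I substitute the factorization of $m_{p_1\otimes p_2}$ (expanding each factor $m_{p_i}$ back into its cumulant sum) and the decomposition of the interval: the subtracted sum runs over all pairs $(p_1', p_2')$ with $p_1'\le p_1$, $p_2'\le p_2$, $(p_1',p_2')\ne(p_1,p_2)$, and to each such pair the induction hypothesis applies (since it has strictly smaller... actually since it is a proper sub-pair, at least one of $p_i'$ is a strictly coarser-or-finer partition, but more simply: the recursion for $\kappa$ only ever refers to strictly smaller partitions in the $\le$ order, and $p_1'\otimes p_2' < p_1\otimes p_2$, so one applies the already-established identity lower in the same induction on the $\le$-order rather than on $k$). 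Matching terms then isolates $\kappa_{p_1\otimes p_2} = \kappa_{p_1}\otimes\kappa_{p_2}$.

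**Main obstacle.** The only genuinely nontrivial point is the interval decomposition $[\mathrm{id}_{k_1+k_2},\, p_1\otimes p_2] = [\mathrm{id}_{k_1}, p_1] \otimes [\mathrm{id}_{k_2}, p_2]$ for the geodesic order $\le$; once this is in hand, both directions are bookkeeping with the triangular change of basis between moments and cumulants. Since the paper explicitly says the proof is "similar to the proof of Proposition 4.5 of \cite{Gab1}," I expect that reference already establishes precisely this additivity of $d$ and $\vee$ under $\otimes$, so in practice I would cite it and restrict the write-up to the short inductive matching argument above.
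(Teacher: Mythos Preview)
Your proposal is correct and matches the approach the paper defers to (Proposition~4.5 of \cite{Gab1}): the interval decomposition $[\mathrm{id}_{k_1+k_2},\,p_1\otimes p_2]=\{p_1'\otimes p_2':p_1'\le p_1,\ p_2'\le p_2\}$, established via additivity of ${\sf nc}$, ${\sf nc}(\cdot\vee\mathrm{id})$ and $d$ under $\otimes$, is exactly the structural input, and both directions then reduce to the triangular moment--cumulant inversion you describe. One small clarification: in the ``only if'' direction your induction should be on $d(\mathrm{id},p_1\otimes p_2)$ (equivalently on the $\le$-order) rather than on $k=k_1+k_2$, since the strictly smaller terms $p_1'\otimes p_2'$ live in the same $\mathcal{P}_{k_1+k_2}$; you already noted this mid-sentence, and it is the right fix.
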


\begin{remark}
\label{rq:facto2}
Again, using the axioms of $\mathcal{P}$-tracial algebras, a family $A_1$ of elements of $A$ is deterministic if and only if the factorization property of the cumulants holds when one replaces $A$ by $A_1$.
\end{remark}

\subsection{The $\mathcal{R}$-transform} 
If $a \in A$, we have seen that we can define its $\mathcal{P}$-tracial distribution algebra as a $\mathcal{P}$-tracial algebra built over $\mathbb{C}[X]$. Because of the first axiom of $\mathcal{P}$-tracial algebras, we can also encode this distribution as a linear form in $\left(\bigoplus_{k=0}^{\infty} \mathbb{C}[\mathcal{P}_k / \mathfrak{S}_k]\right)^{*}$. The linear form which sends $p$ on $m_p(a)$ will be called the $\mathcal{P}$-distribution of $a$ and it is denoted by $\mathcal{M}(a)$. Using the second axiom of $\mathcal{P}$-tracial algebras, we see that the $\mathcal{P}$-distribution of $a$ is enough in order to recover the $\mathcal{P}$-tracial distribution algebra of $a$. 

\begin{definition}
The $\mathcal{R}$-transform of $a$ is the linear form in $\left(\bigoplus_{k=0}^{\infty} \mathbb{C}[\mathcal{P}_k / \mathfrak{S}_k]\right)^{*}$ which sends $p$ on $\kappa_{p}(a)$. 
\end{definition}

Recall the notion of $\mathcal{R}$-tranform defined on $\left(\bigoplus_{k=0}^{\infty} \mathbb{C}[\mathcal{P}_k / \mathfrak{S}_k]\right)^{*}$ in Definition $4.2$ of \cite{Gab1}. By definition, the $\mathcal{P}$-distribution and the $\mathcal{R}$-transform of $a$ are linked by the equality $\mathcal{R}(a) = \mathcal{R}[\mathcal{M}(a)]$. Besides, using Lemma \ref{lemme:detercumu}, we see that $a \in A$ is deterministic if and only if $\mathcal{R}(a)$ is a character.

\subsection{$\mathcal{P}$-freeness}
We define the notion of $\mathcal{P}$-freeness which will play the role of the usual freeness but for $\mathcal{P}$-tracial algebras. Let $A_1$ and $A_2$ be two sub-algebras of $A$.

\begin{definition}
Let $(a_1,...,a_k)$ be a $k$-tuple of elements of  $A_1\cup A_2$. Let $p$ be a partition in $\mathcal{P}_k$. We say that $p$ is compatible with $(a_1,...,a_k)$ if for any $i,j \in \{1,...,k\}$, if $a_i \in A_1$ and $a_j \in A_2$, then $i$ and $j$ are in two different blocks of $p$. 
\end{definition}

Let us remark that for a partition, being compatible with a $k$-tuple actually depends on the choice of $A_1$ and $A_2$: these choices will always be explicit. Let us remark that for any element $a \in A$, if $A_1 = <a> = A_2$, no partition $p \in \mathcal{P}_k$ except $\mathrm{id}_k$ is compatible with $(a,...,a)$.

The notion of $\mathcal{P}$-freeness is given in terms of vanishing mixed cumulants. 

\begin{definition}
The two sub-algebras $A_1$ and $A_2$ are $\mathcal{P}$-free if the two following conditions hold: 
\begin{description}
\item[$\bullet$ \textbf{compatibility condition:}] for any integer $k$, any $k$-uple $(a_1,...,a_{k}) \in (A_1\cup A_2)^{k}$ and any partition $p \in \mathcal{P}_k$, if $p$ is not compatible with $(a_1,...,a_{k})$: 
\begin{align*}
\kappa_{p}(a_1,...,a_{k}) = 0, 
\end{align*}
\item[$\bullet$ \textbf{factorization property:}] for any $k_1$, $k_2 \geq 0$,  any $p_1 \in \mathcal{P}_{k_1}$, any $p_2 \in \mathcal{P}_{k_2}$, any $(a_1,...,a_{k_1}) \in A_{1}^{k_1}$ and any $(a_{k_1+1},...,a_{k_1+k_2}) \in A_2^{k_2}$, 
\begin{align*}
\kappa_{p_1\otimes p_2}(a_1,...,a_{k}) = \kappa_{p_1}(a_1,...,a_{k_1})\kappa_{p_2}(a_{k_1+1},...,a_{k_1+k_2}).
\end{align*} 
\end{description}
Let $A_1$ and $A_2$ be two families of $A$, they are $\mathcal{P}$-free if the algebras they generate are $\mathcal{P}$-free.
\end{definition}

If $A_1$ and $A_2$ are $\mathcal{P}$-free, using the restrictions of $(m_{p})_{p \in \mathcal{P}}$ on $A_1$ and $A_2$, we can compute the restriction of $(m_{p})_{p \in \mathcal{P}}$ on the algebra $<A_1,A_2>$. Indeed, the second axiom (compatibility with the product) in the definition of a $\mathcal{P}$-tracial algebra, shows that we only need to compute the restriction of $(m_{p})_{p \in \mathcal{P}}$ on  $A_1 \cup A_2$. But the data of $(m_p)_{p \in \mathcal{P}}$ is equivalent to the data of $(\kappa_{p})_{p \in \mathcal{P}}$. The $\mathcal{P}$-freeness allows us to compute the restriction of $(\kappa_{p})_{p \in \mathcal{P}}$ on $A_1\cup A_2$ knowing its restriction on $A_1$ and $A_2$. 

\begin{remark}
Again, using the axioms of $\mathcal{P}$-tracial algebras, if $A_1$ and $A_2$ are two families of $A$, they are $\mathcal{P}$-free if and only if the compatibility condition and the factorization property hold for $A_1$ and $A_2$: we do not need to consider the algebras $<A_1>$ and $<A_2>$.
\end{remark}

Using Lemma \ref{lemme:detercumu}, we can already state the following lemma whose proof is straightfoward. 
\begin{lemma}
\label{lem:deterministic}
If $A_1$ and $A_2$ are two deterministic $\mathcal{P}$-free sub-algebras of $A$, then the algebra $<\!A_1, A_2\!>$ is a deterministic sub-algebra. 
\end{lemma}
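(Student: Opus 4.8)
The statement to prove is Lemma \ref{lem:deterministic}: if $A_1$ and $A_2$ are two deterministic $\mathcal{P}$-free sub-algebras of $A$, then $<\!A_1, A_2\!>$ is deterministic. By Lemma \ref{lemme:detercumu} (the cumulant characterization of determinism), it suffices to show that for all $k_1, k_2 \geq 0$, all $p_1 \in \mathcal{P}_{k_1}$, $p_2 \in \mathcal{P}_{k_2}$, and all $(c_1,\dots,c_{k_1+k_2}) \in <\!A_1,A_2\!>^{k_1+k_2}$, one has $\kappa_{p_1 \otimes p_2}(c_1,\dots,c_{k_1+k_2}) = \kappa_{p_1}(c_1,\dots,c_{k_1})\,\kappa_{p_2}(c_{k_1+1},\dots,c_{k_1+k_2})$. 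By Remark \ref{rq:facto2}, we may in fact restrict attention to $c_i \in A_1 \cup A_2$ rather than general elements of the generated algebra, since the factorization of cumulants on $A_1 \cup A_2$ already implies determinism of $<\!A_1,A_2\!>$.

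\textbf{Key steps.} First I would reduce, via multilinearity, to the case where each $c_i$ lies in $A_1$ or in $A_2$. Then I would split the analysis according to whether the partitions $p_1 \otimes p_2$, $p_1$, $p_2$ are compatible (in the $\mathcal{P}$-freeness sense) with the corresponding tuples. If $p_1 \otimes p_2$ is \emph{not} compatible with $(c_1,\dots,c_{k_1+k_2})$, then there is a block of $p_1 \otimes p_2$ containing an index $i$ with $c_i \in A_1$ and an index $j$ with $c_j \in A_2$; since $p_1 \otimes p_2$ has no block spanning the two groups $\{1,\dots,k_1\}$ and $\{k_1+1,\dots,k_1+k_2\}$, this incompatibility must already be visible in $p_1$ (restricted to the first group) or in $p_2$ (restricted to the second), so by the compatibility condition of $\mathcal{P}$-freeness both sides vanish. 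If $p_1 \otimes p_2$ \emph{is} compatible, then in particular each of $p_1$, $p_2$ is compatible with its respective sub-tuple, and I would want to apply the factorization property from the definition of $\mathcal{P}$-freeness — but that property is stated only for tuples with the first $k_1$ entries in $A_1$ and the last $k_2$ in $A_2$, whereas here the $c_i$ are an arbitrary mix. The resolution is to permute columns: using the first axiom of $\mathcal{P}$-tracial algebras (conjugation-invariance of moments, hence of cumulants, under $\mathfrak{S}_k$) one rearranges the tuple so that all $A_1$-entries come first and all $A_2$-entries come second within each group; compatibility of $p_1 \otimes p_2$ guarantees no block is broken by this rearrangement, so the tensor structure is preserved and the factorization property applies directly, after which one permutes back.

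\textbf{Main obstacle.} The genuinely delicate point is the bookkeeping in the compatible case: one must check that the column permutation used to bring $A_1$- and $A_2$-entries into standard order interacts correctly with the tensor decomposition $p_1 \otimes p_2$ — i.e. that the permutation can be chosen to act separately on $\{1,\dots,k_1\}$ and on $\{k_1+1,\dots,k_1+k_2\}$ (it can, because compatibility forbids cross-group blocks and because within each group we only need to sort $A_1$ before $A_2$), and that under such a permutation $\sigma = \sigma_1 \otimes \sigma_2$ one has $\sigma \circ (p_1 \otimes p_2) \circ \sigma^{-1} = (\sigma_1 \circ p_1 \circ \sigma_1^{-1}) \otimes (\sigma_2 \circ p_2 \circ \sigma_2^{-1})$. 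Granting this, the factorization property of $\mathcal{P}$-free algebras plus Lemma \ref{lemme:detercumu} close the argument. Since this is essentially the argument of Proposition $4.5$ of \cite{Gab1} transported to the present setting, I expect it to go through routinely once the permutation is set up, and the lemma's claim that the proof is straightforward is justified.
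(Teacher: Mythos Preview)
Your overall architecture is exactly what the paper has in mind: reduce via Lemma~\ref{lemme:detercumu} and Remark~\ref{rq:facto2} to checking the cumulant factorization on tuples from $A_1\cup A_2$, and split according to compatibility. The incompatible case is handled correctly.

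There is, however, a small but real slip in the compatible case. You permute \emph{within each group} so that $A_1$-entries precede $A_2$-entries inside $\{1,\dots,k_1\}$ and inside $\{k_1+1,\dots,k_1+k_2\}$; this preserves the tensor form $p_1'\otimes p_2'$, but the resulting tuple has the pattern $(A_1,A_2,A_1,A_2)$, which is \emph{not} the configuration to which the factorization clause of $\mathcal{P}$-freeness applies (that clause requires all $A_1$-entries first, all $A_2$-entries second). So ``the factorization property applies directly'' is not true after your permutation. The fix is to permute \emph{globally} so that every $A_1$-entry precedes every $A_2$-entry; compatibility ensures the partition becomes $q\otimes r$ with $q$ gathering the $A_1$-columns and $r$ the $A_2$-columns, and now $\mathcal{P}$-freeness gives $\kappa_{q\otimes r}=\kappa_q\cdot\kappa_r$. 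At this point one must invoke the \emph{hypothesis} that $A_1$ and $A_2$ are deterministic (via Lemma~\ref{lemme:detercumu} applied to each): since $q$ itself splits as the tensor of the $A_1$-parts coming from $p_1$ and from $p_2$ (and similarly for $r$), determinism of $A_1$ and of $A_2$ yields the further factorization into four pieces, which then reassemble into $\kappa_{p_1}\cdot\kappa_{p_2}$. In your write-up Lemma~\ref{lemme:detercumu} appears only as the tool for the \emph{conclusion}; it is equally essential as the tool exploiting the \emph{hypothesis}, and that is precisely where the word ``deterministic'' in the statement earns its keep.
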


We can give another characterization of $\mathcal{P}$-freeness using the exclusive moments. Recall the notion of left- and right-part of a partition ((13) in Section \ref{sec:basic}).

\begin{theorem}
\label{theorem:autreformulation}
Let $A_1$, $A_2$ be two sub-algebras of the $\mathcal{P}$-tracial algebra $(A,(m_p)_{p \in \mathcal{P}})$. They are $\mathcal{P}$-free if and only if for any $k_1, k_2 >0$,  any $p \in \mathcal{P}_{k_1+k_2}$, any $(a_i)_{i=1}^{k_1} \in A_1$, any $(a_i)_{i=k_1+1}^{k_1+k_2}\in A_2$, 
\begin{align}
\label{eq:freeexclusive}
m_{p^{c}}\left[(a_i)_{i=1}^{k_1+k_2}\right] = \delta_{p_{k_1}^{l} \otimes p_{k_1}^{r} \sqsupset p} m_{(p_{k_1}^{l})^{c}} \left[(a_i)_{i=1}^{k_1}\right] m_{(p_{k_1}^{r})^{c}} \left[(a_{i})_{i=k_1+1}^{k_1+k_2}\right], 
\end{align}
where we recall that:
\begin{align*}
p_{k_1}^{l} &= \left\{ b \cap \{1,...,k_1,1',...,k_1'\} | b \in p \right\},\\
p_{k_1}^{r}&= \left\{ b \cap \{k_1+1,...,k_1+k_2,k_1',...,k_1'+k_2'\} | b \in p \right\}. 
\end{align*}
\end{theorem}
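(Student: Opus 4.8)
The plan is to exploit the fact that the three notions in play---$\mathcal{P}$-moments, $\mathcal{P}$-exclusive moments and $\mathcal{P}$-cumulants---are related by the explicit triangular transformations recalled in the excerpt, so that the statement becomes a combinatorial identity about partitions. Concretely, $\mathcal{P}$-freeness is defined in terms of cumulants (vanishing of non-compatible mixed cumulants plus the tensor-factorization property), while Proposition~\ref{prop:lienmomexclucumu} expresses $m_{p^c}$ as a sum of $\kappa_{p'}$ over $p' \sqsupset p$. So I would start from the right-hand side of \eqref{eq:freeexclusive}, expand each of $m_{(p_{k_1}^l)^c}$ and $m_{(p_{k_1}^r)^c}$ via Proposition~\ref{prop:lienmomexclucumu}, multiply, and use the tensor-factorization of cumulants on a deterministic-like product; conversely expand the left-hand side $m_{p^c}$ the same way and, using the compatibility condition to kill every cumulant $\kappa_{p'}$ with $p' \sqsupset p$ that links an $A_1$-index to an $A_2$-index, show the surviving terms are exactly those of the form $q_1 \otimes q_2$ (up to reordering of columns) with $q_1$ on the first $k_1$ legs and $q_2$ on the last $k_2$ legs.

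First I would fix notation and reduce to the case where the tuple is \emph{ordered}, i.e.\ $a_1,\dots,a_{k_1}\in A_1$ and $a_{k_1+1},\dots,a_{k_1+k_2}\in A_2$, which is what the statement assumes; the first axiom of $\mathcal{P}$-tracial algebras lets one pass between orderings by conjugating partitions by permutations, and both sides of \eqref{eq:freeexclusive} transform the same way. Then the key combinatorial lemma is: for $p\in\mathcal{P}_{k_1+k_2}$, a partition $p'\sqsupset p$ is \emph{compatible} with the splitting $\{1,\dots,k_1\}\sqcup\{k_1+1,\dots,k_1+k_2\}$ (no block meets both sides) if and only if $p' = p'^l_{k_1}\otimes p'^r_{k_1}$ (after the obvious column relabelling), and moreover such $p'$ exist with $p'\sqsupset p$ precisely when $p^l_{k_1}\otimes p^r_{k_1}\sqsupset p$, in which case $p'^l_{k_1}\sqsupset p^l_{k_1}$ and $p'^r_{k_1}\sqsupset p^r_{k_1}$, and this correspondence is a bijection onto pairs $(q_1,q_2)$ with $q_1\sqsupset p^l_{k_1}$, $q_2\sqsupset p^r_{k_1}$. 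Granting this lemma, the ``only if'' direction is a bookkeeping computation: $m_{p^c}[(a_i)] = \sum_{p'\sqsupset p}\kappa_{p'}[(a_i)]$; by the compatibility condition all non-splitting $p'$ contribute $0$; for splitting $p' = q_1\otimes q_2$ the factorization property gives $\kappa_{q_1}[(a_i)_{i\le k_1}]\kappa_{q_2}[(a_i)_{i>k_1}]$; resumming over $q_1\sqsupset p^l_{k_1}$ and $q_2\sqsupset p^r_{k_1}$ and applying Proposition~\ref{prop:lienmomexclucumu} in each factor yields exactly $\delta_{p^l_{k_1}\otimes p^r_{k_1}\sqsupset p}\, m_{(p^l_{k_1})^c}m_{(p^r_{k_1})^c}$.

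For the ``if'' direction I would argue by induction on $k=k_1+k_2$ (or on the complexity of the tuple), inverting the same triangular system: assuming \eqref{eq:freeexclusive} holds for all mixed tuples, one recovers the mixed cumulants $\kappa_p$ from the $m_{q^c}$ via the Möbius-type inversion of $\sqsupset$ (the relation $\sqsupset$ is an order with finite decreasing chains, as recalled), and the above combinatorial lemma run in reverse forces $\kappa_p = 0$ whenever $p$ is not splitting-compatible and $\kappa_{q_1\otimes q_2} = \kappa_{q_1}\kappa_{q_2}$ otherwise---which is precisely the definition of $\mathcal{P}$-freeness once one also invokes Remark~\ref{rq:facto2} to see it suffices to check this on $A_1\cup A_2$ rather than on the generated algebras, and the second axiom to bootstrap from tuples of generators to the full statement.

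I expect the main obstacle to be the combinatorial lemma about $\sqsupset$ and the $\otimes$-decomposition: one must verify carefully that the defect/compatibility counting (${\sf nc}(p') - {\sf nc}(p'\vee\mathrm{id})$ matching that of $p$) is \emph{additive} across the two halves exactly when no block of $p'$ straddles the cut, and that $p^l_{k_1}\otimes p^r_{k_1}\sqsupset p$ is the right nondegeneracy condition making the $\delta$ appear; this is where the geometry of the extraction operation $p\mapsto p_I$, its interaction with $\vee$, and the behaviour of ${\sf nc}(\cdot\vee\mathrm{id})$ under $\otimes$ (all from \cite{Gab1}) have to be marshalled precisely. Everything else is formal manipulation of the two triangular change-of-basis formulas.
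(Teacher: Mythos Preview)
Your approach is essentially the same as the paper's: both sides of \eqref{eq:freeexclusive} are expanded via Proposition~\ref{prop:lienmomexclucumu}, and the whole theorem reduces to the combinatorial fact that $p_1'\sqsupset p_{k_1}^l$, $p_2'\sqsupset p_{k_1}^r$ and $p_{k_1}^l\otimes p_{k_1}^r\sqsupset p$ together are equivalent to $p_1'\otimes p_2'\sqsupset p$. The paper does not prove this combinatorial lemma in situ but simply invokes it as Proposition~3.2 of \cite{Gab1}, and then declares the remaining equivalence (your ``if'' direction, where you propose M\"obius inversion on $\sqsupset$) to be ``quite straightforward''; so your proposal is correct and slightly more detailed than the paper's own argument.
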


\begin{proof}
Let us remark that, for any $(a_i)_{i=1}^{k_1} \in A_1$, any $(a_i)_{i=k_1+1}^{k_1+k_2}\in A_2$, the r.h.s. of Equation (\ref{eq:freeexclusive}) is equal to: 
\begin{align*}
\delta_{p_{k_1}^{l} \otimes p_{k_1}^{r} \sqsupset p} \sum_{p_1' \sqsupset p_{k_1}^{l}, p_2' \sqsupset p_{k_1}^{r}} \kappa_{p'_1}  \left[(a_i)_{i=1}^{k_1}\right]  \kappa_{p'_2} \left[(a_i)_{i=k_1+1}^{k_1+k_2}\right]. 
\end{align*}
Using Proposition $3.2$ of \cite{Gab1}, $p_1' \sqsupset p_{k_1}^{l}, p_2' \sqsupset p_{k_1}^{r}$ and $p_{k_1}^{l} \otimes p_{k_1}^{r} \sqsupset p$ if and only if $p_1' \otimes p_2' \sqsupset p$: the r.h.s. of (\ref{eq:freeexclusive}) is then equal to: 
\begin{align*}
\sum_{p_1' \otimes p_2' \sqsupset p} \kappa_{p'_1}  \left[(a_i)_{i=1}^{k_1}\right]  \kappa_{p'_2} \left[(a_i)_{i=k_1+1}^{k_1+k_2}\right]. 
\end{align*}
On the other way, the l.h.s. of (\ref{eq:freeexclusive}) is equal to: 
\begin{align*}
\sum_{p' \sqsupset p} \kappa_{p'}\left[(a_{i})_{i=1}^{k_1+k_2}\right].
\end{align*}
Thus, we must prove that $A_1$ and $A_2$ are $\mathcal{P}$-free if and only if for any $k_1$, $k_2>0$, any $p \in \mathcal{P}_{k_1+k_2}$, any  $(a_i)_{i=1}^{k_1} \in A_1$, any $(a_i)_{i=k_1+1}^{k_1+k_2}\in A_2$, 
\begin{align*}
\sum_{p_1' \otimes p_2' \sqsupset p} \kappa_{p'_1}  \left[(a_i)_{i=1}^{k_1}\right]  \kappa_{p'_2} \left[(a_i)_{i=k_1+1}^{k_1+k_2}\right] = \sum_{p' \sqsupset p} \kappa_{p'}\left[(a_{i})_{i=1}^{k_1+k_2}\right].
\end{align*}
This equivalence is quite straightfoward. 
\qed \end{proof}

\begin{remark}
Again, using the axioms of $\mathcal{P}$-tracial algebras, the same theorem holds when $A_1$ and $A_2$ are only two families of $A$. 
\end{remark}

\subsection{Sum and product}
Let $A_1$, $A_2$ be two  $\mathcal{P}$-free sub-algebras of the $\mathcal{P}$-tracial algebra $(A,(m_p)_{p \in \mathcal{P}})$. Recall the notion of factorizations ((12) in Section \ref{sec:basic}) and Kreweras complement (Definition 3.14 in \cite{Gab1}). The following theorem generalizes Theorem 5.2.2 of \cite{Speic}.

\begin{theorem}
\label{th:calculloi}
Let $k \geq 0$, let $(a_i)_{i=1}^{n} \in A_1^{k}$  and $(b_i)_{i=1}^{n} \in A_2^{k}$, let $p \in \mathcal{P}_k$: 
\begin{align*}
\kappa_{p}(a_1+b_1,...,a_k+b_k) &= \sum_{(p_1,p_2,I) \in \mathfrak{F}_{2}(p)} \kappa_{p_1}((a_{i})_{i \in I}) \kappa_{p_2} ((b_i)_{i \in \{1,...,k\}\setminus I}), \\
\kappa_{p}(a_1b_1,...,a_kb_k)&= \sum_{ p_1\prec p, p_2 \in{\sf K}_p(p_1)} \kappa_{p_1}((a_i)_{i=1}^{k})\kappa_{p_2}((b_i)_{i=1}^{k}), \\
m_{p}(a_1b_1,...,a_kb_k) &= \sum_{p_1 \leq p} \kappa_{p_1}((b_i)_{i=1}^{k}) m_{{ }^{t}p_1\circ p} ((b_i)_{i=1}^{k}). 
\end{align*}  
\end{theorem}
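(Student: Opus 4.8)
The plan is to prove the three identities in turn, each reducing a cumulant (or moment) of a "combined" family to a sum of products of cumulants of the two $\mathcal{P}$-free families, by exploiting the two defining properties of $\mathcal{P}$-freeness: vanishing of cumulants indexed by partitions not compatible with the mixing pattern, and multiplicativity of cumulants along a $\otimes$-splitting. First I would fix notation so that, in each line, the $a_i$ lie in $A_1$ and the $b_i$ in $A_2$, and recall from Lemma \ref{lemme:detercumu} and the compatibility condition that for any tuple of arguments drawn from $A_1 \cup A_2$, $\kappa_q$ of that tuple vanishes unless $q$ refines the partition that separates indices landing in $A_1$ from those landing in $A_2$, and moreover $\kappa_q$ then factors as $\kappa_{q_1} \kappa_{q_2}$ over the two sides.

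For the additive formula, I would expand $\kappa_p(a_1+b_1,\dots,a_k+b_k)$ by multilinearity into $2^k$ terms indexed by subsets $I \subseteq \{1,\dots,k\}$ telling which slots take the $a$-argument; the $I$-term is $\kappa_p((c_i)_i)$ with $c_i = a_i$ for $i\in I$ and $c_i = b_i$ for $i\notin I$. By the compatibility condition this vanishes unless $p$ separates $I$ from $I^c$, i.e.\ unless $p$ restricts to $p_I = p_1$ on $I$ and $p_{I^c} = p_2$ on $I^c$ with no block straddling the two; and by the factorization property of $\mathcal{P}$-free cumulants (together with the extraction/$\otimes$ compatibility from \cite{Gab1}) the surviving term equals $\kappa_{p_1}((a_i)_{i\in I})\,\kappa_{p_2}((b_i)_{i\in I^c})$. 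The condition that $p$ genuinely splits along $I$ with ${\sf nc}(p_1)+{\sf nc}(p_2)={\sf nc}(p)$ is exactly membership in $\mathfrak{F}_2(p)$, so summing over surviving terms gives the stated sum over $\mathfrak{F}_2(p)$. I would need to double-check that the count-of-blocks condition in the definition of $\mathfrak{F}_2$ is automatically forced by $p$ separating $I$ from $I^c$ (no block crosses), which is immediate.

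For the multiplicative cumulant formula, I would first use axiom 2 of $\mathcal{P}$-tracial algebras (compatibility with products) to rewrite $\kappa_p(a_1b_1,\dots,a_kb_k)$ in terms of cumulants on the $2k$ arguments $a_1,b_1,\dots,a_k,b_k$: the product structure converts $p\in\mathcal{P}_k$ into an inserted partition on $\mathcal{P}_{2k}$ composed with the obvious permutation, and then the cumulant of that tuple is expanded via the moment-cumulant relation on $2k$ points. On the $2k$ points the arguments alternate between $A_1$ (odd slots) and $A_2$ (even slots), so by the compatibility and factorization conditions only cumulants of the form $\kappa_{p_1}\otimes\kappa_{p_2}$ survive, with $p_1$ living on the $a$-slots and $p_2$ on the $b$-slots. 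The combinatorial heart is to show that, after the insertion/composition bookkeeping, the pairs $(p_1,p_2)$ that contribute are precisely those with $p_1 \prec p$ and $p_2$ in the Kreweras complement ${\sf K}_p(p_1)$ — this is the direct analogue of Speicher's formula (Theorem 5.2.2 of \cite{Speic}) and is where I expect the real work to lie, since it requires matching the geodesic/distance combinatorics of $\leq$ and ${\sf K}_{p_0}$ from \cite{Gab1} with the surviving index patterns; I would lean on the properties of $\kappa(p,p')$, the neutrality of $\mathrm{id}$, and the characterization of ${\sf K}_p(p_1)$ as exactly the partitions $p_2$ with $d(\mathrm{id},p_1)+d(\mathrm{id},p_2) = d(\mathrm{id},p)$ that are "complementary" in $p$. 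The third identity, for $m_p(a_1b_1,\dots,a_kb_k)$, then follows by summing the cumulant formula over $p_1 \le p$ and recognizing $\sum_{p_2 \in {\sf K}_p(p_1)}\kappa_{p_2}((b_i)_i)$ together with the leftover $b$-cumulants as $m_{{}^tp_1 \circ p}((b_i)_i)$ via the moment-cumulant relation and the identity relating ${\sf K}_p(p_1)$-sums to the shifted index set $\{p' : p' \le {}^tp_1\circ p\}$; here the main check is the index-set bijection $\{p_2 : p_2 \le {}^tp_1\circ p\} \leftrightarrow \{p' \ge p_1,\ p' \le p\}$-type correspondence, again a consequence of the Kreweras-complement machinery of \cite{Gab1}.

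Overall the proof is a three-fold application of "expand by multilinearity or by the product axiom, kill non-compatible cumulants, factor the rest, then re-index the surviving sum using the combinatorial structures ($\mathfrak{F}_2$, $\prec$, ${\sf K}_p$) of \cite{Gab1}." The additive identity is essentially immediate; the genuine obstacle is the Kreweras-complement re-indexing in the multiplicative case, i.e.\ proving that the partitions surviving after the product-axiom rewriting on $2k$ points are exactly $\{(p_1,p_2) : p_1 \prec p,\ p_2 \in {\sf K}_p(p_1)\}$, for which I would carefully port the argument of Speicher's Theorem 5.2.2 into the partition-algebra language using the distance formula $d(p,p') = \tfrac{{\sf nc}(p)+{\sf nc}(p')}{2} - {\sf nc}(p\vee p')$ and the defect ${\sf df}$.
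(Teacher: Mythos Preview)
Your approach is essentially the same as the paper's: multilinearity plus freeness for the additive identity, and for the multiplicative identities the paper likewise passes to $2k$ arguments, kills non-compatible cumulants by freeness, factors the compatible ones, and re-indexes via the Kreweras complement. Two small sharpenings: axiom~2 of $\mathcal{P}$-tracial algebras applies to \emph{moments}, so the paper first converts the second cumulant identity into the equivalent moment identity $m_{p}(a_1b_1,\dots,a_kb_k)=\sum_{p'\leq p}\sum_{p_1\circ p_2=p',\,p_2\in{\sf K}_{p'}(p_1)}\kappa_{p_1}\kappa_{p_2}$ before applying the product axiom (rather than applying axiom~2 directly to $\kappa_p$); and the combinatorial step you flag as the ``genuine obstacle'' --- that the surviving pairs $(p_1,p_2)$ are exactly those with $p_1\otimes p_2\leq (p\otimes\mathrm{id}_k)\tau$, equivalently $p_1\circ p_2\leq p$ with $p_2\in{\sf K}_{p_1\circ p_2}(p_1)$ --- is precisely Theorem~3.5 of \cite{Gab1}, so you can cite it directly rather than porting Speicher's argument.
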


\begin{proof}
The first equality is a consequence of the multi-linearity property of the $\mathcal{P}$-cumulant form and the $\mathcal{P}$-freeness of  $(a_i)_{i=1}^{n} \in A_1^{k}$  and $(b_i)_{i=1}^{n} \in A_2^{k}$. 

 Using the definition of the $\mathcal{P}$-cumulant forms, the second equality is equivalent to the fact that for any $p \in \mathcal{P}_k$: 
 \begin{align}
 \label{eq:aprouver}
 m_{p}(a_1b_1,...,a_kb_k) = \sum_{p' \leq p} \sum_{p_1, p_2 | p_1\circ p_2 = p', p_2 \in {\sf K}_{p_1\circ p_2}(p_1)} \kappa_{p_1} ((a_i)_{i=1}^{k})\kappa_{p_2}((b_i)_{i=1}^{k})
 \end{align}
By Theorem $3.5$ of \cite{Gab1}, $p_1\circ p_2 \leq p$ and $p_2 \in {\sf K}_{p_1\circ p_2}(p_1)$ if and only if $p_1\otimes p_2 \leq (p \otimes \mathrm{id}_{k}) \tau$ where $\tau = (1,k+1)(2,k+2)...(k,2k) \in \mathfrak{S}_{2k}$. Thus, the r.h.s. of Equation (\ref{eq:aprouver}) is equal to:
\begin{align*}
\sum_{p_1\otimes p_2 \leq (p \otimes \mathrm{id}_{k}) \tau}  \kappa_{p_1} ((a_i)_{i=1}^{k})\kappa_{p_2}((b_i)_{i=1}^{k}). 
\end{align*}
Using the $\mathcal{P}$-freeness hypothesis, for any $p_1$ and $p_2$ in $\mathcal{P}_k$,   $\kappa_{p_1} ((a_i)_{i=1}^{k})\kappa_{p_2}((b_i)_{i=1}^{k})$ is equal to $\kappa_{p_1\otimes p_2}(a_1,...,a_k,b_1,...,b_k)$. Besides, if $p' \in \mathcal{P}_k$ is not compatible with $(a_1,...,a_k,b_1,...,b_k)$, which means that there exists no partitions $p_1$ and $p_2$ such that $p_1 \otimes p_2 = p'$, then $\kappa_{p'}((a_1,...,a_k,b_1,...,b_k)) = 0$. Thus, the r.h.s. of Equation (\ref{eq:aprouver}) is equal to: 
\begin{align*}
\sum_{p' \leq (p \otimes \mathrm{id}_{k}) \tau} \kappa_{p'}(a_1,...,a_k,b_1,...,b_k)\! &=\! m_{(p \otimes \mathrm{id}_{k}) \tau} (a_1,...,a_k,b_1,...,b_k)\! \\
&= m_{{\sf Ins}_{(1,3,...,2k-1)}^{2k}(p)}(a_1,b_1,...,a_k,b_k)\\
&=\! m_{p}(a_1b_1,...,a_kb_k), 
\end{align*}
where, for the last two equalities, we applied the axioms of $\mathcal{P}$-tracial algebras. 

It remains to prove the last equality which is a consequence of the last equality and the equivalence of the first and second assertions in Theorem $3.5$ of \cite{Gab1}. Indeed, 
\begin{align*}
m_{p}(a_1,...,a_k,b_1,...,b_k) &= \sum_{p' \leq p}\kappa_{p'}(a_1,...,a_k,b_1,...,b_k) \\
&=\sum_{p' \leq p, p_1\prec p', p_2 \in {\sf K}_{p'}(p_1)} \kappa_{p_1}(a_1,...,a_k) \kappa_{p_2}(b_1,...,b_k) \\
&= \sum_{p_1\leq p,\ p_2 \leq ^{t}p_1 \circ p} \kappa_{p_1}(a_1,...,a_k) \kappa_{p_2}(b_1,...,b_k) \\
&=  \sum_{p_1 \leq p} \kappa_{p_1}(a_1,...,a_k) m_{{ }^{t}p_1 \circ p} (b_1,...,b_k).
\end{align*}
This ends the proof. 
\qed \end{proof}

Recall Section $4.2$ of \cite{Gab1} where we defined the two convolutions $\boxplus$ and $\boxtimes$ on the set of linear forms $\left(\bigoplus_{k=0}^{\infty} \mathbb{C}[\mathcal{P}_k / \mathfrak{S}_k]\right)^{*}$. 

\begin{theorem}
\label{th:Rtransfoprod}
Let $a$ and $b$ be two $\mathcal{P}$-free elements of $A$: 
\begin{align*}
\mathcal{R}[a+b] = \mathcal{R}[a] \boxplus \mathcal{R}[b] \text{\ \ \  and \ \ \ }
\mathcal{R}[ab] = \mathcal{R}[a] \boxtimes  \mathcal{R}[b]. 
\end{align*}
\end{theorem}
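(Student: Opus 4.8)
The plan is to reduce both identities to Theorem \ref{th:calculloi} by unwinding the definitions of the convolutions $\boxplus$ and $\boxtimes$ from Section $4.2$ of \cite{Gab1}. Recall that $\mathcal{R}[a]$ is the linear form in $\left(\bigoplus_{k=0}^{\infty} \mathbb{C}[\mathcal{P}_k / \mathfrak{S}_k]\right)^{*}$ sending $p$ to $\kappa_p(a,\ldots,a)$, where $a$ is inserted $k$ times if $p \in \mathcal{P}_k$; the first axiom of $\mathcal{P}$-tracial algebras guarantees this is well-defined on the $\mathfrak{S}_k$-coinvariants. So for each statement I would first evaluate the left-hand side on an arbitrary $p \in \mathcal{P}_k$, then evaluate the right-hand side on the same $p$, and check the two coincide.

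For the additive case: by definition $\mathcal{R}[a+b](p) = \kappa_p(a+b,\ldots,a+b)$. Applying the first formula of Theorem \ref{th:calculloi} with $a_i = a$ and $b_i = b$ for all $i$, this becomes $\sum_{(p_1,p_2,I) \in \mathfrak{F}_{2}(p)} \kappa_{p_1}(a,\ldots,a)\,\kappa_{p_2}(b,\ldots,b)$. I then need to recognize this sum over factorizations $\mathfrak{F}_2(p)$ as precisely the value of $\mathcal{R}[a] \boxplus \mathcal{R}[b]$ at $p$, i.e. I must check that the definition of $\boxplus$ in \cite{Gab1} is exactly the bilinear pairing that on basis elements reads $p \mapsto \sum_{(p_1,p_2,I)\in\mathfrak{F}_2(p)} f(p_1) g(p_2)$. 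This should be essentially the definition (or an immediate consequence of it), so the additive case is a direct translation.

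For the multiplicative case: $\mathcal{R}[ab](p) = \kappa_p(a_1b_1,\ldots,a_kb_k)$ with $a_i=a$, $b_i=b$. But Theorem \ref{th:calculloi} gives the \emph{moments} $m_p(ab,\ldots,ab)$, or directly the cumulant expansion $\kappa_p(a_1b_1,\ldots,a_kb_k) = \sum_{p_1 \prec p,\ p_2 \in {\sf K}_p(p_1)} \kappa_{p_1}(a,\ldots,a)\,\kappa_{p_2}(b,\ldots,b)$ — which is exactly the second formula. So again I would match $\sum_{p_1 \prec p,\ p_2 \in {\sf K}_p(p_1)} f(p_1)g(p_2)$ against the definition of $\boxtimes$ in Section $4.2$ of \cite{Gab1}, which is built precisely from the Kreweras-complement combinatorics; this identification is where one must be careful that the bookkeeping (the condition $p_1 \prec p$, the complement ${\sf K}_p(p_1)$, and passage to $\mathfrak{S}_k$-classes) agrees on the nose with how $\boxtimes$ was set up.

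The main obstacle is purely bookkeeping: verifying that the combinatorial sums produced by Theorem \ref{th:calculloi} are literally the formulas defining $\boxplus$ and $\boxtimes$ in \cite{Gab1}, and in particular that everything descends correctly to $\bigoplus_k \mathbb{C}[\mathcal{P}_k/\mathfrak{S}_k]^*$ — that the maps $p_1, p_2 \mapsto$ their $\mathfrak{S}$-classes interact with $\mathfrak{F}_2$ and ${\sf K}_p$ in a way compatible with the quotient. Since the convolutions were defined in \cite{Gab1} expressly to encode these operations, no genuinely new argument is needed; the proof is therefore short, amounting to ``unfold the definitions and invoke Theorem \ref{th:calculloi}.''
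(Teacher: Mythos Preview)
Your proposal is correct and matches the paper's approach exactly. In fact the paper does not even supply a separate proof for this theorem: it is stated immediately after recalling the convolutions $\boxplus$ and $\boxtimes$ from Section~4.2 of \cite{Gab1}, and is treated as a direct restatement of the three formulas of Theorem~\ref{th:calculloi} in the language of those convolutions --- precisely the unfolding you describe.
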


\subsection{Semi-groups}
Let $(A, (m_p)_{p \in \mathcal{P}})$ be a $\mathcal{P}$-tracial algebra and $(a_t)_{t \geq 0}$ be a family of elements of $A$ such that $a_0$ is the unit of $A$.  

\begin{definition}
The process $(a_t)_{t \geq 0}$ is an additive (or $\boxplus$-) $\mathcal{P}$-L\'{e}vy process if: 
\begin{enumerate}
\item the $\mathcal{P}$-distribution of $a_t$ is continuous in $t$, 
\item for any $s \geq t \geq 0$, the $\mathcal{P}$-distribution of $a_s - a_t $ only depends on $s-t$, 
\item for any $t \geq 0$, $(a_s-a_t)_{s \geq t}$ is $\mathcal{P}$-free from $(a_u)_{t \geq u}$. 
\end{enumerate}
The process $(a_t)_{t \geq 0}$ is a multiplicative (or $\boxtimes$-) $\mathcal{P}$-L\'{e}vy process if for any $t > 0$, $a_t$ is invertible in $A$ and the three conditions hold when one replaces $a_s-a_t$ by $a_sa_t^{-1}$. 
\end{definition}
From now on, $\boxdot$ stands either for $\boxplus$ or $\boxtimes$. Let $(a_t)_{t \geq 0}$ be a $\boxdot$-$\mathcal{P}$-L\'{e}vy process. The following lemma is a consequence of the definitions and Theorem \ref{th:Rtransfoprod}. 

\begin{lemma}
\label{lemme:semi}
The family $(\mathcal{R}[a_t])_{t \geq 0}$ is a continuous semi-group for the $\boxdot$-convolution. 
\end{lemma}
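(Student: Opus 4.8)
The plan is to verify the three defining properties of a continuous one-parameter semi-group for $(\mathcal{R}[a_t])_{t\geq 0}$ under the $\boxdot$-convolution: namely $\mathcal{R}[a_0]$ is the neutral element, $\mathcal{R}[a_{s+t}]=\mathcal{R}[a_s]\boxdot\mathcal{R}[a_t]$, and $t\mapsto\mathcal{R}[a_t]$ is continuous. First I would record that the $\mathcal{P}$-distribution $\mathcal{M}(a)$ determines the $\mathcal{R}$-transform via $\mathcal{R}(a)=\mathcal{R}[\mathcal{M}(a)]$ (the triangular change of variables between moments and cumulants is a bijection on $\left(\bigoplus_{k=0}^{\infty}\mathbb{C}[\mathcal{P}_k/\mathfrak{S}_k]\right)^{*}$), so that continuity of $t\mapsto\mathcal{M}(a_t)$, which is precisely condition (1) in the definition of a $\boxdot$-$\mathcal{P}$-L\'evy process, transfers to continuity of $t\mapsto\mathcal{R}[a_t]$; this handles the continuity claim. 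For the neutral element, since $a_0$ is the unit of $A$, its $\mathcal{P}$-distribution is that of the identity, whose $\mathcal{R}$-transform is the neutral element of $\boxdot$ (this is where one uses that $\mathcal{R}[0]$ is the zero character for $\boxplus$ and the unit character for $\boxtimes$, as set up in Section $4.2$ of \cite{Gab1}).

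The heart of the argument is the semi-group identity. Fix $s,t\geq 0$. In the additive case, write $a_{s+t}=(a_{s+t}-a_{t})+a_{t}$; by property (3), $a_{s+t}-a_{t}$ is $\mathcal{P}$-free from $a_{t}$ (which lies in the algebra generated by $(a_u)_{u\leq t}$), and by property (2) the $\mathcal{P}$-distribution of $a_{s+t}-a_t$ equals that of $a_s$. Applying Theorem \ref{th:Rtransfoprod} to the two $\mathcal{P}$-free elements $a_{s+t}-a_t$ and $a_t$ gives
\begin{align*}
\mathcal{R}[a_{s+t}] = \mathcal{R}[(a_{s+t}-a_t)+a_t] = \mathcal{R}[a_{s+t}-a_t]\boxplus\mathcal{R}[a_t] = \mathcal{R}[a_s]\boxplus\mathcal{R}[a_t].
\end{align*}
In the multiplicative case one argues identically with $a_{s+t}=(a_{s+t}a_t^{-1})\,a_t$, using that $a_{s+t}a_t^{-1}$ is $\mathcal{P}$-free from $a_t$ and has the same $\mathcal{P}$-distribution as $a_s$, together with the $\boxtimes$ part of Theorem \ref{th:Rtransfoprod}; here one should note that the $\mathcal{P}$-distribution, hence the $\mathcal{R}$-transform, of $a_{s+t}a_t^{-1}$ depends only on $(a_u)_{u\geq t}$ via the increments and is computed inside the subalgebra generated by those increments, so the freeness hypothesis and Theorem \ref{th:Rtransfoprod} apply verbatim.

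The main obstacle, though a minor one, is bookkeeping around the multiplicative case: one must be careful that $a_{s+t}a_t^{-1}$ genuinely lies in (the algebra generated by) the increment process $(a_u a_t^{-1})_{u\geq t}$ that is $\mathcal{P}$-free from $(a_u)_{u\leq t}$, and that stationarity in property (2) is being applied to the correct increment. Once this identification is made, the semi-group property is immediate from Theorem \ref{th:Rtransfoprod}, and continuity follows from the moment-cumulant bijection as above; no further computation is needed.
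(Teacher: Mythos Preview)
Your proposal is correct and follows exactly the approach indicated by the paper, which does not give a detailed proof but simply states that the lemma ``is a consequence of the definitions and Theorem \ref{th:Rtransfoprod}.'' Your unpacking---stationarity to identify the distribution of the increment with that of $a_s$, $\mathcal{P}$-freeness of the increment from the past, and then Theorem \ref{th:Rtransfoprod} for the $\boxdot$-additivity of the $\mathcal{R}$-transform, together with continuity inherited from the moment--cumulant bijection---is precisely what the paper has in mind.
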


\begin{definition}
The infinitesimal $\boxdot$-transform of $(a_t)_{t \geq 0}$, denoted by $\mathfrak{r}[(a_t)_{t \geq 0}]$, is the element of $\left(\bigoplus_{k=0}^{\infty} \mathbb{C}[\mathcal{P}_k / \mathfrak{S}_k]\right)^{*}$ defined by $\frac{d}{dt}_{|t = 0} \mathcal{R}[a_t]$. 
\end{definition}

By definition, for any $t \geq 0$, $\mathcal{R}[a_t] = e^{\boxdot t \mathfrak{r}[(a_s)_{s \geq 0}] }$. Recall Theorem $4.1$ of \cite{Gab1}: it implies, with Lemmas \ref{lemme:detercumu} and \ref{lem:deterministic}, the following theorem. 

\begin{theorem}
\label{th:deterministicsemigroup}
If $\mathfrak{r}[(a_t)_{t \geq 0}]$ is a $\boxdot$-infinitesimal character, then the algebra $<(a_t)_{t \geq 0}>$ is deterministic. 
\end{theorem}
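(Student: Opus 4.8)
The plan is to deduce from the hypothesis that every $a_t$, and more generally every increment of the process, is a deterministic element, and then to assemble these facts into the determinism of $<(a_t)_{t\geq 0}>$ by means of the $\mathcal{P}$-freeness of the increments and Lemma \ref{lem:deterministic}.

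First I would check that $\mathcal{R}[a_t]$ is a $\boxdot$-character for every $t\geq 0$: by definition $\mathcal{R}[a_t]=e^{\boxdot t\,\mathfrak{r}[(a_s)_{s\geq 0}]}$, and since $\mathfrak{r}[(a_s)_{s\geq 0}]$ is a $\boxdot$-infinitesimal character by hypothesis, Theorem $4.1$ of \cite{Gab1} applies. By the characterization of deterministic elements through the $\mathcal{R}$-transform — a consequence of Lemma \ref{lemme:detercumu} — this means that $a_t$ is deterministic for each $t$. Next, writing $a_s\ominus a_t$ for $a_s-a_t$ in the additive case and for $a_sa_t^{-1}$ in the multiplicative case, stationarity gives that for $s\geq t$ the increment $a_s\ominus a_t$ has the same $\mathcal{P}$-distribution as $a_{s-t}\ominus a_0$, which belongs to the deterministic algebra generated by $a_{s-t}$ (and the unit $a_0$); hence every increment is deterministic as well.

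The core of the argument is then an induction on $n$ showing that $<a_{t_1},\dots,a_{t_n}>$ is deterministic for all $0\leq t_1\leq\cdots\leq t_n$, the case $n\leq 1$ being the preceding paragraph. For the inductive step I would set $c=a_{t_n}\ominus a_{t_{n-1}}$. By the third axiom of a $\boxdot$-$\mathcal{P}$-Lévy process, $c$ lies in a family that is $\mathcal{P}$-free from $(a_u)_{u\leq t_{n-1}}$, and since $\mathcal{P}$-freeness is inherited by subalgebras, $<c>$ is $\mathcal{P}$-free from $<a_{t_1},\dots,a_{t_{n-1}}>$. The algebra $<c>$ is deterministic by the first step and $<a_{t_1},\dots,a_{t_{n-1}}>$ is deterministic by the induction hypothesis, so Lemma \ref{lem:deterministic} shows that $<c,a_{t_1},\dots,a_{t_{n-1}}>$ is deterministic. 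Since $a_{t_n}=c+a_{t_{n-1}}$ (resp.\ $a_{t_n}=c\,a_{t_{n-1}}$), the algebra $<a_{t_1},\dots,a_{t_n}>$ is a subalgebra of $<c,a_{t_1},\dots,a_{t_{n-1}}>$, and by Remark \ref{rq:facto2} a subalgebra of a deterministic algebra is deterministic; this closes the induction. Finally, any finite family of elements of $<(a_t)_{t\geq 0}>$ involves only finitely many values $t_1,\dots,t_n$ of the time parameter, hence lies in the deterministic algebra $<a_{t_1},\dots,a_{t_n}>$, so the cumulant factorization of Lemma \ref{lemme:detercumu} holds throughout $<(a_t)_{t\geq 0}>$, which is therefore deterministic.

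The substance is not in any single computation but in the structural observation that knowing each $a_t$ to be deterministic is not enough on its own — the independence-of-increments axiom has to be used — and in organizing the induction so that it runs uniformly for $\boxplus$ and $\boxtimes$: by peeling off the \emph{largest} time and using that the \emph{single} increment $a_{t_n}\ominus a_{t_{n-1}}$ is $\mathcal{P}$-free from the whole past algebra $<a_{t_1},\dots,a_{t_{n-1}}>$ one never has to invert past elements inside the generated algebra, which is the only point where the multiplicative case could become delicate. The remaining minor subtlety — the unit shift $a_{s-t}\ominus a_0$ in the additive case — is harmless because $a_0$ is the unit of $A$.
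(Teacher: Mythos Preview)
Your proof is correct and follows exactly the approach the paper intends: the paper's ``proof'' is a one-line pointer to Theorem~4.1 of \cite{Gab1} together with Lemmas~\ref{lemme:detercumu} and~\ref{lem:deterministic}, and you have faithfully unpacked that pointer—first using Theorem~4.1 of \cite{Gab1} to get that each $\mathcal{R}[a_t]=e^{\boxdot t\,\mathfrak{r}}$ is a character (hence each $a_t$ and each increment is deterministic via Lemma~\ref{lemme:detercumu}), then inducting on the number of times via the $\mathcal{P}$-freeness of increments and Lemma~\ref{lem:deterministic}. The only cosmetic point is that the fact ``a subalgebra of a deterministic algebra is deterministic'' is immediate from the definition of deterministic rather than from Remark~\ref{rq:facto2}, but this is harmless.
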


\subsection{Central limit theorem}
Let $a$ be an element of $A$. For any $i \geq 0$, we denote by $\mathcal{R}_i(a)$ the restriction of $\mathcal{R}(a)$ to $\mathbb{C}[\mathcal{P}_i/\mathfrak{S}_i]$, seen as an element of $(\bigoplus_{k=0}^{\infty} \mathbb{C}[\mathcal{P}_k/\mathfrak{S}_k])^{*}$. This means that for any partition $p \in \mathcal{P}_k$ with $k \neq i$, $\mathcal{R}_{i}(a)(p) = 0$ and for any $p \in \mathcal{P}_i$, $\mathcal{R}_{i}(a)(p) = \mathcal{R}(a)(p)$.

\begin{definition}
\label{def:Pgaussian}
The element $a$ is a centered $\mathcal{P}$-Gaussian if $\mathcal{R}(a) = e^{\boxplus \mathcal{R}_2(a)}$. 
\end{definition}

In particular, for any $k \geq 3$, any irreducible $p \in \mathcal{P}_{k}$, $(\mathcal{R}(a))(p) = 0$.

\begin{theorem}
\label{th:tcl}
Let $(a_n)_{n=1}^{\infty}$ be a sequence  in $A$ of $\mathcal{P}$-free elements which have the same $\mathcal{P}$-distribution and such that $\mathcal{R}_1(a_1)=0$. Then $\frac{1}{\sqrt{n}} \sum_{i=1}^{n} a_i$ converges in $\mathcal{P}$-distribution to a centered $\mathcal{P}$-Gaussian element whose $\mathcal{R}$-transform is $e^{\boxplus \mathcal{R}_2(a_1)}$. 
\end{theorem}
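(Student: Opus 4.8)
The plan is to compute the $\mathcal{R}$-transform of $b_n := \frac{1}{\sqrt n}\sum_{i=1}^n a_i$ directly, using the additivity of the $\mathcal{R}$-transform under $\mathcal{P}$-free sums (Theorem \ref{th:Rtransfoprod}) together with the homogeneity of cumulants under scaling, and then to show this converges coefficientwise to $e^{\boxplus \mathcal{R}_2(a_1)}$. First I would record the scaling behaviour: for $\lambda \in \mathbb{C}$ and $p \in \mathcal{P}_k$, multilinearity of $\kappa_p$ gives $\kappa_p(\lambda c_1,\dots,\lambda c_k) = \lambda^k \kappa_p(c_1,\dots,c_k)$, so $\mathcal{R}(\lambda c)$ restricted to $\mathbb{C}[\mathcal{P}_k/\mathfrak{S}_k]$ equals $\lambda^k \mathcal{R}_k(c)$. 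Combined with $\mathcal{R}[a_{i_1}+\dots+a_{i_n}] = \mathcal{R}[a_1]^{\boxplus n}$ (all $a_i$ having the same $\mathcal{P}$-distribution and being $\mathcal{P}$-free), this yields
\begin{align*}
\mathcal{R}[b_n] = \Big(\sum_{k\geq 0} n^{-k/2}\mathcal{R}_k(a_1)\Big)^{\boxplus n}.
\end{align*}

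The next step is to pass this through the $\log_{\boxplus}$/$\exp_{\boxplus}$ machinery of Theorem $4.1$ of \cite{Gab1}. Writing $\mathcal{R}[a_1] = e^{\boxplus \mathfrak{c}}$ for the infinitesimal ``cumulant-generating'' form $\mathfrak{c} = \sum_{k} \mathfrak{c}_k$ (with $\mathfrak{c}_k$ supported on irreducible partitions in $\mathcal{P}_k$), the $n$-fold $\boxplus$-power rescaled is $\mathcal{R}[b_n] = \exp_{\boxplus}\big(n \cdot D_{n^{-1/2}}\mathfrak{c}\big)$ where $D_\lambda$ multiplies the degree-$k$ part by $\lambda^k$. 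Hence $n\, D_{n^{-1/2}}\mathfrak{c} = \sum_{k\geq 1} n^{1-k/2}\,\mathfrak{c}_k$. The hypothesis $\mathcal{R}_1(a_1) = 0$ kills the $k=1$ term (which would blow up like $\sqrt n$); the $k=2$ term has coefficient $n^0 = 1$ and equals $\mathcal{R}_2(a_1)$ (note degree-$2$ irreducible data is exactly $\mathcal{R}_2$, since $\mathcal{R}_2(a_1)$ on reducible partitions of $\mathcal{P}_2$ is determined by $\mathcal{R}_1 = 0$); and for $k \geq 3$ the coefficient $n^{1-k/2} \to 0$. Therefore $n\,D_{n^{-1/2}}\mathfrak{c} \to \mathcal{R}_2(a_1)$ coefficientwise, and since $\exp_{\boxplus}$ is continuous coefficientwise (each coefficient of the exponential depends on only finitely many coefficients of the argument, being a finite sum of iterated $\boxplus$-products), $\mathcal{R}[b_n] \to e^{\boxplus \mathcal{R}_2(a_1)}$. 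By Definition \ref{def:Pgaussian} the limit is a centered $\mathcal{P}$-Gaussian, and convergence of $\mathcal{R}$-transforms is equivalent to convergence in $\mathcal{P}$-distribution (via the triangular relation between cumulants and moments and the fact that $\mathcal{M} = \mathcal{R}^{-1}$ of Definition $4.2$ of \cite{Gab1}), which gives the claim.

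I would present the argument cleanly by first stating and proving the scaling lemma, then invoking Theorem \ref{th:Rtransfoprod} and Theorem $4.1$ of \cite{Gab1} to reduce everything to the elementary limit $n^{1-k/2}\to \delta_{k,2}$ for $k \geq 2$, and finally translating back from $\mathcal{R}$-transforms to $\mathcal{P}$-distribution. One subtlety worth flagging explicitly: $a_1$ need not be deterministic, so $\mathcal{R}[a_1]$ is a genuine linear form rather than a character, but Theorem $4.1$ of \cite{Gab1} gives the $\log_{\boxplus}$ on all of $(\bigoplus_k \mathbb{C}[\mathcal{P}_k/\mathfrak{S}_k])^*$, so the manipulation is legitimate; and $\mathfrak{c}_1 = \mathcal{R}_1(a_1) = 0$ is used precisely to ensure the would-be $\sqrt n$-divergent term is absent.

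The main obstacle I anticipate is purely bookkeeping rather than conceptual: one must be careful that the dilation operator $D_\lambda$ (degree-$k$ part $\mapsto \lambda^k$ times it) intertwines correctly with $\boxplus$ and with $\exp_{\boxplus}/\log_{\boxplus}$ — i.e. that $D_\lambda(x \boxplus y) = D_\lambda x \boxplus D_\lambda y$, which follows from the homogeneity of the structure constants $\kappa(p,p')$ and the graded nature of the convolution, but should be checked against the precise definition of $\boxplus$ in Section $4.2$ of \cite{Gab1}. The other point needing a line of care is confirming that ``$\mathcal{R}_2(a_1)$ supported on $\mathcal{P}_2$'' coincides with ``the degree-$2$ part of $\log_{\boxplus}\mathcal{R}[a_1]$'', for which one uses that the only partition in $\mathcal{P}_2$ that is not irreducible, $\mathrm{id}_1 \otimes \mathrm{id}_1$, contributes via $\mathfrak{c}_1 \boxplus \mathfrak{c}_1$, which vanishes since $\mathfrak{c}_1 = 0$ — so indeed $\log_{\boxplus}\mathcal{R}[a_1]$ and $\mathcal{R}[a_1]$ agree in degree $2$ under our hypothesis, matching $\mathcal{R}_2(a_1)$ exactly.
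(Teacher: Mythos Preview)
Your approach is correct and takes a genuinely different route from the paper. The paper argues directly and combinatorially: it expands $\kappa_p\big(\tfrac{1}{\sqrt n}\sum a_i,\dots\big)$ by multilinearity, uses the vanishing and factorization axioms of $\mathcal{P}$-freeness to reduce to a sum over functions $f\colon p\vee\mathrm{id}_k\to\{1,\dots,n\}$, groups these by their kernel $\pi\in\mathcal{P}(p\vee\mathrm{id}_k)$ to obtain approximately $n^{-k/2}\sum_\pi n^{{\sf nc}(\pi)}\prod_{b\in\pi}\kappa_{p_{|b}}(a_1)$, and then uses $\mathcal{R}_1(a_1)=0$ to kill all $\pi$ with a singleton block containing a size-$1$ cycle; the surviving $\pi$ (those whose blocks each carry a size-$2$ piece of $p$) give exactly the expansion of $e^{\boxplus\mathcal{R}_2(a_1)}(p)$. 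Your route instead packages everything into the $\boxplus$-group structure: additivity of $\mathcal{R}$ under $\mathcal{P}$-free sums (Theorem~\ref{th:Rtransfoprod}), the homogeneity $D_\lambda$, and a formal $\log_{\boxplus}$, reducing the whole statement to $n^{1-k/2}\to\delta_{k,2}$ for $k\geq 1$. This is the characteristic-function analogue of the CLT, whereas the paper gives the moment-method analogue. Your argument is shorter and more structural but leans on black boxes you correctly flag: that $D_\lambda$ is a $\boxplus$-homomorphism (true because factorizations in $\mathfrak{F}_2(p)$ split columns, so degrees add), and that $\log_{\boxplus}$ exists on general linear forms with unit constant term (true because $\boxplus$ is commutative, associative and graded, so the formal series for $\log$ terminates degreewise). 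Your check that $\mathfrak{c}_2=\mathcal{R}_2(a_1)$ via $\mathfrak{c}_1=0$ is exactly right. The paper's proof, by contrast, is self-contained from the definition of $\mathcal{P}$-freeness and makes the limiting mechanism (pairings of cycles) explicit.
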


\begin{proof}
By multi-linearity, for any integers $n, k$ and any $p \in \mathcal{P}_k$: 
\begin{align*}
\left(\mathcal{R}\left[\frac{1}{\sqrt{n}} \sum_{i=1}^{n} a_i\right]\right)(p) = \frac{1}{n^{\frac{k}{2}}} \sum_{(i_{j})_{j=1}^{k} \in \{1,...,n\}^{k}} \kappa_{p}(a_{i_1}, ...,a_{i_k}).
\end{align*}
Using the $\mathcal{P}$-freeness of $(a_n)_{n=1}^{\infty}$, we can only keep the $k$-tuples $(i_{j})_{j=1}^{k}$ such that $p$ is compatible with $(a_{i_1}, ...,a_{i_k})$. Using also the second axiom in the definition of freeness and the fact that every $a_i$ has the same $\mathcal{P}$-distribution: 
\begin{align*}
\left(\mathcal{R}\left[\frac{1}{\sqrt{n}} \sum_{i=1}^{n} a_i\right]\right)(p) =\frac{1}{n^{\frac{k}{2}}} \sum_{f : p\vee \mathrm{id}_k \to \{1,...,n\} }\prod_{i=1}^{n} \kappa_{p_{| \underset{{b \in f^{-1}(i)}}{\cup} b }}(a_1,...,a_1), 
\end{align*}
where $p \vee {\sf id}_{k}$ is the set composed by the cycles of $p$, $p_{| \underset{{b \in f^{-1}(i)}}{\cup} b }$ is the extraction of $p$ to $ \underset{{b \in f^{-1}(i)}}{\cup} b $ and where we took the convention $(\mathcal{R}(a))(\emptyset) = 1$.

For any function $f : p \vee {\sf id}_{k} \to  \{1,...,n\}$, we define ${\sf Ker}(f)$ as the partition of $p \vee {\sf id}_{k} $ equal to $\{ f^{-1}(i) | i\in \{1,...,n\}\}$. Let us denote by $\mathcal{P}(p \vee {\sf id}_{k})$ the set of partitions of $p \vee {\sf id}_{k}$. For any partition $\pi \in\mathcal{P}(p \vee {\sf id}_{k})$, there exist approximatively $n^{{\sf nc}(\pi)}$ functions $f$ such that ${\sf Ker}(f) = \pi$: 

\begin{align}
\label{eq:tcl}
\left(\mathcal{R}\left[\frac{1}{\sqrt{n}} \sum_{i=1}^{n} a_i\right]\right)(p) \simeq \frac{1}{n^{\frac{k}{2}}} \sum_{\pi \in \mathcal{P}(p \vee {\sf id}_{k})} n^{{\sf nc}(\pi)} \prod_{b \in \pi} \kappa_{p_{| b}} (a_1,...,a_1).
\end{align}

Since $\mathcal{R}_1(a_1) = 0$, for any $\pi \in \mathcal{P}(p \vee {\sf id}_{k})$ such that ${\sf nc}(\pi) > \frac{k}{2}$, we have that $\prod_{b \in \pi} \kappa_{p_{| b}} (a_1,...,a_1) =~0$. Indeed, if ${\sf nc}(\pi) > \frac{k}{2}$, one block of $\pi$ must be a singleton which contrains a cycle of size $1$ of $p$ and we recall that $\mathcal{R}_{1}(a_1)=0$. Thus we can impose in the r.h.s. of Equation (\ref{eq:tcl}) the condition ${\sf nc}(\pi) \leq \frac{k}{2}$: the l.h.s. of Equation \ref{eq:tcl} converges when $n$ goes to infinity. 

If ${\sf nc}(\pi)<\frac{k}{2}$, the term associated with $\pi$ disappears when $n$ goes to infinity and it only remains partitions $\pi \in  \mathcal{P}(p \vee {\sf id}_{k})$ such that ${\sf nc}(\pi) = \frac{k}{2}$ and such that none of the block of $\pi$ is a singleton which contains a cycle of lenght $1$ of $p$. In order that the limit of the l.h.s. of (\ref{eq:tcl})  converges to a non zero value, $p$ must have some cycles of size $2$ and an even number of cycles of size $1$: the blocks of $\pi$ contain either two cycles of size $1$ or a cycle of size $2$ of $p$. This implies, by sending $n$ to infinity in $(\ref{eq:tcl})$ that: 
\begin{align*}
\lim_{n \to \infty }\left(\mathcal{R}\left[\frac{1}{\sqrt{n}} \sum_{i=1}^{n} a_i\right]\right)(p) = e^{\boxplus \mathcal{R}_2(a_1)}(p), 
\end{align*}
which is the equality we wanted to prove. 
\qed \end{proof}

This theorem applied to the $\mathcal{P}$-tracial algebras given in ``Random variables I and II'' of Example \ref{ex:Ptracial} implies the usual central limit theorem for random variables with bounded moments. Besides, applied to the forthcoming $\mathfrak{S}$-tracial algebra defined in Example \ref{ex:stracialalgebra}, this theorem implies the usual free central limit theorem. Indeed, this theorem apply in the setting of $\mathfrak{S}$-tracial algebras and we will see that the notion of $\mathfrak{S}$-freeness in deterministic  $\mathfrak{S}$-tracial algebras is equivalent to the notion of freeness of Voiculescu. For any set  of partitions $\mathcal{A}$ in $\mathcal{P}$, $\mathcal{B}$, $\mathcal{S}$, $\mathcal{H}$ or $\mathcal{B}s$, let us define the notion of $\mathcal{A}$-tracial algebra. 

\subsection{$\mathcal{A}$-tracial algebras}
For the sake of simplicity, we explained first the notion of $\mathcal{P}$-tracial algebras without explaining that one can define for any $\mathcal{A} \in \{ \mathcal{P}$, $\mathcal{B}$, $\mathcal{S}$, $\mathcal{H}, \mathcal{B}s\}$ a notion of $\mathcal{A}$-tracial algebra. In this section, we explain how to do so and how to generalize the notions we defined for $\mathcal{P}$-tracial algebras. This leads us to study the notion of $\mathcal{G}(\mathcal{A})$-invariant family and the links between the different notions of $\mathcal{A}$-freeness.

\subsubsection{Basic definitions}
Let  us choose a set of partitions $\mathcal{A}$ in $\{ \mathcal{P}$, $\mathcal{B}$, $\mathcal{S}$, $\mathcal{H}, \mathcal{B}s\}$ where these last sets are described in the up-coming Table $1$ in Section \ref{duali}. From now on, the letter $\mathcal{A}$ will only be used for such set of partitions. The group $\mathcal{G}(\mathcal{A})$ is the group which corresponds to $\mathcal{A}$ via the duality given in Table $1$. The notions of (deterministic or not) $\mathcal{A}$-tracial algebra, convergence in $\mathcal{A}$-distribution, $\mathcal{A}$-cumulant linear forms, $\mathcal{R}_{\mathcal{A}}$-transform, $\mathcal{A}$-freeness and $\boxdot$-$\mathcal{A}$-L\'evy processes are defined with a straightforward generalization of the case $\mathcal{A}=\mathcal{P}$. The Lemmas \ref{lemme:detercumu}, \ref{lem:deterministic}, \ref{lemme:semi} and the Theorems \ref{th:calculloi}, \ref{th:Rtransfoprod}, \ref{th:deterministicsemigroup} and \ref{th:tcl} are still valid in their straightforward modified formulation. In order to define the $\mathcal{A}$-exclusive moments, we will have to be careful about how we define them: this will be explained later. In a $\mathcal{A}$-tracial algebra, the $\mathcal{A}$-cumulant linear forms will be denoted by $(\kappa_{p}^{\mathcal{A}})_{p \in \mathcal{A}}$.

\begin{exemple}
In a $\mathfrak{S}$-tracial algebra $(A,(m_\sigma)_{\sigma \in \mathfrak{S}})$, the $\mathfrak{S}$-cumulant linear forms $\kappa^{\mathfrak{S}}_{p}$ are the unique multi-linear forms such that, for any $k \geq 0$, any $p \in \mathcal{P}_k$, any $(a_1,...,a_k) \in A_k$, $m_{p} (a_1,...,a_k) = \sum_{p' \in \mathfrak{S}_k | p' \leq p} \kappa_{p'}^{\mathfrak{S}}(a_1,...,a_k)$. 
\end{exemple}

Let us give a natural example of $\mathfrak{S}$-tracial algebra.

\begin{exemple}
\label{ex:stracialalgebra}
Any algebra endowed with a tracial form, namely $(A,\phi)$, can be endowed with a structure of $\mathfrak{S}$-tracial algebra: for any $k\geq 0$, any $\sigma \in \mathfrak{S}_k$ and any $(a_1,...,a_k) \in A^{k}$, we consider: 
\begin{align*}
m_{\sigma}(a_1,...,a_k) = \prod_{c \in \sigma\vee \mathrm{id}_k} \phi\left[\prod_{i \in c \cap \{1,...,k\}} a_{i} \right], 
\end{align*}
where the product is taken according to the order of the cycle of $\sigma$ associated with $c$. This is a deterministic $\mathfrak{S}$-tracial algebra called the $\mathfrak{S}$-tracial algebra associated with $(A, \phi)$.
\end{exemple}

Let $(A,(m_{\sigma})_{\sigma \in \mathfrak{S}})$ be a {\em deterministic} $\mathfrak{S}$-tracial algebra. It is actually the $\mathfrak{S}$-tracial algebra associated with $(A, m_{{\mathrm{id}}_{1}})$. Besides, the notion of $\mathfrak{S}$-freeness for $(A,(m_{\sigma})_{\sigma \in \mathfrak{S}})$ is equivalent to the notion of Voiculescu's freeness for $(A, m_{{\mathrm{id}}_{1}})$: this is a consequence of the bijection between $[\mathrm{id}_k, (1,...,k)] \cap \mathfrak{S}_k$ and ${\sf NC}_{k}$, and the fact that the freeness in free probability theory can be stated as a property of vanishing cumulants (Speicher's condition, Theorem 11.16 in \cite{Speic}).

\subsubsection{Natural restriction, extension of structure, $\mathcal{G}(\mathcal{A})$-invariance and $\mathcal{A}$-exclusive moments}
\label{sec:restiexten}
 Let $\mathcal{A}_1$ and $\mathcal{A}_2$ be sets of partitions in $\{ \mathcal{P}, \mathcal{B}, \mathfrak{S}, \mathcal{H}, \mathcal{B}s\}$ such that $\mathcal{A}_2 \subset \mathcal{A}_1$. There exists a natural application which allows us to forget some of the structure. Indeed, any $\mathcal{A}_{1}$-tracial algebra can be seen as a $\mathcal{A}_2$-tracial algebra by forgetting the $m_p$ such that $p \notin \mathcal{A}_2$. Thus, on any $\mathcal{A}_1$-tracial algebra, we can consider the $\mathcal{A}_2$-cumulant linear forms, denoted by $(\kappa_p^{\mathcal{A}_2})_{p \in \mathcal{A}_2}$. This implies that we can talk about $\mathcal{A}_2$-freeness for elements which are in a $\mathcal{A}_1$-tracial algebra. In Section \ref{sec:libertes}, we will study the links between these notions of freeness. 
 
\begin{remark}
Recall the definitions in Section 4.4.1 of \cite{Gab1} and recall the notion of cumulants defined in the same paper. Let $(A,(m_p))_{p \in \mathcal{A}_1}$ be a $\mathcal{A}_1$-tracial algebra. The notions of $\mathcal{A}_1$- and $\mathcal{A}_2$- cumulants are linked by the following equality: 
\begin{align*}
\mathcal{C}^{\kappa}_{\mathcal{A}_2}\bigg(\sum_{p \in \mathcal{A}_1}\kappa^{\mathcal{A}_1}_p (a_1,...,a_k) p^{*}\bigg) = \sum_{p \in \mathcal{A}_2}\kappa^{\mathcal{A}_2}_p (a_1,...,a_k) p^{*}, 
\end{align*}
where $p^{*}$ is the dual form of $p$. Thus, using Proposition 5.2 of \cite{Gab1}, one can compute the $\mathcal{A}_2$-cumulants forms on $(a_1,...,a_k)$ by considering the limits, as $N$ goes to infinity, of the cumulants of $\int_{\mathcal{G}(\mathcal{A}_2)(N)}g^{\otimes k} \rho_N(E_N) (g^{*})^{\otimes k} dg,$ where: $$E_N \!=\! \sum\limits_{p' \in (\mathcal{A}_1)_k}\!\! \kappa^{\mathcal{A}_1}_{p'} (a_1,...,a_k) N^{{\sf nc}(p' \vee \mathrm{id}_k) - {\sf nc}(p')}p'.$$ 
\end{remark}

 One can also define an extension of structure. Let $(A,(m_{p})_{p \in \mathcal{A}_{2}})$ be a $\mathcal{A}_2$-tracial algebra.

\begin{definition}
 The natural extension of $(A,(m_{p})_{p \in \mathcal{A}_2})$ as a $\mathcal{A}_1$-tracial algebra is given by the fact that for any $k \geq 0$, $p \in \mathcal{A}_1$ and any $(a_1,...,a_k) \in A^{k}$, 
\begin{align*}
m_{p} (a_1,...,a_k)= \sum_{ p' \in \mathcal{A}_2 | p' \leq p} \kappa^{\mathcal{A}_2}_{p'}(a_1,...,a_k). 
\end{align*}
\end{definition}

\begin{exemple}
The natural extension of a $\mathfrak{S}$-, respectively $\mathcal{B}$-, tracial algebra to a $\mathcal{P}$-tracial algebra has a simpler form. Indeed, using Lemma 3.2 and Definition 3.6 of \cite{Gab1} and the notations in that article, for any $p \in \mathcal{P}_k$, $m_{p} = \sum_{p' \in \overline{\mathfrak{S}_k}  | p' \dashv p} \kappa^{\mathfrak{S}}_{{\sf Mb}(p')}$, respectively $m_{p} = \sum_{p' \in \overline{\mathcal{B}_k} | p' \dashv p}\kappa^{\mathcal{B}}_{{\sf Mb}(p')}$. 
\end{exemple}

The definition of natural extension is set such that for any $k \geq 0$, any $p \in \mathcal{A}_1$ of length $k$ and any $(a_1,...,a_k)$, $\kappa^{\mathcal{A}_1}_p(a_1,...,a_k)$ computed using the natural extension structure is equal to $\delta_{p \in \mathcal{A}_2} \kappa^{\mathcal{A}_2}_p(a_1,...,a_k)$. Thus, we already see that two families $A_1$ and $A_2$ of $(A,(m_p)_{p \in \mathcal{A}_2})$ are $\mathcal{A}_2$-free if and only if they are $\mathcal{A}_1$-free for the natural extension of $A$ as a $\mathcal{A}_1$-tracial algebra. This natural extension leads us to the following definition of $\mathcal{G}(\mathcal{A})$-invariance.

\begin{definition}
\label{def:GAinva}
Let $(A,(m_p)_{p \in \mathcal{A}_1})$ be a $\mathcal{A}_1$-tracial algebra and $A_1$ be a family of elements of $A$. The family $A_1$ is $\mathcal{G}(\mathcal{A}_2)$-invariant if for any $k \geq 0$, any $p \in \mathcal{A}_1$ of length $k$ and any $(a_1,...,a_k) \in A_{1}^{k}$, $\kappa^{\mathcal{A}_1}_p(a_1,...,a_k) = \delta_{p \in \mathcal{A}_2} \kappa^{\mathcal{A}_2}_p(a_1,...,a_k)$, where $(\kappa^{\mathcal{A}_2}_p)_{p \in \mathcal{A}_2}$ are the $\mathcal{A}_2$-cumulants forms of   $(A,(m_p)_{p \in \mathcal{A}_2})$.
\end{definition}

With this definition, we see that any $\mathcal{A}_2$-tracial algebra is $\mathcal{G}(\mathcal{A}_2)$-invariant in its natural extension as a $\mathcal{A}_1$-tracial algebra. 

Let $(A,(m_p)_{p \in \mathcal{A}_1})$ be a $\mathcal{A}_1$-tracial algebra. Using the axioms of $\mathcal{A}_1$-tracial algebras, one can see that a family $A_1$ of elements of $A$ is $\mathcal{G}(\mathcal{A}_2)$-invariant if and only if $<A_1>$ is $\mathcal{G}(\mathcal{A}_2)$-invariant. Besides, if $\mathcal{A}_1 = \mathcal{P}$ and $\mathcal{A}_2$ is $\mathfrak{S}$ or $\mathcal{B}$, using a slight generalization of the second assertion in Lemma $4.2$ of \cite{Gab1}, we have the characterization of $\mathcal{G}(\mathcal{A}_2)$ invariance. Recall Definition 3.6 in \cite{Gab1}.

\begin{theorem} 
\label{th:invariancecaract}
Let us suppose that $\mathcal{A}_1 = \mathcal{P}$ and $\mathcal{A}_2$ is $\mathfrak{S}$, respectively $\mathcal{B}$, then a family $A_1$ of elements of $A$ is $U$-invariant, resp. $O$-invariant, if  and only if for any $k \geq 0$, any $p \in \mathcal{P}_k$ and any $(a_1,...,a_k) \in A_{1}^{k}$, $m_{p^{c}}(a_1,...,a_k)$ is equal to $\delta_{p \in \overline{\mathfrak{S}_k}} m_{({\sf Mb}(p))^{c}}(a_1,...,a_k)$, respectively $\delta_{p \in \overline{\mathcal{B}_k}} m_{({\sf Mb}(p))^{c}}(a_1,...,a_k)$. 
\end{theorem}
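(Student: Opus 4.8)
The plan is to translate the cumulant condition defining $\mathcal{G}(\mathcal{A}_2)$-invariance (Definition~\ref{def:GAinva}) into a condition on the $\mathcal{P}$-exclusive moments, using the triangular relation of Proposition~\ref{prop:lienmomexclucumu} between $(m_{p^{c}})_{p\in\mathcal{P}}$ and $(\kappa^{\mathcal{P}}_{p})_{p\in\mathcal{P}}$, and then to recognise the outcome as the right-hand side of the announced formula by means of the combinatorics of $\overline{\mathfrak{S}_{k}}$, $\overline{\mathcal{B}_{k}}$ and the map ${\sf Mb}$ introduced in Definition~3.6 of \cite{Gab1}. I treat the case $\mathcal{A}_2=\mathfrak{S}$ (so $\mathcal{G}(\mathcal{A}_2)=U$); the case $\mathcal{A}_2=\mathcal{B}$ (so $\mathcal{G}(\mathcal{A}_2)=O$) is handled verbatim after replacing $\mathfrak{S}_{k}$, $\overline{\mathfrak{S}_{k}}$ by $\mathcal{B}_{k}$, $\overline{\mathcal{B}_{k}}$ and invoking the $\mathcal{B}$-versions of Lemma~3.2 and Definition~3.6 of \cite{Gab1}. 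As in the other statements of this section, the axioms of $\mathcal{P}$-tracial algebras let me argue directly on $k$-tuples of elements of $A_1$ rather than of $<\!A_1\!>$.

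First I would isolate two purely combinatorial facts, both contained in Definition~3.6 and a mild generalisation of the second assertion of Lemma~4.2 of \cite{Gab1}. Fact~(i): for $\sigma\in\mathfrak{S}_{k}$ the only $q\in\mathcal{P}_{k}$ with $q\sqsupset\sigma$ is $\sigma$ itself, so that Proposition~\ref{prop:lienmomexclucumu} gives $m_{\sigma^{c}}=\kappa^{\mathcal{P}}_{\sigma}$ in any $\mathcal{P}$-tracial algebra; in particular $\mathfrak{S}_{k}\subseteq\overline{\mathfrak{S}_{k}}$ and ${\sf Mb}(\sigma)=\sigma$. Fact~(ii): for an arbitrary $p\in\mathcal{P}_{k}$ the set $\{\sigma\in\mathfrak{S}_{k}\mid\sigma\sqsupset p\}$ is empty unless $p\in\overline{\mathfrak{S}_{k}}$, in which case it equals the singleton $\{{\sf Mb}(p)\}$; and, dually, writing $\mu$ for the M\"obius function of the (locally finite) order $\sqsupset$, for every $\sigma\in\mathfrak{S}_{k}$ one has $\sum_{p'\in\overline{\mathfrak{S}_{k}},\,p'\sqsupset p,\,{\sf Mb}(p')=\sigma}\mu(p,p')=\delta_{p=\sigma}$.

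For the direct implication, assume $A_1$ is $U$-invariant, that is, $\kappa^{\mathcal{P}}_{p}(a_1,\dots,a_k)=\delta_{p\in\mathfrak{S}_{k}}\,\kappa^{\mathfrak{S}}_{p}(a_1,\dots,a_k)$ for all $p\in\mathcal{P}_{k}$ and all $(a_1,\dots,a_k)\in A_1^{k}$. Feeding this into Proposition~\ref{prop:lienmomexclucumu} gives $m_{p^{c}}(a_1,\dots,a_k)=\sum_{\sigma\in\mathfrak{S}_{k},\,\sigma\sqsupset p}\kappa^{\mathfrak{S}}_{\sigma}(a_1,\dots,a_k)$, which by Fact~(ii) vanishes when $p\notin\overline{\mathfrak{S}_{k}}$ and equals $\kappa^{\mathfrak{S}}_{{\sf Mb}(p)}(a_1,\dots,a_k)$ when $p\in\overline{\mathfrak{S}_{k}}$; using $U$-invariance and Fact~(i) once more, $\kappa^{\mathfrak{S}}_{{\sf Mb}(p)}=\kappa^{\mathcal{P}}_{{\sf Mb}(p)}=m_{({\sf Mb}(p))^{c}}$ on $A_1$, which is exactly the claimed identity.

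For the converse I would M\"obius-invert Proposition~\ref{prop:lienmomexclucumu}, writing $\kappa^{\mathcal{P}}_{p}=\sum_{p'\sqsupset p}\mu(p,p')\,m_{p'^{c}}$; substituting the hypothesis $m_{p'^{c}}=\delta_{p'\in\overline{\mathfrak{S}_{k}}}\,m_{({\sf Mb}(p'))^{c}}$ and then Fact~(i) ($m_{({\sf Mb}(p'))^{c}}=\kappa^{\mathcal{P}}_{{\sf Mb}(p')}$), and grouping terms according to the value $\sigma={\sf Mb}(p')$, the second part of Fact~(ii) collapses the sum to $\kappa^{\mathcal{P}}_{p}=\delta_{p\in\mathfrak{S}_{k}}\,\kappa^{\mathcal{P}}_{p}$ on $A_1$; hence $\kappa^{\mathcal{P}}_{p}=0$ on $A_1$ for every $p\notin\mathfrak{S}_{k}$. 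Plugging this vanishing back into the defining relation $m_{\sigma}=\sum_{p'\leq\sigma}\kappa^{\mathcal{P}}_{p'}$ for $\sigma\in\mathfrak{S}_{k}$ leaves $m_{\sigma}=\sum_{\tau\in\mathfrak{S}_{k},\,\tau\leq\sigma}\kappa^{\mathcal{P}}_{\tau}$, so by the uniqueness that defines the $\mathfrak{S}$-cumulants one gets $\kappa^{\mathcal{P}}_{\tau}=\kappa^{\mathfrak{S}}_{\tau}$ on $A_1$ for all $\tau\in\mathfrak{S}_{k}$; altogether $\kappa^{\mathcal{P}}_{p}=\delta_{p\in\mathfrak{S}_{k}}\kappa^{\mathfrak{S}}_{p}$ on $A_1$, i.e. $A_1$ is $U$-invariant. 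The only non-formal ingredient, and the expected main obstacle, is Fact~(ii): checking that $\overline{\mathfrak{S}_{k}}$ (resp. $\overline{\mathcal{B}_{k}}$) together with ${\sf Mb}$ is precisely the data for which the order $\sqsupset$ and its M\"obius function collapse the above sums in the prescribed way, which is where one must push through the generalisation of the second assertion of Lemma~4.2 of \cite{Gab1}.
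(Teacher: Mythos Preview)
Your proposal is correct and follows essentially the same approach as the paper. The paper does not give an explicit proof of this theorem: it simply states that the result follows from ``a slight generalization of the second assertion in Lemma~4.2 of \cite{Gab1}'', which is precisely the combinatorial content you isolate as Fact~(ii) (together with Lemma~3.2 and Definition~3.6 of \cite{Gab1} for the description of $\overline{\mathfrak{S}_k}$, $\overline{\mathcal{B}_k}$ and ${\sf Mb}$). Your argument spells out what the paper leaves implicit: the direct implication via Proposition~\ref{prop:lienmomexclucumu}, and the converse via M\"obius inversion on the order $\sqsupset$. Your identification of Fact~(ii) as the crux is exactly right, and your verification that the second half of Fact~(ii) follows from the first by standard M\"obius inversion (taking $g(p')=\delta_{p'=\sigma}$, so that $f(p)=\delta_{\sigma\sqsupset p}$ and then inverting) is the clean way to close the loop.
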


Using the notion of natural extension, we can now define the notion of $\mathcal{A}_1$-exclusive moments. Recall that $(A,(m_p)_{p \in \mathcal{A}_1})$ is a $\mathcal{A}_1$-tracial algebra. 

\begin{definition}
The $\mathcal{A}_1$-exclusive moments $(m^{\mathcal{A}_1}_{p^{c}})_{p \in \mathcal{P}}$ are the $\mathcal{P}$-exclusive moments of the natural extension of $(A,(m_p)_{p \in \mathcal{A}_1})$ as a $\mathcal{P}$-tracial algebra. 
\end{definition}
In particular, using Proposition \ref{prop:lienmomexclucumu}, the $\mathcal{A}_1$-exclusive moments are characterized by the fact that for any $k \geq 0$, any $p \in \mathcal{P}_k$ and any $(a_1,...,a_k) \in A^{k}$, 
\begin{align*}
m^{\mathcal{A}_1}_{p^{c}}(a_1,...,a_k) = \sum_{p' \in (\mathcal{A}_1)_k | p' \sqsupset p } \kappa_{p'}^{\mathcal{A}_1} (a_1,...,a_k).
\end{align*}
With this definition, one can see that for any $k \geq 0$, any $p \in \mathcal{A}_1$ of length $k$ and any $(a_1,...,a_k) \in A^k$, 
\begin{align*}
m_{p} (a_1,...,a_k) = \sum_{p' \in \mathcal{P}_k | p' \dashv p } m_{p^{c}}^{\mathcal{A}_1}(a_1,...,a_k).
\end{align*}

With this definition of $\mathcal{A}$-exclusive moments, Theorem \ref{theorem:autreformulation} now becomes valid in the general setting of $\mathcal{A}$-tracial algebras.

\subsubsection{The different notions of $\mathcal{A}$-freeness}
 \label{sec:libertes}
Recall that $\mathcal{A}_1$ and $\mathcal{A}_2$ are two sets of partitions in $\{ \mathcal{P}, \mathcal{B}, \mathfrak{S}, \mathcal{H}, \mathcal{B}s\}$ such that $\mathcal{A}_2 \subset \mathcal{A}_1$. Let $(A,(m_p)_{p \in \mathcal{A}_1})$ be a $\mathcal{A}_1$-tracial algebra. 

\begin{theorem}
\label{th:invarianceetliberte}
Let $A_1$ and $A_2$ be families of $A$ such that $A_1\cup A_2$ is $\mathcal{G}(\mathcal{A}_2)$-invariant. The two families $A_1$ and $A_2$ are $\mathcal{A}_2$-free if and only if they are $\mathcal{A}_1$-free. 

Besides, if only $A_1$ is $\mathcal{G}(\mathcal{A}_2)$-invariant and if $A_1$ and $A_2$ are $\mathcal{A}_1$-free then they are $\mathcal{A}_2$-free. 
\end{theorem}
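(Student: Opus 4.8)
The plan is to reduce everything to statements about cumulants, since both $\mathcal{A}_i$-freeness and $\mathcal{G}(\mathcal{A}_2)$-invariance are defined purely in terms of the $\mathcal{A}_i$-cumulant linear forms, and the relation between the $\mathcal{A}_1$- and $\mathcal{A}_2$-cumulants on a $\mathcal{G}(\mathcal{A}_2)$-invariant family is explicit. Concretely, recall from Definition \ref{def:GAinva} that $A_1\cup A_2$ being $\mathcal{G}(\mathcal{A}_2)$-invariant means $\kappa^{\mathcal{A}_1}_p(a_1,\dots,a_k)=\delta_{p\in\mathcal{A}_2}\,\kappa^{\mathcal{A}_2}_p(a_1,\dots,a_k)$ for every $p\in\mathcal{A}_1$ of length $k$ and every $(a_1,\dots,a_k)\in(A_1\cup A_2)^k$. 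So for tuples drawn from $A_1\cup A_2$ the two families of cumulants literally coincide on $\mathcal{A}_2$ and the $\mathcal{A}_1$-cumulants indexed by $p\notin\mathcal{A}_2$ all vanish.

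First I would prove the equivalence under the two-sided hypothesis. The compatibility condition for $\mathcal{A}_i$-freeness concerns $\kappa^{\mathcal{A}_i}_p(a_1,\dots,a_k)$ for $p\in\mathcal{A}_i$ not compatible with $(a_1,\dots,a_k)\in(A_1\cup A_2)^k$; the factorization property concerns $\kappa^{\mathcal{A}_i}_{p_1\otimes p_2}$ for $p_1\in(\mathcal{A}_i)_{k_1}$, $p_2\in(\mathcal{A}_i)_{k_2}$, $(a_i)\in A_1^{k_1}$, $(a_i)\in A_2^{k_2}$. In both cases the arguments lie in $A_1\cup A_2$, so by $\mathcal{G}(\mathcal{A}_2)$-invariance every $\mathcal{A}_1$-cumulant appearing equals the corresponding $\mathcal{A}_2$-cumulant when its index is in $\mathcal{A}_2$, and is zero otherwise. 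One must also check that $p_1\otimes p_2\in\mathcal{A}_2$ iff $p_1,p_2\in\mathcal{A}_2$ (true for each of the sets in $\{\mathcal P,\mathcal B,\mathfrak S,\mathcal H,\mathcal Bs\}$, being closed under tensor product and extraction), and that an index not compatible with the tuple automatically makes the relevant cumulant vanish on the $\mathcal{A}_1$ side already (indices with $p\notin\mathcal{A}_2$ give zero, and compatible-but-in-$\mathcal{A}_2$ indices are governed by $\mathcal{A}_2$-freeness). Tracing these identifications through the definitions shows the $\mathcal{A}_1$-freeness conditions for $A_1,A_2$ are satisfied exactly when the $\mathcal{A}_2$-freeness conditions are; this is the substance of the first assertion.

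For the second assertion I would argue similarly but more delicately, since only $A_1$, not $A_1\cup A_2$, is $\mathcal{G}(\mathcal{A}_2)$-invariant, so we no longer get an identification of cumulants for arbitrary mixed tuples. The strategy is to start from $\mathcal{A}_1$-freeness of $A_1$ and $A_2$ and deduce the compatibility and factorization conditions for $\mathcal{A}_2$-cumulants. Here I would use the natural-extension description of the $\mathcal{A}_2$-cumulants and the fact that $\kappa^{\mathcal{A}_2}_p(a_1,\dots,a_k)$, for $p\in\mathcal{A}_2$, is computed from the $\mathcal{A}_1$-moments (hence from the $\mathcal{A}_1$-cumulants) by a Möbius-type inversion over the interval $[\mathrm{id}_k,p]$ inside $\mathcal{A}_2$; since $\mathcal{A}_2\subset\mathcal{A}_1$, the $\mathcal{A}_1$-freeness of $A_1,A_2$ forces all $\mathcal{A}_1$-cumulants of non-compatible mixed tuples to vanish and makes mixed cumulants factor through the tensor decomposition, and these properties pass to the $\mathcal{A}_2$-cumulants under the inversion (using again that $\mathcal{A}_2$ is closed under tensor product and that the interval structure respects the compatibility partition of the tuple). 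The $\mathcal{G}(\mathcal{A}_2)$-invariance of $A_1$ is what guarantees that the $\mathcal{A}_1$-cumulants of $A_1$-only tuples already live on $\mathcal{A}_2$, so no extra $\mathcal{A}_1\setminus\mathcal{A}_2$ terms leak into the computation.

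The main obstacle I expect is precisely this last point: in the one-directional statement one must carefully control mixed tuples $(a_1,\dots,a_k)\in(A_1\cup A_2)^k$ where the compatibility partition genuinely separates $A_1$-indices from $A_2$-indices, and show that the $\mathcal{A}_1$-cumulant supported off $\mathcal{A}_2$ cannot survive even though $A_2$ is not assumed $\mathcal{G}(\mathcal{A}_2)$-invariant. The resolution is that $\mathcal{A}_1$-freeness already forces such a cumulant to factor as a product of an $A_1$-cumulant and an $A_2$-cumulant (each indexed by an extraction of $p$), and the $A_1$-factor, by $\mathcal{G}(\mathcal{A}_2)$-invariance of $A_1$, vanishes unless its index is in $\mathcal{A}_2$; since $p\notin\mathcal{A}_2$ and $\mathcal{A}_2$ is closed under the tensor decomposition, at least one extraction is outside $\mathcal{A}_2$, and if it is the $A_1$-part we are done, while if it is only the $A_2$-part one invokes that $p\otimes$-indices in $\mathcal{A}_1\setminus\mathcal{A}_2$ with the $A_1$-part in $\mathcal{A}_2$ still give zero because $\mathcal{A}_2$ is tensor-closed. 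Once this bookkeeping is in place the rest is the routine Möbius-inversion transfer sketched above.
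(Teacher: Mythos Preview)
For the first assertion your reduction to cumulants is correct and is exactly the paper's argument, just unpacked: the paper phrases it via natural extensions, observing that $\mathcal{G}(\mathcal{A}_2)$-invariance of $A_1\cup A_2$ makes $(\langle A_1,A_2\rangle,(m_p)_{p\in\mathcal{A}_1})$ the natural extension of $(\langle A_1,A_2\rangle,(m_p)_{p\in\mathcal{A}_2})$, and the discussion preceding Definition~\ref{def:GAinva} already records that $\mathcal{A}_2$-freeness and $\mathcal{A}_1$-freeness coincide under natural extension.

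For the second assertion your M\"obius-inversion route has a genuine gap. You claim that the contributions $\kappa^{\mathcal{A}_1}_{p_1'\otimes p_2'}$ with $p_1'\in\mathcal{A}_2$ but $p_2'\in\mathcal{A}_1\setminus\mathcal{A}_2$ ``still give zero because $\mathcal{A}_2$ is tensor-closed''. This is false. Take $\mathcal{A}_1=\mathcal{P}$, $\mathcal{A}_2=\mathfrak{S}$, $p=\mathrm{id}_1\otimes(1,2)\in\mathfrak{S}_3$, and arguments $(a,b_1,b_2)$ with $a\in A_1$, $b_1,b_2\in A_2$. Then $p'=\mathrm{id}_1\otimes{\sf 0}_2\leq p$ is compatible with the tuple, and $\mathcal{P}$-freeness gives
\[
\kappa^{\mathcal{P}}_{p'}(a,b_1,b_2)=\kappa^{\mathcal{P}}_{\mathrm{id}_1}(a)\,\kappa^{\mathcal{P}}_{{\sf 0}_2}(b_1,b_2),
\]
which need not vanish since $A_2$ is not assumed $U$-invariant. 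Tensor-closedness of $\mathcal{A}_2$ says nothing about vanishing of $\mathcal{A}_1$-cumulants. Such terms do not disappear individually; they must be \emph{regrouped} to reconstitute $\kappa^{\mathfrak{S}}_{(1,2)}(b_1,b_2)$, and your sketch gives no mechanism for this regrouping in general (it would require controlling the sets $\{p_2'\in\mathcal{A}_1:p_1'\otimes p_2'\leq p\}$ for each fixed $p_1'\in\mathcal{A}_2$ and arbitrary $p\in\mathcal{A}_2$).

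The paper sidesteps this combinatorics entirely by a representation-theoretic argument: it invokes Proposition~5.2 of \cite{Gab1}, realizing $\kappa^{\mathcal{A}_2}_p(a_1,\dots,a_k,b_1,\dots,b_{k'})$ as the limit of the $p$-cumulant of $\int_{\mathcal{G}(\mathcal{A}_2)(N)} g^{\otimes k+k'}\rho_N(E_N)(g^*)^{\otimes k+k'}\,dg$, where $E_N$ is built from the $\mathcal{A}_1$-cumulants. The $\mathcal{A}_1$-freeness together with $\mathcal{G}(\mathcal{A}_2)$-invariance of $A_1$ forces $E_N$ to factor as a tensor product, and the duality between $\mathcal{A}_2$ and $\mathcal{G}(\mathcal{A}_2)$ makes the integral factor as well; applying Proposition~5.2 again to the $A_2$-factor produces $\kappa^{\mathcal{A}_2}_{p_2}(b_1,\dots,b_{k'})$ directly. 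The integration over the compact group is precisely what performs, in one stroke, the regrouping your combinatorial approach is missing.
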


\begin{proof}
Let us prove the first assertion. If $A_1\cup A_2$ is $\mathcal{G}(\mathcal{A}_2)$-invariant, then $<A_1,A_2>$ is  $\mathcal{G}(\mathcal{A}_2)$-invariant. This implies that $(<A_1,A_2>,(m_{p})_{p \in \mathcal{A}_1})$ is the natural extension of $(<A_1,A_2>,(m_{p})_{p \in \mathcal{A}_2})$ as a $\mathcal{A}_1$-tracial algebra. Using our discussion about natural extension, $A_1$ and $A_2$ are $\mathcal{A}_2$-free in $(<A_1,A_2>,(m_{p})_{p \in \mathcal{A}_2})$ if and only if they are $\mathcal{A}_1$-free in $(<A_1,A_2>,(m_{p})_{p \in \mathcal{A}_1})$. This proves the first assertion. 

Let us prove the second assertion: let us suppose only that $A_1$ is $\mathcal{G}(\mathcal{A}_2)$-invariant and that $A_1$ and $A_2$ are $\mathcal{A}_1$-free. Let $k$, $k'$ be two integers, let $(a_1,...,a_k) \in A_1^{k}$ and $(b_1,...,b_{k'}) \in A_2^{k'}$ and let $p \in (\mathcal{A}_2)_{k+k'}$. The easiest way to compute the cumulant $\kappa_{p}^{\mathcal{A}_2}(a_1,...,a_k,b_1,...,b_{k'})$ is to use Proposition $5.2$ of \cite{Gab1} which asserts that it is the limit of the $p$-cumulant of $E^{\mathcal{G}(\mathcal{A}_2)}_N = \int_{\mathcal{G}(\mathcal{A}_2)(N)} g^{\otimes k+k'} \rho_{N}(E_N) (g^{*})^{\otimes k+k'} dg$, where   
\begin{align*}
E_N = \sum_{p' \in (\mathcal{A}_1)_{k+k'}} \kappa_{p'}^{\mathcal{A}_1}(a_1,...,a_k,b_1,...,b_{k'}) N^{{\sf nc}(p' \vee \mathrm{id}_{k+k'}) - {\sf nc}(p')} p'. 
\end{align*}
Using the $\mathcal{G}(\mathcal{A}_2)$-invariance of $A_1$ and the $\mathcal{A}_1$-freeness of $A_1$ and $A_2$, for any integer $N$, $E_N$ is equal to: 
\begin{align*}
 \sum_{p_1 \in (\mathcal{A}_2)_k, p_2 \in (\mathcal{A}_1)_{k'} }\kappa^{\mathcal{A}_2}_{p_1}(a_1,...,a_k) \kappa_{p_2}^{\mathcal{A}_1}&(b_1,...,b_{k'}) \\ &N^{{\sf nc}(p_1\vee \mathrm{id}_k) - {\sf nc}(p_1) + {\sf nc}(p_2\vee \mathrm{id}_{k'})- {\sf nc}(p_2)}p_1\!\otimes\! p_2, 
\end{align*}
and thus, using the duality between $\mathcal{A}_2$ and $\mathcal{G}(\mathcal{A}_2)$, $E_N^{\mathcal{G}(\mathcal{A}_2)}$ is equal to: 
\begin{align*}
 \sum_{p_1 \in (\mathcal{A}_2)_k} \kappa_{p_1}^{\mathcal{A}_2} (a_1,...,a_k)  N^{{\sf nc}(p_1\vee \mathrm{id}_k) - {\sf nc}(p_1)} p_1 \otimes \int_{\mathcal{G}(\mathcal{A}_2)(N)} g^{\otimes k'} \rho_N(F_N)(g^{*})^{\otimes k'} dg 
\end{align*}
with $F_N = \sum_{p_2 \in (\mathcal{A}_1)_{k'}} \kappa_{p_2}^{\mathcal{A}_1} (b_1,...,b_{k'}) N^{{\sf nc}(p_2\vee \mathrm{id}_{k'}) - {\sf nc}(p_2)} p_2$. Again, using Proposition $5.2$ of \cite{Gab1}, the limit of the $p'$-cumulant of $\int_{\mathcal{G}(\mathcal{A}_2)(N)} g^{\otimes k'} \rho_N(F_N)(g^{*})^{\otimes k'} dg $ is equal to $\kappa_{p'}^{\mathcal{A}_2}(b_1,...,b_{k'})$. Thus, 
\begin{align*}
\kappa_{p}^{\mathcal{A}_2}(a_1,...,a_k,b_1,...,b_{k'}) \!=\! \delta_{\exists p_1\in (\mathcal{A}_2)_k, p_2 \in (\mathcal{A}_2)_{k'} | p=p_1\otimes p_2} \kappa_{p_1}^{\mathcal{A}_2}(a_1,...,a_k) \kappa_{p_2}^{\mathcal{A}_2}(b_1,...,b_{k'}).
\end{align*}
This proves that $A_1$ and $A_2$ are $\mathcal{A}_2$-free. 
 \end{proof}
As noticed by C. Male, this last following theorem is a generalization of the ``rigidity of freeness" theorem of \cite{Camille}.

For sake of simplicity, from now on let us suppose that $\mathcal{A}_1 = \mathcal{P}$, generalizations to other cases are straightfoward. We mostly focused on $\mathcal{A}_2 \in \{ \mathcal{P}, \mathcal{B}, \mathfrak{S}\}$ and a little on $\mathcal{A}_2 = \mathcal{B}s$. Recall that ${\sf 0}_2$ is the partition $\{\{1,2,1',2'\}\}$.

\begin{theorem}
\label{th:critere}
Let $A_1$ and $A_2$ be two families of $A$. We have the following equivalence or implications: 
\begin{enumerate}
\item if $A_1\cup A_2$ is deterministic, the $\mathfrak{S}$-freeness of $A_1$ and $A_2$ is equivalent to the freeness of $A_1$ and $A_2$ in Voiculescu sense for the linear form $m_{\mathrm{id}_1}$, 
\item if $A_1$ and $A_2$ are $\mathcal{B}$- or $\mathcal{B}s$-free then they are $\mathfrak{S}$-free, 
\end{enumerate}
and the following negative assertions hold: 
\begin{enumerate}
\item if $A_1$ and $A_2$ are $\mathcal{P}$-free and if there exists $(a_1,a_2) \in A_1^{2}$ and $(b_1,b_2) \in A_2^{2}$ such that: 
\begin{align*}
\kappa^{\mathcal{P}}_{{\sf 0}_2}(a_1,a_2) \kappa^{\mathcal{P}}_{{\sf 0}_{2}}(b_1,b_2) \neq 0, 
\end{align*}
then $A_1$ and $A_2$ are not $\mathfrak{S}$-free. In particular, $\mathcal{P}$-freeness does not imply the $\mathfrak{S}$-freeness.
\item the $\mathcal{P}$-freeness of $A_1$ and $A_2$ does not generally imply the $\mathcal{B}$-freeness of $A_1$ and~$A_2$, 
\item the $\mathfrak{S}$-freeness of $A_1$ and $A_2$ does not generally imply the $\mathcal{B}$-freeness of $A_1$ and~$A_2$.
\end{enumerate}
\end{theorem}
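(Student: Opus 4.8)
The plan is to treat the positive and negative assertions separately, using in each case the characterization of the relevant cumulants through the order $\leq$ (resp. $\dashv$, $\sqsupset$) together with the identification of $\mathfrak{S}$-freeness with Voiculescu freeness already recorded just before this theorem. For the first positive assertion, I would simply invoke the paragraph following Example \ref{ex:stracialalgebra}: a deterministic $\mathfrak{S}$-tracial algebra is the one associated with $(A, m_{\mathrm{id}_1})$, and the bijection between $[\mathrm{id}_k, (1,\dots,k)]\cap\mathfrak{S}_k$ and ${\sf NC}_k$ turns the vanishing-mixed-$\mathfrak{S}$-cumulant condition into Speicher's vanishing-mixed-free-cumulant condition (Theorem 11.16 in \cite{Speic}); the factorization property is automatic since the algebra is deterministic, by Lemma \ref{lemme:detercumu}. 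For the second positive assertion, the key point is that $\mathfrak{S}_k \subset \mathcal{B}_k$ (and $\mathfrak{S}_k\subset\mathcal{B}s_k$), so $\mathfrak{S}$ is a sub-case of $\mathcal{B}$ in the sense of Section \ref{sec:restiexten}; the $\mathfrak{S}$-cumulants are recovered from the $\mathcal{B}$-cumulants by the restriction-of-structure formula $\kappa^{\mathfrak{S}}_\sigma = $ (the $\mathfrak{S}$-part of) the $\mathcal{B}$-cumulant expansion. Then if a mixed $\mathcal{B}$-cumulant vanishes by compatibility, so does the corresponding mixed $\mathfrak{S}$-cumulant, and the factorization property restricts from $\mathcal{B}$ to $\mathfrak{S}$ because $\sigma_1\otimes\sigma_2$ for permutations $\sigma_i$ is again a permutation; hence $\mathcal{B}$-freeness (or $\mathcal{B}s$-freeness) implies $\mathfrak{S}$-freeness.

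For the first negative assertion, I would compute a mixed $\mathfrak{S}$-cumulant of four elements $a_1,b_1,a_2,b_2$ directly. Take $k=4$ and the permutation $\sigma = (1\,2)(3\,4) \in \mathfrak{S}_4$ acting on the tuple $(a_1,b_1,a_2,b_2)$ (so $a_1$ in slot $1$, $b_1$ in slot $2$, $a_2$ in slot $3$, $b_2$ in slot $4$). The point is that $\sigma$ is not compatible with $(a_1,b_1,a_2,b_2)$ for the pair $(A_1,A_2)$, since slots $1$ and $2$ lie in a common block, so $\mathfrak{S}$-freeness would force $\kappa^{\mathfrak{S}}_\sigma(a_1,b_1,a_2,b_2) = 0$. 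On the other hand, using the $\mathcal{P}$-freeness of $A_1$ and $A_2$ one expands $m_\sigma$ (equivalently, uses the moment-cumulant relation for $\mathfrak{S}$ together with the extension-of-structure description of $m_\sigma$ as a sum of $\mathcal{P}$-cumulants over $p' \dashv \sigma$), and one isolates the contribution $\kappa^{\mathcal{P}}_{{\sf 0}_2}(a_1,b_1)\,\kappa^{\mathcal{P}}_{{\sf 0}_2}(a_2,b_2)$ type terms --- more precisely one shows that the $\mathfrak{S}$-cumulant indexed by $\sigma$ equals, up to non-vanishing combinatorial constants, a product involving $\kappa^{\mathcal{P}}_{{\sf 0}_2}$ evaluated on $A_1$-pairs and on $A_2$-pairs, which by hypothesis is nonzero. (Here one exploits that ${\sf 0}_2\notin\mathfrak{S}_2$, so this contribution is genuinely invisible to the $\mathfrak{S}$-structure and cannot be cancelled.) This contradiction gives the claim; the remaining two negative assertions, that $\mathcal{P}$-freeness does not imply $\mathcal{B}$-freeness and $\mathfrak{S}$-freeness does not imply $\mathcal{B}$-freeness, are obtained by the same mechanism with the Weyl contraction $[1,2]$ (resp. an appropriate non-$\mathcal{B}$ partition) in place of ${\sf 0}_2$: exhibit a $\mathcal{P}$-free (resp. $\mathfrak{S}$-free) pair whose relevant $\mathcal{P}$-cumulant (resp. $\mathfrak{S}$-cumulant structure) produces a nonzero $\mathcal{B}$-cumulant that $\mathcal{B}$-freeness would have to kill.

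The main obstacle I anticipate is the bookkeeping in the first negative assertion: one must write the mixed $\mathfrak{S}$-cumulant $\kappa^{\mathfrak{S}}_{(1\,2)(3\,4)}(a_1,b_1,a_2,b_2)$ explicitly in terms of the $\mathcal{P}$-cumulants of $A_1$ and of $A_2$, using the $\mathcal{P}$-freeness (compatibility + factorization of $\kappa^{\mathcal{P}}_{p_1\otimes p_2}$) to kill all genuinely mixed $\mathcal{P}$-cumulants, and then verify that the surviving sum --- indexed by factorizations $p' = p_1 \otimes p_2$ with $p' \dashv (1\,2)(3\,4)$ and $p_1$ supported on the $A_1$-slots, $p_2$ on the $A_2$-slots --- contains the term $\kappa^{\mathcal{P}}_{{\sf 0}_2}(a_1,b_1)\kappa^{\mathcal{P}}_{{\sf 0}_2}(a_2,b_2)$ with a nonzero coefficient and that nothing cancels it. One should organize this by grading the $\mathcal{P}$-cumulants by $\#$ of blocks, checking that the ${\sf 0}_2 \otimes {\sf 0}_2$ contribution sits at an extremal value of $d(\mathrm{id}_4,\cdot)$ and is therefore uniquely determined; this makes the non-cancellation transparent. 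The positive assertions and the remaining negative assertions are then routine once the machinery of Sections \ref{sec:restiexten}--\ref{sec:libertes} and the $\mathfrak{S}$/Voiculescu dictionary are in hand.
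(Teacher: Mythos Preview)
Your plan for the two positive assertions is fine and matches the paper. The gap is in the first negative assertion.

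With your choice $\sigma=(1,2)(3,4)$ on the tuple $(a_1,b_1,a_2,b_2)$, the ``surviving'' term you isolate, $\kappa^{\mathcal{P}}_{{\sf 0}_2}(a_1,b_1)\,\kappa^{\mathcal{P}}_{{\sf 0}_2}(a_2,b_2)$, is itself zero: $a_1\in A_1$ and $b_1\in A_2$, so ${\sf 0}_2$ is not compatible with $(a_1,b_1)$ and the $\mathcal{P}$-freeness compatibility condition kills it. In fact, for this $\sigma$ the only $p'\leq\sigma$ in $\mathcal{P}_4$ compatible with $(a_1,b_1,a_2,b_2)$ is $\mathrm{id}_4$: since $[\mathrm{id}_2,(1,2)]=\{\mathrm{id}_2,{\sf 0}_2,(1,2)\}$, every non-identity factor on slots $\{1,2\}$ or $\{3,4\}$ mixes an $a$ with a $b$. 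Hence all four moments $m_{\sigma'}$ for $\sigma'\in[\mathrm{id}_4,(1,2)(3,4)]\cap\mathfrak{S}_4$ collapse to the common value $\kappa^{\mathcal{P}}_{\mathrm{id}_4}(a_1,b_1,a_2,b_2)$, and the M\"obius combination giving $\kappa^{\mathfrak{S}}_{(1,2)(3,4)}$ vanishes identically. Your permutation simply does not see the obstruction.

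The paper instead takes the $4$-cycle and computes the \emph{moment} $m_{(1,2,3,4)}(a_1,b_1,a_2,b_2)=m_{(1,2)}(a_1b_1,a_2b_2)$ via the product formula of Theorem~\ref{th:calculloi}, summing over $p_1\in[\mathrm{id}_2,(1,2)]=\{\mathrm{id}_2,{\sf 0}_2,(1,2)\}$. Here the arguments of $\kappa^{\mathcal{P}}_{p_1}$ are $(a_1,a_2)\in A_1^2$, so the ${\sf 0}_2$ contribution $\kappa^{\mathcal{P}}_{{\sf 0}_2}(a_1,a_2)\,m_{{\sf 0}_2}(b_1,b_2)$ is genuinely present. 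Redoing the same computation under the hypothesis of $\mathfrak{S}$-freeness (only $p_1\in\{\mathrm{id}_2,(1,2)\}$) gives the same expression minus exactly $\kappa^{\mathcal{P}}_{{\sf 0}_2}(a_1,a_2)\,\kappa^{\mathcal{P}}_{{\sf 0}_2}(b_1,b_2)$, which is nonzero by hypothesis. The product formula is what groups the $a$'s together and the $b$'s together; your direct cumulant computation on $(a_1,b_1,a_2,b_2)$ never achieves this.

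For the second and third negative assertions the paper does \emph{not} build explicit counterexamples with $[1,2]$ as you sketch. Negative assertion~2 is a one-line logical consequence: if $\mathcal{P}$-freeness implied $\mathcal{B}$-freeness, then by positive assertion~2 it would imply $\mathfrak{S}$-freeness, contradicting negative assertion~1. Negative assertion~3 uses the forthcoming transpose result (Theorem~\ref{th:transpofree}): a $U$-invariant deterministic element $a$ is $\mathfrak{S}$-free from ${}^t a$; if $\mathfrak{S}$-freeness implied $\mathcal{B}$-freeness, then $a$ and ${}^t a$ would be $\mathcal{B}$-free, which (since $\mathcal{B}$-moments identify $a$ with ${}^t a$) would force $a$ to be $\mathfrak{S}$-free from itself, and this fails in general.
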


A summary of these assertions can be found in Diagram 1. Let us remark that the first negative assertion is inspired by a result of C. Male, namely the first point of Corollary $3.5$ of \cite{Camille}. The proof that we give here differs from the one of C. Male as it is based on the cumulants we introduced in this article.

\begin{proof}[Proof of Theorem \ref{th:critere}]
The first assertion was already explained in the previous sections. Let us prove the second assertion. At first, let us remark that for any integer $k$, any permutation $\sigma \in \mathfrak{S}_k$, there exists no element $p \in \mathcal{B}s_k \cup \mathcal{B}_k$ such that $p \leq \sigma$. In Lemma $3.1$ of \cite{Gab1}, we proved this assertion when one replaces $\mathcal{B}s_k \cup \mathcal{B}_k$ by $\mathcal{B}_k$: this proof is easily generalized to $\mathcal{B}s_k \cup \mathcal{B}_k$. A consequence is that for any $k \geq 0$, $\sigma \in \mathfrak{S}_k$, $\kappa_{\sigma}^{\mathcal{B}s} = \kappa_{\sigma}^{\mathcal{B}}= \kappa_{\sigma}^{\mathfrak{S}}$. Then the vanishing of either the mixed  $\mathcal{B}s$-cumulants or the mixed  $\mathcal{B}$-cumulants implies the vanishing of the mixed $\mathfrak{S}$-cumulants. 

Now, let us consider the negative assertions. For the first one, let $(a_1,a_2) \in A_1^{2}$ and $(b_1,b_2) \in A_2^{2}$ such that $\kappa_{{\sf 0}_2}(a_1,a_2) \kappa_{{\sf 0}_{2}}(b_1,b_2) \neq 0.$ Using the axiom of $\mathcal{P}$-tracial algebras, Theorem \ref{th:calculloi} and the fact that $[\mathrm{id}_k, (1,2)] = \{\mathrm{id}_k, {\sf 0}_{2}, (1,2)\}$, $m_{(1,2,3,4)}(a_1,b_1,a_2,b_2)$ is equal to $m_{(1,2)}(a_1b_1,a_2b_2)$ which is equal to $\kappa_{\mathrm{id}_2}^{\mathcal{P}}(a_1,a_2) m_{(1,2)}(b_1,b_2) + \kappa_{{\sf 0}_2}^{\mathcal{P}}(a_1,a_2)$ $m_{{\sf 0}_2}(b_1,b_2) + \kappa_{(1,2)}^{\mathcal{P}}(a_1,a_2) m_{{\mathrm{id}}_{2}}(b_1,b_2)$ or: 
\begin{align*}
&m_{\mathrm{id}_2}(a_1,a_2) m_{(1,2)}(b_1,b_2) + m_{(1,2)}(a_1,a_2) m_{\mathrm{id}_2}(b_1,b_2) - m_{\mathrm{id}_2}(a_1,a_2) m_{\mathrm{id}_2} (b_1,b_2) \\ 
& \ \ \ \ \ \ \ \ \ \ \ \ \ \ \ \ \ \ \ \ \ \ \ \ \ \ \ \ \ \ \ \ \ \ \ \ \ \ \ \ \ \ \ \ \ \ \ \ \ \ \ \ \ \ \ \ \ \ \ \ \ \ \ \ \ \ \ \ \ \ \ \ \ \ \ \ \ \ \ \ \ \ \ \ \ + \kappa_{{\sf 0}_2}^{\mathcal{P}}(a_1,a_2)\kappa_{{\sf 0}_2}^{\mathcal{P}}(b_1,b_2).
\end{align*}
If $(a_1,a_2)$ and $(b_1,b_2)$ are $\mathfrak{S}$-free, using similar arguments, $m_{(1,2,3,4)}(a_1,b_1,a_2,b_2)$ is equal to $m_{\mathrm{id}_2}\!(a_1,a_2) m_{(1,2)}\!(b_1,b_2) \!+\! m_{(1,2)}\!(a_1,a_2) m_{\mathrm{id}_2}\!(b_1,b_2) \!-\! m_{\mathrm{id}_2}\!(a_1,a_2) m_{\mathrm{id}_2}\!(b_1,b_2)$.
Since $\kappa_{{\sf 0}_2}^{\mathcal{P}}(a_1,a_2)\kappa_{{\sf 0}_2}^{\mathcal{P}}(b_1,b_2) \neq 0$, $(a_1,a_2)$ and $(b_1,b_2)$ are not $\mathfrak{S}$-free and thus $A_1$ and $A_2$ are not $\mathfrak{S}$-free. 

For the second negative assertion, let us remark that if $\mathcal{P}$-freeness was implying $\mathcal{B}$-freeness, because of the third implication we proved, $\mathcal{P}$-freeness would imply $\mathfrak{S}$-freeness: we just proved that this was not true. 

For the third implication, let us suppose that $\mathfrak{S}$-freeness implies $\mathcal{B}$-freeness. In next section, we will see that if $a \in A$ is $U$-invariant, $a$ and $^{t}a$ are $\mathfrak{S}$-free. Thus, $a$ and $^{t}a$ would be $\mathcal{B}$-free: this would imply that $a$ is $\mathfrak{S}$-free with itself: this is not generally the case. 
\qed \end{proof}

\begin{remark}
Since ${\sf 0}_{2} \in \mathcal{H}_2$, the first negative assertion in Theorem \ref{th:critere} holds if $A_1$ and $A_2$ are two $\mathcal{H}$-free sub-algebras of $A$: $\mathcal{H}$-freeness does not imply the $\mathfrak{S}$-freeness. 
\end{remark}

\hspace{-10pt}\label{diagram}
\begin{align*}
 \xymatrix{
 & \textbf{Voiculescu's freeness}\ar@/^{1pc}/[dd] \\
 & \text{ }\\
 &\mathfrak{S}-\textbf{freeness} \ar@/^{-1pc}/[uu]^{\text{(a) under deterministic asumption }}\ar@/^1.5pc/[dd]^{\text{(b) under } U- \text{invariance}}  && \mathcal{B}s-{\textbf{freeness}}\ar@/^{-1.5pc}/[ll]_{}\\
 & \text{ }\\
 & \mathcal{B}-\textbf{freeness} \ar@/^1.5pc/[uu]^{\text{(d)}}\ar@/^1.5pc/[dd]^{\text{(c) under } O- \text{invariance}}\ar@/^{-1.5pc}/[uu] \\
& \text{ }\\
& \mathcal{P}-\textbf{freeness}\ar@/^{-1.5pc}/[uu]
}
\end{align*}
\begin{center}
Diagram $1$. The different notions of $Voiculescu$-, $\mathfrak{S}$-, $\mathcal{B}$-, $\mathcal{B}s$- and $\mathcal{P}$-freeness.
\end{center}

\subsubsection{Voiculescu freeness and the transpose operation}

In this section, we show that the notion of transpose operation allows us to construct families which are free in the sense of Voiculescu. This gives a generalization, in an algebraic setting, of a result  of Mingo and Popa in \cite{Mingo} which asserts that unitary invariant random matrices are free with their transpose. It illustrates also the use of the $\mathcal{B}$-cumulants in order to prove some interesting results about freeness. Let $k_1<k$ be integers and let $p$ be in $\mathcal{P}_k$.

\begin{definition}
\label{def:Sk}
The partition ${\sf S}_{k_1}(p)$ is the partition obtained by permuting $i$ with $i'$ for any $i \in \{ k_1+1,...,k\}$ in the definition of $p$. 
\end{definition}

For any $\mathcal{A}$ except $\mathfrak{S}$, for any integers $k_1<k$, $\mathcal{A}_k$ is stable by the operation ${\sf S}_{k_1}$. Until the end of this section, let us consider such $\mathcal{A}$.

\begin{definition}
A transpose operation on a $\mathcal{A}$-tracial algebra $(A,(m_p)_{p \in \mathcal{A}})$ is a linear operation $^{t} : a \mapsto a^{t}$ such that for any integers $k_1<k$, any $p \in \mathcal{A}_k$, any $(a_1,...,a_k) \in A^{k}$, $m_{p} (a_1,...,a_{k_1}, { }^{t}a_{k_1+1}, ..., { }^{t}a_k) = m_{{\sf S}_{k_1}(p)} (a_1,...,a_{k})$. 
\end{definition}

For example, the matrix transposition is a transposition operation on the $\mathcal{P}$-tracial algebras of matrices or random matrices defined in Example \ref{eq:normalized}. For any family of elements $A'$, we denote by $^{t}A'$ the set $\{^{t}a | a \in A'\}$. In order to prove next theorem, we need the following definition. Recall the notion of left- and right- parts of a partition defined in Definition 3.12 of \cite{Gab1}.
\begin{definition}
\label{def:cut}
Let $k, k'$ be two integers and $\sigma \in \mathfrak{S}_{k+k'}$. The left-part of $\sigma$, $\sigma_{k}^{l}$, is composed of blocs of size $2$ except possibly two blocs of size $1$. Let us glue the blocs of size $1$ together and we get a permutation that we call ${\sf C}_{k}^{l}(\sigma) \in \mathfrak{S}_k$. By doing the same with the right part, we get another permutation ${\sf C}_{k}^{r}(\sigma) \in\mathfrak{S}_{k'}$.
\end{definition}

\begin{theorem}
\label{th:transpofree}
Let $(A,(m_p)_{p \in \mathcal{A}})$ be a $\mathcal{A}$-tracial algebra endowed with a transpose operation denoted by $\text{ }^{t}$. For any $A_1, A_2 \subset A$ such that $A_1 \cup A_2$ is $U$-invariant and deterministic, $A_1$ and $^{t}\!A_{2}$ are free in the sense of Voiculescu.
\end{theorem}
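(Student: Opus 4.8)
The plan is to work with $\mathcal{B}$-cumulants, since the hypotheses ($U$-invariance, determinism) are precisely the kind of structural input that makes $\mathcal{B}$-cumulants tractable, and since — as recalled in Diagram 1 and Theorem \ref{th:critere} — $\mathfrak{S}$-freeness together with the deterministic assumption yields Voiculescu freeness. So the target reduces to showing that $A_1$ and $^{t}\!A_2$ are $\mathfrak{S}$-free (for the linear form $m_{\mathrm{id}_1}$), i.e.\ that all mixed $\mathfrak{S}$-cumulants $\kappa^{\mathfrak{S}}_{\sigma}(c_1,\dots,c_k)$ vanish whenever $\sigma\in\mathfrak{S}_k$ and the arguments are an alternating mixture of elements of $A_1$ and elements $^{t}a$ with $a\in A_2$, not all from one side. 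Since $<A_1,A_2>$ is deterministic, $\kappa^{\mathfrak{S}}=\kappa^{\mathcal{B}}$ on permutations (the fact recalled in the proof of Theorem \ref{th:critere}), and moreover $\kappa^{\mathcal{B}}$ factorizes on tensor products (Lemma \ref{lemme:detercumu} in the $\mathcal{B}$-version), so we may compute everything with $\mathcal{B}$-cumulants.

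\textbf{Key steps.} First I would unwind the transpose axiom: for a partition $p\in\mathcal{B}_k$ and arguments $a_1,\dots,a_{k_1}\in A_1$, $a_{k_1+1},\dots,a_k\in A_2$, we have $m_p(a_1,\dots,a_{k_1},{}^{t}a_{k_1+1},\dots,{}^{t}a_k)=m_{{\sf S}_{k_1}(p)}(a_1,\dots,a_k)$; passing through the linear changes of basis defining $\mathcal{B}$-cumulants (which are triangular for $\leq$ and commute with the column operations ${\sf S}_{k_1}$ because $\mathcal{B}$ is stable under ${\sf S}_{k_1}$ and the $\leq$-order is compatible with it), this gives $\kappa^{\mathcal{B}}_p(a_1,\dots,{}^{t}a_{k_1+1},\dots,{}^{t}a_k)=\kappa^{\mathcal{B}}_{{\sf S}_{k_1}(p)}(a_1,\dots,a_k)$. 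Second, I would use $U$-invariance of $A_1\cup A_2$ in the form of Definition \ref{def:GAinva}/Theorem \ref{th:invariancecaract}: $\kappa^{\mathcal{P}}_p(a_1,\dots,a_k)=\delta_{p\in\overline{\mathfrak{S}_k}}\kappa^{\mathfrak{S}}_{{\sf Mb}(p)}(a_1,\dots,a_k)$, so the only $\mathcal{B}$-partitions carrying nonzero cumulants on $A_1\cup A_2$ are the images under ${\sf Mb}$ of permutations — i.e.\ the $\mathcal{B}$-cumulant $\kappa^{\mathcal{B}}_p$ of the original (untransposed) tuple vanishes unless $p$ is of a very restricted "pair-partition coming from a permutation" shape. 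Third, combining these two observations: a mixed $\mathcal{B}$-cumulant of the transposed tuple equals $\kappa^{\mathcal{B}}_{{\sf S}_{k_1}(p)}$ of the untransposed tuple, and applying ${\sf S}_{k_1}$ to the allowed shapes produces a partition that is \emph{no longer} of the admissible ${\sf Mb}(\text{permutation})$ form when the two sides genuinely interact — because the transpose operation ${\sf S}_{k_1}$ flips the "through-strands" connecting the $A_1$-block to the $A_2$-block into "turn-back" (cap/cup) strands, which $U$-invariance forbids. The cut operation ${\sf C}^{l}_k,{\sf C}^{r}_k$ of Definition \ref{def:cut} is presumably exactly the bookkeeping device needed to express, on the admissible configurations, the factorized cumulant as a product $\kappa^{\mathfrak{S}}_{{\sf C}^{l}_k(\sigma)}((a_i)_{A_1})\,\kappa^{\mathfrak{S}}_{{\sf C}^{r}_k(\sigma)}((a_i)_{A_2})$ — i.e.\ the surviving terms are exactly those in which $A_1$ and $^{t}\!A_2$ decouple, which is the vanishing-of-mixed-cumulants statement. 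Finally I would invoke Theorem \ref{th:critere}(1) to upgrade $\mathfrak{S}$-freeness (under the deterministic assumption, which transfers to $<A_1,{}^{t}A_2>$ since $^{t}$ is linear and preserves moments up to ${\sf S}$, hence preserves determinism) to Voiculescu freeness.

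\textbf{Main obstacle.} The delicate point is the interplay between the transpose operation ${\sf S}_{k_1}$ and the Möbius-type inversion defining $\mathcal{B}$-cumulants: I must check that $\kappa^{\mathcal{B}}$ really does intertwine with ${\sf S}_{k_1}$ (not just the moments $m_p$), which requires that the geodesic order $\leq$ and the sets $[\mathrm{id}_k,p]$ used in the inversion are preserved by ${\sf S}_{k_1}$ on $\mathcal{B}_k$ — plausible since $\mathcal{B}_k$ is ${\sf S}_{k_1}$-stable and ${\sf S}_{k_1}$ is an involution, but it needs a genuine argument about how ${\sf nc}(\cdot)$, ${\sf nc}(\cdot\vee\mathrm{id})$ and $\vee$ transform. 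The second subtle point is the combinatorial core: showing that for a mixed placement of $A_1$- and $^{t}\!A_2$-arguments, \emph{no} permutation $\sigma$ has ${\sf S}_{k_1}^{-1}\!\big({\sf Mb}^{-1}\text{-preimage}\big)$ landing on a nonvanishing configuration unless $\sigma$ factors as $\sigma_1\otimes\sigma_2$ across the $A_1$/$A_2$ split — this is where one pays attention to the fact that a through-strand from side $1$ to side $2$ becomes, after ${\sf S}_{k_1}$ flips one endpoint, a block of the form $\{i,j\}$ or $\{i',j'\}$ with $i\le k_1<j$, which is a $\mathcal{B}$-type (non-permutation) block straddling both sides, hence killed by $U$-invariance. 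I expect this "every surviving diagram is a disjoint union" argument, carried out cleanly with the left/right-part and ${\sf C}$-notation, to be the technical heart of the proof.
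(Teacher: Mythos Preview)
Your strategy is correct and matches the paper's in its essential ingredients: pass to $\mathcal{B}$-cumulants, use the transpose axiom to move ${}^t$ onto the partition via ${\sf S}_{k_1}$, invoke $U$-invariance so that only permutations survive, observe that ${\sf S}_{k_1}(\sigma)\in\mathfrak{S}_{k}$ forces $\sigma=\sigma_1\otimes\sigma_2$, factor by determinism, and conclude $\mathfrak{S}$-freeness hence Voiculescu freeness.

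There is one organisational difference worth flagging. You propose to prove the intertwining $\kappa^{\mathcal{B}}_p(\dots,{}^{t}a,\dots)=\kappa^{\mathcal{B}}_{{\sf S}_{k_1}(p)}(\dots,a,\dots)$ directly, by checking that ${\sf S}_{k_1}$ preserves ${\sf nc}$, $\vee$ and $\mathrm{id}_k$ (hence the geodesic order and its M\"obius function). This is indeed straightforward, and once you have it, your argument is shorter than the paper's: the paper instead applies the transpose only at the \emph{moment} level (where it is the definition), expands $m_{{\sf S}_k(\sigma)}$ as a sum of $\kappa^{\mathcal{B}}_{p}$ over $p\le {\sf S}_k(\sigma)$, and then needs two auxiliary geodesic lemmas (Lemmas~\ref{lem:geo2} and~\ref{lem:geodesic}) to identify the surviving index set with $\{\sigma_1\otimes\sigma_2\le\sigma\}$. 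Your route sidesteps those lemmas entirely; the paper's route avoids having to argue that cumulant-inversion commutes with ${\sf S}_{k_1}$. The content is the same---both ultimately rest on the fact that ${\sf S}_{k_1}$ is an order-preserving involution fixing $\mathrm{id}_k$---but your packaging is a bit more economical.

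One small correction: the form of $U$-invariance you want is $\kappa^{\mathcal{B}}_p=\delta_{p\in\mathfrak{S}}\,\kappa^{\mathfrak{S}}_p$ on $A_1\cup A_2$ (this is Definition~\ref{def:GAinva} specialised to $\mathcal{A}_2=\mathfrak{S}$, and it is exactly what the paper invokes). The formula you wrote, $\kappa^{\mathcal{P}}_p=\delta_{p\in\overline{\mathfrak{S}_k}}\kappa^{\mathfrak{S}}_{{\sf Mb}(p)}$, is the \emph{exclusive-moment} characterisation from Theorem~\ref{th:invariancecaract}, which is not what is needed here and would introduce an unnecessary detour through ${\sf Mb}$.
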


\begin{proof}
Let $A_1, A_2 \subset A$ such that $A_1\cup A_2$ is $U$-invariant. Let $k$, $k'$ be two integers, $(a_1,...,a_k)$ be in $A_1^{k}$, $(b_1,...,b_{k'})$ be in $A_2^{k'}$ and $\sigma \in \mathfrak{S}_{k+k'}$. Using the definitions of a transpose operation and $\mathfrak{S}$-cumulants: 
\begin{align*}
m_{\sigma}(a_1,...,a_k, ^{t}\!b_1,..., ^{t}\!b_{k'}) &= m_{{\sf S}_{k}(\sigma)}(a_1,...,a_k,b_1,...,b_{k'})\\
&=\sum_{p \in \mathcal{B}_{k+k'} | p \leq {\sf S}_{k}(\sigma)} \kappa^{\mathcal{B}}_{p} (a_1,...,a_k,b_1,...,b_{k'}).
\end{align*}
Since $A_1 \cup A_2$ is $U-$invariant, for any $p \in \mathcal{A}$, when restricted to $A_1 \cup A_2$, $\kappa^{\mathcal{A}}_p$ is equal to $\delta_{p \in \mathfrak{S}} \kappa^{\mathfrak{S}}_p$. Actually this implies that for any $p \in \mathcal{B}$, when restricted to $A_1 \cup A_2$,  $\kappa^{\mathcal{B}}_{p} $ is equal to $\delta_{ p \in \mathfrak{S}} \kappa^{\mathfrak{S}}_p$. Thus, 
\begin{align*}
m_{\sigma}(a_1,...,a_k, ^{t}\!b_1,..., ^{t}\!b_{k'}) =\sum_{\sigma' \in \mathfrak{S}_{k+k'} | \sigma' \leq {\sf S}_{k}(\sigma)} \kappa^{\mathfrak{S}}_{\sigma'} (a_1,...,a_k,b_1,...,b_{k'}).
\end{align*}
Recall Definition \ref{def:cut}. Using Lemma \ref{lem:geodesic}, $\{ \sigma' \in \mathfrak{S}_{k+k'} | \sigma' \leq  {\sf S}_{k}(\sigma) \} $ is equal to $\{\sigma_1' \otimes \sigma_2' | \sigma_1' \in \mathfrak{S}_{k}, \sigma_2' \in \mathfrak{S}_{k'}, \sigma_1' \leq {\sf C}_k^{l}(\sigma), \sigma_2' \leq \text{ }^{t}{\sf C}_k^{r}(\sigma)\}$. Thus, using the fact that $A_1 \cup A_2$ is deterministic and Lemma \ref{lemme:detercumu}:
\begin{align*}
m_{\sigma}(a_1,...,a_k, ^{t}\!b_1,..., ^{t}\!b_{k'}) &=\sum_{\sigma'_1, \sigma'_2 | \sigma'_1 \leq  {\sf C}_k^{l}(\sigma), \sigma_2' \leq\text{ }^{t}{\sf C}_k^{r}(\sigma) } \kappa^{\mathfrak{S}}_{\sigma'_1 \otimes \sigma'_2} (a_1,...,a_k,b_1,...,b_{k'}) \\
&= \sum_{\sigma'_1, \sigma'_2 | \sigma'_1 \leq  {\sf C}_k^{l}(\sigma), \sigma_2' \leq\text{ }^{t}{\sf C}_k^{r}(\sigma) } \kappa^{\mathfrak{S}}_{\sigma'_1} (a_1,...,a_k)  \kappa^{\mathfrak{S}}_{\sigma'_2} (b_1,...,b_{k'}) \\
&= \sum_{\sigma'_1, \sigma'_2 | \sigma'_1 \leq  {\sf C}_k^{l}(\sigma), \sigma_2' \leq{\sf C}_k^{r}(\sigma) } \kappa^{\mathfrak{S}}_{\sigma'_1} (a_1,...,a_k)  \kappa^{\mathfrak{S}}_{\sigma'_2} (\!\text{ } ^{t}b_1,..., ^{t}\!b_{k'}).
\end{align*}
Using Lemma \ref{lem:geo2},  $\{\sigma'_1 \otimes \sigma'_2 | \sigma'_1 \leq  {\sf C}_k^{l}(\sigma), \sigma'_2 \leq  {\sf C}_k^{r}(\sigma) \} = \{ \sigma_1' \otimes \sigma_2' \leq \sigma | \sigma_1' \in \mathfrak{S}_{k}, \sigma_2' \in \mathfrak{S}_{k'}\}$. Thus: 
\begin{align*}
m_{\sigma}(a_1,...,a_k, ^{t}\!b_1,..., ^{t}\!b_{k'}) = \sum_{\sigma_1' \in \mathfrak{S}_k, \sigma_2' \in \mathfrak{S}_{k'} | \sigma_1' \otimes \sigma_2' \leq \sigma } \kappa^{\mathfrak{S}}_{\sigma'_1} (a_1,...,a_k)  \kappa^{\mathfrak{S}}_{\sigma'_2} (\!\text{ } ^{t}b_1,..., ^{t}\!b_{k'}).
\end{align*}
Since this equality holds for any $\sigma \in \mathfrak{S}_{k+k'}$, any choice of $(a_1,...,a_k)$ and $(b_1,..., b_{k'})$, by definition of the $\mathfrak{S}$-cumulants, we get that $A_1$ and $^{t}A_2$ are $\mathfrak{S}$-free: since $A_1 \cup A_2$ is deterministic, we already saw that it is equivalent to say that $A_1$ and $A_2$ are free in the sense of Voiculescu.
\qed \end{proof}

In the end of this section, we prove the lemmas needed in order to complete the proof of Theorem \ref{th:transpofree}. Recall that $(1,...,l)$ is the $l$-cycle which sends $i$ on $i+1$ modulo $l$ and $(l,...,1)$ is the one which sends $i$ on $i-1$: $(l,...,1)$ is equal to $^{t}(1,...l)$. Let $l \leq k$ be two integers and $\sigma$ be in $\mathfrak{S}_{k}$.

\begin{lemma}
\label{lem:geo2}
Let $\sigma_1$ be in $\mathfrak{S}_{l}$ and $\sigma_2$ be in $\mathfrak{S}_{k-l}$, then $\sigma_1 \otimes \sigma_2 \leq \sigma$ if and only if $\sigma_1 \leq {\sf C}_k^{l}(\sigma)$ and $\sigma_2 \leq {\sf C}_k^{r}(\sigma).$
\end{lemma}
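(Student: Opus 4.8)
The plan is to unwind the definitions of the Kreweras-type order $\leq$ on $\mathfrak{S}_k$ and of the cutting maps ${\sf C}_k^l$, ${\sf C}_k^r$, and to reduce everything to the distance/defect formalism recalled in Section \ref{sec:basic}. Recall that $\sigma_1 \otimes \sigma_2 \leq \sigma$ means $d(\mathrm{id}_k, \sigma_1\otimes\sigma_2) + d(\sigma_1\otimes\sigma_2, \sigma) = d(\mathrm{id}_k, \sigma)$, i.e.\ the defect ${\sf df}(\sigma_1\otimes\sigma_2, \sigma)$ vanishes. On permutations the distance $d(\mathrm{id}_k, \tau) = k - {\sf nc}(\tau\vee\mathrm{id}_k)$ is, up to the standard correspondence, the word-length $|\tau|$ in transpositions, so the order $\leq$ restricted to $\mathfrak{S}_k$ is the usual ``geodesic'' partial order generated by the Cayley graph; this is exactly the point where the identification $[\mathrm{id}_k,(1,\dots,k)]\cap\mathfrak{S}_k \cong {\sf NC}_k$ alluded to after Example \ref{ex:stracialalgebra} lives. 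First I would record that $d(\mathrm{id}_k, \sigma_1\otimes\sigma_2) = d(\mathrm{id}_l,\sigma_1) + d(\mathrm{id}_{k-l},\sigma_2)$, which is immediate since the two blocks of columns do not interact, and similarly that $\vee$ and hence $d(\cdot,\cdot)$ splits along a tensor decomposition whenever both arguments respect it.

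Next I would analyze the structure of $\sigma_{l}^{l}$ (the left part, the extraction of $\sigma$ to $\{1,\dots,l,1',\dots,l'\}$) when $\sigma\in\mathfrak S_k$: each block of $\sigma$ is a pair $\{i,\sigma(i)'\}$, and extraction to the left columns breaks such a pair either into a full pair (when both $i\leq l$ and $\sigma(i)\leq l$), or into two singletons (when exactly one of $i,\sigma(i)$ is $\leq l$), or deletes it entirely. Since $\sigma$ is a bijection, the number of ``$i\leq l$ with $\sigma(i)>l$'' equals the number of ``$i>l$ with $\sigma(i)\leq l$'', so $\sigma_l^l$ has an equal number of top-singletons and bottom-singletons; gluing them pairwise (respecting the order, as in Definition \ref{def:cut}) produces a genuine permutation ${\sf C}_k^l(\sigma)\in\mathfrak S_l$, and symmetrically ${\sf C}_k^r(\sigma)\in\mathfrak S_{k-l}$. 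The key combinatorial identity I need is that ${\sf nc}\big((\sigma_1\otimes\sigma_2)\vee\sigma\big) = {\sf nc}\big(\sigma_1 \vee {\sf C}_k^l(\sigma)\big) + {\sf nc}\big(\sigma_2 \vee {\sf C}_k^r(\sigma)\big)$ whenever $\sigma_1,\sigma_2$ live on the two column-blocks — because the cycles of $\sigma$ that wander between the two blocks get ``reconnected'' by the gluing operation in exactly the way that taking $\vee$ with $\sigma_1\otimes\sigma_2$ (which keeps the two blocks separate) would count them. Granting this, both sides of the claimed equivalence become the single equation
\begin{align*}
d(\mathrm{id}_l,\sigma_1)+d(\mathrm{id}_{k-l},\sigma_2) + d(\sigma_1,{\sf C}_k^l(\sigma)) + d(\sigma_2,{\sf C}_k^r(\sigma)) = d(\mathrm{id}_k,\sigma),
\end{align*}
and the right-hand side equals $d(\mathrm{id}_l,{\sf C}_k^l(\sigma)) + d(\mathrm{id}_{k-l},{\sf C}_k^r(\sigma))$ by the same reconnection count applied with $\sigma_1=\mathrm{id}_l$, $\sigma_2=\mathrm{id}_{k-l}$. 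Since $d$ is a genuine metric (triangle inequality), the total defect is a sum of two nonnegative defects ${\sf df}(\sigma_1,{\sf C}_k^l(\sigma)) + {\sf df}(\sigma_2,{\sf C}_k^r(\sigma))$, so it vanishes iff each summand does, i.e.\ iff $\sigma_1\leq{\sf C}_k^l(\sigma)$ and $\sigma_2\leq{\sf C}_k^r(\sigma)$.

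The main obstacle, and the step I would spend the most care on, is the reconnection identity ${\sf nc}\big((\sigma_1\otimes\sigma_2)\vee\sigma\big) = {\sf nc}\big(\sigma_1 \vee {\sf C}_k^l(\sigma)\big) + {\sf nc}\big(\sigma_2 \vee {\sf C}_k^r(\sigma)\big)$ — in particular making precise that the gluing prescription in Definition \ref{def:cut} is the ``correct'' one, i.e.\ that it matches the way $\vee$ with a tensor product partition re-splices the inter-block cycles, rather than introducing spurious merges or splits. I would prove it by following a single cycle of $\sigma$ through the blocks: reading off the alternating maximal runs in $\{1,\dots,l\}$ and in $\{l+1,\dots,k\}$, one sees that a cycle crossing $2m$ times contributes $m$ ``arcs'' to $\sigma_l^l$ and $m$ to $\sigma_r^r$, and the ordered gluing is exactly what reassembles those $m$ left-arcs into a cycle of ${\sf C}_k^l(\sigma)$ and the $m$ right-arcs into a cycle of ${\sf C}_k^r(\sigma)$; a cycle that stays in one block contributes one cycle there and nothing elsewhere. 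Then forming $\vee$ with $\sigma_1\otimes\sigma_2$ on the left and with $\sigma_1,\sigma_2$ separately on the right-hand side counts connected components of the same two disjoint graphs. This is essentially Lemma \ref{lem:geodesic} (invoked in the proof of Theorem \ref{th:transpofree}) combined with the earlier distance computations of \cite{Gab1}, so I would phrase the argument to reuse those rather than redo the bijection ${\sf NC}_k$-side by hand; once the identity is in place the rest is the bookkeeping above.
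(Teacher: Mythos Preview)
Your overall strategy --- showing that the total defect ${\sf df}(\sigma_1\otimes\sigma_2,\sigma)$ splits as ${\sf df}(\sigma_1,{\sf C}_k^l(\sigma))+{\sf df}(\sigma_2,{\sf C}_k^r(\sigma))$ and then using nonnegativity --- is sound, and it is genuinely different from the paper's argument. The paper does not attempt the general $\sigma$ directly: it reduces to the single cycle $\sigma=(1,\dots,k)$, writes $(1,\dots,k)=(1,l{+}1)\big((1,\dots,l)\otimes(1,\dots,k{-}l)\big)$, and uses the trace identity ${\sf nc}(u\vee vw)={\sf nc}((u\,{}^t w)\vee v)$ to turn the problem into a one-line cycle count. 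That algebraic route sidesteps entirely the ``reconnection'' bookkeeping you set up.

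There is, however, a concrete error in your key step. The identity
\[
{\sf nc}\big((\sigma_1\otimes\sigma_2)\vee\sigma\big)\;=\;{\sf nc}\big(\sigma_1\vee{\sf C}_k^l(\sigma)\big)+{\sf nc}\big(\sigma_2\vee{\sf C}_k^r(\sigma)\big)
\]
is \emph{false} as stated. Take $\sigma=(1,\dots,k)$ and $\sigma_1=\mathrm{id}_l$, $\sigma_2=\mathrm{id}_{k-l}$: the left side is $1$ while the right side is $2$. In general the correct identity carries an additive correction $c(\sigma,l)$ counting the cycles of $\sigma$ that straddle the cut; the paper's computation for the single cycle is exactly the case $c=1$. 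Your own cycle-tracing paragraph actually predicts this: a crossing cycle contributes one cycle to ${\sf C}_k^l(\sigma)$ and one to ${\sf C}_k^r(\sigma)$, hence two to the right side but only one to the left. The good news is that $c(\sigma,l)$ does not depend on $\sigma_1,\sigma_2$, so when you apply the (corrected) identity once for general $\sigma_1,\sigma_2$ and once for $\sigma_1=\mathrm{id}_l,\sigma_2=\mathrm{id}_{k-l}$, the correction cancels and your defect argument survives intact. But you must state and prove the identity with the correction term, and establish its independence from $\sigma_1,\sigma_2$; the version you wrote cannot be ``granted''. Note also that Definition~\ref{def:cut} as written only specifies the gluing when there are at most two singletons per side, i.e.\ essentially for a single cycle, so your discussion of a cycle crossing $2m$ times with $m>1$ presupposes an extension of the definition that the paper never makes --- another reason the paper's reduction to $\sigma=(1,\dots,k)$ is cleaner.
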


\begin{proof}
The lemma is a consequence of the special case when $\sigma = (1,...,k)$. Let us suppose that $\sigma= (1,...,k)$, then we have to prove that $\sigma_1 \otimes \sigma_2 \leq (1,...,k)$ if and only if $\sigma_1 \leq (1,...,l)$ and $\sigma_2 \leq (1,...,k-l)$. Recall the notion of defect defined in Equation~(1) of \cite{Gab1}: we need to prove that ${\sf df}(\sigma_1 \otimes \sigma_2, (1,...,k)) = {\sf df}(\sigma_1,(1,...,l)) + {\sf df}(\sigma_2, (1,...,k-l))$. After a simple calculation, this equality is equivalent to ${\sf nc}(\sigma_1 \vee (1,...,l) ) + {\sf nc}(\sigma_2 \vee (1,...,k-l)) = {\sf nc}(\sigma_1\otimes \sigma_2 \vee (1,...,k)) + 1.$ Recall Equation $(14)$ of \cite{Gab1} which asserts that for any $p, p' \in \mathcal{P}_k$, ${\sf Tr}(\rho_N(p \text{ }^{t}\!p') ) = N^{{\sf nc}(p \vee p')}$. This implies that for any permutations $u, v, w$, ${\sf nc}(u \vee vw) = {\sf nc}((u \text{ }^{t}w) \vee v)$. Thus: 
\begin{align*}
{\sf nc}(\sigma_1\otimes \sigma_2 \vee (1,...,k))& = {\sf nc}\left[\sigma_1\otimes \sigma_2 \vee \left((1,l+1)((1,...,l)\otimes (1,...,k-l)) \right)\right]\\
&=  {\sf nc}\left[(\sigma_1\otimes \sigma_2)((l,...,1) \otimes (k-l,...,1)) \vee (1,l+1)\right]
\end{align*}
but the transposition $(1,l+1)$ glue two cycles of $(\sigma_1\otimes \sigma_2)((l,...,1) \otimes (k-l,...,1))$:
\begin{align*}
{\sf nc}(\sigma_1\otimes \sigma_2 \vee (1,...,k))& = {\sf nc}((\sigma_1\otimes \sigma_2)((l,...,1) \otimes (k-l,...,1)) ) - 1 \\
&=  {\sf nc}\left[(\sigma_1\otimes \sigma_2) \vee ((1,...,l) \otimes (1,...,k-l))\right] -1\\
& = {\sf nc}(\sigma_1\vee (1,...,l)) + {\sf nc}(\sigma_2\vee (1,...,k-l))-1
\end{align*}
which was the equality to prove. 
\qed \end{proof}

\begin{lemma}
\label{lem:geodesic}
 The permutation $\sigma$ satisfies $\sigma \leq {\sf S}_l((1,...,k))$ if and only if there exists $\sigma_1 \leq (1,...,l)$ and $\sigma_2 \leq (k-l,...,1)$ such that $\sigma =\sigma_1\otimes \sigma_2$.  
\end{lemma}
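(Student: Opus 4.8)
The plan is to reduce Lemma~\ref{lem:geodesic} to the combinatorics of cycles and the identity ${\sf nc}(u \vee vw) = {\sf nc}((u\,{}^{t}w) \vee v)$ already used in the proof of Lemma~\ref{lem:geo2}, exactly as in that proof. First I would unwind the definition of ${\sf S}_l$: applied to the $k$-cycle $(1,\dots,k)$, the operation ${\sf S}_l$ swaps $i$ with $i'$ for $i \in \{l+1,\dots,k\}$, so ${\sf S}_l((1,\dots,k))$ is no longer a permutation but a partition of Brauer type, namely the one whose blocks are $\{i,(i{+}1)'\}$ for $i \le l-1$, the ``turn-around'' blocks $\{l,k\}$ and $\{1',(l{+}1)'\}$ (or more precisely $\{l, k\}$, $\{l', (l+1)'\}$ after the swap — I would draw the diagram to fix the exact two pair-of-caps/cups), and $\{i,i'\}$-type images for the rest. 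The key structural observation is that ${\sf S}_l((1,\dots,k))$ is, up to the swap bookkeeping, the ``concatenation'' of the cycle datum of $(1,\dots,l)$ on the first $l$ columns and of $(k{-}l,\dots,1) = {}^{t}(1,\dots,k{-}l)$ on the last $k-l$ columns, glued at the two endpoints; this is the Brauer-diagram analogue of cutting an $n$-cycle into an $l$-chain and an $(k-l)$-chain.

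**The main computation.**

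Given that, for $\sigma \in \mathfrak{S}_k$ the relation $\sigma \le {\sf S}_l((1,\dots,k))$ means ${\sf df}(\sigma, {\sf S}_l((1,\dots,k))) = 0$, i.e. $d({\rm id}_k,\sigma) + d(\sigma, {\sf S}_l((1,\dots,k))) = d({\rm id}_k, {\sf S}_l((1,\dots,k)))$. I would expand both sides using $d(p,p') = \tfrac{{\sf nc}(p)+{\sf nc}(p')}{2} - {\sf nc}(p\vee p')$. After cancellation the condition becomes an equality of the form ${\sf nc}(\sigma \vee {\sf S}_l((1,\dots,k))) = $ (something explicit in ${\sf nc}(\sigma)$ and $k$). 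Now I claim $\sigma \vee {\sf S}_l((1,\dots,k))$ forces $\sigma$ to respect the block structure $\{1,\dots,l\} \sqcup \{l+1,\dots,k\}$: any block of $\sigma$ meeting both halves would have to ``cross'' one of the two turn-around points of ${\sf S}_l((1,\dots,k))$, and a defect computation (again via ${\sf nc}(u\vee vw) = {\sf nc}((u\,{}^{t}w)\vee v)$, composing $\sigma$ with $(l,\dots,1)\otimes(k{-}l,\dots,1)$ and comparing with $(1,l{+}1)$) shows this strictly increases the defect above $0$. So $\sigma = \sigma_1 \otimes \sigma_2$ with $\sigma_1 \in \mathfrak{S}_l$, $\sigma_2 \in \mathfrak{S}_{k-l}$. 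Substituting and using that on the two halves ${\sf S}_l((1,\dots,k))$ restricts to $(1,\dots,l)$ and to $(k{-}l,\dots,1)$ respectively, the single defect-zero condition splits as ${\sf df}(\sigma_1, (1,\dots,l)) + {\sf df}(\sigma_2, (k{-}l,\dots,1)) = 0$, which by non-negativity of the defect is equivalent to $\sigma_1 \le (1,\dots,l)$ and $\sigma_2 \le (k{-}l,\dots,1)$. The converse direction is the same computation read backwards: if $\sigma = \sigma_1 \otimes \sigma_2$ with those two inequalities, summing the (zero) defects gives ${\sf df}(\sigma, {\sf S}_l((1,\dots,k))) = 0$.

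**Where the difficulty lies.**

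The routine part is the defect bookkeeping once the diagram of ${\sf S}_l((1,\dots,k))$ is pinned down — it is a carbon copy of the manipulations already carried out in Lemma~\ref{lem:geo2}, just with one of the two cycles transposed because of the column swaps. The genuine obstacle, and the step I would spend the most care on, is the ``no crossing block'' claim: showing that $\sigma \le {\sf S}_l((1,\dots,k))$ forbids $\sigma$ from having a block straddling $\{1,\dots,l\}$ and $\{l+1,\dots,k\}$. The clean way to see this is to note that $^{t}({\sf S}_l((1,\dots,k)))$, composed appropriately, is the product of $(1,\dots,l)\otimes(1,\dots,k{-}l)$ with a single transposition-type contraction (this is precisely what makes ${\sf S}_l$ of an $n$-cycle ``one step away'' from a direct sum of two cycles), so that ${\sf nc}(\sigma \vee {\sf S}_l((1,\dots,k))) \le {\sf nc}(\sigma \vee ((1,\dots,l)\otimes(1,\dots,k{-}l))) + 1$ with equality characterizing exactly the block-preserving $\sigma$; combined with the defect identity this pins everything down. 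Once that inequality is in hand, Lemma~\ref{lem:geo2} applied to $\sigma_1 \otimes \sigma_2 \le (1,\dots,l)\otimes(1,\dots,k{-}l)$ finishes the identification.
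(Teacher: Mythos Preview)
Your outline could in principle be pushed through, but you are missing the one observation that makes the paper's proof a three-line argument and that dissolves exactly the step you flag as the ``genuine obstacle''. Namely: the map ${\sf S}_l$ is an involution on $\mathcal{P}_k$ that preserves both ${\sf nc}$ and ${\sf nc}(\,\cdot\,\vee\,\cdot\,)$, hence preserves the distance $d$ and therefore the defect: ${\sf df}(\sigma',\sigma)={\sf df}({\sf S}_l(\sigma'),{\sf S}_l(\sigma))$ for all $\sigma,\sigma'$. Consequently $\sigma\le {\sf S}_l((1,\dots,k))$ is equivalent to ${\sf S}_l(\sigma)\le (1,\dots,k)$. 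Now Lemma~3.1 of \cite{Gab1} says that anything $\le$ a permutation is itself a permutation, so ${\sf S}_l(\sigma)\in\mathfrak{S}_k$. But $\sigma\in\mathfrak{S}_k$ \emph{and} ${\sf S}_l(\sigma)\in\mathfrak{S}_k$ together force $\sigma$ to preserve the two halves $\{1,\dots,l\}$ and $\{l+1,\dots,k\}$ (just look at what ${\sf S}_l$ does to a block $\{i,\sigma(i)'\}$ straddling the two halves: it becomes $\{i,\sigma(i)\}$ or $\{i',\sigma(i)'\}$, neither a permutation block). So $\sigma=\sigma_1\otimes\sigma_2$, ${\sf S}_l(\sigma)=\sigma_1\otimes{}^{t}\sigma_2$, and the condition ${\sf S}_l(\sigma)\le(1,\dots,k)$ is handled by Lemma~\ref{lem:geo2}.

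Compared to this, your route tries to establish the ``no-crossing'' claim by a direct inequality on ${\sf nc}(\sigma\vee{\sf S}_l((1,\dots,k)))$ versus ${\sf nc}(\sigma\vee((1,\dots,l)\otimes(1,\dots,k{-}l)))$. This is where your sketch is genuinely incomplete: the inequality you propose, and especially the equality case characterizing block-preserving $\sigma$, is not obvious and you have not justified it. It may be salvageable, but it is considerably more work than the paper's argument, and it duplicates effort --- you end up re-deriving by hand what the ${\sf S}_l$-invariance of the defect plus Lemma~3.1 of \cite{Gab1} give for free. The moral is: rather than analyzing the Brauer diagram ${\sf S}_l((1,\dots,k))$ directly, undo ${\sf S}_l$ and work with the cycle $(1,\dots,k)$ where the geodesic structure is already understood.
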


\begin{proof}
For any integers $l\leq k$, any $\sigma$ and $\sigma'$ in $\mathfrak{S}_k$, ${\sf df}(\sigma', \sigma) = {\sf df}({\sf S}_l(\sigma'), {\sf S}_{l}(\sigma))$. Thus, $\sigma' \leq \sigma$ if and only if ${\sf S}_{l}(\sigma') \leq{\sf S}_{l}(\sigma)$. 

Let us suppose that $\sigma \leq {\sf S}_l((1,...,k))$, then ${\sf S}_{l}(\sigma) \leq (1,...,k)$. Lemma $3.1$ of \cite{Gab1} implies that ${\sf S}_{l}(\sigma) \in \mathfrak{S}_k$: there exists $\sigma_1 \in \mathfrak{S}_k$ and $\sigma_2 \in \mathfrak{S}_{k-l}$ such that $\sigma =\sigma_1\otimes \sigma_2$. Thus ${\sf S}_{l}(\sigma_1 \otimes \sigma_2) = \sigma_1 \otimes\text{ }^{t}\sigma_2 \leq (1,..., k)$. Using Lemma \ref{lem:geo2}, $\sigma_1 \leq (1,...,l)$ and $\sigma_2 \leq (k-l,...,1)$. The other implication is now straigthforward. 
\qed \end{proof}

\subsection{Classical and free cumulants}
\label{sec:Classicetfreecumu}
The notion of $\mathcal{P}$-cumulants generalizes the notions of classical and free cumulants. 
\subsubsection{Classical cumulants as $\mathcal{P}$-cumulants}
\label{sec:classical}
For any integer $k$, recall that ${\sf 0}_k$ is the partition $\{\{1,...,k,1',...,k'\}\}$. Let $(A,(m_p)_{p \in \mathcal{P}})$ be a $\mathcal{P}$-tracial algebra.

\begin{definition}
A family $A_1$ of elements of $A$ is classical if it is deterministic and for any integer $k$, any irreducible $p \in \mathcal{P}_k$ and any $(a_1,...,a_k) \in A^{k}$, $m_{p} (a_1,...,a_k) = m_{{\sf 0}_k}(a_1,...,a_k)$. 
\end{definition}

\begin{theorem}
\label{th:classicalcumulant}
Let $A_1$ be a classical family of elements of $A$. For any integer $k$, any $p \in \mathcal{P}_k$ and any $(a_1,...,a_k) \in A_1^{k}$, 
\begin{align*}
\kappa_{p}(a_1,...,a_k) =  \prod_{b \in p \vee {\mathrm{id}_k}}\delta_{p_{| b} = {\sf 0}_{\frac{\#b}{2}}} {\sf cum}_{\frac{\#b}{2}}\left((a_i)_{i \in b \cap \{1,...,k\}}\right), 
\end{align*}
where for any $n$, any $(a_1,...,a_n)$, ${\sf cum}_{\frac{\#b}{2}}\left((a_i)_{i \in b \cap \{1,...,k\}}\right)$ is the classical cumulant defined recursively by the fact that for any integer $n$, any $(a_1,...,a_n) \in A^{n}$: 
\begin{align*}
m_{{\sf 0}_{n}} (a_1,...,a_n) = \sum_{p \in {\sf P}_{n}} \prod_{b \in p} {\sf cum}_{\#b} ((a_i)_{i \in b}), 
\end{align*}
with ${\sf P}_n$ the set of partitions of $\{1,...,n\}$. 
\end{theorem}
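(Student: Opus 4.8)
The plan is to compute the $\mathcal{P}$-cumulants of a classical family directly from the defining relation $m_p = \sum_{p' \leq p} \kappa_{p'}$ by Möbius inversion, exploiting the two special features of a classical family: it is deterministic (so cumulants factorize over tensor products, by Lemma \ref{lemme:detercumu}), and its moments are ``trivial on irreducible pieces'' in the sense that $m_p(a_1,\dots,a_k) = m_{{\sf 0}_k}(a_1,\dots,a_k)$ whenever $p$ is irreducible. First I would reduce to the irreducible case: if $p \in \mathcal{P}_k$ has cycles $b \in p\vee{\mathrm{id}}_k$, then, up to a permutation of the columns (which the first axiom of $\mathcal{P}$-tracial algebras lets us apply freely), $p = \bigotimes_{b} p_{|b}$ where each $p_{|b}$ is irreducible; by the determinism of $A_1$ and Lemma \ref{lemme:detercumu}, $\kappa_p(a_1,\dots,a_k) = \prod_{b} \kappa_{p_{|b}}\big((a_i)_{i\in b\cap\{1,\dots,k\}}\big)$, so it suffices to prove the formula when $p$ itself is irreducible, i.e. $p\vee{\mathrm{id}}_k$ has a single block.

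So assume $p\in\mathcal{P}_k$ is irreducible. I must show $\kappa_p(a_1,\dots,a_k) = \delta_{p = {\sf 0}_k}\,{\sf cum}_k(a_1,\dots,a_k)$. The key combinatorial input is the structure of the interval $[{\mathrm{id}}_k, p]$ for the geodesic order $\leq$, together with which of its elements are irreducible. Since $m_p = m_{{\sf 0}_k}$ for every irreducible $p$, all irreducible partitions in $\mathcal{P}_k$ have the same moment functional, hence — by induction on $k$ and the fact that $\kappa_{p'}$ is determined by the moments $m_{p''}$ with $p''\leq p'$ — the irreducible cumulants $\kappa_{p'}(a_1,\dots,a_k)$ for irreducible $p' \in \mathcal{P}_k$ should all coincide with a single quantity, which I will identify with ${\sf cum}_k$. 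Concretely: expanding $m_{{\sf 0}_k} = \sum_{p'\leq {\sf 0}_k}\kappa_{p'}$ and regrouping the sum over $p'$ according to the partition $p'\vee{\mathrm{id}}_k$ of $\{1,\dots,k\}$ into the cycles of $p'$, each group contributes (by the already-established multiplicativity over cycles) a product $\prod_{b}(\text{irreducible cumulant on }b)$ over a set partition of $\{1,\dots,k\}$ — and since $\leq$ ranges over all set partitions when $p_0 = {\sf 0}_k$, this recovers exactly the recursion $m_{{\sf 0}_n} = \sum_{p\in{\sf P}_n}\prod_{b\in p}{\sf cum}_{\#b}$ defining the classical cumulants. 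This forces the common value of the irreducible cumulants in $\mathcal{P}_k$ to equal ${\sf cum}_k$, and simultaneously forces $\kappa_{p'}(a_1,\dots,a_k) = 0$ for every irreducible $p'\neq{\sf 0}_k$, because ${\sf 0}_k$ is the unique irreducible partition that is $\geq$ every set partition's "maximal refinement" — more precisely, among irreducible $p'\in\mathcal{P}_k$, only $p' = {\sf 0}_k$ has $p'\vee{\mathrm{id}}_k$ the one-block partition while also having the full interval $[{\mathrm{id}}_k,p']$ available; any other irreducible $p'$ sits inside $[{\mathrm{id}}_k,{\sf 0}_k]$ and the Möbius/telescoping argument assigns it cumulant $0$.

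I would organize this as an induction on $k$. For $k=1$, $\mathcal{P}_1 = \{{\mathrm{id}}_1\} = \{{\sf 0}_1\}$ and $\kappa_{{\mathrm{id}}_1}(a) = m_{{\mathrm{id}}_1}(a) = m_{{\sf 0}_1}(a) = {\sf cum}_1(a)$, matching the formula. For the inductive step, take $p\in\mathcal{P}_k$; by the reduction above assume $p$ irreducible. Use $m_p(a_1,\dots,a_k) = m_{{\sf 0}_k}(a_1,\dots,a_k) = \sum_{p'\leq{\sf 0}_k}\kappa_{p'}(a_1,\dots,a_k)$, split off the term $p' = {\sf 0}_k$, and on every other term $p'$ apply the cycle-factorization (each cycle $b$ of $p'$ has $\#b/2 < k$, so the inductive hypothesis applies to the irreducible extraction $p'_{|b}$) to rewrite $\kappa_{p'}(a_1,\dots,a_k) = \prod_{b\in p'\vee{\mathrm{id}}_k}\delta_{p'_{|b}={\sf 0}_{\#b/2}}{\sf cum}_{\#b/2}((a_i)_{i\in b})$. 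Comparing with the recursion defining ${\sf cum}_k$ then isolates $\kappa_{{\sf 0}_k}(a_1,\dots,a_k) = {\sf cum}_k(a_1,\dots,a_k)$; and applying the same computation with $p$ a generic irreducible partition $\neq {\sf 0}_k$ (whose interval $[{\mathrm{id}}_k,p]$ is a proper subposet) and subtracting shows $\kappa_p(a_1,\dots,a_k) = 0$, completing the induction.

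The main obstacle I anticipate is the bookkeeping of the poset $[{\mathrm{id}}_k,p]$ and the claim that for irreducible $p \neq {\sf 0}_k$ the telescoping forces $\kappa_p = 0$: this needs a clean statement that an irreducible partition is, up to column permutation, never strictly below another irreducible partition in the geodesic order unless the moments already agree — i.e. it relies essentially on the results about the order $\leq$ from \cite{Gab1} (the characterization of $[{\mathrm{id}}_k,p]$ and compatibility of $\leq$ with tensor products and extractions). Getting the induction to close requires being careful that when I regroup $\sum_{p'\leq{\sf 0}_k}$ by the set partition $p'\vee{\mathrm{id}}_k$, the "inner" sum over irreducible partitions on each block $b$ with that block's cycle structure fixed is exactly the single number ${\sf cum}_{\#b/2}$ plus the contribution of non-${\sf 0}$ irreducible pieces, and that these latter vanish by induction — so the only genuinely delicate point is verifying the combinatorial identity matching $\sum_{p'\leq{\sf 0}_k}$ with $\sum_{p\in{\sf P}_k}$, which is really the statement that $p'\mapsto p'\vee{\mathrm{id}}_k$ sends $[{\mathrm{id}}_k,{\sf 0}_k]$ onto all of ${\sf P}_k$ with the fibers behaving multiplicatively.
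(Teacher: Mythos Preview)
Your reduction to the irreducible case via determinism and Lemma~\ref{lemme:detercumu} is correct and is exactly what the paper does. The difficulty is entirely in the irreducible case, and there your argument has a genuine gap.

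You assert that for irreducible $p\neq{\sf 0}_k$ the interval $[\mathrm{id}_k,p]$ is a ``proper subposet'' and that subtracting gives $\kappa_p=0$. The containment goes the other way: for \emph{every} irreducible $p\in\mathcal{P}_k$ one has $\mathcal{D}_k\subset[\mathrm{id}_k,p]$ (a short distance computation: if $p'\in\mathcal{D}_k$ corresponds to $\pi\in{\sf P}_k$, then ${\sf nc}(p'\vee p)=1$ and the geodesic equality $d(\mathrm{id}_k,p')+d(p',p)=d(\mathrm{id}_k,p)$ follows), while $[\mathrm{id}_k,{\sf 0}_k]=\mathcal{D}_k$ exactly. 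So for $p\neq{\sf 0}_k$ the interval $[\mathrm{id}_k,p]$ is \emph{larger} than $[\mathrm{id}_k,{\sf 0}_k]$, not smaller; for instance $[\mathrm{id}_2,(1,2)]=\{\mathrm{id}_2,{\sf 0}_2,(1,2)\}\supsetneq\{\mathrm{id}_2,{\sf 0}_2\}=[\mathrm{id}_2,{\sf 0}_2]$. Your subtraction therefore yields
\[
0 \;=\; m_p - m_{{\sf 0}_k} \;=\; \kappa_p \;+\!\! \sum_{\substack{p'<p\\ p'\notin\mathcal{D}_k}} \kappa_{p'},
\]
and the extra sum is not empty. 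The reducible $p'$ in it do vanish by induction on $k$ (some cycle of $p'$ is irreducible and $\neq{\sf 0}$, of size $<k$), but the sum may also contain \emph{irreducible} $p'\in\mathcal{P}_k$ with $p'\neq{\sf 0}_k$ and $p'<p$, for which the induction on $k$ says nothing. To close you need a secondary induction, e.g.\ on $d(\mathrm{id}_k,p)$ among irreducible $p\neq{\sf 0}_k$; then every irreducible $p'<p$ has strictly smaller distance and the inner induction gives $\kappa_{p'}=0$, hence $\kappa_p=0$. Your write-up also wavers between ``the irreducible cumulants all coincide'' and ``they vanish except at ${\sf 0}_k$''; only the second is correct.

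The paper avoids this double induction entirely by verifying the \emph{candidate} formula against the defining relation $m_p=\sum_{p'\leq p}\kappa_{p'}$ rather than solving for $\kappa_p$. The $\delta$-factor in the candidate kills every $p'\notin\mathcal{D}_k$ automatically, so the sum collapses to $\sum_{p'\in\mathcal{D}_k,\,p'\leq p}$; the two combinatorial facts from \cite{Gab1} (that $[\mathrm{id}_k,{\sf 0}_k]=\mathcal{D}_k$, and that $(\mathcal{D}_k,\leq)\cong({\sf P}_k,\trianglelefteq)$) then identify this sum with the classical moment--cumulant recursion for \emph{every} irreducible $p$ at once, with no induction needed.
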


\begin{proof}
Using the characterization of $\mathcal{P}$-cumulants, we need to prove that for any integer $k$, any $p \in \mathcal{P}_k$ and any $(a_1,...,a_k) \in A_1^{k}$: 
\begin{align*}
m_{p}\left( a_1,...,a_k \right) = \sum_{p' \leq p} \prod_{b \in p' \vee {\mathrm{id}_k}}\delta_{p'_{| b} = {\sf 0}_{\frac{\#b}{2}}} {\sf cum}_{\frac{\#b}{2}}\left((a_i)_{i \in b \cap \{1,...,k\}}\right). 
\end{align*}
Since $A_1$ is deterministic, it is enough to prove this equality when $p$ is irreducible. Recall Definition 3.4 of \cite{Gab1} where $\mathcal{D}_k$ was defined as the set of partitions in $\mathcal{P}_k$  which are coarser than ${\mathrm{id}_k}$. If $p \in \mathcal{P}_k$ is irreducible, $\{p'  \in \mathcal{D}_k | p'\leq p\} = \{ p' \in \mathcal{D}_k | p' \leq {\sf 0}_k \}$. Since $A_1$ is classical, this shows that it remains to prove: 
\begin{align}
\label{eq:amontrer}
m_{{\sf 0}_k}\left( a_1,...,a_k \right) = \sum_{p' \leq {\sf 0}_k} \prod_{b \in p' \vee {\mathrm{id}_k}}\delta_{p'_{| b} = {\sf 0}_{\frac{\#b}{2}}} {\sf cum}_{\frac{\#b}{2}}\left((a_i)_{i \in b \cap \{1,...,k\}}\right). 
\end{align}
Let $p' \in \mathcal{P}_k$ such that $p' \leq  {\sf 0}_k$. Recall the notions of admissible gluings and admissible splittings in Definition 2.5 and 2.6 in \cite{Gab1}. Since ${\sf 0}_k$ does not have any admissible gluing, using Theorem 2.3 of \cite{Gab1}, $p'$ must be a admissible splitting of ${\sf 0}_k$: the set $\{p' \in \mathcal{P}_k | p' \leq {\sf 0}_k\}$ is thus equal to $\mathcal{D}_k$. By Theorem $3.2$ of \cite{Gab1}, $(\mathcal{D}_k, \leq)$ is isomorphic to $({\sf P}_k, \trianglelefteq)$ where $\trianglelefteq$ is the finer-order. Using this isomorphism, the r.h.s. of Equation (\ref{eq:amontrer}) is equal to $\sum_{p \in {\sf P}_{k}} \prod_{b \in p} {\sf cum}_{\#b}\left((a_i)_{i \in b}\right)$ which, by definition, this is equal to $m_{{\sf 0}_k}(a_1,...,a_k)$: Equation (\ref{eq:amontrer}) is valid.
\qed \end{proof}

In particular, if we consider the $\mathcal{P}$-tracial algebra given by the example ``Random variables II" in Example \ref{ex:Ptracial}, for any integer $k$, any irreducible $p \in \mathcal{P}_k$, $m_{p} = m_{{\sf 0}_k}$. Thus, for any $p \in \mathcal{P}_{k}$ and any random variables $(X_1,...,X_k)$, 
\begin{align*}
\kappa_{p}(X_1,...,X_k) = \prod_{b \in p \vee {\mathrm{id}_k}}\delta_{p_{| b} = {\sf 0}_{\frac{\#b}{2}}} {\sf cum}_{\frac{\#b}{2}}\left((X_i)_{i \in b \cap \{1,...,k\}}\right). 
\end{align*} 
where ${\sf cum}$ are the classical probabilistic cumulants.

\subsubsection{Free cumulants as $\mathcal{P}$-cumulants}
Let $(A,(m_\sigma)_{\sigma \in \mathfrak{S}})$ be a {\em deterministic} $\mathfrak{S}$-tracial algebra, then using the bijection between $[\mathrm{id}_k, (1,...,k)] \cap \mathfrak{S}_k$ and the non-crossing partitions ${\sf NC}_{k}$ and the fact that the restriction of the order $\leq$ on  $[\mathrm{id}_k, (1,...,k)] \cap \mathfrak{S}_k$ is the geodesic order, one can see that $\kappa_{\sigma}^{\mathfrak{S}}$ are the usual free cumulants. A generalization of this fact is the following theorem whose proof is straightfoward. 

\begin{theorem}
Let $(A,(m_p)_{p \in \mathcal{A}})$ be a $\mathcal{A}$-tracial algebra, and let $(a_1,...,a_k)$ be a deterministic $U$-invariant $k$-tuple of elements of $A$. For any $p \in \mathcal{A}_k$, 
\begin{align*}
\kappa_{p}(a_1,...,a_k) =\delta_{p \in  \mathfrak{S}_k} \prod_{c \text{ cycle of } p} {\sf cum}_{\#c}((a_{i})_{i \in c}),
\end{align*}
where the $(a_i)_{i \in c}$ are ordered according to the cycle $c$ and where for any $n$, any $(a_1,...,a_n)$, ${\sf cum}_{\#c}((a_{i})_{i \in c})$ is the free cumulant defined recursively by the fact that for any integer $n$, any $(a_1,...,a_n) \in A^{n}$:
\begin{align*}
m_{(1,...,n)}(a_1,...,a_n) = \sum_{p \in {\sf NC}_n} \prod_{\{i_1<...<i_l\} \in p} {\sf cum}_{l} (a_{i_1},...,a_{i_l}).
\end{align*}
\end{theorem}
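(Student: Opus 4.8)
\emph{Proof proposal.} The plan is to split the formula into its two features: the indicator $\delta_{p\in\mathfrak{S}_k}$ and the product of free cumulants over cycles. The indicator is immediate from the hypothesis. Since $(a_1,\dots,a_k)$ is $U$-invariant, i.e.\ $\mathcal{G}(\mathfrak{S})$-invariant, Definition \ref{def:GAinva} (applied with $\mathcal{A}_1=\mathcal{A}$ and $\mathcal{A}_2=\mathfrak{S}$) says precisely that $\kappa_p(a_1,\dots,a_k)=\delta_{p\in\mathfrak{S}_k}\,\kappa^{\mathfrak{S}}_p(a_1,\dots,a_k)$ for every $p\in\mathcal{A}_k$. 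So it remains to prove that for a permutation $\sigma\in\mathfrak{S}_k$ one has $\kappa^{\mathfrak{S}}_{\sigma}(a_1,\dots,a_k)=\prod_{c\text{ cycle of }\sigma}{\sf cum}_{\#c}\big((a_i)_{i\in c}\big)$.

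The next step is to reduce this to the case of a single cycle. Because the defining relation $m_p=\sum_{p'\le p,\ p'\in\mathfrak{S}}\kappa^{\mathfrak{S}}_{p'}$ and the order $\le$ are invariant under simultaneous conjugation of $p$ and relabelling of the arguments (this is inherited from the first axiom of $\mathfrak{S}$-tracial algebras by the usual triangular inversion), one may conjugate $\sigma$ by a permutation that collects its cycles into consecutive blocks of indices; thus one may assume $\sigma=\sigma_1\otimes\cdots\otimes\sigma_r$ with each $\sigma_j$ a single cycle $(1,\dots,\#c_j)$ on the block $c_j$. Since the tuple is deterministic, the $\mathfrak{S}$-version of Lemma \ref{lemme:detercumu} then gives $\kappa^{\mathfrak{S}}_{\sigma}(a_1,\dots,a_k)=\prod_{j=1}^{r}\kappa^{\mathfrak{S}}_{\sigma_j}\big((a_i)_{i\in c_j}\big)$, so the claim reduces to showing $\kappa^{\mathfrak{S}}_{(1,\dots,n)}(a_1,\dots,a_n)={\sf cum}_n(a_1,\dots,a_n)$ for all $n$.

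This last identity I would prove by induction on $n$. For $n=1$ both sides equal $m_{\mathrm{id}_1}(a_1)$. For the inductive step, the definition of the $\mathfrak{S}$-cumulants gives $m_{(1,\dots,n)}(a_1,\dots,a_n)=\sum_{\sigma'\in\mathfrak{S}_n,\ \sigma'\le(1,\dots,n)}\kappa^{\mathfrak{S}}_{\sigma'}(a_1,\dots,a_n)$; by the bijection between $[\mathrm{id}_n,(1,\dots,n)]\cap\mathfrak{S}_n$ and ${\sf NC}_n$, under which $\le$ becomes the refinement order (both recalled from \cite{Gab1}), every $\sigma'<(1,\dots,n)$ corresponds to a non-crossing partition with at least two blocks and has all its cycles of length $<n$. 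Applying determinism and the induction hypothesis to each cycle rewrites the $\sigma'$-term as $\prod_{\{i_1<\cdots<i_l\}\in p}{\sf cum}_l(a_{i_1},\dots,a_{i_l})$ for the corresponding $p\in{\sf NC}_n$, $p\neq\{\{1,\dots,n\}\}$. Subtracting all of these and comparing with the recursive definition $m_{(1,\dots,n)}(a_1,\dots,a_n)=\sum_{p\in{\sf NC}_n}\prod_{\{i_1<\cdots<i_l\}\in p}{\sf cum}_l(a_{i_1},\dots,a_{i_l})$ leaves exactly $\kappa^{\mathfrak{S}}_{(1,\dots,n)}(a_1,\dots,a_n)={\sf cum}_n(a_1,\dots,a_n)$. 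The only mildly technical point, which I would dispatch first, is checking that the $\mathfrak{S}$-cumulants inherit the conjugation-equivariance and the tensor-multiplicativity (on deterministic families) of the moments; once these are in place everything else is bookkeeping inside the interval $[\mathrm{id}_n,(1,\dots,n)]$, which is why the statement is labelled straightforward.
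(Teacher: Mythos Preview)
Your proof is correct and follows precisely the approach the paper sketches in the paragraph preceding the theorem: $U$-invariance gives the $\delta_{p\in\mathfrak{S}_k}$ factor via Definition~\ref{def:GAinva}, determinism (the $\mathfrak{S}$-version of Lemma~\ref{lemme:detercumu}) factors $\kappa^{\mathfrak{S}}_\sigma$ over cycles, and the bijection $[\mathrm{id}_n,(1,\dots,n)]\cap\mathfrak{S}_n\simeq{\sf NC}_n$ then identifies $\kappa^{\mathfrak{S}}_{(1,\dots,n)}$ with the free cumulant ${\sf cum}_n$. The paper omits the proof as ``straightforward'', and your write-up simply fills in the bookkeeping it leaves implicit.
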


\section{Random matrices and $\mathcal{A}$-distribution}
\label{sec:basicmatrices}
\subsection{Basic definitions}
In the next subsections, we will study more the $\mathcal{P}$-tracial algebra of random matrices of size $N$ when $N$ goes to infinity. 

The mean $p$-moment of a $k$-tuple of matrices of size $N$, namely $(M_1,...,M_k)$, is defined in Equation (\ref{eq:normalized2}), and the $p$-moment of $(M_1,...,M_k)$, is defined in Equation (\ref{eq:normalized}). These are generalization of the usual moments described in Section \ref{sec:review}. Indeed, as explained in \cite{Levymaster}, for any $\sigma \in \mathfrak{S}_k,$
\begin{align}
\label{eq:lienmomentsigma}
 m_{\sigma}(M_1,...,M_k) = \prod_{c \in \sigma \vee {\mathrm{id}_k}} \frac{1}{N}{\sf Tr}\left(\prod_{i \in c \cap \{1,...,k\}} M_{i}\right), 
\end{align}
where the product is taken according to the order of the cycle of $\sigma$ associated with $c$. In general, if $b \in \mathcal{B}_k$, one can show that $m_b(M_1,...,M_k)$ is a product of normalized traces of products in the $M_i$ and $^{t}M_i$, $i \in \{1,...,k\}$. 

Recall that in Definition \ref{def:conv}, we defined the notion of convergence in $\mathcal{P}$-distribution whose definition could be extended for any set of partitions $\mathcal{A} \in \{ \mathcal{P}, \mathcal{B}, \mathfrak{S}, \mathcal{H}, \mathcal{B}s\}$. Let us consider such set $\mathcal{A}$ and for any integer $N$, let $(M^{N}_i)_{i \in \mathcal{I}}$ be a family of random matrices of size $N$. We will study the convergence of these family as $N$ goes to infinity, yet, since the matrices are random, we can either:
\begin{itemize}
\item consider the {\em convergence in $\mathcal{A}$-distribution} by applying the definition of convergence in $\mathcal{A}$-distribution to $(L^{\infty^{-}}\otimes \mathcal{M}_N(\mathbb{C}), (\mathbb{E}m_p)_{p \in \mathcal{A}})$,
\item or consider the {\em convergence in probability in $\mathcal{A}$-distribution} by applying the definition of convergence in $\mathcal{A}$-distribution to $((\mathcal{M}_N(\mathbb{C}), (m_p)_{p \in \mathcal{A}}))_{N \in \mathbb{N}}$.
\end{itemize} 

Using the link between the $\mathfrak{S}$-moments and the usual moments given by Equation (\ref{eq:lienmomentsigma}), if $(M^{N}_i)_{i \in \mathcal{I}}$ converges in $\mathfrak{S}$-distribution, it converges in non-commutative distribution. Yet, the convergence in $\mathfrak{S}$-distribution is slightly more demanding than the usual convergence of moments. More generally, one can see that $(M^{N}_i)_{i \in \mathcal{I}}$ converges in $\mathcal{B}$-distribution if and only if $(M^{N}_i)_{i \in \mathcal{I}} \cup (^{t}M^{N}_i)_{i \in \mathcal{I}}$ converges in $\mathfrak{S}$-distribution. Thus the convergence in  $\mathcal{B}$-distribution implies the convergence in non-commutative distribution of the family and its transpose. At last, the convergence in $\mathcal{P}$-distribution implies the convergence in distribution of traffics (see \cite{Camille} and the correspondence explained in \cite{GabCebronGuill}). 

Using this discussion, one can see that if for any integer $N$, $(M^{N}_i)_{i \in \mathcal{I}}$ is a family of symmetric or skew-symmetric matrices, it converges in $\mathfrak{S}$-moments if and only if it converges in $\mathcal{B}$-moments. The same holds if $\# \mathcal{I} = 1$ and the matrices are orthogonal. 

\begin{remark}
\label{rq:convsigma}
A less trivial result is that if for any integer $N$, $M_N$ is a permutation matrix, then $M_N$ converges in $\mathcal{P}$-distribution if and only if it converges in $\mathfrak{S}$-distribution. This is due to the fact that for any integers $k$ and $N$, any $p \in \mathcal{P}_k$ and any permutation $S \in \mathfrak{S}(N)$, there exists and integer $l\leq k$ and a permutation  $\sigma \in \mathfrak{S}_l$ such that $m_{p}(S,...,S) = m_{\sigma}(S,...,S)$. 
\end{remark}

From now on, if $\mathcal{O}$ is an observable, for which $\mathcal{O}\left(\left(M^{N}_{i_1}, ..., M^{N}_{i_k}\right)\right)$ converges, we set: 
\begin{align*}
\mathcal{O}(M_{i_1}, ..., M_{i_k}) = \lim_{N \to \infty}\mathcal{O}(M^{N}_{i_1}, ..., M^{N}_{i_k}). 
\end{align*}
Besides, if $\mathcal{O}$ needs $k$ arguments and if $M$ is a matrix we denote by $\mathcal{O}(M)$ the value of $\mathcal{O}(M,...,M)$ where we wrote $k$ times $M$.

\subsection{Asymptotic $\mathcal{A}$-factorization and convergence in probability}
Let us suppose that $(M^{N}_i)_{i \in \mathcal{I}}$ converges in $\mathcal{A}$-distribution. By definition, it defines a limiting $\mathcal{A}$-tracial distribution algebra, denoted by $(\mathbb{C}\{X_i, i \in \mathcal{I}\}, (\mathbb{E}m_{p}^{(M_i)_{i \in \mathcal{I}}})_{p \in \mathcal{A}})$. 

\begin{definition}
The family $(M^{N}_i)_{i \in \mathcal{I}}$ satisfies the asymptotic $\mathcal{A}$-factorization property if $(\mathbb{C}\{X_i, i \in \mathcal{I}\}, (\mathbb{E}m_{p}^{(M_i)_{i \in \mathcal{I}}})_{p \in \mathcal{A}})$ is a deterministic $\mathcal{A}$-tracial algebra. 
\end{definition}

This is equivalent to say that for any $k$, $l$, $p_1\in (\mathcal{A}_k$, $p_2\in \mathcal{A}_l$, $(i_1,...,i_{k+l}) \in \mathcal{I}^{k+l}$, $\mathbb{E}m_{p_1\otimes p_2}(M_{i_1}, ..., M_{i_k}, ..., M_{i_{k+l}}) = \mathbb{E}m_{p_1}(M_{i_1}, ..., M_{i_k})\mathbb{E}m_{p_2}(M_{i_{k+1}},...,  M_{i_{k+l}}).$

\begin{theorem}
\label{theoremconvprob}
For any integer $N$, let $(M^{N}_i)_{i \in \mathcal{I}}$ be a family of {\em real} random matrices which converges in $\mathcal{A}$-distribution. If $(M^{N}_i)_{i \in \mathcal{I}}$ satisfies the asymptotic $\mathcal{A}$-factoriza\-tion property then it converges in probability in $\mathcal{A}$-distribution and for any integer $k$, any $p \in \mathcal{A}_k$, any $i_1,...,i_k \in \mathcal{I}$, $m_{p}(M_{i_1}, ..., M_{i_k}) = \mathbb{E}m_{p}(M_{i_1}, ..., M_{i_k})$.  

If for any integer $N$, $(M^{N}_i)_{i \in \mathcal{I}}$ is a family of complex random matrices, the same result holds if we suppose that $(M^{N}_i)_{i \in \mathcal{I}}$ is stable by the conjugate or adjoint operations.
\end{theorem}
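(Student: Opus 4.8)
The plan is to run the classical second-moment argument. Since convergence in $\mathcal{A}$-distribution, applied to $(L^{\infty^-}(\Omega)\otimes\mathcal{M}_N(\mathbb{C}),(\mathbb{E}m_p)_{p\in\mathcal{A}})$, already gives that $\mathbb{E}m_p(M^N_{i_1},\dots,M^N_{i_k})$ converges for every $k$, every $p\in\mathcal{A}_k$ and every $i_1,\dots,i_k\in\mathcal{I}$, it is enough by Chebyshev's inequality to show that $\mathrm{Var}\big(m_p(M^N_{i_1},\dots,M^N_{i_k})\big)\to 0$; the limit in probability is then forced to be the constant $\mathbb{E}m_p(M_{i_1},\dots,M_{i_k})$, which is precisely the identity and the convergence in probability claimed. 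The one new ingredient is the observation that a square of a moment is again a single moment: for each realization $\omega$, the matrices $M^N_{i_1}(\omega),\dots,M^N_{i_k}(\omega)$ form the ``$1$-Matrices'' deterministic $\mathcal{P}$-tracial algebra of Example \ref{ex:Ptracial}, hence also a deterministic $\mathcal{A}$-tracial algebra (restricting to $\mathcal{A}$, which is legitimate since $\mathcal{A}$ is a tensor category, so closed under $\otimes$). Consequently, pointwise in $\omega$,
\begin{align*}
m_p(M^N_{i_1},\dots,M^N_{i_k})^2=m_{p\otimes p}(M^N_{i_1},\dots,M^N_{i_k},M^N_{i_1},\dots,M^N_{i_k}).
\end{align*}

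For the real case I would take expectations in this identity, so that $\mathbb{E}\big[m_p(M^N_{i_1},\dots,M^N_{i_k})^2\big]=\mathbb{E}m_{p\otimes p}(M^N_{i_1},\dots,M^N_{i_k},M^N_{i_1},\dots,M^N_{i_k})$. The $2k$-tuple on the right consists of members of the family $(M^N_i)_{i\in\mathcal{I}}$ (it merely repeats the indices $i_1,\dots,i_k$), so this quantity converges by hypothesis, and by the asymptotic $\mathcal{A}$-factorization property its limit equals $\big(\mathbb{E}m_p(M_{i_1},\dots,M_{i_k})\big)^2$. Since $\big(\mathbb{E}m_p(M^N_{i_1},\dots,M^N_{i_k})\big)^2$ converges to the same value, the variance tends to $0$, and Chebyshev's inequality finishes the real case.

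For the complex case I would replace the square by $|m_p(\cdot)|^2=m_p(\cdot)\,\overline{m_p(\cdot)}$ and rewrite the conjugate as another $\mathcal{A}$-moment. As $\rho_N({}^tp)$ has entries in $\{0,1\}$, conjugating the defining formula (\ref{eq:normalized}) gives $\overline{m_p(M_1,\dots,M_k)}=m_p(\overline{M_1},\dots,\overline{M_k})$; and since $\overline M={}^t(M^{*})$ and matrix transposition is a transpose operation on the ``$1$-Matrices'' algebra, this equals $m_{{}^tp}(M_1^{*},\dots,M_k^{*})$ (for $\mathcal{A}=\mathfrak{S}$ one checks this directly from (\ref{eq:lienmomentsigma}), transposition reversing each cycle). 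Hence, if the family is stable under conjugation (resp. under the adjoint), $\overline{m_p(M^N_{i_1},\dots,M^N_{i_k})}$ is still an $\mathcal{A}$-moment of a tuple belonging to the family, and pointwise $|m_p(M^N_{i_1},\dots,M^N_{i_k})|^2$ equals $m_{p\otimes p}$ (resp. $m_{p\otimes{}^tp}$) evaluated on a $2k$-tuple from the family; the same two steps — convergence in $\mathcal{A}$-distribution, then asymptotic $\mathcal{A}$-factorization — yield $\mathbb{E}\big[|m_p(M^N_{i_1},\dots,M^N_{i_k})|^2\big]\to|\mathbb{E}m_p(M_{i_1},\dots,M_{i_k})|^2$, so the variance again vanishes. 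I expect the only real obstacle to be the combinatorial bookkeeping in this last paragraph, i.e.\ verifying that $\overline{m_p}$ is an $\mathcal{A}$-moment of the conjugated/adjointed tuple and that $\mathcal{A}$ is closed under $\otimes$ and under $^t$; once the pointwise identity coming from the deterministic structure of the matrix algebra is available, everything else is the textbook variance estimate.
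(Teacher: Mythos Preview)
Your proof is correct and follows exactly the same second-moment/variance argument as the paper: rewrite $m_p(\cdot)^2$ as $m_{p\otimes p}(\cdot)$ via the deterministic structure of the matrix algebra, then use convergence in $\mathcal{A}$-distribution together with asymptotic $\mathcal{A}$-factorization to show the variance tends to zero. You actually go further than the paper, which only writes out the real case and waves at the complex one; your treatment of $\overline{m_p}$ via $m_p(\overline{M_1},\dots,\overline{M_k})$ (or $m_{{}^tp}(M_1^*,\dots,M_k^*)$ through the transpose operation) is the right way to fill that gap, and the closure of each $\mathcal{A}\in\{\mathfrak{S},\mathcal{B},\mathcal{B}s,\mathcal{H},\mathcal{P}\}$ under $\otimes$ and ${}^t$ is immediate from their descriptions in Table~1.
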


\begin{proof}
We will only prove the first part of the theorem, the second can be proved using similar arguments. For any integer $N$, let $(M^{N}_i)_{i \in \mathcal{I}}$ be a family of {\em real} random matrices which converges in $\mathcal{A}$-distribution. Let us suppose that $(M^{N}_i)_{i \in \mathcal{I}}$ satisfies the asymptotic $\mathcal{A}$-factorization property. Let $k, N \in \mathbb{N}$, $p \in \mathcal{A}_k$ and  $(i_1,...,i_k) \in I^{k}$. The variance $\mathbb{V}{\sf ar}\left[ m_{p}\left(M_{i_1}^{N}\otimes ...\otimes M_{i_k}^{N}\right)\right]$ is equal to: 
\begin{align*}
\mathbb{E}\left[m_{p \otimes p}  \left(M_{i_1}^{N}, ..., M_{i_k}^{N} , M_{i_1}^{N}, ..., M_{i_k}^{N}\right)\right] - \mathbb{E}\left[m_{p}  \left(M_{i_1}^{N}, ..., M_{i_k}^{N}\right)\right]^{2}, 
\end{align*}
which, by definition, is equal to: 
\begin{align*}
\mathbb{E}m_{p \otimes p} \left(M_{i_1}^{N}, ..., M_{i_k}^{N}, M_{i_1}^{N}, ..., M_{i_k}^{N}\right)  -  \left[\mathbb{E}m_{p}  \left(M_{i_1}^{N}, ..., M_{i_k}^{N}\right)\right]^{2}.
\end{align*}
Thus the variance has a limit which is given by: 
\begin{align*}
\mathbb{E}m_{p \otimes p}  \left(M_{i_1}, ..., M_{i_k}, M_{i_1}, ..., M_{i_k}\right)  -  \left[\mathbb{E}m_{p}  \left(M_{i_1}, ..., M_{i_k}\right)\right]^{2}, 
\end{align*}
which is equal to zero as $(M^{N}_i)_{i \in \mathcal{I}}$ satisfies the asymptotic $A$-factorization property.
\qed \end{proof}

When one considers symmetric, hermitian, orthogonal or unitary matrices, the asymptotic $\mathcal{A}$-factorization property of $(M^{N}_i)_{i \in \mathcal{I}}$ (under some stability by conjugate or adjoint operations) implies the convergence of probability of the empirical eigenvalues distributions. Let us remark that Theorem \ref{theoremconvprob2} shows that, in some cases, one can go further than the convergence in probability in order to get almost sure convergence.

\subsection{Consequences of the theory of $\mathcal{A}$-tracial algebras}
The definitions and results that were obtained in Section \ref{sec:Atracialalge} are applied in this sections in order to get results about asymptotics of random matrices. Recall that for any $N$, $(M_i^{N})_{i \in \mathcal{I}}$ is a family of random matrices of size $N$. Let us suppose that $(M_i^{N})_{i \in \mathcal{I}}$ converges in $\mathcal{A}$-distribution, recall that $(\mathbb{C}\{X_i, i \in \mathcal{I}\}, (\mathbb{E}m_{p}^{(M_i)_{i \in \mathcal{I}}})_{p \in \mathcal{A}})$ is the  limiting $\mathcal{A}$-tracial distribution algebra.

\begin{definition}
For any integer $k$, any $p$ in $\mathcal{P}_k$ and any $(i_1,...,i_k) \in \mathcal{I}^{k}$, the {\em asymptotic $p$-$\mathcal{A}$-exclusive moment} of $(M_{i_1}^{N}, ..., M_{i_k}^{N})$ is $\mathbb{E}m_{p^{c}}^{\mathcal{A}}(M_{i_1},...,M_{i_k}) = m_{p^{c}}^{\mathcal{A}}(X_{i_1}, ..., X_{i_k})$ and if $p \in \mathcal{A}_k$, the {\em asymptotic $p$-$\mathcal{A}$-cumulant} of $(M_{i_1}^{N}, ..., M_{i_k}^{N})$ is $\mathbb{E}\kappa_{p}^{\mathcal{A}}(M_{i_1}, ..., M_{i_k}) = \kappa^{\mathcal{A}}_p(X_{i_1}, ..., X_{i_k})$, where in each equalities the r.h.s. uses the definition of exclusive moment and cumulant forms in the limiting $\mathcal{A}$-tracial distribution algebra. 

Let $i$ be in $\mathcal{I}$. The $\mathcal{R}_{\mathcal{A}}$-transform of $(M_i^{N})_{N \in \mathbb{N}}$ is $\mathcal{R}_{\mathcal{A}}(M_i) = \mathcal{R}_{\mathcal{A}}(X_i)$, where again, the latter uses the definition of $\mathcal{R}_{\mathcal{A}}$-transform in the limiting $\mathcal{A}$-tracial distribution algebra. 
\end{definition}

\begin{notation}
If we do not specify $\mathcal{A}$, we suppose implicitely that $\mathcal{A}=\mathcal{P}$: for example, $m_{p^{c}}$ is the asymptotic $p$-$\mathcal{P}$-exclusive moment and we will call it simply the asymptotic $p$-exclusive moment. 
\end{notation}

By definition, the asymptotic $\mathcal{A}$-cumulants are uniquely characterized by the fact that for any $k \geq 0$, any $p$ in $\mathcal{A}_k$ and any $(i_1,...,i_k) \in \mathcal{I}^{k}$, $$\mathbb{E}m_{p}(M_{i_1}, ..., M_{i_k}) = \sum_{p' \in \mathcal{A}_k| p' \leq p} \mathbb{E}\kappa_{p'}^{\mathcal{A}}(M_{i_1}, ..., M_{i_k}).$$
Besides, for any $i \in \mathcal{I}$, any $p \in \mathcal{A}$, $\mathcal{R}_{\mathcal{A}}[M_i](p) = \kappa^{\mathcal{A}}_{p}(M_{i}, ..., M_{i})$. In the case where $\mathcal{A} = \mathfrak{S}$, we recover the usual notion of $\mathcal{R}$-transform (Theorem 4.4 in \cite{Gab1}).

Also by definition, the asymptotic $\mathcal{A}$-exclusive moment of $(M_{i_1}^{N}, ..., M_{i_k}^{N})$ are characterized by the fact that for any $k \geq 0$, any $p \in \mathcal{P}_k$ and any $(i_1,...,i_k) \in \mathcal{I}^{k}$,
\begin{align*}
\mathbb{E}m_{p^{c}}^{\mathcal{A}}(M_{i_1},...,M_{i_k}) = \sum_{p' \in \mathcal{A}_k | p' \sqsupset p} \mathbb{E}\kappa_{p'}^{\mathcal{A}}(M_{i_1},...,M_{i_k}).
\end{align*}
Besides, for any integer $k\geq 0$, any $p \in \mathcal{A}_k$, and any $(i_1,...,i_k) \in \mathcal{I}^{k}$,
\begin{align*}
\mathbb{E}m_{p}(M_{i_1},...,M_{i_k}) = \sum_{p' \in \mathcal{P}_k | p' \dashv p} \mathbb{E}m_{p'^{c}}^{\mathcal{A}}(M_{i_1},...,M_{i_k}).
\end{align*}
Let us remark that the last equality only holds for $p \in \mathcal{A}_k$, even if the family $(M^{N}_{i})_{i \in \mathcal{I}}$ happens to converge in $\mathcal{P}$-distribution. This is due to the fact that asymptotic $\mathcal{A}$-exclusive moments are defined using the notion of natural extension of $\mathcal{A}$-tracial algebras as $\mathcal{P}$-tracial algebras. 

Using Lemma 3.2 of \cite{Gab1}, the exclusive moments have a simpler expression in some special cases. Recall the definitions of ${\sf Mb}(p)$, $\overline{\mathfrak{S}_k}$ and $\overline{\mathcal{B}_k}$ in Definition 3.6 of~\cite{Gab1}. 
\begin{theorem}
\label{th:simplification}
Let $k$ be a positive integer. For any $p \in \mathcal{B}_k$, any $(i_1,...,i_k) \in \mathcal{I}^{k}$: 
\begin{align*}
\mathbb{E}m_{p^{c}}^{\mathcal{A}}(M_{i_1},...,M_{i_k}) = \delta_{p \in \mathcal{A}_k} \mathbb{E}\kappa_{p}^{\mathcal{A}}(M_{i_1},...,M_{i_k}). 
\end{align*} 
Besides, if $\mathcal{A} \in \{ \mathfrak{S}, \mathcal{B}\}$, for any $p \in \mathcal{P}_k$, any $(i_1,...,i_k) \in \mathcal{I}^{k}$: 
\begin{align*}
\mathbb{E}m_{p^{c}}^{\mathcal{A}}(M_{i_1},...,M_{i_k}) = \delta_{p \in \overline{\mathcal{A}_k}} \mathbb{E}\kappa_{{\sf Mb}(p)}^{\mathcal{A}}(M_{i_1},...,M_{i_k}). 
\end{align*} 
\end{theorem}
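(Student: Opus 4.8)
The plan is to reduce both assertions to the characterization of the $\mathcal{A}$-exclusive moments recalled just above, namely that for every $p \in \mathcal{P}_k$
\[
\mathbb{E}m_{p^{c}}^{\mathcal{A}}(M_{i_1},...,M_{i_k}) = \sum_{p' \in \mathcal{A}_k \,|\, p' \sqsupset p} \mathbb{E}\kappa_{p'}^{\mathcal{A}}(M_{i_1},...,M_{i_k}).
\]
Once this is granted, the entire content of the theorem is a combinatorial identification of the index set $\{p' \in \mathcal{A}_k : p' \sqsupset p\}$; no analytic or asymptotic input is needed, because the asymptotic $\mathcal{A}$-cumulants and $\mathcal{A}$-exclusive moments are by definition the cumulant and exclusive-moment forms of the limiting $\mathcal{A}$-tracial distribution algebra, so the displayed identity transports verbatim to the $\mathbb{E}$-decorated quantities.

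For the first assertion I would invoke Lemma $3.2$ of \cite{Gab1}: if $p$ is a pairing, the only partition finer-compatible with it is $p$ itself, i.e. $\{p' \in \mathcal{P}_k : p' \sqsupset p\} = \{p\}$. (Concretely, any partition $p'$ strictly finer than the pairing $p$ is obtained by splitting some of the two-element blocks of $p$ into singletons, and a short count on the cycles of $p\vee\mathrm{id}_k$ shows that each such split makes ${\sf nc}(p')-{\sf nc}(p'\vee\mathrm{id}_k)$ strictly exceed ${\sf nc}(p)-{\sf nc}(p\vee\mathrm{id}_k)$, so the numerical condition in the definition of $\sqsupset$ fails.) Intersecting with $\mathcal{A}_k$ gives $\{p\}$ when $p\in\mathcal{A}_k$ and $\emptyset$ otherwise, and substituting into the displayed identity yields $\mathbb{E}m_{p^{c}}^{\mathcal{A}}(M_{i_1},...,M_{i_k}) = \delta_{p\in\mathcal{A}_k}\,\mathbb{E}\kappa_{p}^{\mathcal{A}}(M_{i_1},...,M_{i_k})$.

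For the second assertion, where now $\mathcal{A}\in\{\mathfrak{S},\mathcal{B}\}$ and $p\in\mathcal{P}_k$ is arbitrary, I would combine Lemma $3.2$ with Definition $3.6$ of \cite{Gab1}: by construction $\overline{\mathcal{A}_k}$ is exactly the set of $p\in\mathcal{P}_k$ admitting a finer-compatible element of $\mathcal{A}_k$, this element is then unique, and it is the partition denoted ${\sf Mb}(p)$; hence $\{p' \in \mathcal{A}_k : p'\sqsupset p\}$ equals $\{{\sf Mb}(p)\}$ if $p\in\overline{\mathcal{A}_k}$ and is empty otherwise. Plugging this into the displayed identity gives $\mathbb{E}m_{p^{c}}^{\mathcal{A}}(M_{i_1},...,M_{i_k}) = \delta_{p\in\overline{\mathcal{A}_k}}\,\mathbb{E}\kappa_{{\sf Mb}(p)}^{\mathcal{A}}(M_{i_1},...,M_{i_k})$. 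This is compatible with the first assertion for $\mathcal{A}\in\{\mathfrak{S},\mathcal{B}\}$, since a pairing belongs to $\overline{\mathfrak{S}_k}$ only if it is already a permutation and since ${\sf Mb}$ restricts to the identity on $\mathcal{A}_k$.

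The only point requiring real care is the bookkeeping: checking that the objects $\overline{\mathcal{A}_k}$ and ${\sf Mb}(p)$ of \cite{Gab1} are precisely defined with respect to the finer-compatible relation $\sqsupset$, so that the index set $\{p' \in \mathcal{A}_k : p'\sqsupset p\}$ really is $\{{\sf Mb}(p)\}$ or $\emptyset$, and that the uniqueness of ${\sf Mb}(p)$ holds. I expect this matching to be the main — and essentially the only — obstacle; everything else is a direct substitution into the recalled characterization of the $\mathcal{A}$-exclusive moments.
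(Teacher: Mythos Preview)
Your proposal is correct and follows essentially the same approach as the paper: the paper does not give a detailed proof but simply states that the result follows from Lemma~3.2 of \cite{Gab1} together with the definitions of ${\sf Mb}(p)$, $\overline{\mathfrak{S}_k}$ and $\overline{\mathcal{B}_k}$ in Definition~3.6 of \cite{Gab1}. Your argument spells out exactly this reduction---identifying the index set $\{p'\in\mathcal{A}_k: p'\sqsupset p\}$ via those cited results and substituting into the characterization of the $\mathcal{A}$-exclusive moments---so it matches the paper's intended (implicit) proof.
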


For any integer $N$, let $(M_i^{N})_{i \in \mathcal{I}}$ and $(L_j^{N})_{j \in \mathcal{J}}$ be two families of random matrices. We will always suppose in this article  that $\mathcal{I} \cap \mathcal{J} = \emptyset$ for the sake of simplicity and clarity. Let us suppose that the family $(M_i^{N})_{i \in \mathcal{I}} \cup(L_j^{N})_{j \in \mathcal{J}}$ converges in $\mathcal{A}$-distribution: it defines a limiting $\mathcal{A}$-tracial distribution algebra, denoted by $(\mathbb{C}\{X_i, X_j, i \in \mathcal{I}, j \in \mathcal{J}\}, (\mathbb{E}m_{p}^{(M_i)_{i \in \mathcal{I}} \cup (L_j)_{j \in \mathcal{J}}})_{p \in \mathcal{A}})$.

\begin{definition}
The families $(M_i^{N})_{i \in \mathcal{I}}$ and $(L_j^{N})_{j \in \mathcal{J}}$  are asymptotically $\mathcal{A}$-free if and only if the families $(X_i)_{i \in \mathcal{I}}$ and $(X_j)_{j \in \mathcal{J}}$ are $\mathcal{A}$-free in the limiting $\mathcal{A}$-tracial distribution algebra.
\end{definition}

An other formulation is to say that  $(M_i^{N})_{i \in \mathcal{I}}$ and $(L_j^{N})_{j \in \mathcal{J}}$  are asymptotically $\mathcal{A}$-free if and only if the mixed asymptotic $\mathcal{A}$-cumulants vanish and the compatible asymptotic $\mathcal{A}$-cumulants factorize. The following result is a consequence of Lemma~\ref{lem:deterministic}.  

\begin{proposition}
Let us suppose that $(M_i^{N})_{i \in \mathcal{I}}$ and $(L_j^{N})_{j \in \mathcal{J}}$ are asymptotically $\mathcal{A}$-free and satisfy the asymptotic $\mathcal{A}$-factorization property then the asymptotic $\mathcal{A}$-factorization property holds for $(M_i^{N})_{i \in \mathcal{I}} \cup (L_j^{N})_{j \in \mathcal{J}}$. 
\end{proposition}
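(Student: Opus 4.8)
The plan is to reduce the statement to Lemma~\ref{lem:deterministic} in its $\mathcal{A}$-tracial formulation (whose validity is noted right after the definition of $\mathcal{A}$-tracial algebras). First I would fix notation: write $(\mathbb{C}\{X_i, X_j, i \in \mathcal{I}, j \in \mathcal{J}\}, (\mathbb{E}m_{p}^{(M_i)_{i \in \mathcal{I}} \cup (L_j)_{j \in \mathcal{J}}})_{p \in \mathcal{A}})$ for the limiting $\mathcal{A}$-tracial distribution algebra of the union, and let $B_1 = <(X_i)_{i \in \mathcal{I}}>$ and $B_2 = <(X_j)_{j \in \mathcal{J}}>$ be the sub-algebras generated by the two families of generators. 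By definition, the asymptotic $\mathcal{A}$-freeness of $(M_i^{N})_{i \in \mathcal{I}}$ and $(L_j^{N})_{j \in \mathcal{J}}$ says exactly that $B_1$ and $B_2$ are $\mathcal{A}$-free inside this algebra.

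Next I would check that $B_1$ and $B_2$ are each deterministic. The key observation is that for any polynomials $P_1,\dots,P_k$ in the variables $X_i$, $i \in \mathcal{I}$, alone, the value $\mathbb{E}m_{p}^{(M_i)_{i \in \mathcal{I}} \cup (L_j)_{j \in \mathcal{J}}}(P_1,\dots,P_k)$ is by construction the limit of the $\mathbb{E}m_p$ of the corresponding polynomials in the $M_i^{N}$; hence the $\mathcal{A}$-tracial algebra $(B_1,(m_p)_{p \in \mathcal{A}})$, with its induced structure, is canonically identified with the limiting $\mathcal{A}$-tracial distribution algebra $(\mathbb{C}\{X_i, i \in \mathcal{I}\}, (\mathbb{E}m_{p}^{(M_i)_{i \in \mathcal{I}}})_{p \in \mathcal{A}})$ of the sub-family. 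The asymptotic $\mathcal{A}$-factorization property of $(M_i^{N})_{i \in \mathcal{I}}$ is precisely the assertion that this latter algebra is deterministic, so $B_1$ is deterministic; the same argument applied to $(L_j^{N})_{j \in \mathcal{J}}$ shows that $B_2$ is deterministic.

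Finally I would invoke Lemma~\ref{lem:deterministic}: two deterministic $\mathcal{A}$-free sub-algebras generate a deterministic sub-algebra, hence $<B_1, B_2> = \mathbb{C}\{X_i, X_j, i \in \mathcal{I}, j \in \mathcal{J}\}$ is deterministic. Unwinding the definitions once more — using Remark~\ref{rq:facto}, which says a family is deterministic exactly when the factorization of the moments holds on it — this means precisely that $(M_i^{N})_{i \in \mathcal{I}} \cup (L_j^{N})_{j \in \mathcal{J}}$ satisfies the asymptotic $\mathcal{A}$-factorization property, which is what we want.

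I expect no serious obstacle; the only points requiring (routine) care are the bookkeeping identification of $B_1$ with the limiting distribution algebra of the sub-family $(M_i^{N})_{i \in \mathcal{I}}$ — which amounts to observing that a limit of moments evaluated only on a sub-family of matrices sees only that sub-family — and checking that ``deterministic'' for the whole polynomial algebra $\mathbb{C}\{X_i, X_j\}$ is literally the asymptotic $\mathcal{A}$-factorization property for the union. Both are immediate from the definitions, so the proof is essentially a translation of the hypotheses into the limiting algebra followed by a single application of Lemma~\ref{lem:deterministic}.
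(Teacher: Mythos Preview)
Your proposal is correct and follows exactly the paper's approach: the paper states the proposition as a direct consequence of Lemma~\ref{lem:deterministic}, and you have simply spelled out the translation from the asymptotic hypotheses to the limiting $\mathcal{A}$-tracial distribution algebra before invoking that lemma.
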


Using the version of Theorem \ref{theorem:autreformulation} for $\mathcal{A}$-tracial algebras, we have another formulation of asymptotic $\mathcal{A}$-freeness. 

\begin{theorem}
\label{th:freeasymptoautre}
The families $(M_i^{N})_{i \in \mathcal{I}}$ and $(L_j^{N})_{j \in \mathcal{J}}$  are asymptotically $\mathcal{A}$-free if and only for any $k_1, k_2 >0$,  any $p \in \mathcal{P}_{k_1+k_2}$, any $(i_1,...,i_{k_1}) \in \mathcal{I}^{k_1}$, any $(j_1,...,j_{k_2}) \in \mathcal{I}^{k_2}$, 
\begin{align*}
\mathbb{E}m^{\mathcal{A}}_{p^{c}}\!\left[M_{i_1},...,M_{i_{k_1}}, L_{i_1}, ..., L_{i_{k_2}}\right] \!\!=\! \delta_{p_{k_1}^{l} \otimes p_{k_1}^{r} \sqsupset p} \mathbb{E}m^{\mathcal{A}}_{(p_{k_1}^{l})^{c}} \!\left[(M_i)_{i=1}^{k_1}\right] \!\mathbb{E}m^{\mathcal{A}}_{(p_{k_1}^{r})^{c}} \!\left[(L_{j})_{j=1}^{k_2}\right]\!.
\end{align*}
\end{theorem}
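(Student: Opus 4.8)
The plan is to transfer Theorem~\ref{theorem:autreformulation}, which is an equivalence between $\mathcal{P}$-freeness and a factorization identity for $\mathcal{P}$-exclusive moments inside a fixed $\mathcal{P}$-tracial algebra, to the present situation of two families of random matrices by passing to the limiting $\mathcal{A}$-tracial distribution algebra. First I would recall that, by definition, $(M_i^N)_{i\in\mathcal{I}}$ and $(L_j^N)_{j\in\mathcal{J}}$ are asymptotically $\mathcal{A}$-free if and only if the families of generators $(X_i)_{i\in\mathcal{I}}$ and $(X_j)_{j\in\mathcal{J}}$ are $\mathcal{A}$-free in the limiting $\mathcal{A}$-tracial distribution algebra $B := (\mathbb{C}\{X_i,X_j\}, (\mathbb{E}m_p^{(M_i)\cup(L_j)})_{p\in\mathcal{A}})$. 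Next I would invoke the general-$\mathcal{A}$ version of Theorem~\ref{theorem:autreformulation} — which the text has declared valid in the $\mathcal{A}$-tracial setting once $\mathcal{A}$-exclusive moments are defined via the natural extension — applied to the subalgebras $\langle X_i, i\in\mathcal{I}\rangle$ and $\langle X_j, j\in\mathcal{J}\rangle$ of $B$. This says precisely that these two subalgebras are $\mathcal{A}$-free iff for all $k_1,k_2>0$, all $p\in\mathcal{P}_{k_1+k_2}$, all tuples of $X$'s from the two families,
\begin{align*}
m^{\mathcal{A}}_{p^c}\!\left[(X_{i_\ell})_{\ell=1}^{k_1},(X_{j_\ell})_{\ell=1}^{k_2}\right] = \delta_{p_{k_1}^l\otimes p_{k_1}^r\sqsupset p}\, m^{\mathcal{A}}_{(p_{k_1}^l)^c}\!\left[(X_{i_\ell})_{\ell=1}^{k_1}\right] m^{\mathcal{A}}_{(p_{k_1}^r)^c}\!\left[(X_{j_\ell})_{\ell=1}^{k_2}\right].
\end{align*}

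Then I would use the dictionary between the limiting algebra and the sequences of random matrices: by the definition of asymptotic $\mathcal{A}$-exclusive moments, $\mathbb{E}m^{\mathcal{A}}_{p^c}(M_{i_1},\dots)$ is by construction equal to $m^{\mathcal{A}}_{p^c}(X_{i_1},\dots)$ evaluated in $B$, and likewise for the right-hand side factors. Substituting these equalities turns the identity above into exactly the displayed identity of the theorem. The remark following Theorem~\ref{theorem:autreformulation} (that the same theorem holds when $A_1,A_2$ are only families, not algebras) lets me apply the formulation directly to the generating families $(X_i)_{i\in\mathcal{I}}$ and $(X_j)_{j\in\mathcal{J}}$ without first passing to the algebras they generate, which matches the phrasing of the statement to be proved.

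The only genuine point to check carefully — and the step I expect to be the main obstacle — is that the general-$\mathcal{A}$ version of Theorem~\ref{theorem:autreformulation} really does hold verbatim: the original proof for $\mathcal{P}$ expanded $m_{p^c}$ as $\sum_{p'\sqsupset p}\kappa_{p'}$ and the right-hand side as a product of such sums, then used Proposition~3.2 of \cite{Gab1} to identify $\{p_1'\otimes p_2' \sqsupset p\}$ with $\{(p_1',p_2') : p_1'\sqsupset p_{k_1}^l,\ p_2'\sqsupset p_{k_1}^r,\ p_{k_1}^l\otimes p_{k_1}^r\sqsupset p\}$. In the $\mathcal{A}$-setting one must verify that the characterization $m^{\mathcal{A}}_{p^c} = \sum_{p'\in\mathcal{A}_k,\ p'\sqsupset p}\kappa^{\mathcal{A}}_{p'}$ (stated just after the definition of $\mathcal{A}$-exclusive moments) is compatible with the tensor-product combinatorics: since $p_1'\otimes p_2'\in\mathcal{A}$ iff $p_1',p_2'\in\mathcal{A}$ for all the sets $\mathcal{A}$ under consideration (a property of the Tannakian-type stability of these partition sets), the same manipulation goes through, and the equivalence reduces — as in the $\mathcal{P}$ case — to the straightforward Möbius-type equivalence between the cumulant form of $\mathcal{A}$-freeness (vanishing mixed $\mathcal{A}$-cumulants plus factorization of compatible ones) and the displayed exclusive-moment identity. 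Once that compatibility is noted, the proof is a direct citation: apply the general-$\mathcal{A}$ Theorem~\ref{theorem:autreformulation} in $B$ and translate through the definitions of asymptotic $\mathcal{A}$-freeness and asymptotic $\mathcal{A}$-exclusive moments.
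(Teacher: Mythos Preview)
Your proposal is correct and follows exactly the paper's approach: the paper introduces this theorem with the sentence ``Using the version of Theorem~\ref{theorem:autreformulation} for $\mathcal{A}$-tracial algebras, we have another formulation of asymptotic $\mathcal{A}$-freeness,'' i.e.\ it simply applies the general-$\mathcal{A}$ version of Theorem~\ref{theorem:autreformulation} in the limiting $\mathcal{A}$-tracial distribution algebra and translates through the definitions of asymptotic $\mathcal{A}$-freeness and asymptotic $\mathcal{A}$-exclusive moments. Your extra verification that the $\mathcal{A}$-version of Theorem~\ref{theorem:autreformulation} goes through (tensor stability of $\mathcal{A}$ and the identity $m^{\mathcal{A}}_{p^c}=\sum_{p'\in\mathcal{A}_k,\,p'\sqsupset p}\kappa^{\mathcal{A}}_{p'}$) is exactly what the paper asserts without detail at the end of Section~\ref{sec:restiexten}.
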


From now on, let us suppose that  $(M_i^{N})_{i \in \mathcal{I}}$ and $(L_j^{N})_{j \in \mathcal{J}}$ are asymptotically $\mathcal{A}$-free. Then the $\mathcal{A}$-distribution of  $(M_i^{N})_{i \in \mathcal{I}} \cup(L_j^{N})_{j \in \mathcal{J}}$ only depends on the $\mathcal{A}$-distribution of  $(M_i^{N})_{i \in \mathcal{I}}$ and the $\mathcal{A}$-distribution of $(L_j^{N})_{j \in \mathcal{J}}$. As a consequence of Theorems \ref{th:calculloi} and \ref{th:Rtransfoprod}, we get the following theorem. Recall the notion of factorizations ((12) in Section \ref{sec:basic}).

\begin{theorem}
\label{sommeetproduit}
Let $k$ be an integer, $p$ be in $\mathcal{A}_k$, $(i_1,...,i_k)$ be in $\mathcal{I}^{k}$ and $(j_1,...,j_k)$ be in $\mathcal{J}^{k}$: 
\begin{align*}
\mathbb{E}\kappa_{p}^{\mathcal{A}}[M_{i_1}+L_{j_1}, ...,M_{i_k}+L_{j_k}] &= \sum_{(p_1,p_2, I) \in \mathfrak{F}_2(p)}\mathbb{E}\kappa_{p_1}^{\mathcal{A}}[(M_{i_1},...,M_{i_k})] \mathbb{E}\kappa_{p_2}^{\mathcal{A}}[(L_{j_1},...,L_{j_k})],\\
\mathbb{E}\kappa_{p}^{\mathcal{A}}[M_{i_1}L_{j_1}, ...,M_{i_k}L_{j_k}] &= \!\!\!\!\!\!\!\!\!\!\!\!\!\!\!\!\!\!\!\!\!\!\!\!\!\!\sum_{p_1, p_2 \in \mathcal{A}_k | p_1 \prec p, p_2 \in {\sf K}_{p}(p_1)} \!\!\!\!\!\!\!\mathbb{E}\kappa_{p_1}^{\mathcal{A}}[(M_{i_1},...,M_{i_k})]\mathbb{E} \kappa_{p_2}^{\mathcal{A}}[(L_{j_1},...,L_{j_k})], \\
\mathbb{E}m_{p}[M_{i_1}L_{j_1}, ...,M_{i_k}L_{j_k}] &= \sum_{p_1 \in \mathcal{A}_k | p_1 \leq p} \mathbb{E}\kappa_{p_1}^{\mathcal{A}}[(M_{i_1},...,M_{i_k})] \mathbb{E}m_{^{t}p_1\circ p}[(L_{j_1},...,L_{j_k})]. 
\end{align*}
Thus, for any $i$ in $\mathcal{I}$ and $j$ in $\mathcal{J}$, 
\begin{align*}
\mathcal{R}_{\mathcal{A}}[M_i+L_j] =\mathcal{R}_{\mathcal{A}}[M_i] \boxplus \mathcal{R}_{\mathcal{A}}[L_j] \ \ \ \text{ and }\ \ \
\mathcal{R}_{\mathcal{A}}[M_iL_j] =\mathcal{R}_{\mathcal{A}}[M_i] \boxtimes \mathcal{R}_{\mathcal{A}}[L_j]. 
\end{align*}
\end{theorem}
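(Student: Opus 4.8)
The plan is to transcribe, into the limiting $\mathcal{A}$-tracial distribution algebra of the union family, the abstract identities of Theorems \ref{th:calculloi} and \ref{th:Rtransfoprod}, which are valid for $\mathcal{A}$-tracial algebras in their straightforward modified form (as recorded in Section \ref{sec:Atracialalge}), all the combinatorial sums being then taken within $\mathcal{A}$.

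First I would fix the setting. Since $(M_i^{N})_{i \in \mathcal{I}} \cup (L_j^{N})_{j \in \mathcal{J}}$ converges in $\mathcal{A}$-distribution, it defines its limiting $\mathcal{A}$-tracial distribution algebra $(\mathbb{C}\{X_i, X_j\}, (\mathbb{E}m_{p}^{(M_i) \cup (L_j)})_{p \in \mathcal{A}})$, and by the hypothesis of asymptotic $\mathcal{A}$-freeness the subalgebras $A_1 = <X_i,\ i \in \mathcal{I}>$ and $A_2 = <X_j,\ j \in \mathcal{J}>$ of this algebra are $\mathcal{A}$-free. Applying the definition of convergence in $\mathcal{A}$-distribution to the polynomials $X_{i_\ell} + X_{j_\ell}$ and $X_{i_\ell} X_{j_\ell}$, the families $(M_i^{N} + L_j^{N})$ and $(M_i^{N} L_j^{N})$ also converge in $\mathcal{A}$-distribution, and the definitions of asymptotic $\mathcal{A}$-cumulants, asymptotic moments and $\mathcal{R}_{\mathcal{A}}$-transform yield
\begin{align*}
\mathbb{E}\kappa_{p}^{\mathcal{A}}[M_{i_1}+L_{j_1}, ..., M_{i_k}+L_{j_k}] &= \kappa_{p}^{\mathcal{A}}(X_{i_1}+X_{j_1}, ..., X_{i_k}+X_{j_k}), \\
\mathbb{E}\kappa_{p}^{\mathcal{A}}[M_{i_1}L_{j_1}, ..., M_{i_k}L_{j_k}] &= \kappa_{p}^{\mathcal{A}}(X_{i_1}X_{j_1}, ..., X_{i_k}X_{j_k}), \\
\mathbb{E}m_{p}[M_{i_1}L_{j_1}, ..., M_{i_k}L_{j_k}] &= m_{p}(X_{i_1}X_{j_1}, ..., X_{i_k}X_{j_k}),
\end{align*}
together with $\mathbb{E}\kappa_{p}^{\mathcal{A}}[M_{i_1}, ..., M_{i_k}] = \kappa_{p}^{\mathcal{A}}(X_{i_1}, ..., X_{i_k})$, $\mathbb{E}m_{p}[L_{j_1}, ..., L_{j_k}] = m_{p}(X_{j_1}, ..., X_{j_k})$, $\mathcal{R}_{\mathcal{A}}[M_i] = \mathcal{R}_{\mathcal{A}}(X_i)$ and $\mathcal{R}_{\mathcal{A}}[L_j] = \mathcal{R}_{\mathcal{A}}(X_j)$.

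Next I would apply the $\mathcal{A}$-tracial version of Theorem \ref{th:calculloi} to the $\mathcal{A}$-free subalgebras $A_1$ and $A_2$, with $a_\ell = X_{i_\ell} \in A_1$ and $b_\ell = X_{j_\ell} \in A_2$: rewriting both sides through the dictionary above produces the three displayed formulas, the sums over $\mathfrak{F}_{2}(p)$, over $p_1 \prec p$ with $p_2 \in {\sf K}_{p}(p_1)$, and over $p_1 \leq p$ being exactly those appearing in Theorem \ref{th:calculloi}. For the last two assertions I would invoke the $\mathcal{A}$-tracial version of Theorem \ref{th:Rtransfoprod} for the $\mathcal{A}$-free elements $X_i$ and $X_j$; equivalently, one checks that the first two cumulant identities are nothing but the combinatorial definitions of the convolutions $\boxplus$ and $\boxtimes$ of Section $4.2$ of \cite{Gab1}.

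The only genuinely non-mechanical point, and hence the main thing to watch, is the identification used in the second paragraph: that the limiting $\mathcal{A}$-tracial distribution algebra of a family of polynomial expressions in $(M_i^{N}) \cup (L_j^{N})$ is canonically a sub-$\mathcal{A}$-tracial algebra of the limiting algebra of $(M_i^{N}) \cup (L_j^{N})$ itself, so that the abstract computation may legitimately be performed there and then transcribed back. This follows from the construction of the $\mathcal{A}$-tracial distribution algebra (item 4 of Example \ref{ex:Ptracial} and its $\mathcal{A}$-analogue) together with the second axiom of $\mathcal{A}$-tracial algebras, which forces the moments of polynomials to be determined by those of the generators; once this is granted, everything reduces to a direct citation of Theorems \ref{th:calculloi} and \ref{th:Rtransfoprod}.
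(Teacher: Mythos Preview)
Your proposal is correct and is exactly the approach the paper takes: the paper introduces this theorem with the sentence ``As a consequence of Theorems \ref{th:calculloi} and \ref{th:Rtransfoprod}, we get the following theorem'' and gives no further proof. You have simply spelled out the transcription into the limiting $\mathcal{A}$-tracial distribution algebra that the paper leaves implicit, which is precisely what is needed.
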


An asymptotic $\mathcal{P}$-central limit theorem for random matrices can be also deduced from Theorem \ref{th:tcl}. Let $\mathcal{A}_1 \subset \mathcal{A}$ be a subset of partitions in $\{\mathcal{P}, \mathcal{H}, \mathcal{B}s, \mathcal{B}, \mathfrak{S}\}$. The notion of $\mathcal{G}(\mathcal{A}_1)$-$[\mathcal{A}]$-asymptotic invariance is deduced from Definition~\ref{def:GAinva}. 

\begin{definition}
The family $(M^{N}_i)_{i \in \mathcal{I}}$ is {\em asymptotically $\mathcal{G}(\mathcal{A}_1)$-$[\mathcal{A}]$-invariant} if $(X_i)_{i \in \mathcal{I}}$ is $\mathcal{G}(\mathcal{A}_1)$-invariant in the limiting $\mathcal{A}$-tracial distribution algebra of  $(M^{N}_i)_{i \in \mathcal{I}}$. 
If $\mathcal{A} = \mathcal{P}$ we simply will talk about {\em asymptotic $\mathcal{G}(\mathcal{A}_1)$-invariance. }
\end{definition}
This means that for any integer $k$, any $p \in \mathcal{A}$ of length $k$ and any $(i_1,...,i_k) \in \mathcal{I}^{k}$, $\mathbb{E}\kappa_{p}^{\mathcal{A}}(M_{i_1}, ..., M_{i_k}) = \delta_{p \in \mathcal{A}_1} \mathbb{E}\kappa_{p}^{\mathcal{A}_1}(M_{i_1}, ..., M_{i_k})$. Let us remark that this notion depends also on the chosen $\mathcal{A}$. In particular any family $(M^{N}_i)_{i \in \mathcal{I}}$ which converges in $\mathcal{A}$-distribution is asymptotically $\mathcal{G}(\mathcal{A})$-$[\mathcal{A}]$-invariant. The following theorem is a consequence of Theorem \ref{th:invariancecaract}.

\begin{theorem} 
Let us suppose that $(M^{N}_i)_{i \in \mathcal{I}}$ converges in $\mathcal{P}$-distribution. It is asymptotically $U$-invariant, resp. $O$-invariant, if and any if  for any $k \geq 0$, any $p \in \mathcal{P}_k$ and any $(i_1,...,i_k) \in \mathcal{I}^{k}$, $\mathbb{E}m_{p^{c}}(M_{i_1},...,M_{i_k})$ is equal to $\delta_{p \in \overline{\mathfrak{S}_k}} \mathbb{E}m_{({\sf Mb}(p))^{c}}(M_{i_1},...,M_{i_k})$, respectively $\delta_{p \in \overline{\mathcal{B}_k}} \mathbb{E}m_{({\sf Mb}(p))^{c}}(M_{i_1},...,M_{i_k})$. 
\end{theorem}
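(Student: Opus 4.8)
The plan is to deduce the statement directly from Theorem \ref{th:invariancecaract}, read off in the limiting $\mathcal{P}$-tracial distribution algebra. Since $(M^{N}_i)_{i \in \mathcal{I}}$ converges in $\mathcal{P}$-distribution, it defines the limiting $\mathcal{P}$-tracial distribution algebra $(\mathbb{C}\{X_i : i \in \mathcal{I}\}, (\mathbb{E}m_{p}^{(M_i)_{i \in \mathcal{I}}})_{p \in \mathcal{P}})$; I would take this as the ambient $\mathcal{P}$-tracial algebra $A$ in Theorem \ref{th:invariancecaract} and take $A_1 = (X_i)_{i \in \mathcal{I}}$ as the distinguished family. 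Because $\mathcal{G}(\mathfrak{S}) = U$ and $\mathcal{G}(\mathcal{B}) = O$, the definition of asymptotic $U$-invariance (resp.\ $O$-invariance) of $(M^{N}_i)_{i \in \mathcal{I}}$ says exactly that $(X_i)_{i \in \mathcal{I}}$ is $U$-invariant (resp.\ $O$-invariant) in this algebra, which is the left-hand side of the equivalence in Theorem \ref{th:invariancecaract}.

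Next I would unwind the definition of the asymptotic $p$-exclusive moments: for any $k \geq 0$, any $p \in \mathcal{P}_k$ and any $(i_1,\dots,i_k) \in \mathcal{I}^{k}$ one has, by definition (with the convention that an unspecified $\mathcal{A}$ is $\mathcal{P}$), $\mathbb{E}m_{p^{c}}(M_{i_1},\dots,M_{i_k}) = m_{p^{c}}^{\mathcal{P}}(X_{i_1},\dots,X_{i_k})$, the right-hand side being the $\mathcal{P}$-exclusive moment computed in the limiting algebra. Consequently the asserted identity $\mathbb{E}m_{p^{c}}(M_{i_1},\dots,M_{i_k}) = \delta_{p \in \overline{\mathfrak{S}_k}}\,\mathbb{E}m_{({\sf Mb}(p))^{c}}(M_{i_1},\dots,M_{i_k})$, quantified over all $k$, all $p \in \mathcal{P}_k$ and all tuples from $\mathcal{I}$, is literally the condition $m_{p^{c}}(a_1,\dots,a_k) = \delta_{p \in \overline{\mathfrak{S}_k}}\,m_{({\sf Mb}(p))^{c}}(a_1,\dots,a_k)$ for all $(a_1,\dots,a_k) \in A_1^{k}$ appearing in Theorem \ref{th:invariancecaract}, and similarly with $\overline{\mathcal{B}_k}$ in the orthogonal case.

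Applying Theorem \ref{th:invariancecaract} with $\mathcal{A}_1 = \mathcal{P}$ and $\mathcal{A}_2 = \mathfrak{S}$ (resp.\ $\mathcal{A}_2 = \mathcal{B}$) to $A_1 = (X_i)_{i \in \mathcal{I}}$ then yields the desired equivalence, and translating both sides back through the two definitional identifications above produces the theorem. I do not expect a genuine obstacle here: all the partition combinatorics (the appearance of $\overline{\mathfrak{S}_k}$, $\overline{\mathcal{B}_k}$ and ${\sf Mb}$, and the passage between $\mathcal{P}$-exclusive moments and $\mathfrak{S}$- or $\mathcal{B}$-cumulants) is already packaged inside Theorem \ref{th:invariancecaract} and the earlier material of \cite{Gab1}. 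The only point requiring care is the bookkeeping of definitions, namely checking that ``asymptotically $U$-invariant'' for a sequence of random matrices is precisely ``$U$-invariant in the limiting $\mathcal{P}$-tracial distribution algebra'' and that the ``asymptotic $p$-exclusive moment'' is precisely the $\mathcal{P}$-exclusive moment of that same limiting algebra, so that Theorem \ref{th:invariancecaract} applies verbatim; both are immediate from the definitions recalled just before the statement.
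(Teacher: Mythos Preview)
Your proposal is correct and matches the paper's own approach: the paper states explicitly that this theorem ``is a consequence of Theorem \ref{th:invariancecaract}'' and gives no further argument, so your unwinding of the definitions (asymptotic $\mathcal{G}(\mathcal{A})$-invariance as $\mathcal{G}(\mathcal{A})$-invariance in the limiting $\mathcal{P}$-tracial distribution algebra, and asymptotic exclusive moments as the exclusive moments of that algebra) followed by a direct application of Theorem \ref{th:invariancecaract} is exactly what is intended.
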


\subsection{The different notions of asymptotic $\mathcal{A}$-freeness}
This section is a direct consequence of Section \ref{sec:Atracialalge}.  The following results about asymptotic $\mathcal{A}$-freeness for random matrices are consequences of Theorems \ref{th:invarianceetliberte}, \ref{th:critere} and \ref{th:transpofree}. 
Let $\mathcal{A}_2 \subset \mathcal{A}_1$ be two sets of partitions in $\{\mathcal{P}, \mathcal{H}, \mathcal{B}s, \mathcal{B}, \mathfrak{S}\}$. For any integer $N$, let $(M_i^{N})_{i \in \mathcal{I}}$ and $(L_j^{N})_{j \in \mathcal{J}}$ be two families of random matrices which converge in $\mathcal{A}_1$-distribution. 

\begin{theorem}
\label{th:asympinvetliberte}
If $(M_i^{N})_{i \in \mathcal{I}} \cup (L_j^{N})_{j \in \mathcal{J}}$ is asymptotically $\mathcal{G}(\mathcal{A}_2)$-[$\mathcal{A}_1$]-invariant, the two families  $(M_i^{N})_{i \in \mathcal{I}}$ and $(L_j^{N})_{j \in \mathcal{J}}$ are asymptotically $\mathcal{A}_2$-free if and only if they are asymptotically $\mathcal{A}_1$-free. 

Besides, if only $(L_j^{N})_{j \in \mathcal{J}}$ is asymptocally $\mathcal{G}(\mathcal{A}_2)$-[$\mathcal{A}_1$]-invariant and if the families  $(M_i^{N})_{i \in \mathcal{I}}$ and $(L_j^{N})_{j \in \mathcal{J}}$ are asymptotically $\mathcal{A}_1$-free then they are $\mathcal{A}_2$-free. 
\end{theorem}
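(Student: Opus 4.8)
This is the random-matrix transcription of Theorem~\ref{th:invarianceetliberte}, so the plan is to reduce to that algebraic statement applied inside one fixed limiting $\mathcal{A}_1$-tracial distribution algebra. First I would fix notation: asymptotic $\mathcal{A}_1$-freeness and asymptotic $\mathcal{G}(\mathcal{A}_2)$-$[\mathcal{A}_1]$-invariance are defined relative to the limiting $\mathcal{A}_1$-tracial distribution algebra of the union, so I take $(M_i^{N})_{i\in\mathcal{I}}\cup(L_j^{N})_{j\in\mathcal{J}}$ to converge in $\mathcal{A}_1$-distribution and write $\big(\mathbb{C}\{X_i, X_j : i\in\mathcal{I}, j\in\mathcal{J}\},\, (\mathbb{E}m_p)_{p\in\mathcal{A}_1}\big)$ for that algebra. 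Put $B_1 = \langle X_i : i\in\mathcal{I}\rangle$ and $B_2 = \langle X_j : j\in\mathcal{J}\rangle$.

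The one point needing care is the compatibility of the $\mathcal{A}_1$- and $\mathcal{A}_2$-pictures. Since $\mathcal{A}_2\subset\mathcal{A}_1$, restriction of structure (forgetting the $m_p$ with $p\notin\mathcal{A}_2$) shows the families also converge in $\mathcal{A}_2$-distribution, and the limiting $\mathcal{A}_2$-tracial distribution algebra is exactly the natural restriction of the $\mathcal{A}_1$-one. By the discussion on restriction of structure in Section~\ref{sec:restiexten}, $\mathcal{A}_2$-freeness of $B_1$ and $B_2$ (resp. $\mathcal{G}(\mathcal{A}_2)$-invariance of $B_1\cup B_2$, resp. of $B_2$) computed in the restricted $\mathcal{A}_2$-tracial algebra agrees with the same notion computed through the $\mathcal{A}_2$-cumulant forms $(\kappa_p^{\mathcal{A}_2})_{p\in\mathcal{A}_2}$ attached to the $\mathcal{A}_1$-tracial algebra. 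Hence all the hypotheses and conclusions in the statement are literally the corresponding algebraic notions for the pair $B_1, B_2$ evaluated inside this single $\mathcal{A}_1$-tracial algebra.

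With this dictionary, the first assertion is immediate from the first part of Theorem~\ref{th:invarianceetliberte} applied to $B_1$ and $B_2$, whose union is $\mathcal{G}(\mathcal{A}_2)$-invariant. For the second assertion only $(L_j^{N})_{j\in\mathcal{J}}$, i.e.\ $B_2$, is $\mathcal{G}(\mathcal{A}_2)$-invariant, so I would apply the second part of Theorem~\ref{th:invarianceetliberte} with $B_2$ in the role of the invariant family ``$A_1$'' and $B_1$ in the role of ``$A_2$''. This is legitimate since $\mathcal{A}$-freeness of two subalgebras is a symmetric relation: interchanging the two families amounts to conjugating the partitions $p_1\otimes p_2$ by the block-swap permutation and using the first axiom of $\mathcal{A}$-tracial algebras, which leaves the cumulant forms unchanged.

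There is no genuine obstacle here; the only work is the bookkeeping of the second paragraph, namely checking that convergence in $\mathcal{A}_1$-distribution, asymptotic $\mathcal{A}_1$- and $\mathcal{A}_2$-freeness, and asymptotic $\mathcal{G}(\mathcal{A}_2)$-$[\mathcal{A}_1]$-invariance are all read off from one and the same limiting $\mathcal{A}_1$-tracial algebra, after which Theorem~\ref{th:invarianceetliberte} applies verbatim.
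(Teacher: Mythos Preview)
Your proposal is correct and follows exactly the approach the paper takes: the paper presents Theorem~\ref{th:asympinvetliberte} simply as a consequence of Theorem~\ref{th:invarianceetliberte}, obtained by working in the limiting $\mathcal{A}_1$-tracial distribution algebra. Your careful bookkeeping about the compatibility of the $\mathcal{A}_1$- and $\mathcal{A}_2$-pictures under restriction of structure, and the symmetry remark for the second assertion, are more explicit than what the paper writes but add nothing new.
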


The notions of asymptotic $\mathcal{A}$-freeness are linked even when we do not suppose any asymptotic invariance. 

\begin{theorem}
\label{th:lesdifferentesliberte}
For this theorem, let us suppose that $\mathcal{A}_1= \mathcal{P}$. Then we have the following equivalence or implications: 
\begin{enumerate}
\item if  $(M_i^{N})_{i \in \mathcal{I}}$ and $(L_j^{N})_{j \in \mathcal{J}}$ satisfy the asymptotic $\mathcal{P}$-factorization, the two families are asymptotically $\mathfrak{S}$-free if and only if they are asymptotically free in the sense of Voiculescu, 
\item if $(M_i^{N})_{i \in \mathcal{I}}$ and $(L_j^{N})_{j \in \mathcal{J}}$  are asymptotically $\mathcal{B}$- or $\mathcal{B}s$-free then they are asymptotically $\mathfrak{S}$-free. 
\end{enumerate}
and the following negative assertions hold: 
\begin{enumerate}
\item if $(M_i^{N})_{i \in \mathcal{I}}$ and $(L_j^{N})_{j \in \mathcal{J}}$ are asymptotically $\mathcal{P}$-free and if there exists $(i_1, i_2) \in \mathcal{I}$ and $(j_1, j_2) \in \mathcal{J}$ such that:
\begin{align*}
\mathbb{E}\kappa_{{\sf 0}_{2}}^{\mathcal{P}}(M_{i_1}, M_{i_2}) \mathbb{E}\kappa_{{\sf 0}_{2}}^{\mathcal{P}}(L_{j_1}, L_{j_2}) \neq 0, 
\end{align*}
then $(M_i^{N})_{i \in \mathcal{I}}$ and $(L_j^{N})_{j \in \mathcal{J}}$ are not asymptotically $\mathfrak{S}$-free. In particular, asymptotic $\mathcal{P}$-freeness does not imply the asymptotic $\mathfrak{S}$-freeness. 
\item the asymptotic $\mathcal{P}$-freeness of  $(M_i^{N})_{i \in \mathcal{I}}$ and $(L_j^{N})_{j \in \mathcal{J}}$ does not generally imply the asymptotic $\mathcal{B}$-freeness of the two families, 
\item the asymptotic $\mathfrak{S}$-freeness of  $(M_i^{N})_{i \in \mathcal{I}}$ and $(L_j^{N})_{j \in \mathcal{J}}$ does not generally imply the asymptotic $\mathcal{B}$-freeness of the two families. 
\end{enumerate}
\end{theorem}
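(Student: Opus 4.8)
The plan is to read every assertion inside the limiting $\mathcal{P}$-tracial distribution algebra $\left(\mathbb{C}\{X_i, X_j, i \in \mathcal{I}, j \in \mathcal{J}\}, (\mathbb{E}m_{p}^{(M_i)_{i\in\mathcal{I}}\cup(L_j)_{j\in\mathcal{J}}})_{p \in \mathcal{P}}\right)$ attached to the joint family, and then to invoke the abstract statements of Theorems~\ref{th:critere} and~\ref{th:transpofree}. First I would record the dictionary: by definition asymptotic $\mathcal{A}_2$-freeness of $(M_i^{N})_{i\in\mathcal{I}}$ and $(L_j^{N})_{j\in\mathcal{J}}$ is $\mathcal{A}_2$-freeness of $(X_i)_{i\in\mathcal{I}}$ and $(X_j)_{j\in\mathcal{J}}$ in that algebra, the asymptotic $\mathcal{A}_2$-cumulants $\mathbb{E}\kappa^{\mathcal{A}_2}_p$ are its $\mathcal{A}_2$-cumulant forms, the asymptotic $\mathcal{P}$-factorization property says exactly that the algebra is deterministic, and, via Equation~(\ref{eq:lienmomentsigma}), asymptotic freeness in the sense of Voiculescu coincides with freeness of $(X_i)$ and $(X_j)$ for the tracial form $m_{\mathrm{id}_1}$ in the $\mathfrak{S}$-tracial algebra of Example~\ref{ex:stracialalgebra}. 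With these identifications the theorem is essentially a transcription of Theorem~\ref{th:critere}.

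For the positive part, assertion~(1) should follow from Theorem~\ref{th:critere}(1): under the asymptotic $\mathcal{P}$-factorization hypothesis the limiting algebra is deterministic, so the $\mathfrak{S}$-freeness of $(X_i)_{i\in\mathcal{I}}$ and $(X_j)_{j\in\mathcal{J}}$ is equivalent to their Voiculescu freeness for $m_{\mathrm{id}_1}$, which translates back into the stated equivalence of asymptotic notions. Assertion~(2) should be Theorem~\ref{th:critere}(2) read verbatim in the limiting algebra, so that the vanishing of the mixed $\mathcal{B}$- or $\mathcal{B}s$-cumulants forces the vanishing of the mixed $\mathfrak{S}$-cumulants.

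For the negative part, the conditional clause of item~(1) is the first negative assertion of Theorem~\ref{th:critere} applied in the limiting algebra: when $\mathbb{E}\kappa^{\mathcal{P}}_{{\sf 0}_2}(M_{i_1},M_{i_2})$ and $\mathbb{E}\kappa^{\mathcal{P}}_{{\sf 0}_2}(L_{j_1},L_{j_2})$ are both nonzero, the computation of $m_{(1,2,3,4)}(X_{i_1},X_{j_1},X_{i_2},X_{j_2})$ carried out there already excludes asymptotic $\mathfrak{S}$-freeness. To see that this situation occurs I would exhibit two independent sequences of diagonal matrices with i.i.d.\ entries of nonzero variance: such a family is classical in the sense of Section~\ref{sec:classical}, each of the two subfamilies is $\mathcal{G}(\mathcal{P})$-invariant, hence the two subfamilies are asymptotically $\mathcal{P}$-free (by the independence-and-invariance criterion of Section~\ref{sec:GAinvetindep}), while Theorem~\ref{th:classicalcumulant} identifies $\mathbb{E}\kappa^{\mathcal{P}}_{{\sf 0}_2}$ with the nonzero covariance. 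Item~(2) I would obtain by contradiction: if asymptotic $\mathcal{P}$-freeness implied asymptotic $\mathcal{B}$-freeness, then chaining with assertion~(2) above would make asymptotic $\mathcal{P}$-freeness imply asymptotic $\mathfrak{S}$-freeness, contradicting item~(1). For item~(3) I would start from a deterministic $U$-invariant sequence, say $M^{N}=U_N D_N U_N^{*}$ with $U_N$ Haar distributed and $D_N$ deterministic with converging spectral distribution; by the random-matrix version of Theorem~\ref{th:transpofree}, $M^{N}$ is asymptotically $\mathfrak{S}$-free from ${}^{t}M^{N}$, so if asymptotic $\mathfrak{S}$-freeness implied asymptotic $\mathcal{B}$-freeness then $M^{N}$ and ${}^{t}M^{N}$ would be asymptotically $\mathcal{B}$-free, which, exactly as in the proof of Theorem~\ref{th:critere}, would force $M^{N}$ to be asymptotically $\mathfrak{S}$-free with itself, impossible unless $M^{N}$ is asymptotically scalar.

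The part I expect to be the real obstacle is not the translation but the construction of the concrete realizations behind the negative assertions: verifying that the explicit diagonal model is genuinely asymptotically $\mathcal{P}$-free, and verifying that $M^{N}$ and ${}^{t}M^{N}$ are not asymptotically $\mathcal{B}$-free, i.e.\ that a single non-scalar deterministic element of a $\mathcal{B}$-tracial algebra is never $\mathcal{B}$-free from itself --- a point one can test on a low-order $\mathcal{B}$-cumulant such as $\kappa^{\mathcal{B}}_{[1,2]}$.
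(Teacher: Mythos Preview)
Your proposal is correct and follows the paper's own route exactly: the paper provides no separate proof of Theorem~\ref{th:lesdifferentesliberte} and simply declares it a consequence of Theorems~\ref{th:critere} and~\ref{th:transpofree} read in the limiting $\mathcal{P}$-tracial distribution algebra, which is precisely the dictionary you set up. Your explicit random-matrix realizations for the negative assertions (the i.i.d.\ diagonal model and the unitarily conjugated model) are not written out in the paper but are the natural way to make the ``does not generally imply'' clauses concrete; the verification you flag as potentially delicate reduces, as you note, to the single identity $\kappa^{\mathcal{B}}_{[1,2]}(a,{}^{t}a)=\kappa^{\mathcal{B}}_{(1,2)}(a,a)=\kappa^{\mathfrak{S}}_{(1,2)}(a,a)$, which is nonzero as soon as the limiting spectral distribution has nonzero variance.
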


At last, we can prove the asymptotic freeness of a unitary invariant matrix and its transpose. 

\begin{theorem}
For this theorem, let us suppose that $\mathcal{B} \subset \mathcal{A}_1$. If $(M_i^{N})_{i \in \mathcal{I}}$ and $(L_j^{N})_{j \in \mathcal{J}}$ satisfy the asymptotic $\mathfrak{S}$-factorization property and $(M_i^{N})_{i \in \mathcal{I}} \cup (L_j^{N})_{j \in \mathcal{J}}$ is asymptotically $U$-[$\mathcal{A}_1$]-invariant, then $(M_i^{N})_{i \in \mathcal{I}}$ and $(^{t}L_j^{N})_{j \in \mathcal{J}}$ are asymptotically free in the sense of Voiculescu. 
\end{theorem}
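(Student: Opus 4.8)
The plan is to transport Theorem~\ref{th:transpofree} to the limiting $\mathcal{A}_1$-tracial distribution algebra of a suitably enlarged family, and to read the conclusion off directly, since that theorem already produces freeness in the sense of Voiculescu.

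First I would build the ambient algebra. Consider the enlarged family $\mathcal{F}^{N}=(M_i^{N})_{i\in\mathcal{I}}\cup({}^{t}M_i^{N})_{i\in\mathcal{I}}\cup(L_j^{N})_{j\in\mathcal{J}}\cup({}^{t}L_j^{N})_{j\in\mathcal{J}}$. Because $\mathcal{B}\subset\mathcal{A}_1$ and each $(\mathcal{A}_1)_k$ is stable under the operations ${\sf S}_{k_1}$, any $\mathcal{A}_1$-moment of $\mathcal{F}^{N}$ coincides, after applying the appropriate ${\sf S}_{k_1}$, with an $\mathcal{A}_1$-moment of $(M_i^{N})_{i\in\mathcal{I}}\cup(L_j^{N})_{j\in\mathcal{J}}$, which converges by hypothesis; hence $\mathcal{F}^{N}$ converges in $\mathcal{A}_1$-distribution, and I denote by $A$ its limiting $\mathcal{A}_1$-tracial distribution algebra, with generators $X_i^{M},X_i^{{}^{t}M},X_j^{L},X_j^{{}^{t}L}$. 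Matrix transposition being a transpose operation on the $\mathcal{P}$-tracial algebra of random matrices of size $N$, the identity $m_{p}(a_1,\dots,a_{k_1},{}^{t}a_{k_1+1},\dots,{}^{t}a_k)=m_{{\sf S}_{k_1}(p)}(a_1,\dots,a_k)$ holds at every finite $N$; as an identity between converging moments it survives the limit, and it endows $A$ with a transpose operation sending $X_i^{M}\mapsto X_i^{{}^{t}M}$, $X_i^{{}^{t}M}\mapsto X_i^{M}$, $X_j^{L}\mapsto X_j^{{}^{t}L}$ and $X_j^{{}^{t}L}\mapsto X_j^{L}$.

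Next I would set $A_1=\langle X_i^{M}:i\in\mathcal{I}\rangle$ and $A_2=\langle X_j^{L}:j\in\mathcal{J}\rangle$, so that ${}^{t}A_2=\langle X_j^{{}^{t}L}:j\in\mathcal{J}\rangle$, and check the hypotheses of Theorem~\ref{th:transpofree}. The subalgebra $\langle A_1,A_2\rangle$ is, as an $\mathcal{A}_1$-tracial algebra, the limiting $\mathcal{A}_1$-tracial distribution algebra of $(M_i^{N})_{i\in\mathcal{I}}\cup(L_j^{N})_{j\in\mathcal{J}}$; by the asymptotic $U$-$[\mathcal{A}_1]$-invariance hypothesis it is $U$-invariant, so $\kappa_{p}^{\mathcal{A}_1}=\delta_{p\in\mathfrak{S}}\kappa_{p}^{\mathfrak{S}}$ on it, while the asymptotic $\mathfrak{S}$-factorization hypothesis (applied to the union) gives, via the $\mathfrak{S}$-version of Lemma~\ref{lemme:detercumu}, that $\kappa_{\sigma_1\otimes\sigma_2}^{\mathfrak{S}}=\kappa_{\sigma_1}^{\mathfrak{S}}\kappa_{\sigma_2}^{\mathfrak{S}}$ there. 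Combining these: for $p=p_1\otimes p_2\in(\mathcal{A}_1)_{k}$, if $p\in\mathfrak{S}$ then $p_1,p_2\in\mathfrak{S}$ and $\kappa_{p}^{\mathcal{A}_1}=\kappa_{p_1}^{\mathfrak{S}}\kappa_{p_2}^{\mathfrak{S}}=\kappa_{p_1}^{\mathcal{A}_1}\kappa_{p_2}^{\mathcal{A}_1}$, while if $p\notin\mathfrak{S}$ then one of $\kappa_{p_1}^{\mathcal{A}_1},\kappa_{p_2}^{\mathcal{A}_1}$ vanishes and both sides are $0$; so $\langle A_1,A_2\rangle$ is deterministic by Lemma~\ref{lemme:detercumu}. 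Theorem~\ref{th:transpofree} then yields that $A_1$ and ${}^{t}A_2$ are free in the sense of Voiculescu. Finally, since $\langle A_1,{}^{t}A_2\rangle$ is exactly the limiting $\mathcal{A}_1$-tracial distribution algebra of $(M_i^{N})_{i\in\mathcal{I}}\cup({}^{t}L_j^{N})_{j\in\mathcal{J}}$ and $m_{\mathrm{id}_1}$ on it is the limit of the normalized-trace expectations, this is precisely the assertion that $(M_i^{N})_{i\in\mathcal{I}}$ and $({}^{t}L_j^{N})_{j\in\mathcal{J}}$ are asymptotically free in the sense of Voiculescu.

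The step I expect to be delicate is the determinism of $\langle A_1,A_2\rangle$: one must be careful that the asymptotic $\mathfrak{S}$-factorization is used on the combined family, so that together with the $U$-invariance it really produces a deterministic $\mathcal{A}_1$-tracial algebra — only then is Theorem~\ref{th:transpofree} applicable, and this is exactly the interplay exploited above. The remaining points, namely the convergence of the transposed families (guaranteed by $\mathcal{B}\subset\mathcal{A}_1$) and the persistence of the transpose axiom under the limit, are routine once the relevant $\mathcal{A}_1$-moments are known to converge.
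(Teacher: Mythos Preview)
Your proposal is correct and follows the paper's intended route: the paper does not give a detailed proof but states that this result is a direct consequence of Theorem~\ref{th:transpofree} applied to the limiting $\mathcal{A}_1$-tracial distribution algebra, and remarks that inspection of that proof shows the asymptotic $\mathfrak{S}$-factorization property (rather than full $\mathcal{A}_1$-factorization) suffices. You have filled in precisely the details the paper leaves implicit --- the convergence of the transposed families via stability of $\mathcal{A}_1$ under ${\sf S}_{k_1}$, the passage of the transpose operation to the limit, and the verification that $U$-invariance together with $\mathfrak{S}$-determinism of the union yields the hypotheses of Theorem~\ref{th:transpofree}.
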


Let us remark that, looking at the proof of Theorem \ref{th:transpofree}, we could change the condition of asymptotic $\mathcal{A}_1$-factorization property by a condition of asymptotic $\mathfrak{S}$-factorization property.

\section{Dualities and the finite-dimensional world}
\label{duali}

\subsection{Dualities}
\label{sectionschur}

Let $k$ be an integer, $p$ be in $\mathcal{A}_k$ and $L$ be a matrix  of size $N$ in $\mathcal{G}(\mathcal{A})(N)$, it is not difficult to see that $M^{\otimes k} \rho_N(p) M^{\otimes k} = \rho_N(p)$. Thus, using the tracial property, for any $k$-tuple of matrices of size $N$, $(M_1, ..., M_k)$, $\mathbb{E}m_{p}\left(M_1,...,M_k\right)$ is equal to:
\begin{align*}
\frac{1}{N^{{\sf nc}(p\vee \mathrm{id}_k)}} {\sf Tr}\!\!\left(\left[ \int_{\mathcal{G}(\mathcal{A})(N)} M^{\otimes k} \mathbb{E}\left[M_1\otimes ... \otimes M_k\right] (M^{-1})^{\otimes k}dM\right] \rho_{N}(^{t}p)\right). 
\end{align*}

The endomorphism $\int_{\mathcal{G}(\mathcal{A})(N)} M^{\otimes k} \mathbb{E}\left[M_1\otimes ... \otimes M_k\right] (M^{-1})^{\otimes k}dM$ commutes with $M^{\otimes k }$ for any $M\in \mathcal{G}(\mathcal{A})(N)$. This motivates the study of endomorphisms of $\left(\mathbb{C}^{N}\right)^{\otimes k}$ which commute with $M^{\otimes k}$, with $M\in \mathcal{G}(\mathcal{A})(N)$. We introduce for this the Schur-Weyl-Jones's duality and some similar statements. The dualities are summarized in Table $1$. The Schur-Weyl duality was previously used in the setting of random matrices, for example in \cite{Collins2003}, \cite{ColSni}, \cite{Levy2} and \cite{Levymaster}. Besides, this section and next section can be partly seen as a generalization of \cite{Casalis}. One can find more informations about these dualities in \cite{Good}, and for the last two dualities one can look at \cite{Jo}, \cite{MA1} and \cite{Halv}. The groups $\mathcal{G}(\mathcal{A})(N)$ are called, if we do not take into account the unitary groups, easy orthogonal groups. For a definition of the easy orthogonal groups and the proofs of the duality theorems one can have a look at \cite{Banica}. 

\begin{center}
Table $1$. Dualities between groups and partitions. 
\end{center}
\begin{center}

\begin{tabular}{|c|c||c|c|}
\hline 
Family  $\mathcal{G}(\mathcal{A})$& Description of $\mathcal{G}(\mathcal{A})(N)$ & Partitions $\mathcal{A}$ & Descriptions of $\mathcal{A}$ \\ 
\hline 
& & & \\
$U$ & $MM^{*} = {\mathrm{Id}}_N$ & $\mathfrak{S}$ & permutations \\ 
& & & \\
\hline 
& & & \\
$O$ & $M\ \! ^{t}\!M = {\mathrm{Id}}_N$, $M \in \mathcal{M}_N(\mathbb{R})$ & $\mathcal{B}$ & $\forall b \in p, \# b = 2$  \\ 
& & & \\
\hline 
& & & \\
$H$ & $M_{i,j} \in \{0,-1,+1\}$,& $\mathcal{H}$& $\forall b \in p, \# b \in  2 \mathbb{N}$\\
& $\sum_{j} \delta_{M_{i,j} \neq 0} =\sum_{i} \delta_{M_{i,j} \neq 0} = 1$ &  & \\ 
& & & \\
\hline 
& & & \\
$B$ & $M \in O(N)$ & $\mathcal{B}s$  & $\forall b \in p, \# b \leq 2$ \\
& $\sum_i M_{i,j} = \sum_{j} M_{i,j} = 1$ & &  \\ 
& & & \\
\hline 
& & & \\
$B \times \mathbb{Z}/2\mathbb{Z} $ &  & $\mathcal{B}s$ &  \\ 
& & & \\
\hline 
& & & \\
$\mathfrak{S} $&$M_{i,j} \in \{0,1\}$,& & \\
& $\sum_{j} \delta_{M_{i,j} \neq 0} =\sum_{i} \delta_{M_{i,j} \neq 0} = 1$ &$\mathcal{P}$  &  \\ 
& & & \\
\hline
& & & \\
$\mathfrak{S} \times  \mathbb{Z}/2\mathbb{Z}  $&  & $\mathcal{P}$&  \\ 
& & & \\
\hline 
\end{tabular} 
\text{ }\\
\end{center}

We will make a distinction between $\mathfrak{S}_k$ and $\mathfrak{S}(N)$. The notation $\mathfrak{S}_k$ will stand for the symmetric group, seen as a group of permutations, and $\mathfrak{S}(N)$ will be seen as the subgroup of permutation matrices in $\mathcal{G}l(N)$. Let $k$ and $N$ be two positive integers. 

\begin{definition}
For any $M \in {\mathcal{M}}_{N}(\mathbb{C})$, any $x_1,..., x_k \in \mathbb{C}^{N}$, let: 
$$ \rho^{k}(M)(x_1 \otimes...\otimes x_k) = Mx_1 \otimes ... \otimes Mx_k.$$
The application $\rho^{k}$ is the tensor action of $ {\mathcal{M}}(N)(\mathbb{C})$ on $\left(\mathbb{C}^{N}\right)^{\otimes k}$. 
\end{definition}

Let $C$ be a subalgebra of ${\sf End}\left((\mathbb{C}^{N})^{\otimes k}\right)$, let us define the commutant of $C$. 

\begin{definition}
The commutant of $C$, denoted by $C'$, is: 
\begin{align*}
C' = \left\{E \in {\sf End}\left((\mathbb{C}^{N})^{\otimes k}\right), \forall F \in C, EF=FE\right\}.
\end{align*}
\end{definition}
For any subgroup $G$ of $\mathcal{M}(N)$, we define:
$$\mathbb{C}\left[\rho^k_{G}\right]  = \mathbb{C}\left[\left\{\rho^{k}(g), g \in G\right\}\right] \subset {\sf End}\left((\mathbb{C}^{N})^{\otimes k}\right).$$
Beside, we define: 
$$\mathbb{C}\left[\rho_N^{\mathcal{A}_k}\right] = \mathbb{C}\left[\left\{\rho_{N}(p ), p \in \mathcal{A}_k\right\}\right]\subset {\sf End}\left((\mathbb{C}^{N})^{\otimes k}\right).$$

The kind of duality between the groups and the set of partitions described in Table~$1$ is explained in the following theorem.

\begin{theorem}
For any set of partitions $\mathcal{A} \in \{\mathfrak{S}, \mathcal{B}, \mathcal{H}, \mathcal{B}s, \mathcal{P}\}$ and any integers $k$ and $N$, 
\label{duality}
\begin{align*}
\left(\mathbb{C}\left[\rho^k_{\mathcal{G}(\mathcal{A})(N)}\right]\right)' = \mathbb{C}\left[\rho_N^{\mathcal{A}_k}\right] \text{ and } \left(\mathbb{C}\left[\rho_N^{\mathcal{A}_k}\right]\right)' = \mathbb{C}\left[\rho^k_{\mathcal{G}(\mathcal{A})(N)}\right]. 
\end{align*}
\end{theorem}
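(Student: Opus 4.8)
The plan is to deduce the two displayed equalities from one another by a bicommutant argument, so that only one of them needs a genuine proof, and to prove that one by realizing the commutant as a space of invariant tensors and invoking the first fundamental theorem of invariant theory for $\mathcal{G}(\mathcal{A})(N)$. First I would record the trivial inclusion $\mathbb{C}[\rho_N^{\mathcal{A}_k}]\subseteq(\mathbb{C}[\rho^{k}_{\mathcal{G}(\mathcal{A})(N)}])'$: for $p\in\mathcal{A}_k$ and $g\in\mathcal{G}(\mathcal{A})(N)$ one has $g^{\otimes k}\rho_N(p)(g^{*})^{\otimes k}=\rho_N(p)$, i.e.\ $\rho_N(p)$ commutes with $\rho^{k}(g)=g^{\otimes k}$; this was already observed at the start of Section~\ref{sectionschur}, and it is verified block by block on the expansion $\rho_N(p)=\sum_{I\,:\,p\,\trianglelefteq\,{\sf Ker}(I)}E_{i_{1'}}^{i_1}\otimes\cdots\otimes E_{i_{k'}}^{i_k}$ using exactly the defining relations of $\mathcal{G}(\mathcal{A})(N)$ from Table~$1$ (permutation of tensor legs for $\mathfrak{S}$; $g\,{}^{t}g=\mathrm{Id}_N$ kills a Weyl contraction $[i,j]$ for $\mathcal{B}$; the sparsity and sum conditions handle blocks of size $\geq 3$ or of size $1$ for $\mathcal{H}$, $\mathcal{B}s$, $\mathcal{P}$; the general case reduces to these since $\mathcal{A}_k$ is generated under $\circ$, $\otimes$, ${}^{t}$ by such elementary partitions). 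Taking commutants gives $\mathbb{C}[\rho^{k}_{\mathcal{G}(\mathcal{A})(N)}]\subseteq(\mathbb{C}[\rho_N^{\mathcal{A}_k}])'$.

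Next I would observe that both $\mathbb{C}[\rho^{k}_{\mathcal{G}(\mathcal{A})(N)}]$ and $\mathbb{C}[\rho_N^{\mathcal{A}_k}]$ are unital $*$-subalgebras of the finite-dimensional $*$-algebra ${\sf End}((\mathbb{C}^{N})^{\otimes k})$: the first because $\mathcal{G}(\mathcal{A})(N)$ is stable under $g\mapsto g^{*}$, the second because $\rho_N(p)^{*}=\rho_N({}^{t}p)$ and $\mathcal{A}_k$ is stable under transposition. By von Neumann's double commutant theorem, each of them equals its own bicommutant. Hence it suffices to prove a single inclusion, say $(\mathbb{C}[\rho^{k}_{\mathcal{G}(\mathcal{A})(N)}])'\subseteq\mathbb{C}[\rho_N^{\mathcal{A}_k}]$: combined with the first paragraph this yields the first equality, and applying $'$ and using the bicommutant property then yields the second.

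For that remaining inclusion I would identify ${\sf End}((\mathbb{C}^{N})^{\otimes k})=(\mathbb{C}^{N})^{\otimes k}\otimes((\mathbb{C}^{N})^{*})^{\otimes k}$; since $\mathcal{G}(\mathcal{A})(N)$ preserves a non-degenerate (bi- or sesqui-)linear form, the dual module is equivariantly identified with the standard one (keeping track, in the unitary case, of which $k$ legs carry the conjugate module), and after a permutation of the $2k$ tensor legs the commutant $(\mathbb{C}[\rho^{k}_{\mathcal{G}(\mathcal{A})(N)}])'$ becomes exactly the subspace of $\mathcal{G}(\mathcal{A})(N)$-invariant vectors of $(\mathbb{C}^{N})^{\otimes 2k}$. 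The crux is then the invariant-theoretic statement that this fixed space is linearly spanned by the partition vectors
\[
\xi_q \;=\; \sum_{I\,:\,q\,\trianglelefteq\,{\sf Ker}(I)} e_{i_1}\otimes\cdots\otimes e_{i_{2k}},
\]
with $q$ ranging over the partitions of $\{1,\dots,2k\}$ belonging to the class $\mathcal{A}$. Undoing the leg permutation turns the $\xi_q$ back into the operators $\rho_N({}^{t}p)$, $p\in\mathcal{A}_k$, which span $\mathbb{C}[\rho_N^{\mathcal{A}_k}]$ (again by transpose-stability of $\mathcal{A}_k$), and the inclusion follows. This spanning statement is the first fundamental theorem of invariant theory for $\mathcal{G}(\mathcal{A})(N)$: for $\mathfrak{S}$ it is classical Schur--Weyl duality, for $\mathcal{B}$ the first fundamental theorem for $O(N)$, for $\mathcal{H}$ and $\mathcal{B}s$ the analogues for the hyperoctahedral and bistochastic groups, and for $\mathcal{P}$ the partition-algebra duality of Jones and Martin; a uniform ``easy group'' treatment is in \cite{Banica} (see also \cite{Good}, \cite{Jo}, \cite{Halv}).

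The main obstacle is precisely this invariant-theory input, and within it the low-dimensional bookkeeping: when $N<k$ the map $p\mapsto\rho_N(p)$ fails to be injective on $\mathbb{C}[\mathcal{A}_k]$, so one cannot close the argument by matching $\dim\mathrm{Hom}$ with a count of partitions. I would avoid this by proving the fixed-vector statement directly rather than by a dimension count -- for instance by showing that the averaging operator $\int_{\mathcal{G}(\mathcal{A})(N)}g^{\otimes 2k}\,dg$ (Haar measure on the compact group $\mathcal{G}(\mathcal{A})(N)$, a finite group in the cases $\mathfrak{S}$ and $H$) is the orthogonal projection onto the fixed space and that its range lies in the span of the $\xi_q$; this last point is where the combinatorics specific to each $\mathcal{G}(\mathcal{A})$ enters. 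Since one only needs the span, not a basis of invariants, the failure of injectivity of $\rho_N$ is harmless, and the equality of algebras then holds for all $N$ and $k$.
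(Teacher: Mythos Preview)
Your proposal is a correct outline of the standard proof, but you should be aware that the paper itself does not prove this theorem: immediately after the statement it simply remarks that the $(\mathfrak{S},\mathcal{P})$ case is Jones' duality (citing \cite{Halv}), that $(U,\mathfrak{S})$ and $(O,\mathcal{B})$ are the classical Schur--Weyl dualities, and directs the reader to \cite{Good}, \cite{Jo}, \cite{MA1}, \cite{Halv}, and \cite{Banica} for the remaining cases. So there is no ``paper's own proof'' to compare against --- you have supplied the argument that the paper outsources. Your bicommutant reduction plus identification of the commutant with the fixed space of $(\mathbb{C}^N)^{\otimes 2k}$ and appeal to the first fundamental theorem for each $\mathcal{G}(\mathcal{A})$ is exactly the route taken in the references the paper cites (the ``easy group'' treatment of \cite{Banica} in particular), and your remark that one only needs the \emph{span} of the $\xi_q$, not linear independence, correctly handles the small-$N$ regime where $\rho_N$ is not injective.
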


The duality $(\mathfrak{S}, \mathcal{P})$ is the simplest to show: it is the Jones' duality. One can look at the proof in \cite{Halv} which uses the exclusive basis defined in Definition $3.16$ of \cite{Gab1}. The dualities $(U, \mathfrak{S})$ and $(O,\mathcal{B})$ are the Schur-Weyl dualities.

\subsection{Finite-dimensional cumulants and exclusive moments}
The notion of asymptotic $\mathcal{A}$-cumulants and $\mathcal{A}$-exclusive moments are well defined for sequences $((M_i^{N})_{i \in \mathcal{I}})_{N \in \mathbb{N}}$ of families of random matrices which converge in $\mathcal{A}$-distribution. In this section, we introduce the notion of finite-dimensional cumulants and exclusive moments for families $(M_i)_{i \in \mathcal{I}}$ of random matrices of fixed size. 

Let $(M_i)_{i \in \mathcal{I}}$ be a family of random matrices of size $N$ and $(i_1,...,i_k)$ be a $k$-tuple of elements of $\mathcal{I}$. As noticed in Section \ref{sectionschur}, the endomorphism $$ E_N^{\mathcal{G}(\mathcal{A})} = \int_{\mathcal{G}(\mathcal{A})(N)} M^{\otimes k} \mathbb{E}\left[M_{i_1}\otimes ... \otimes M_{i_k}\right] (M^{-1})^{\otimes k}dM$$ commutes with $M^{\otimes k }$ for any $M\in \mathcal{G}(\mathcal{A})(N)$. Using the duality stated in Theorem~\ref{duality}, it belongs to $\mathbb{C}\left[\rho_{N}^{\mathcal{A}_k}\right]$. Let us suppose that $N\geq 2k$ so that $\rho_{N}$ is injective: the endomorphism $E_N^{\mathcal{G}(\mathcal{A})}$ can be seen as an element of $\mathbb{C}[\mathcal{A}_k]$.   Recall Definition 5.3 of \cite{Gab1} where the notion of cumulants for elements in $\mathbb{C}[\mathcal{P}_k]$ is defined. 

\begin{definition}
For any partition $p$ in $\mathcal{A}_k$, the $p$-$\mathcal{A}$-cumulant of $(M_{i_1},...,M_{i_k})$ is the cumulant of the endomorphism $E_N^{\mathcal{G}(\mathcal{A})}$: 
\begin{align*}
\mathbb{E}\kappa_{p}^{\mathcal{A}}\left( M_{i_1}, ..., M_{i_k}\right) = \kappa_{p}\left[ \int_{\mathcal{G}(\mathcal{A})(N)} M^{\otimes k} \mathbb{E}\left[M_{i_1}\otimes ... \otimes M_{i_k}\right] (M^{-1})^{\otimes k}dM\right]. 
\end{align*}
For any partition $p$ in $\mathcal{P}_k$, the mean $p$-$\mathcal{A}$-exclusive moment of $(M_{i_1},...,M_{i_k})$, denoted by $\mathbb{E}m_{p^{c}}^{\mathcal{A}} \left( M_{i_1}, ..., M_{i_k}\right)$, is the $p$-exclusive moment of the endomorphism $E_N^{\mathcal{G}(\mathcal{A})}$:
\begin{align*}
\frac{1}{N^{{\sf nc}(p\vee \mathrm{id}_k)}} {\sf Tr} \left[\left(\int_{\mathcal{G}(\mathcal{A})(N)} M^{\otimes k} \mathbb{E}\left[M_{i_1}\otimes ... \otimes M_{i_k}\right] (M^{-1})^{\otimes k}dM\right) \rho_{N}(^{t}p^{c})\right].
\end{align*}
\end{definition}

From now on, when we consider finite dimensional cumulants, we will always suppose that $N\geq 2k$ even if omit to write it. Besides, when $\mathcal{A} = \mathcal{P}$, we will omit to specify it: $\mathbb{E}\kappa_{p}$ and $\mathbb{E}m_{p^{c}}$ stand for the $p$-$\mathcal{P}$-cumulant and exclusive moment. 

Let us remark that for any $p \in \mathcal{A}_k$, $\mathbb{E}m_{p}(M_{i_1}, ..., M_{i_k})$ is the $p$-moment of $E_N^{\mathcal{G}(\mathcal{A})}$ since, by Theorem \ref{duality}, $\rho_N(p)$ commutes with the tensor action of $\mathcal{G}(\mathcal{A})(N)$. We can apply the results of \cite{Gab1}, namely  Theorems 5.1 (or 5.5) and 5.2. 

\begin{theorem}
\label{th:lienlimite}
For any integer $N$, let $(M_i^{N})_{i \in \mathcal{I}}$ be a family of random matrices of size $N$. The family $(M_i^{N})_{i \in \mathcal{I}}$  converges in $\mathcal{A}$-distribution if and only if one of the following assertions is valid: 
\begin{enumerate}
\item for any $k$, any $(i_1,...,i_k) \in \mathcal{I}^{k}$ and any $p \in \mathcal{A}_k$, $\mathbb{E}\kappa_{p}^{\mathcal{A}}(M_{i_1}^{N}, ..., M_{i_1}^{N})$ converges as $N$ goes to infinity, 
\item for any $k$, any $(i_1,...,i_k) \in \mathcal{I}^{k}$ and any $p \in \mathcal{P}_k$, $\mathbb{E}m_{p^{c}}^{\mathcal{A}}(M_{i_1}^{N}, ..., M_{i_1}^{N})$ converges as $N$ goes to infinity. 
\end{enumerate}
If it does, then for any $k$, any $(i_1,...,i_k) \in \mathcal{I}^{k}$ and any $p \in \mathcal{A}_k$,
\begin{align*}
\mathbb{E}\kappa^{\mathcal{A}}_{p}(M_{i_1}, ..., M_{i_k}) &= \lim_{N \to \infty} \mathbb{E}\kappa^{\mathcal{A}}_{p}(M^{N}_{i_1}, ..., M^{N}_{i_k}), 
\end{align*}
and for any $p \in \mathcal{P}_k$, 
\begin{align*}
\mathbb{E}m^{\mathcal{A}}_{p^{c}}(M_{i_1}, ..., M_{i_k}) &= \lim_{N \to \infty} \mathbb{E}m^{\mathcal{A}}_{p^{c}}(M^{N}_{i_1}, ..., M^{N}_{i_k}).
\end{align*} 
where in the two last equations, the l.h.s. is respectively the asymptotic $p$-$\mathcal{A}$-cumulant and the asymptotic $p$-$\mathcal{A}$-exclusive moment. 
\end{theorem}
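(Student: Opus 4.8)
The plan is to deduce the statement from the combinatorial results of \cite{Gab1}, namely Theorems 5.1 (or 5.5) and 5.2, applied to the sequence of averaged endomorphisms $E_N^{\mathcal{G}(\mathcal{A})}$ introduced just before the theorem. Throughout one fixes $N \ge 2k$, so that $\rho_N$ is injective on $\mathbb{C}[\mathcal{A}_k]$ and $E_N^{\mathcal{G}(\mathcal{A})}$ is a well-defined element of $\mathbb{C}[\mathcal{A}_k]$; write $x_N := E_N^{\mathcal{G}(\mathcal{A})}$.

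First I would record the identity relating the mean $p$-moments of the matrix family to the $p$-moments of $x_N$. For $p \in \mathcal{A}_k$, Theorem \ref{duality} gives that $\rho_N({}^{t}p)$ commutes with $\rho^{k}(M)$ for every $M \in \mathcal{G}(\mathcal{A})(N)$; hence conjugating the argument of ${\sf Tr}$ by $M^{\otimes k}$ and averaging over $\mathcal{G}(\mathcal{A})(N)$ leaves $\mathbb{E}m_{p}(M_{i_1}^{N},\dots,M_{i_k}^{N})$ unchanged, so that
\[
\mathbb{E}m_{p}(M_{i_1}^{N},\dots,M_{i_k}^{N})=\frac{1}{N^{{\sf nc}(p\vee\mathrm{id}_k)}}{\sf Tr}\big(x_N\,\rho_N({}^{t}p)\big),
\]
which is exactly the $p$-moment of $x_N$ in the sense of \cite{Gab1}. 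Consequently, for each fixed $k$ and $(i_1,\dots,i_k)$, the $\mathcal{A}$-distribution data of the family is the sequence of moments of $x_N\in\mathbb{C}[\mathcal{A}_k]$, and convergence in $\mathcal{A}$-distribution means precisely: for all $k$, all $(i_1,\dots,i_k)\in\mathcal{I}^{k}$ and all $p\in\mathcal{A}_k$, the $p$-moment of $x_N$ converges as $N\to\infty$.

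Next I would apply Theorems 5.1 (or 5.5) and 5.2 of \cite{Gab1} to the sequences $(x_N)_N$. Theorem 5.1/5.5 states that, for a sequence in $\mathbb{C}[\mathcal{A}_k]$, the convergence of all $p$-moments ($p\in\mathcal{A}_k$), of all $p$-cumulants $\kappa_p[x_N]$ ($p\in\mathcal{A}_k$), and of all $p$-exclusive moments $m_{p^{c}}[x_N]$ ($p\in\mathcal{P}_k$) are equivalent, and it furnishes the limiting relations among these functionals. Since by definition $\mathbb{E}\kappa_p^{\mathcal{A}}(M_{i_1}^{N},\dots,M_{i_k}^{N})=\kappa_p[x_N]$ and $\mathbb{E}m_{p^{c}}^{\mathcal{A}}(M_{i_1}^{N},\dots,M_{i_k}^{N})=m_{p^{c}}[x_N]$, this yields at once the equivalence of assertions (1) and (2) with convergence in $\mathcal{A}$-distribution. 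For the identification of the limits: once convergence holds, the limiting $\mathcal{A}$-tracial distribution algebra has moments $\mathbb{E}m_{p}(M_{i_1},\dots,M_{i_k})=\lim_N m_p(x_N)$, and its $\mathcal{A}$-cumulant and $\mathcal{A}$-exclusive-moment forms are obtained from those moments by the limiting triangular transformations of \cite{Gab1}; Theorem 5.1/5.5 identifies them with $\lim_N\kappa_p[x_N]$ and $\lim_N m_{p^{c}}[x_N]$. Equivalently one may invoke Theorem 5.2 of \cite{Gab1}, which expresses the asymptotic $p$-$\mathcal{A}$-cumulant directly as $\lim_N\kappa_p[x_N]$ — this is the statement already used in the proof of Theorem \ref{th:invarianceetliberte} — and then obtain the exclusive-moment version from the $\sqsupset$-triangular relation of Proposition \ref{prop:lienmomexclucumu}.

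The main obstacle is precisely the passage to the limit inside the moment--cumulant--exclusive-moment dictionary on $\mathbb{C}[\mathcal{A}_k]$: these transformations carry $N$-dependent powers of $N$, so triangularity over a fixed finite poset does not by itself transport convergence from one family of functionals to another, and one must control the lower-order strata. This control is exactly the content of Theorems 5.1 and 5.5 of \cite{Gab1}; granting those inputs the present theorem is essentially a translation, the only genuine steps being (a) the trace-invariance computation giving $\mathbb{E}m_{p}(M_{i_1}^{N},\dots,M_{i_k}^{N})=m_p(x_N)$ via Theorem \ref{duality}, and (b) matching the definitions of the finite-dimensional $\mathcal{A}$-cumulants and $\mathcal{A}$-exclusive moments from the paragraph preceding the theorem with the cumulant and exclusive moment of $x_N$ as defined in \cite{Gab1}, under the standing hypothesis $N\ge 2k$ that makes $x_N$ meaningful.
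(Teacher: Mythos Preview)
Your proposal is correct and follows exactly the approach the paper takes: the paper does not give a separate proof but simply states (in the sentence preceding the theorem) that one applies Theorems 5.1 (or 5.5) and 5.2 of \cite{Gab1} to the averaged endomorphisms $E_N^{\mathcal{G}(\mathcal{A})}$, after noting via the duality that $\mathbb{E}m_{p}$ of the matrices coincides with the $p$-moment of $E_N^{\mathcal{G}(\mathcal{A})}$. Your write-up spells out precisely these steps.
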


\subsection{$\mathcal{G}(\mathcal{A})$-invariant families}
\subsubsection{Definitions and convergence}
Let $(M_i^{N})_{i \in \mathcal{I}}$ be a family of random matrices of size $N$. When for any integer $N$, $(M_i^{N})_{i \in \mathcal{I}}$ is invariant in law by conjugation by $\mathcal{G}(\mathcal{A})(N)$, the notions of finite-dimensional exclusive moments and cumulants become easier. 

\begin{definition}
\label{invariancedef}
The family $(M_i^{N})_{i \in \mathcal{I}}$ is $\mathcal{G}(\mathcal{A})$-invariant if for any matrix $M$ in the group $\mathcal{G}(\mathcal{A})(N)$, any integer $k$, any $(i_1,...,i_{k})$ in $\mathcal{I}^{k}$, $(MM_{i_1}M^{-1}, ..., MM_{i_k}M^{-1})$ has the same law as $(M_{i_1},...,M_{i_k})$.
\end{definition}

Let us suppose for this section that $(M_i^{N})_{i \in \mathcal{I}}$ is $\mathcal{G}(\mathcal{A})$-invariant. Let $(i_1,...,i_k) \in \mathcal{I}^{k}$. By definition, for any integer $N$: 
\begin{align}
\label{eq:fondamentale}
\mathbb{E}\left[M_{i_1}^{N} \otimes ... \otimes M_{i_k}^{N}\right] = \sum_{p \in\mathcal{A}_k} \mathbb{E}\kappa_{p}^{\mathcal{A}}(M_{i_1}^{N},...,M_{i_k}^{N}) \frac{\rho_{N}(p)}{N^{{\sf nc}(p) - {\sf nc}(p \vee {\mathrm{id}_k})} }. 
\end{align}
Thus, if we consider another set of partitions $\mathcal{A}'$ such that $\mathcal{A} \subset \mathcal{A}'$, for any $k$, any $p \in \mathcal{A}'_k$, any $(i_1,...,i_k) \in \mathcal{I}^{k}$ and any integer $N$: 
\begin{align}
\label{eq:finitedim}
\mathbb{E}\kappa_{p}^{\mathcal{A}'}[M^{N}_{i_1}, ...,M^{N}_{i_k}] = \delta_{p \in \mathcal{A}} \mathbb{E}\kappa_{p}^{\mathcal{A}}[M^{N}_{i_1}, ...,M^{N}_{i_k}].
\end{align}
This implies the following theorem. 

\begin{theorem}
\label{th:invarianceasymp}
For any integer $N$, let $(M_i^{N})_{i \in \mathcal{I}}$ be a family of random matrices. Let us suppose that $(M_i^{N})_{i \in \mathcal{I}}$ is $\mathcal{G}(\mathcal{A})$-invariant and converges in $\mathcal{A}$-distribution. Then it converges in $\mathcal{P}$-distribution and it is asymptotically $\mathcal{G}(\mathcal{A})$-invariant. 
Besides, if it satisfies the asymptotic $\mathcal{A}$-factorization property, it satisfies the asymptotic $\mathcal{P}$-factorization property. 
\end{theorem}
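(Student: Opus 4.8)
The plan is to exploit Equation (\ref{eq:finitedim}), which already tells us that for a $\mathcal{G}(\mathcal{A})$-invariant family, the finite-dimensional $\mathcal{A}'$-cumulants of any superset $\mathcal{A}' \supset \mathcal{A}$ are supported on $\mathcal{A}$ and coincide there with the $\mathcal{A}$-cumulants. Taking $\mathcal{A}' = \mathcal{P}$, this reduces the convergence in $\mathcal{P}$-distribution to the convergence, as $N\to\infty$, of the quantities $\mathbb{E}\kappa_p^{\mathcal{P}}[M^N_{i_1},\dots,M^N_{i_k}]$ for all $p\in\mathcal{P}_k$; but by (\ref{eq:finitedim}) these vanish unless $p\in\mathcal{A}_k$, in which case they equal $\mathbb{E}\kappa_p^{\mathcal{A}}[M^N_{i_1},\dots,M^N_{i_k}]$. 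Since the family converges in $\mathcal{A}$-distribution, Theorem \ref{th:lienlimite} (assertion (1)) says exactly that these finite-dimensional $\mathcal{A}$-cumulants converge. So the finite-dimensional $\mathcal{P}$-cumulants converge for every $p$, and applying Theorem \ref{th:lienlimite} in the direction $\mathcal{A}=\mathcal{P}$ yields convergence in $\mathcal{P}$-distribution.

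Second, for asymptotic $\mathcal{G}(\mathcal{A})$-invariance: once convergence in $\mathcal{P}$-distribution is established, the asymptotic $\mathcal{P}$-cumulants exist and, by the last part of Theorem \ref{th:lienlimite}, are the limits of the finite-dimensional $\mathcal{P}$-cumulants. Passing to the limit in (\ref{eq:finitedim}) gives $\mathbb{E}\kappa_p^{\mathcal{P}}(M_{i_1},\dots,M_{i_k}) = \delta_{p\in\mathcal{A}_k}\,\mathbb{E}\kappa_p^{\mathcal{A}}(M_{i_1},\dots,M_{i_k})$, which is precisely the defining condition (Definition \ref{def:GAinva} / the displayed characterization after it) for the limiting $\mathcal{P}$-tracial distribution algebra to be $\mathcal{G}(\mathcal{A})$-invariant. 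One should note here that the asymptotic $\mathcal{A}$-cumulant appearing on the right is the same object whether computed via the limiting $\mathcal{A}$-tracial distribution algebra or as the limit of the finite-dimensional $\mathcal{A}$-cumulants — again Theorem \ref{th:lienlimite} — so no ambiguity arises.

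Third, for the factorization statement: suppose the family satisfies the asymptotic $\mathcal{A}$-factorization property, i.e. by Lemma \ref{lemme:detercumu} (in its $\mathcal{A}$-version) the asymptotic $\mathcal{A}$-cumulants factorize on tensor products $p_1\otimes p_2$ with $p_1,p_2\in\mathcal{A}$. We must show the asymptotic $\mathcal{P}$-cumulants factorize on all $p_1\otimes p_2$ with $p_1,p_2\in\mathcal{P}$. If both $p_1,p_2\in\mathcal{A}$ then $p_1\otimes p_2\in\mathcal{A}$ (each of $\mathcal{A}\in\{\mathcal{P},\mathcal{B},\mathcal{S},\mathcal{H},\mathcal{B}s\}$ is stable under $\otimes$), and by the asymptotic $\mathcal{G}(\mathcal{A})$-invariance just proved the $\mathcal{P}$-cumulant equals the $\mathcal{A}$-cumulant, so factorization follows from the $\mathcal{A}$-factorization. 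If at least one of $p_1,p_2$ is not in $\mathcal{A}$, then $p_1\otimes p_2\notin\mathcal{A}$ (stability of $\mathcal{A}$ under $\otimes$ and under the inverse operation of extracting the left/right part), so $\mathbb{E}\kappa_{p_1\otimes p_2}^{\mathcal{P}} = 0$ by invariance; and the corresponding product $\mathbb{E}\kappa_{p_1}^{\mathcal{P}}\,\mathbb{E}\kappa_{p_2}^{\mathcal{P}}$ is also $0$ since the factor with argument not in $\mathcal{A}$ vanishes. Hence the factorization holds identically, and by Lemma \ref{lemme:detercumu} the limiting $\mathcal{P}$-tracial distribution algebra is deterministic, i.e. the asymptotic $\mathcal{P}$-factorization property holds.

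The only genuinely delicate point is the bookkeeping around Theorem \ref{th:lienlimite} and the requirement $N\ge 2k$ for $\rho_N$ to be injective (so that $E_N^{\mathcal{G}(\mathcal{A})}$ may be read as an element of $\mathbb{C}[\mathcal{A}_k]$ and its cumulants make sense); this is harmless since we only care about the limit $N\to\infty$. Everything else is a matter of invoking (\ref{eq:finitedim}), Theorem \ref{duality}, Theorem \ref{th:lienlimite}, and Lemma \ref{lemme:detercumu} in the right order, and checking the elementary closure of each $\mathcal{A}$ under $\otimes$ and under passage to left/right parts.
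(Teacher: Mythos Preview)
Your proposal is correct and follows essentially the same approach as the paper: both use Equation~(\ref{eq:finitedim}) together with Theorem~\ref{th:lienlimite} (in both directions) to obtain convergence in $\mathcal{P}$-distribution, and then pass to the limit in (\ref{eq:finitedim}) for asymptotic $\mathcal{G}(\mathcal{A})$-invariance. The paper dismisses the factorization assertion as ``straightforward''; your case analysis via Lemma~\ref{lemme:detercumu} and the closure of each $\mathcal{A}$ under $\otimes$ and under taking left/right parts is exactly the intended unpacking of that word.
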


\begin{proof}
If $(M_i^{N})_{i \in \mathcal{I}}$ is $\mathcal{G}(\mathcal{A})$-invariant and converges in $\mathcal{A}$-distribution, then using Equation (\ref{eq:finitedim}) and Theorem \ref{th:lienlimite}, for any integer $k$, any $p \in \mathcal{P}$ and any $(i_1,...,i_k) \in \mathcal{I}^{k}$, $\mathbb{E}\kappa_{p}^{\mathcal{P}}[M^{N}_{i_1}, ...,M^{N}_{i_k}]$ converges. Again by Theorem \ref{th:lienlimite}, the family $(M_i^{N})_{i \in \mathcal{I}}$ converges in $\mathcal{P}$-distribution. Besides, by taking the limit of Equation (\ref{eq:finitedim}), we see that it is asymptotically $\mathcal{G}(\mathcal{A})$-invariant. The last assertion is straightforward. 
  \end{proof}

In particular, if for any integer $N$, $(M_i^{N})_{i \in \mathcal{I}}$ is invariant by conjugation in law by $U(N)$, and $(M_i^{N})_{i \in \mathcal{I}}$ converges in non-commutative distribution and satisfies the asymptotic $\mathfrak{S}$-factorization property, then $(M_i^{N})_{i \in \mathcal{I}}$ converges in $\mathcal{P}$-distribution and for any integer $k$, any $p \in \mathcal{P}_k$, any $(i_1,...,i_k) \in \mathcal{I}^{k}$, 
\begin{align*}
\lim_{N \to \infty} \mathbb{E}m_{p}(M_{i_1},...,M_{i_k}) = \sum_{\sigma \in \mathfrak{S}_k | \sigma\leq p} \prod_{(j_1,...,j_r) \text{ cycle of } \sigma} \kappa\left( M_{i_{j_1}}, ..., M_{i_{j_r}}\right), 
\end{align*}
where the $\kappa$ in the r.h.s. stands for the free cumulants in free probability.  It has to be noticed that this unitary-invariant case was also proved independently in the paper \cite{CGG} using the theory of traffics \cite{Camille}.

Let us suppose that $(M_i^{N})_{i \in \mathcal{I}}$ is $\mathcal{G}(\mathcal{A})$-invariant and converges in $\mathcal{A}$-distribution. Because of the $\mathcal{G}(\mathcal{A})$-invariance, for any $p \in \mathcal{P}_{k}$, any $(i_1,...,i_k) \in \mathcal{I}^{k}$, the exclusive moment $\mathbb{E}m_{p^{c}}^{\mathcal{A}}(M_{i_1}, ..., M_{i_k})$ is equal to $\mathbb{E}m_{p^{c}}(M_{i_1}, ..., M_{i_k})$ since this equality already holds for any finite $N$. Using Theorem \ref{th:simplification}, we get the following result. 

\begin{theorem}
\label{th:invarianceasymp2}
If for any integer $N$, $(M_i^{N})_{i \in \mathcal{I}}$ is invariant in law by conjugation by the unitary group, and $(M_i^{N})_{i \in \mathcal{I}}$ converges in non-commutative distribution and satisfies the asymptotic $\mathfrak{S}$-factorization, then for any integer $k$, any $p \in \mathcal{P}_k$, and any $(i_1,...,i_k) \in \mathcal{I}^{k}$, 
\begin{align*}
\lim_{N \to \infty} \mathbb{E}m_{p^{c}}(M_{i_1}, ..., M_{i_k}) = \delta_{p \in \overline{\mathfrak{S}_k}} \prod_{(j_1,...,j_r) \text{ cycle of } {\sf Mb}(p)} \kappa(M_{i_{j_1}}, ..., M_{i_{j_k}}), 
\end{align*}
where  the $\kappa$ in the r.h.s. stands for the free cumulants in free probability. 
\end{theorem}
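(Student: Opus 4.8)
The plan is to reduce the statement to a chain of results already in the paper: Theorem~\ref{th:invarianceasymp} to promote the convergence, the finite-$N$ coincidence (under unitary invariance) of $\mathcal{P}$- and $\mathfrak{S}$-exclusive moments explained just before the statement, Theorem~\ref{th:lienlimite} to pass to the limit, Theorem~\ref{th:simplification} to trade an exclusive moment for a cumulant, and the last theorem of Section~\ref{sec:Classicetfreecumu} to recognise that cumulant as a product of free cumulants.

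\emph{Step~1.} First I would observe that the two hypotheses ``converges in non-commutative distribution'' and ``satisfies the asymptotic $\mathfrak{S}$-factorization'' together amount to convergence in $\mathfrak{S}$-distribution with a deterministic limiting $\mathfrak{S}$-tracial distribution algebra: by Equation~(\ref{eq:lienmomentsigma}) each $\mathbb{E}m_{\sigma}(M^{N}_{i_1},\dots,M^{N}_{i_k})$ is the expectation of a product of normalised traces of words in the $M^{N}_i$, and the factorization hypothesis turns it asymptotically into the corresponding product of limiting non-commutative moments. Since moreover $(M^{N}_i)_{i\in\mathcal{I}}$ is $U$-invariant, i.e.\ $\mathcal{G}(\mathfrak{S})$-invariant, Theorem~\ref{th:invarianceasymp} applied with $\mathcal{A}=\mathfrak{S}$ shows that the family converges in $\mathcal{P}$-distribution, is asymptotically $U$-invariant, and satisfies the asymptotic $\mathcal{P}$-factorization property; in particular the limiting $\mathfrak{S}$- (and $\mathcal{P}$-) tracial distribution algebra is deterministic, so $(X_i)_{i\in\mathcal{I}}$ is a deterministic $U$-invariant tuple in the sense of Section~\ref{sec:Classicetfreecumu}.

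\emph{Step~2.} For each finite $N$ the family is $U$-invariant, hence, as noted in the paragraph preceding the statement, $\mathbb{E}m_{p^{c}}(M^{N}_{i_1},\dots,M^{N}_{i_k})=\mathbb{E}m^{\mathfrak{S}}_{p^{c}}(M^{N}_{i_1},\dots,M^{N}_{i_k})$ (both sides being read off the same endomorphism $\mathbb{E}[M^{N}_{i_1}\otimes\cdots\otimes M^{N}_{i_k}]$, which lies in $\mathbb{C}[\rho^{\mathfrak{S}_k}_N]$ by the duality of Section~\ref{sectionschur}). Letting $N\to\infty$ and invoking Theorem~\ref{th:lienlimite} with $\mathcal{A}=\mathfrak{S}$, this converges to the asymptotic $\mathfrak{S}$-exclusive moment $\mathbb{E}m^{\mathfrak{S}}_{p^{c}}(M_{i_1},\dots,M_{i_k})$. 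Now the second assertion of Theorem~\ref{th:simplification}, with $\mathcal{A}=\mathfrak{S}$, gives
\begin{align*}
\mathbb{E}m^{\mathfrak{S}}_{p^{c}}(M_{i_1},\dots,M_{i_k})=\delta_{p\in\overline{\mathfrak{S}_k}}\,\mathbb{E}\kappa^{\mathfrak{S}}_{{\sf Mb}(p)}(M_{i_1},\dots,M_{i_k}),
\end{align*}
and when $p\in\overline{\mathfrak{S}_k}$ the partition ${\sf Mb}(p)$ is a permutation of $\{1,\dots,k\}$.

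\emph{Step~3.} The asymptotic $\mathfrak{S}$-cumulant $\mathbb{E}\kappa^{\mathfrak{S}}_{\sigma}(M_{i_1},\dots,M_{i_k})$ is by definition $\kappa^{\mathfrak{S}}_{\sigma}(X_{i_1},\dots,X_{i_k})$ in the limiting $\mathfrak{S}$-tracial distribution algebra, which is deterministic and $U$-invariant by Step~1. Applying the last theorem of Section~\ref{sec:Classicetfreecumu} with $\mathcal{A}=\mathfrak{S}$ and $p=\sigma\in\mathfrak{S}_k$, this equals
\begin{align*}
\prod_{(j_1,\dots,j_r)\ \text{cycle of }\sigma}\kappa\big(M_{i_{j_1}},\dots,M_{i_{j_r}}\big),
\end{align*}
the $\kappa$ on the right being the free cumulants. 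Taking $\sigma={\sf Mb}(p)$ and chaining Steps~1--3 produces exactly the claimed identity. The only genuinely delicate point is Step~1: checking that convergence in non-commutative distribution together with the asymptotic $\mathfrak{S}$-factorization really does yield convergence in $\mathfrak{S}$-distribution, which is what lets Theorem~\ref{th:invarianceasymp} and Theorem~\ref{th:lienlimite} apply; the rest is a routine assembly of the cited statements.
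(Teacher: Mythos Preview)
Your proof is correct and follows essentially the same route the paper sketches in the paragraph immediately preceding the theorem: use $U$-invariance to identify $\mathbb{E}m_{p^{c}}$ with $\mathbb{E}m_{p^{c}}^{\mathfrak{S}}$ already at finite $N$, apply Theorem~\ref{th:simplification} with $\mathcal{A}=\mathfrak{S}$ to rewrite the latter as $\delta_{p\in\overline{\mathfrak{S}_k}}\,\mathbb{E}\kappa^{\mathfrak{S}}_{{\sf Mb}(p)}$, and then invoke Section~\ref{sec:Classicetfreecumu} to recognise the $\mathfrak{S}$-cumulants as products of free cumulants in the deterministic limit. Your Step~1, making explicit why the two hypotheses yield convergence in $\mathfrak{S}$-distribution (so that Theorems~\ref{th:invarianceasymp} and~\ref{th:lienlimite} apply), fills in a point the paper leaves implicit in its paragraph just after Theorem~\ref{th:invarianceasymp}.
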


\subsection{Microscopic observables}
We can easily relate the moments of the products of the entries of any $\mathfrak{S}$-invariant matrix with the finite dimensional cumulants: this allows us to study the asymptotics of the moments of the products of the entries of any $\mathfrak{S}$-invariant matrix. Let $N$ be a postive integer and $\mathcal{N}=(n_1,n_{1'}, ..., n_{k},n_{k'}) \in \{1,...,N\}^{2k}$. 
\begin{definition}
The kernel ${\sf Ker}(\mathcal{N})$ is the partition in $\mathcal{P}_k$ such that $u$ and $v$, both in $\{1,...,k,1',...,k' \}$, are in the same bloc if and only if $n_{u}= n_{v}$.  
\end{definition}

For example, if $\mathcal{N} = (1,1,2,1)$, then ${\sf Ker}(\mathcal{N}) = \{\{1,1',2'\}, \{2\}\}$. The following theorem is a generalization of Theorem 2.6 of \cite{CMSS}. Recall the notation $\trianglelefteq$ in (2) in \ref{sec:basic}.

\begin{theorem}
\label{th:entrees}
For any integer $N$, let $(M_i^{N})_{i \in \mathcal{I}}$ be a family of $\mathcal{G}(\mathcal{A})$-invariant random matrices of size $N$. For any integers $N$, $k$, any $(i_1,...,i_k) \in \mathcal{I}^{k}$, any $2k$-tuples $\mathcal{N} = (n_1,n_{1'},...,n_{k}, n_{k'})$ in $\{1,...,N\}^{2k}$: 
\begin{align*}
\mathbb{E}\left[ (M_{i_1}^{N})_{n_{1'}}^{n_1} ... (M_{i_k}^{N})_{n_{k'}}^{n_k}\right] =  \sum_{p_1 \in \mathcal{A}_k | p_1 \trianglelefteq {\sf Ker}(\mathcal{N})} \frac{1}{N^{{\sf nc}(p_1)- {\sf nc}(p_1\vee \mathrm{id}_k)}} \mathbb{E}\kappa_{p_1}^{\mathcal{A}}[ M_{i_1}^{N}, ..., M_{i_k}^{N}]. 
\end{align*}
Thus, if  $(M_i^{N})_{i \in \mathcal{I}}$ converges in $\mathcal{A}$-distribution, $\mathbb{E}\left[ (M_{i_1}^{N})_{n_{1'}}^{n_1} ... (M_{i_k}^{N})_{n_{k'}}^{n_k}\right]$ is equal to:
\begin{align*}
N^{{\sf nc}({\sf Ker}(\mathcal{N})\vee \mathrm{id}_k) - {\sf nc}({\sf Ker}(\mathcal{N}))}\bigg( \sum_{p_1 \in \mathcal{A}_k | p_1 \sqsupset p}\!\!\!\!\!\!\!\mathbb{E} \kappa^{\mathcal{A}}_{p_1} (M_{i_1}, ..., M_{i_k}) +o(1)\!\bigg).
\end{align*}
\end{theorem}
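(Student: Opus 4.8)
The plan is to exploit the $\mathcal{G}(\mathcal{A})$-invariance to rewrite $\mathbb{E}[M_{i_1}^N\otimes\cdots\otimes M_{i_k}^N]$ in the basis of the $\rho_N(p)$, and then read off a single matrix entry from that expansion. Concretely, by Equation (\ref{eq:fondamentale}) we have
\begin{align*}
\mathbb{E}\left[M_{i_1}^{N} \otimes \cdots \otimes M_{i_k}^{N}\right] = \sum_{p \in\mathcal{A}_k} \mathbb{E}\kappa_{p}^{\mathcal{A}}(M_{i_1}^{N},\ldots,M_{i_k}^{N}) \, \frac{\rho_{N}(p)}{N^{{\sf nc}(p) - {\sf nc}(p \vee {\mathrm{id}_k})}}.
\end{align*}
The matrix entry $(M_{i_1}^N)^{n_1}_{n_{1'}}\cdots (M_{i_k}^N)^{n_k}_{n_{k'}}$ is the coefficient of $E^{n_1}_{n_{1'}}\otimes\cdots\otimes E^{n_k}_{n_{k'}}$ in the left-hand tensor; so the first step is to identify, for each $p$, the coefficient of that elementary tensor in $\rho_N(p)$. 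From the definition of $\rho_N$ recalled in item (10) of Section \ref{sec:basic}, $\rho_N(p) = \sum_{I \mid p \trianglelefteq {\sf Ker}(I)} E^{i_1}_{i_{1'}}\otimes\cdots\otimes E^{i_k}_{i_{k'}}$, so the coefficient of $E^{n_1}_{n_{1'}}\otimes\cdots\otimes E^{n_k}_{n_{k'}}$ is exactly $\delta_{p \trianglelefteq {\sf Ker}(\mathcal{N})}$ (being careful about the ${}^t p$ versus $p$ convention, i.e. matching the indexing in (\ref{eq:normalized}); one checks the transpose is absorbed correctly because we are pairing row index $n_a$ with upper leg $a$). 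Substituting gives precisely the first displayed identity, summing over $p_1 \in \mathcal{A}_k$ with $p_1 \trianglelefteq {\sf Ker}(\mathcal{N})$ and weight $N^{-({\sf nc}(p_1) - {\sf nc}(p_1\vee \mathrm{id}_k))}$.

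For the second (asymptotic) statement, write $p := {\sf Ker}(\mathcal{N})$ and factor out the dominant power of $N$. The exponent attached to $p_1$ is $-{\sf nc}(p_1)+{\sf nc}(p_1\vee\mathrm{id}_k)$; I want to compare this with the exponent $-{\sf nc}(p)+{\sf nc}(p\vee\mathrm{id}_k)$ attached to $p$ itself, so I factor $N^{{\sf nc}(p\vee\mathrm{id}_k)-{\sf nc}(p)}$ out of the whole sum. The remaining power of $N$ on the term indexed by $p_1$ is then $N^{[{\sf nc}(p_1\vee\mathrm{id}_k)-{\sf nc}(p_1)] - [{\sf nc}(p\vee\mathrm{id}_k)-{\sf nc}(p)]}$. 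The key combinatorial step is to show that for $p_1 \trianglelefteq p$ (i.e. $p_1$ finer than $p$), this exponent is $\le 0$, with equality exactly when $p_1 \sqsupset p$ (finer-compatible), so that only the finer-compatible refinements survive in the limit. This is exactly the content of the finer-compatible order $\sqsupset$ from the definition preceding Proposition \ref{prop:lienmomexclucumu}: $p_1 \sqsupset p$ means $p_1$ finer than $p$ and ${\sf nc}(p_1) - {\sf nc}(p_1\vee\mathrm{id}_k) = {\sf nc}(p) - {\sf nc}(p\vee\mathrm{id}_k)$. Using the convergence in $\mathcal{A}$-distribution, each $\mathbb{E}\kappa_{p_1}^{\mathcal{A}}(M_{i_1}^N,\ldots,M_{i_k}^N)$ converges to $\mathbb{E}\kappa_{p_1}^{\mathcal{A}}(M_{i_1},\ldots,M_{i_k})$ by Theorem \ref{th:lienlimite}, so the leading term is $\sum_{p_1 \in \mathcal{A}_k \mid p_1 \sqsupset p} \mathbb{E}\kappa_{p_1}^{\mathcal{A}}(M_{i_1},\ldots,M_{i_k}) + o(1)$, and the prefactor is $N^{{\sf nc}({\sf Ker}(\mathcal{N})\vee\mathrm{id}_k)-{\sf nc}({\sf Ker}(\mathcal{N}))}$, as claimed.

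The main obstacle I anticipate is the purely combinatorial inequality ${\sf nc}(p_1\vee\mathrm{id}_k)-{\sf nc}(p_1) \le {\sf nc}(p\vee\mathrm{id}_k)-{\sf nc}(p)$ for $p_1 \trianglelefteq p$, with the characterization of the equality case being $p_1\sqsupset p$. This should follow from the general theory of the distance $d$ and the orders $\dashv$, $\sqsupset$ developed in \cite{Gab1}: refining $p$ to $p_1$ increases ${\sf nc}$ by the number of splits performed, while it can only decrease or preserve ${\sf nc}(\cdot\vee\mathrm{id}_k)$ by at most that same number, and the extremal case where no cycle-count is lost is precisely the finer-compatible case by definition of $\sqsupset$. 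One must also be slightly careful that the sum in the first identity ranges over $p_1 \in \mathcal{A}_k$ and not all of $\mathcal{P}_k$, but this causes no trouble since $\mathbb{E}\kappa_{p_1}^{\mathcal{A}}$ is only defined for $p_1 \in \mathcal{A}_k$ anyway, and $\sqsupset$-refinements of $p$ lying outside $\mathcal{A}_k$ simply do not appear. A minor bookkeeping point is tracking the transpose ${}^t p$ in (\ref{eq:normalized}) versus the untransposed $p$ in (\ref{eq:fondamentale}): one should verify that reading entries of $\mathbb{E}[M_{i_1}^N\otimes\cdots\otimes M_{i_k}^N]$ against the basis $(E^{n_a}_{n_{a'}})_a$ gives the condition $p_1 \trianglelefteq {\sf Ker}(\mathcal{N})$ and not its transpose, which follows since the kernel condition is invariant under the legs/co-legs relabelling used by ${\sf Ker}$.
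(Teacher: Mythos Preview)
Your proof is correct, and for the first identity it is in fact more direct than the paper's: you read off the $(n_1,\ldots,n_{k'})$-entry of $\rho_N(p)$ straight from the definition of $\rho_N$ (item (10) in Section \ref{sec:basic}), obtaining the indicator $\delta_{p \trianglelefteq {\sf Ker}(\mathcal{N})}$, and substitute into Equation (\ref{eq:fondamentale}). The paper instead takes a detour through the exclusive moment: it first uses $\mathfrak{S}$-invariance to count how many tuples share the same kernel, yielding
\[
\mathbb{E}\left[ (M_{i_1}^{N})_{n_{1'}}^{n_1} \cdots (M_{i_k}^{N})_{n_{k'}}^{n_k}\right] = \frac{(N-{\sf nc}({\sf Ker}(\mathcal{N})))!}{N!}\, N^{{\sf nc}({\sf Ker}(\mathcal{N})\vee \mathrm{id}_k)}\, \mathbb{E}m_{{\sf Ker}(\mathcal{N})^{c}} (M_{i_1}^{N}, \ldots, M_{i_k}^{N}),
\]
and then invokes Equation (\ref{eq:fondamentale}) together with a formula from \cite{Gab1} relating exclusive moments to cumulants. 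Your argument bypasses both the counting step and the reference to \cite{Gab1}, at the cost of not making explicit the intermediate link between matrix entries and exclusive moments (which the paper exploits later, e.g.\ in Remark \ref{remarque:microplus}).

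For the asymptotic statement the two proofs coincide: both factor out $N^{{\sf nc}(p\vee\mathrm{id}_k)-{\sf nc}(p)}$ and invoke the definition of $\sqsupset$. The combinatorial inequality you flag as the main obstacle is indeed elementary: splitting one block of $p$ raises ${\sf nc}$ by $1$ while raising ${\sf nc}(\cdot\vee\mathrm{id}_k)$ by at most $1$, so ${\sf nc}(p_1)-{\sf nc}(p_1\vee\mathrm{id}_k)\ge {\sf nc}(p)-{\sf nc}(p\vee\mathrm{id}_k)$ with equality exactly in the finer-compatible case, which is precisely the definition of $p_1\sqsupset p$. Your concern about the transpose is harmless, since ${\sf Ker}(\mathcal{N})$ treats primed and unprimed labels symmetrically.
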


\begin{proof}
Since $(M_i^{N})_{i \in \mathcal{I}}$ is a family of $\mathcal{G}(\mathcal{A})$-invariant, it is $\mathfrak{S}$-invariant. Thus  the value of $\mathbb{E}\left[ (M_{i_1}^{N})_{n_{1'}}^{n_1} ... (M_{i_k}^{N})_{n_{k'}}^{n_k}\right]$ only depends on the kernel of $\mathcal{N}=(n_1,n_{1'},...,n_k,n_{k'})$. This implies that: 
\begin{align*}
\mathbb{E}\left[ (M_{i_1}^{N})_{n_{1'}}^{n_1} ... (M_{i_k}^{N})_{n_{k'}}^{n_k}\right] = \frac{N-{\sf nc}({\sf Ker}(\mathcal{N}))!}{N!} N^{{\sf nc}({\sf Ker}(\mathcal{N})\vee \mathrm{id}_k)} \mathbb{E}m_{{\sf Ker}(\mathcal{N})^{c}} (M_{i_1}^{N}, ..., M_{i_k}^{N}). 
\end{align*}
Recall that $ \mathbb{E}m_{{\sf Ker}(\mathcal{N})^{c}}$ is the exclusive moment of $\mathbb{E}\left[M_{i_1}^{N} \otimes ... \otimes M_{i_k}^{N}\right]$: using Equation~(\ref{eq:fondamentale}) in this article and Equation (31) of \cite{Gab1}, we get the first equation we had to prove. The second equation is a consequence of the first one and the definition of the order $\sqsupset$.
  \end{proof}

\begin{remark}
\label{remarque:microplus}
Actually, using the up-coming Section \ref{sec:algfluctu}, we can have a better understanding of the moments of the entries if the family converges up to a higher order of fluctuations. If the family converges to any order of fluctuations, $\mathbb{E}\left[ (M_{i_1}^{N})_{n_{1'}}^{n_1} ... (M_{i_k}^{N})_{n_{k'}}^{n_k}\right] $ is equivalent to: 
\begin{align*}
\frac{1}{N^{o_{p}(i_1,...,i_k)}} \sum_{p_1 \in \mathcal{A}_k, i \in \mathbb{N}| p_1 \trianglelefteq {\sf Ker}(\mathcal{N}), {\sf df}(p_1,p) + i = o_{p}(i_1,...,i_k)} \mathbb{E}\kappa^{i,\mathcal{A}}_{p_1}(M_{i_1},...,M_{i_k}),
\end{align*}
where: $$o_p(i_1,...,i_k) = min\{ i+{\sf df}(p_1,p) | i\in \mathbb{N}, p_1 \in \mathcal{A}_k, p_1 \trianglelefteq p, \mathbb{E}\kappa^{i,\mathcal{A}}_{p_1}(M_{i_1},...,M_{i_k})\neq 0\}.$$
\end{remark}

In the next remark, we apply Theorem \ref{th:entrees} to matrices which are U-invariant. 

\begin{remark}
Let us denote by $\mathcal{H}^{=}_k$ the set of partitions $p$ in $\mathcal{H}_k$ such that for any block $b$ in $p$, $\#( b \cap \{1,...,k\}) = \#( b \cap \{1',...,k'\})$. Let us suppose that $(M_i^{N})_{i \in \mathcal{I}}$  is $U$-invariant, then for any integer $N$, 
\begin{align*}
\mathbb{E}\left[ (M_{i_1}^{N})_{n_{1'}}^{n_1} ... (M_{i_k}^{N})_{n_{k'}}^{n_k}\right] =\delta_{ {\sf Ker}(\mathcal{N}) \in \mathcal{H}_k^{=}} \sum_{\sigma \in \mathfrak{S}_k | \sigma \trianglelefteq {\sf Ker}(\mathcal{N})} \frac{1}{N^{k - {\sf nc}(\sigma \vee \mathrm{id}_k)}} \mathbb{E}\kappa^{\mathfrak{S}}_{\sigma}(M_{i_1}^{N}, ..., M_{i_k}^{N}). 
\end{align*}
For example, $\mathbb{E}\big[(M_{i_1}^{N})_{1}^{1} (M_{i_1}^{N})_1^{2}\big] = 0$ for any integer $N$. Besides, if the family converges in non-commutative distribution and satisfies the asymptotic $\mathfrak{S}$-factorization property: 
\begin{align*}
\mathbb{E}\left[ (M_{i_1}^{N})_{n_{1'}}^{n_1} ... (M_{i_k}^{N})_{n_{k'}}^{n_k}\right]  =\delta_{ {\sf Ker}(\mathcal{N}) \in \overline{\mathfrak{S}_{k}}} \prod_{(j_1,...,j_r) \text{ cycle of }{\sf Mb}({\sf Ker}(\mathcal{N}))} \kappa(M_{i_{j_1}}, ...,M_{i_{j_r}} ), 
\end{align*} 
where the $\kappa$ in the r.h.s. stands for the free cumulants in free probability. 
Actually, again, if we suppose that the family of matrices converges in $\mathfrak{S}$ to higher order of fluctuations, we can have better results. For example, let us suppose that it does converges up to order $2$ of fluctuations and that $\mathbb{E}m_{\mathrm{id}_1}(M_{i_1}) =\mathbb{E}m_{\mathrm{id}_1}(M_{i_2}) = \mathbb{E}m^{1}_{id_{2}}[M_{i_1},M_{i_2}] = 0$ and $\kappa(M_{i_1},M_{i_2}) \neq 0$. Then, the asymptotic in Remark \ref{remarque:microplus} shows that: 
\begin{align*}
\mathbb{E}[(M_{i_1}^{N})_{1}^{1} (M_{i_2}^{N})_{1}^{1}] \sim \frac{1}{N} \kappa(M_{i_1},M_{i_2}).
\end{align*}
\end{remark}

\subsection{Classical cumulants as finite-dimensional cumulants}

\label{secclassic}
In Section \ref{sec:classical}, we explained that classical cumulants can be seen as special cases of $\mathcal{P}$-cumulants. We will see in this section a Schur-Weyl interpretation of classical cumulants: these cumulants can be seen as finite dimensional $\mathcal{P}$-cumulants.  Recall that $\mathcal{D}_k$ is defined as the set of partitions in $\mathcal{P}_k$  which are coarser than ${\mathrm{id}_k}$.

Let $N$ and $k$ be positive integers. Let $\mathcal{D}^N(\mathbb{C})$ be the set of matrices $M \in \mathcal{M}_N(\mathbb{C})$ which are diagonal. We will study elements of $\left(\mathcal{D}^{N}(\mathbb{C})\right)^{\otimes k}$ which commute with the action of $\mathfrak{S}(N)$ on $\left(\mathbb{C}^{N}\right)^{\otimes k}$.

\begin{lemma}
\label{simpleshurweyl}
For any integers $k$ and $N$, 
\begin{align*}
\left(\mathbb{C}\left[\rho^k_{\mathfrak{S}(N)}\right]\right)' \cap \left(\mathcal{D}^{N}(\mathbb{C})\right)^{\otimes k} \subset \rho_{N}\left[\mathbb{C}\left[\mathcal{D}_{k}\right]\right].
\end{align*}
\end{lemma}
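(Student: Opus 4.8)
The plan is to make the left-hand side completely explicit in the basis of $\left(\mathcal{D}^N(\mathbb{C})\right)^{\otimes k}$ given by the diagonal matrix units, and to show directly that the commutation constraint with $\mathfrak{S}(N)$ forces the coefficient function to be constant on orbits of a kind that matches exactly the image under $\rho_N$ of the span of $\mathcal{D}_k$. Concretely, write a general element of $\left(\mathcal{D}^N(\mathbb{C})\right)^{\otimes k}$ as $E = \sum_{\mathcal{N}} c(\mathcal{N})\, E^{n_1}_{n_1}\otimes\cdots\otimes E^{n_k}_{n_k}$, where $\mathcal{N}=(n_1,\dots,n_k)\in\{1,\dots,N\}^k$ and $E^i_i$ is the diagonal unit. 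Since $E$ is diagonal in the standard tensor basis, it acts by $E\,(e_{n_1}\otimes\cdots\otimes e_{n_k}) = c(\mathcal{N})\,(e_{n_1}\otimes\cdots\otimes e_{n_k})$.

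**The commutation condition.**
First I would compute $\rho^k(\sigma)$ for $\sigma\in\mathfrak{S}(N)$: it permutes the standard basis vectors of $\mathbb{C}^N$, hence permutes the tensor basis by $e_{n_1}\otimes\cdots\otimes e_{n_k}\mapsto e_{\sigma(n_1)}\otimes\cdots\otimes e_{\sigma(n_k)}$. The equation $E\rho^k(\sigma) = \rho^k(\sigma)E$ applied to a basis vector $e_{\mathcal{N}}$ gives $c(\sigma(\mathcal{N})) = c(\mathcal{N})$ for every $\sigma\in\mathfrak{S}(N)$, i.e. $c$ is invariant under the diagonal action of $\mathfrak{S}(N)$ on $\{1,\dots,N\}^k$. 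Two tuples lie in the same $\mathfrak{S}(N)$-orbit precisely when they have the same kernel partition (here the relevant kernel is the partition of $\{1,\dots,k\}$, since only the ``top'' indices $n_i = n_{i'}$ appear for diagonal matrices), so $c(\mathcal{N})$ depends only on $\delta:={\sf Ker}(\mathcal{N})$, viewed as a partition of $\{1,\dots,k\}$ — equivalently, as an element of $\mathcal{D}_k$ via the identification of $\mathcal{D}_k$ with set partitions of $\{1,\dots,k\}$ (the map sending a partition of $\{1,\dots,k\}$ to the partition of $\{1,\dots,k,1',\dots,k'\}$ that also glues each $i$ to $i'$, as in Definition 3.4 of \cite{Gab1}).

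**Identifying the span.**
Writing $c(\mathcal{N}) = \gamma(\delta)$ with $\gamma$ a function on $\mathcal{D}_k$, I would then regroup the sum by kernel: $E = \sum_{\delta\in\mathcal{D}_k}\gamma(\delta)\sum_{\mathcal{N}\,:\,{\sf Ker}(\mathcal{N})=\delta} E^{n_1}_{n_1}\otimes\cdots\otimes E^{n_k}_{n_k}$. The inner sum, by comparison with the definition of $\rho_N$ recalled in item (9) of Section \ref{sec:basic}, is exactly the ``exclusive'' version of $\rho_N(\delta)$ — it is $\rho_N(\delta^c)$ in the notation of \cite{Gab1}, i.e. $\rho_N$ applied to the exclusive-basis element — and in any case each such inner sum lies in $\rho_N\left[\mathbb{C}[\mathcal{D}_k]\right]$: indeed $\rho_N(\delta') = \sum_{\delta'' \dashv\text{-refines }\delta'} (\text{exclusive term for }\delta'')$, so by triangular inversion over the finite poset $\mathcal{D}_k$ (ordered by refinement; note distinct partitions in $\mathcal{D}_k$ give the same constraint $\trianglelefteq$-pattern only via this poset), the exclusive terms are $\mathbb{C}$-linear combinations of the $\rho_N(\delta')$. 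Hence $E\in\rho_N\left[\mathbb{C}[\mathcal{D}_k]\right]$, which is the claimed inclusion.

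**Main obstacle.**
The only genuinely delicate point is bookkeeping: making sure that for diagonal matrices the partition ${\sf Ker}(\mathcal{N})$ really does collapse to a partition of $\{1,\dots,k\}$ lying in $\mathcal{D}_k$ (because $n_i = n_{i'}$ automatically, each top vertex $i$ is glued to its bottom $i'$), and that the identification of functions-on-$\mathcal{D}_k$ with the image $\rho_N[\mathbb{C}[\mathcal{D}_k]]$ is bijective for $N$ large — but since the statement only claims an inclusion $\subset$, one does not even need the injectivity of $\rho_N$ here, so the triangular-inversion step over the finite poset $\mathcal{D}_k$ suffices and requires no size hypothesis on $N$. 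I would just need to be careful to cite the precise description of $\rho_N$ and of $\mathcal{D}_k$ from \cite{Gab1} so the regrouping is unambiguous.
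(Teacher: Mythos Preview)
Your proof is correct and follows essentially the same route as the paper: expand $E$ in diagonal matrix units, observe that commutation with $\rho^k_{\mathfrak{S}(N)}$ forces the coefficients to depend only on the kernel partition, regroup to obtain a combination of the $\rho_N(p_\pi^c)$ with $p_\pi\in\mathcal{D}_k$, and conclude since each $p_\pi^c$ lies in $\mathbb{C}[\mathcal{D}_k]$. The paper simply asserts this last fact as a first remark, whereas you spell it out as a triangular inversion over the refinement poset, but otherwise the arguments coincide.
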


\begin{proof}
Let $E$ be in $\left(\mathbb{C}\left[\rho^k_{\mathfrak{S}(N)}\right]\right)' \cap \left(\mathcal{D}^{N}(\mathbb{C})\right)^{\otimes k}$. The first step is to remark that for any $ p \in \mathcal{D}_k$, $p^{c}$ is in $\mathbb{C}\left[\mathcal{D}_{k}\right]$. Thus, we only need to show that $E$ can be written as a linear combination of elements of the form $\rho_{N}(p^{c})$ with $p \in \mathcal{D}_k$. We can decompose $E$ on the canonical base of ${\sf End}\left((\mathbb{C}^{N})^{\otimes k}\right)$ and since $E\in \left(\mathcal{D}^{N}(\mathbb{C})\right)^{\otimes k}$, $$E = \sum\limits_{(i_1,..., i_k) \in \{1,...,N\}^{k}}{c_{i_1,...,i_k}} E_{i_1}^{i_1} \otimes ...\otimes E_{i_k}^{i_k},$$
where $E_{i}^{j}$ is the endomorphism which sends the $j^{th}$ element of the canonical basis on the $i^{th}$. Any $(i_1,..., i_k) \in \{1,...,N\}^{k}$ defines a partition of $\{1,...,k\}$, denoted by ${\sf Ker}((i_1,...,i_k))$, which is the unique partition of $\{1,...,k\}$ such that two elements $u$ and $v$ of $\{1,...,k\}$ are in the same block if and only if $i_u=i_v$. Since $E$ commutes with the action $\rho^{k}_{\mathfrak{S}(N)}$, $c_{i_1,..., i_k}$ only depends on ${\sf Ker}(i_1,...,i_k)$. With obvious notations: 
\begin{align*}
E &=  \sum_{\pi  \in {\sf P}_k} c_{\pi} \sum_{(i_1,..., i_k)\mid {\sf Ker}((i_1,..., i_k)) = \pi}E_{i_1}^{i_1} \otimes ...\otimes E_{i_k}^{i_k} \\
&=  \sum_{\pi \in {\sf P}_k} c_{\pi} \rho_{N}\left[p_{\pi}^{c}\right], 
\end{align*}
where for any $\pi \in {\sf P}_k$, $p_{\pi}= \{ \{i, i\in b\}\cup \{ i', i \in b\} | b \in \pi \}$.
  \end{proof}

\begin{lemma}
\label{lem:injectif}
If $N \geq k$, the restriction of $\rho_N$ to $\mathbb{C}\left[\mathcal{D}_{k}\right]$ is injective. 
\end{lemma}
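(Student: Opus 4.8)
The plan is to make the restriction of $\rho_N$ to $\mathbb{C}\left[\mathcal{D}_k\right]$ completely explicit and then recognize the resulting family of operators as a unitriangular transform of a manifestly linearly independent family, so that injectivity is immediate.

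First I would recall the bijection already used in the proof of Lemma \ref{simpleshurweyl}: $\mathbb{C}\left[\mathcal{D}_k\right]$ has basis $(p_\pi)_{\pi \in {\sf P}_k}$, where for $\pi \in {\sf P}_k$ we set $p_\pi = \{ \{i : i \in b\}\cup\{i' : i \in b\} \mid b \in \pi\}$. Thus it suffices to prove that the operators $\rho_N(p_\pi)$, $\pi \in {\sf P}_k$, are linearly independent in ${\sf End}\left((\mathbb{C}^N)^{\otimes k}\right)$. Next I would compute $\rho_N(p_\pi)$ from the definition of $\rho_N$ in Section \ref{sec:basic}: a canonical basis element $E_{i_{1'}}^{i_1}\otimes\cdots\otimes E_{i_{k'}}^{i_k}$ contributes to $\rho_N(p_\pi)$ if and only if $p_\pi \trianglelefteq {\sf Ker}(i_1,\dots,i_k,i_{1'},\dots,i_{k'})$. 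Since $j$ and $j'$ lie in the same block of $p_\pi$ for every $j$, this forces $i_j = i_{j'}$ for all $j$, so only the ``diagonal'' basis elements $E_{i_1}^{i_1}\otimes\cdots\otimes E_{i_k}^{i_k}$ can occur; for those, the remaining constraint is exactly that $\pi \trianglelefteq {\sf Ker}(i_1,\dots,i_k)$ as partitions of $\{1,\dots,k\}$. Writing $D_\sigma = \sum_{(i_1,\dots,i_k) \mid {\sf Ker}(i_1,\dots,i_k) = \sigma} E_{i_1}^{i_1}\otimes\cdots\otimes E_{i_k}^{i_k}$ for each $\sigma \in {\sf P}_k$, this yields $\rho_N(p_\pi) = \sum_{\sigma \in {\sf P}_k \mid \pi \trianglelefteq \sigma} D_\sigma$.

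Then I would observe that the $D_\sigma$, $\sigma \in {\sf P}_k$, are supported on pairwise disjoint subsets of the canonical basis of ${\sf End}\left((\mathbb{C}^N)^{\otimes k}\right)$, and that each $D_\sigma$ is nonzero: a $k$-tuple $(i_1,\dots,i_k)$ with ${\sf Ker}(i_1,\dots,i_k) = \sigma$ exists as soon as $N$ is at least the number of blocks of $\sigma$, and here that number is at most $k \leq N$. This is the only place the hypothesis $N \geq k$ is used. Consequently $(D_\sigma)_{\sigma \in {\sf P}_k}$ is linearly independent, and the identity $\rho_N(p_\pi) = \sum_{\sigma \trianglerighteq \pi} D_\sigma$ expresses the change of family $(D_\sigma) \mapsto (\rho_N(p_\pi))$ by the incidence (zeta) matrix of the finite lattice $({\sf P}_k, \trianglelefteq)$; with respect to any linear extension of $\trianglelefteq$ this matrix is upper unitriangular, hence invertible over $\mathbb{Z}$, its inverse being given by the M\"{o}bius function of ${\sf P}_k$. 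Therefore $(\rho_N(p_\pi))_{\pi \in {\sf P}_k}$ is linearly independent, i.e.\ $\rho_N$ is injective on $\mathbb{C}\left[\mathcal{D}_k\right]$.

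I do not expect any serious obstacle: the only points requiring care are the reduction to diagonal basis elements together with the bookkeeping of the two kernel conditions, and pinning down exactly where $N \geq k$ enters (nonvanishing of every $D_\sigma$). Equivalently, one may conclude by M\"{o}bius inversion over ${\sf P}_k$, which gives $D_\sigma = \sum_{\pi \trianglelefteq \sigma}\mu_{{\sf P}_k}(\pi,\sigma)\,\rho_N(p_\pi)$, so each $D_\sigma$ lies in $\rho_N\left(\mathbb{C}\left[\mathcal{D}_k\right]\right)$; since the $D_\sigma$ are linearly independent and number $\#{\sf P}_k = \dim \mathbb{C}\left[\mathcal{D}_k\right]$, the map $\rho_N$ restricted to $\mathbb{C}\left[\mathcal{D}_k\right]$ is an isomorphism onto its image, in particular injective.
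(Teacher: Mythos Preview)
Your proof is correct and follows essentially the same approach as the paper's: both hinge on the operators $D_\sigma = \sum_{{\sf Ker}(i_1,\dots,i_k)=\sigma} E_{i_1}^{i_1}\otimes\cdots\otimes E_{i_k}^{i_k}$ being nonzero (which is exactly where $N\geq k$ enters) and having disjoint supports. The only difference is that the paper works directly in the exclusive basis $(p_\pi^c)_{\pi\in{\sf P}_k}$, for which $\rho_N(p_\pi^c)=D_\pi$, so injectivity follows immediately by evaluating $\sum_\pi c_\pi\,\rho_N(p_\pi^c)$ on a tensor $e_{i_1}\otimes\cdots\otimes e_{i_k}$ with ${\sf Ker}(i_1,\dots,i_k)=\pi_0$; your route via the basis $(p_\pi)$ and the zeta matrix of ${\sf P}_k$ reaches the same conclusion with one extra (harmless) inversion step.
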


\begin{proof}
We will use the same notations used in the proof of the previous lemma. Let us suppose that $N \geq k$ and let $(c_{\pi})_{\pi \in {\sf P}_k} \in \mathbb{C}^{{\sf P}_k}$ such that $\sum_{\pi \in {\sf P}_k} c_{\pi} \rho_{N}^{\mathcal{P}_k}\left[p_{\pi}^{c}\right] = 0$. Let $\pi_0 \in {\sf P}_k$ and $(i_1,...,i_k)$ be in $\{1,...,N\}^{k}$ such that ${\sf Ker}((i_1,...,i_k))=\pi_0$. Such $k$-tuple exists since $N\geq k$. Then: 
$$
0=\left(\sum_{\pi \in {\sf P}_k} c_{\pi} \rho_{N}^{\mathcal{P}_k}\left[p_{\pi}^{c}\right] \right)\left(e_{i_1} \otimes ...\otimes e_{i_k}\right) = c_{\pi_0} \left(e_{i_1} \otimes ...\otimes e_{i_k}\right),
$$
and thus $c_{\pi_0} = 0$. This shows that the restriction of $\rho_N$ to $\mathbb{C}\left[\mathcal{D}_{k}\right]$ is injective. 
  \end{proof}

The Schur-Weyl duality interpretation of classical cumulants is given by the following theorem. Recall the notion of classical cumulants denoted by ${\sf cum}(X_1,...,X_l)$. 

\begin{theorem}
\label{cumulantmagique}
Let $(X_1,..., X_k)$ be a $k$-tuple of random variables in $L^{\infty^{-}}(\Omega, \mathcal{A},\mathbb{P})$. Let $l\geq k$ and $(X_1^{n},..., X_{k}^{n})_{n \in \{1,...,l\} }$ be a family of $l$ i.i.d. $k$-tuples of random variables which have the same law as $(X_1,..., X_k)$. For any $i \in \{1,...,k\}$, let us consider the diagonal matrice $M_{i} = {\sf Diag}\left[ (X_i^{n})_{n=1}^{l}\right].$ The following formula holds: 
\begin{align*}
{\sf cum}_{k}(X_1,..., X_k) = \mathbb{E}\kappa_{{\sf 0}_k}\left[M_1,..., M_k\right], 
\end{align*}
and more generally, for any $p \in \mathcal{D}_k$, $$\mathbb{E}\kappa^{p}\left[M_1,..., M_k\right] = \prod_{b \in p\vee {\mathrm{id}_k}} {\sf cum}_{\frac{\# b}{2}} \left((X_i)_{i \in b \cap \{1,...,k\}}\right)\!.$$
\end{theorem}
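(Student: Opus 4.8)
I would derive the theorem from the explicit form of the endomorphism
$E:=\mathbb{E}[M_1\otimes\cdots\otimes M_k]\in {\sf End}((\mathbb{C}^{l})^{\otimes k})$, together with Equation (\ref{eq:fondamentale}) and two Möbius inversions on the partition lattice ${\sf P}_k$. The plan is as follows.

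First I would observe that, since the $l$ vectors $(X^{n}_1,\dots,X^{n}_k)_{n=1}^{l}$ are i.i.d., the family $(M_1,\dots,M_k)$ is exchangeable in the $l$ diagonal coordinates and hence invariant by conjugation by permutation matrices, i.e. $\mathfrak{S}$-invariant; in particular $E_{l}^{\mathfrak{S}}=E$, so $\mathbb{E}\kappa^{p}[M_1,\dots,M_k]$ is the $p$-cumulant of $E$. Next I would compute $E$ entrywise: it is diagonal, and its entry at a multi-index $(n_1,\dots,n_k)\in\{1,\dots,l\}^{k}$ is $\mathbb{E}[X_1^{n_1}\cdots X_k^{n_k}]$, which by independence across distinct coordinate values depends only on $\pi:={\sf Ker}((n_1,\dots,n_k))\in{\sf P}_k$ and equals $M(\pi):=\prod_{b\in\pi}\mathbb{E}\big[\prod_{i\in b}X_i\big]$. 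With the notation $p_\pi=\{\{i: i\in b\}\cup\{i': i\in b\}\mid b\in\pi\}\in\mathcal{D}_k$ and $p_\pi^{c}$ as in the proof of Lemma \ref{simpleshurweyl}, this yields
\begin{align*}
E=\sum_{\pi\in{\sf P}_k}M(\pi)\,\rho_l\!\left(p_\pi^{c}\right).
\end{align*}

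Then I would pass to the basis $(\rho_l(p_\pi))_{\pi}$. Expanding $\rho_l(p_\pi)=\sum_{(n):\,\pi\preceq{\sf Ker}((n))}E_{n_1}^{n_1}\otimes\cdots\otimes E_{n_k}^{n_k}$ and grouping by the exact kernel gives $\rho_l(p_\pi)=\sum_{\pi'\succeq\pi}\rho_l(p_{\pi'}^{c})$, which Möbius-inverts (up-set inversion in $({\sf P}_k,\preceq)$) to $\rho_l(p_\pi^{c})=\sum_{\pi'\succeq\pi}\mu(\pi,\pi')\rho_l(p_{\pi'})$. Substituting and swapping summations,
\begin{align*}
E=\sum_{\pi'\in{\sf P}_k}\Big(\sum_{\pi\preceq\pi'}\mu(\pi,\pi')M(\pi)\Big)\rho_l(p_{\pi'})=\sum_{\pi'\in{\sf P}_k}K(\pi')\,\rho_l(p_{\pi'}),
\end{align*}
where the last equality is the classical moment--cumulant Möbius inversion applied to $M(\pi')=\sum_{\sigma\preceq\pi'}K(\sigma)$, with $K(\pi'):=\prod_{c\in\pi'}{\sf cum}_{\#c}((X_i)_{i\in c})$; this identity for $M$ is exactly the multiplicative form of $m_{{\sf 0}_n}(X_1,\dots,X_n)=\sum_{p\in{\sf P}_n}\prod_{b\in p}{\sf cum}_{\#b}((X_i)_{i\in b})$.

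Finally, since the family is $\mathfrak{S}$-invariant, Equation (\ref{eq:fondamentale}) reads $E=\sum_{p\in\mathcal{P}_k}\mathbb{E}\kappa_p(M_1,\dots,M_k)\,l^{-({\sf nc}(p)-{\sf nc}(p\vee\mathrm{id}_k))}\rho_l(p)$; as $\rho_l$ is injective on $\mathbb{C}[\mathcal{P}_k]$ in the standing range $l\geq 2k$, I may compare coefficients. For $p=p_\pi\in\mathcal{D}_k$ one has $p_\pi\vee\mathrm{id}_k=p_\pi$ and ${\sf nc}(p_\pi)={\sf nc}(p_\pi\vee\mathrm{id}_k)=\#\pi$, so the normalisation factor is $1$ and $\mathbb{E}\kappa^{p_\pi}[M_1,\dots,M_k]=K(\pi)$. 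Rewriting $\pi$ through $p_\pi\vee\mathrm{id}_k$ — each block $b$ of $p_\pi$ meets $\{1,\dots,k\}$ in the corresponding block $c$ of $\pi$, with $\#b=2\#c$ — gives $\mathbb{E}\kappa^{p}[M_1,\dots,M_k]=\prod_{b\in p\vee\mathrm{id}_k}{\sf cum}_{\frac{\#b}{2}}((X_i)_{i\in b\cap\{1,\dots,k\}})$, and the case $p={\sf 0}_k$ (i.e. $\pi$ the one-block partition) is ${\sf cum}_k(X_1,\dots,X_k)$. The main obstacle is purely bookkeeping: keeping straight the two Möbius inversions on ${\sf P}_k$ — the one expressing $(\rho_l(p_\pi))$ in terms of the exclusive vectors $(\rho_l(p_\pi^{c}))$ (an up-set inversion) and the classical moment/cumulant one (a down-set inversion) — and checking that, combined with the vanishing of ${\sf nc}(p)-{\sf nc}(p\vee\mathrm{id}_k)$ on $\mathcal{D}_k$, they compose to the stated formula.
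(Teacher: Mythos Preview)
Your argument is essentially the same as the paper's: compute $E=\mathbb{E}[M_1\otimes\cdots\otimes M_k]$ explicitly in the exclusive basis $(\rho_l(p_\pi^{c}))_{\pi\in{\sf P}_k}$, compare it to the cumulant expansion of $E$ in the basis $(\rho_l(p_\pi))_{\pi\in{\sf P}_k}$, and identify the coefficients via a M\"obius inversion on ${\sf P}_k$. The only cosmetic difference is the direction of that inversion: you expand $\rho_l(p_\pi^{c})$ in terms of $\rho_l(p_{\pi'})$, while the paper expands $\rho_l(p_\pi)$ in terms of $\rho_l(p_{\pi'}^{c})$ and compares coefficients in the exclusive basis instead.

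There is, however, one genuine gap. You compare coefficients by invoking Equation~(\ref{eq:fondamentale}) together with the injectivity of $\rho_l$ on all of $\mathbb{C}[\mathcal{P}_k]$, which holds only for $l\geq 2k$; the theorem, on the other hand, is stated for $l\geq k$. The paper handles the range $k\leq l<2k$ by first appealing to Lemma~\ref{simpleshurweyl}: since $E$ is diagonal and commutes with $\rho^{k}_{\mathfrak{S}(l)}$, it already lies in $\rho_l(\mathbb{C}[\mathcal{D}_k])$, so the cumulant expansion is supported on $\mathcal{D}_k$ and one only needs injectivity of $\rho_l$ on $\mathbb{C}[\mathcal{D}_k]$, which Lemma~\ref{lem:injectif} gives for $l\geq k$. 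Your proof is easily repaired along the same lines: once you have $E=\sum_{\pi'}K(\pi')\rho_l(p_{\pi'})$, compare it to $E=\sum_{\pi}\mathbb{E}\kappa_{p_\pi}[M_1,\dots,M_k]\,\rho_l(p_\pi)$ (the cumulant expansion restricted to $\mathcal{D}_k$, which is legitimate by Lemma~\ref{simpleshurweyl}) and invoke Lemma~\ref{lem:injectif} rather than the full injectivity of $\rho_l$.
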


\begin{proof}
The endomorphism  $\mathbb{E}\left[\bigotimes_{i=1}^{k} M_{i}\right]$ is in $\left(\mathcal{D}_N(\mathbb{C})\right)^{\otimes k}$ and commutes with the action $\rho^{k}_{\mathfrak{S}(N)}$. By Lemma \ref{simpleshurweyl}, it belongs to $\rho_{N}\left[\mathbb{C}\left[\mathcal{D}_{k}\right]\right]$. Since $l\geq k$, by Lemma \ref{lem:injectif}, the number $ \mathbb{E}\kappa_{{\sf 0}_k}\left[M_1,..., M_k\right]$ is well-defined. For sake of clarity, we will make no difference between a partition $p$ and its representation $\rho_{N}(p)$ as an endomorphism. Following the calculations in the proof of Lemma  \ref{simpleshurweyl}, using the same notations and using the independence of the $(X_1^{n},..., X_{k}^{n})_{n \in \{1,...,l\} }$, one gets: 
\begin{align*}
\mathbb{E}\left[\bigotimes_{i =1}^{k} M_{i}\right] = \sum_{\pi \in {\sf P}_k}\left(\prod_{c \in \pi}\mathbb{E}\left[\prod_{i \in c} X_{i}\right]\right) p_{\pi}^{c}.
\end{align*}
By definition of finite-dimensional cumulants and Lemma \ref{simpleshurweyl}, we have also the following equality:
\begin{align*}
\mathbb{E}\left[\bigotimes_{i =1}^{k} M_{i}\right] = \sum_{\pi \in {\sf P}_k} \mathbb{E}\kappa_{p_{\pi}}\left[M_{1},..., M_k\right] p_{\pi} 
&= \sum_{\pi \in {\sf P}_k} \mathbb{E}\kappa_{p_{\pi}}\left[M_{1},..., M_k\right]\left( \sum_{p_{\pi} \trianglelefteq p'}p'^{c}\right)\\
&=   \sum_{\pi \in {\sf P}_k} \mathbb{E}\kappa_{p_{\pi}} \left[ M_{1},...,M_{k}\right] \left(\sum_{\pi \trianglelefteq \pi'}p_{\pi'}^{c}\right)\\
&= \sum_{\pi \in {\sf P}_k } \left( \sum_{\pi' \trianglelefteq \pi} \mathbb{E}\kappa_{p_{\pi'}} \left[M_{1},...,M_k\right] \right) p_{\pi}^{c}. 
\end{align*}
Using Lemma \ref{lem:injectif}, for any $\pi \in {\sf P}_{k}$, $\sum_{\pi' \trianglelefteq \pi} \mathbb{E}\kappa_{p_{\pi'}} \left[M_{1},...,M_{k}\right] =\prod_{c \in \pi}\mathbb{E}\left[\prod_{i \in c} X_{i}\right].$ This allows us to conclude the proof of the theorem.
  \end{proof}

Theorem \ref{cumulantmagique} shows that one could be able to study the probabilistic fluctuations of the observables of random matrices invariant in law by conjugation by the symmetric group in the framework we developed. Instead of studying the asymptotic of a matrix $M$, one would have to study the asymptotic of the matrix ${\sf Diag}\left[M_1,...,M_k\right]$ where $M_i$ are independent and have the same law as $M$: one can see that it leads to the study of partitionned partitions of $\{1,...,k,1',...,k'\}$.

\section{$\mathcal{G}(\mathcal{A})$-invariance, independence and $\mathcal{A}$-freeness}

\label{sec:GAinvetindep}

\subsection{Convergence in $\mathcal{A}$-distribution, $\mathcal{G}(\mathcal{A})$-invariance and independence implies $\mathcal{A}$-freeness}
This section generalizes Theorem \ref{th:voicu} which asserts that independence with an unitary or orthogonal invariance property imply Voiculescu's freeness. For any integer $N$, let $(M^{N}_{i})_{i \in \mathcal{I}}$ and $(L^{N}_j)_{j \in \mathcal{J}}$ be two families of random matrices.

\begin{theorem}
\label{Lemain}
Let us suppose that the two families $(M^{N}_{i})_{i \in \mathcal{I}}$ and $(L^{N}_j)_{j \in \mathcal{J}}$ converge in $\mathcal{A}$-distribution. Let us suppose that for any integer $N$, $(L^{N}_j)_{j\in \mathcal{J}}$ is $\mathcal{G}(\mathcal{A})$-invariant and the two families $(M_i^{N})_{i \in \mathcal{I}}$ and $(L_j^{N})_{j \in \mathcal{J}}$ are independent. The families $(M^{N}_i)_{i\in \mathcal{I}}$ and $(L_j^{N})_{j \in \mathcal{J}}$  are asymptotically $\mathcal{A}$-free.
\end{theorem}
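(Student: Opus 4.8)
## Proof proposal

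The plan is to prove this by computing the joint $\mathcal{A}$-cumulants of a mixed tuple drawn from both families and showing they satisfy the two defining conditions of $\mathcal{A}$-freeness: vanishing of mixed cumulants and factorization of compatible ones. The natural tool is the finite-dimensional description of $\mathcal{A}$-cumulants via the averaging operator $E_N^{\mathcal{G}(\mathcal{A})}$ from Section~\ref{duali}, combined with the explicit expansion \eqref{eq:fondamentale} available for $\mathcal{G}(\mathcal{A})$-invariant families.

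First I would fix $k_1,k_2>0$, indices $(i_1,\dots,i_{k_1})\in\mathcal{I}^{k_1}$ and $(j_1,\dots,j_{k_2})\in\mathcal{J}^{k_2}$, set $k=k_1+k_2$, and consider the endomorphism of $(\mathbb{C}^N)^{\otimes k}$
\begin{align*}
E_N^{\mathcal{G}(\mathcal{A})} = \int_{\mathcal{G}(\mathcal{A})(N)} M^{\otimes k}\, \mathbb{E}\!\left[M^N_{i_1}\otimes\cdots\otimes M^N_{i_{k_1}}\otimes L^N_{j_1}\otimes\cdots\otimes L^N_{j_{k_2}}\right] (M^{-1})^{\otimes k}\, dM.
\end{align*}
Using the independence of the two families, the expectation factorizes as $\mathbb{E}[M^N_{i_1}\otimes\cdots\otimes M^N_{i_{k_1}}]\otimes\mathbb{E}[L^N_{j_1}\otimes\cdots\otimes L^N_{j_{k_2}}]$. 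Since $(L^N_j)_{j\in\mathcal{J}}$ is $\mathcal{G}(\mathcal{A})$-invariant, by \eqref{eq:fondamentale} the second tensor factor already lies in $\rho_N(\mathbb{C}[\mathcal{A}_{k_2}])$ and is fixed by conjugation, so the integral only needs to average the first factor. The key point is then a Weingarten-type computation: averaging $M^{\otimes k_1}\,\mathbb{E}[M^N_{i_\bullet}]\,(M^{-1})^{\otimes k_1}$ over $\mathcal{G}(\mathcal{A})(N)$ produces a combination of partitions $\rho_N(p_1)$ with $p_1\in\mathcal{A}_{k_1}$ acting on the first $k_1$ legs, tensored with the already-invariant piece on the last $k_2$ legs. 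Thus $E_N^{\mathcal{G}(\mathcal{A})}$ is a linear combination of $\rho_N(p_1\otimes p_2)$ with $p_1\in\mathcal{A}_{k_1}$, $p_2\in\mathcal{A}_{k_2}$ — in particular it is supported on partitions compatible with the $\mathcal{I}$/$\mathcal{J}$-split.

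The next step is to extract cumulants. By Definition of the finite-dimensional $\mathcal{A}$-cumulants and the injectivity of $\rho_N$ for $N\ge 2k$, the $p$-$\mathcal{A}$-cumulant $\mathbb{E}\kappa_p^{\mathcal{A}}(M^N_{i_1},\dots,L^N_{j_{k_2}})$ is computed from the expansion of $E_N^{\mathcal{G}(\mathcal{A})}$ in $\mathbb{C}[\mathcal{A}_k]$; because that expansion only involves partitions of the form $p_1\otimes p_2$, the mixed cumulants attached to non-compatible $p$ vanish identically at finite $N$. For the factorization, I would identify the coefficient of $\rho_N(p_1\otimes p_2)$: the $\mathcal{G}(\mathcal{A})$-averaging of the $M$-block yields exactly $\mathbb{E}\kappa_{p_1}^{\mathcal{A}}(M^N_{i_1},\dots,M^N_{i_{k_1}})$ up to the correct power of $N$ (this is Proposition~5.2 of \cite{Gab1}, exactly as used in the proof of Theorem~\ref{th:invarianceetliberte}), while the $L$-block contributes $\mathbb{E}\kappa_{p_2}^{\mathcal{A}}(L^N_{j_1},\dots,L^N_{j_{k_2}})$ by \eqref{eq:finitedim}. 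Hence $\mathbb{E}\kappa_{p_1\otimes p_2}^{\mathcal{A}} = \mathbb{E}\kappa_{p_1}^{\mathcal{A}}\cdot\mathbb{E}\kappa_{p_2}^{\mathcal{A}}$ at every finite $N\ge 2k$. Passing to the limit via Theorem~\ref{th:lienlimite} (the families converge in $\mathcal{A}$-distribution, hence so does their union, since the joint finite-dimensional cumulants are determined by the two separate ones plus the vanishing/factorization just established), we get that the asymptotic mixed $\mathcal{A}$-cumulants vanish and the compatible ones factorize, which is precisely asymptotic $\mathcal{A}$-freeness.

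The main obstacle I anticipate is the Weingarten step — making precise that the $\mathcal{G}(\mathcal{A})$-average of $M^{\otimes k_1}\,\mathbb{E}[M^N_{i_\bullet}]\,(M^{-1})^{\otimes k_1}$, viewed inside $\mathbb{C}[\mathcal{A}_{k_1}]$, reproduces the $\mathcal{A}$-cumulants of the $M$-family with the right normalization, uniformly in $N$, and that tensoring with the (untouched, already invariant) $L$-block does not create cross terms. This is essentially a repackaging of the argument in the proof of Theorem~\ref{th:invarianceetliberte}, where a single $\mathcal{G}(\mathcal{A}_2)$-invariant factor was peeled off; here the roles of the two factors are swapped (the invariant family is averaged trivially, the other is averaged nontrivially), so one must check that Proposition~5.2 of \cite{Gab1} applies in this direction too. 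Once that bookkeeping is in place, everything else is a direct application of Theorem~\ref{th:lienlimite} and the definitions.
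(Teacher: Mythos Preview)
Your approach is essentially the same as the paper's, and it is correct. The Weingarten step you worry about is in fact immediate: once you observe that $E_N^{\mathcal{G}(\mathcal{A})}$ factorizes as the tensor product of the averaged $M$-block with the (already invariant, hence unchanged) $L$-block, the finite-dimensional $\mathcal{A}$-cumulants factorize directly because cumulants of a tensor product in $\mathbb{C}[\mathcal{A}_k]$ are products of the cumulants of the factors---no separate Weingarten computation or appeal to Proposition~5.2 of \cite{Gab1} is needed, and the paper's proof does not invoke it.
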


\begin{remark}
Actually, the compatibility condition and the compatible factorization property hold for any integer $N$ big enough so that the $N$-dimensional $\mathcal{A}$-cumulants are well defined.
\end{remark}

\begin{proof}[Proof of Theorem \ref{Lemain}]
Let $k_1$ and $k_2$ be integers, let $(i_1,...,i_{k_1})$ be in $\mathcal{I}^{k_1}$ and $(j_1,...,j_{k_2})$ be in $\mathcal{J}^{k_2}$. For any integer $N$, by independence, and using the $\mathcal{G}(\mathcal{A})$-invariance of $(L^{N}_j)^{N}_{j\in \mathcal{J}}$, 
\begin{align*}
\int_{\mathcal{G}(\mathcal{A})(N)} M^{\otimes k_1+k_2} \mathbb{E}\left[M^{N}_{i_1}\otimes ...\otimes M^{N}_{i_{k_1}} \otimes L^{N}_{j_{1}} \otimes ...\otimes L^{N}_{j_{k_2}}\right] (M^{-1})^{\otimes k_1+k_2} dM
\end{align*}
is equal to: 
\begin{align*}
\left(\int_{\mathcal{G}(\mathcal{A})(N)} M^{\otimes k_1} \mathbb{E}\left[M^{N}_{i_1}\otimes ...\otimes M^{N}_{i_{k_1}} \right] (M^{-1})^{\otimes k_1} dM \right) \otimes \mathbb{E}\left[ L^{N}_{j_{1}} \otimes ...\otimes L^{N}_{j_{k_2}}\right]. 
\end{align*}
Thus, when $N$ is greater than $2k$, for any $p$ in $\mathcal{A}_{k_1+k_2}$, if there exists $(p_1,p_2) \in \mathcal{A}_{k_1} \times \mathcal{A}_{k_2}$ such that $p=p_1\otimes p_2$, then: 
\begin{align*}
\mathbb{E}\kappa_{p}^{\mathcal{A}}\left[M^{N}_{i_1}, ..., M^{N}_{i_k}, L^{N}_{j_1}, ..., L^{N}_{j_{k_2}}\right] =  \mathbb{E}\kappa_{p}^{\mathcal{A}}\left[(M^{N}_{i_n})_{n=1}^{k_1}\right] \mathbb{E}\kappa_{p}^{\mathcal{A}}\left[(L^{N}_{i_n})_{n=1}^{k_2}\right], 
\end{align*}
and if not, $\mathbb{E}\kappa_{p}^{\mathcal{A}}\left[M^{N}_{i_1}, ..., M^{N}_{i_k}, L^{N}_{j_1}, ..., L^{N}_{j_{k_2}}\right] = 0$.
In particular, the finite dimensional cumulant $\mathbb{E}\kappa_{p}^{\mathcal{A}}\left[M^{N}_{i_1}, ..., M^{N}_{i_k}, L^{N}_{j_1}, ..., L^{N}_{j_{k_2}}\right] $ converges as $N$ goes to infinity. Using Theorem \ref{th:lienlimite}, the family $(M_i^{N})_{i \in \mathcal{I}} \cup (L^{N}_j)_{j \in \mathcal{J}}$ converges in $\mathcal{A}$-distribution: $(M^{N}_i)_{i\in \mathcal{I}}$ and $(L_j^{N})_{j \in \mathcal{J}}$  are asymptotically $\mathcal{A}$-free.
  \end{proof}

\subsection{Convergence in $\mathcal{P}$-distribution, strong asymptotic $\mathcal{G}(\mathcal{A})$-invariance and independence implies $\mathcal{A}$-freeness}
\label{strongGinv}

In Theorem \ref{th:asympinvetliberte}, the asymptotic $\mathcal{G}(\mathcal{A})$-invariance of one of the two families was enough. One can wonder if it is possible to state a version of Theorem \ref{Lemain} where the condition of  $\mathcal{G}(\mathcal{A})$-invariance is replaced by the condition of asymptotically $\mathcal{G}(\mathcal{A})$-invariant.

\begin{remark} 
Let us suppose that Theorem \ref{Lemain} is true when one replaces the condition of  $\mathcal{G}(\mathcal{A})$-invariance by the condition of asymptotically $\mathcal{G}(\mathcal{A})$-invariance. Let us suppose that $\mathcal{A} = \mathcal{P}$. Since the family $(L^{N}_j)_{j \in \mathcal{J}}$ is asymptotically $\mathfrak{S}$-invariant, by hypothesis, $(M^{N}_i)_{i \in \mathcal{I}}$ and $(L^{N}_j)_{j \in \mathcal{J}}$ should be asymptotically $\mathcal{P}$-free. 
 
For any integer $N$, let $M_N$ be the diagonal matrix with $\lfloor\frac{N}{2}\rfloor$ zeros followed by $\lceil\frac{N}{2}\rceil$ ones and $L_N$ be the diagonal matrix with $\lfloor\frac{N}{2}\rfloor$ ones follows by $\lceil\frac{N}{2}\rceil$ zeros. The matrices $M_N$ and $L_N$ converge in $\mathcal{P}$-distribution and have the same $\mathcal{P}$-distribution. Besides, $\mathbb{E}m_{{\mathrm{id}}_1}(M) = \mathbb{E}m_{{\mathrm{id}}_1}(L) = \frac{1}{2}. $

 According to our previous discussion, $(M_N)_{N \in \mathbb{N}}$ and $(L_N)_{N \in \mathbb{N}}$ should be asymptotically $\mathcal{P}$-free. Recall that ${\sf 0}_{2}$ is the partition $\{\{1,2,1',2'\}\}$. We should have the following equality $\mathbb{E}m_{{\sf 0}_{2}}(M,L) = \mathbb{E}m_{{\mathrm{id}}_1}(M) \mathbb{E}m_{{\mathrm{id}}_1}(L).$ Yet the l.h.s. is equal to zero, and the right hand side is equal to $\frac{1}{4}$. Thus $(M_N)_{N \in \mathbb{N}}$ and $(L_N)_{N \in \mathbb{N}}$ can not be $\mathcal{P}$-free: Theorem \ref{Lemain} is not true when one just replaces the condition of $\mathcal{G}(\mathcal{A})$-invariance by the condition of asymptotically $\mathcal{G}(\mathcal{A})$-invariance.
\end{remark}

Actually, one can state a version of Theorem \ref{Lemain} where one replaces the notion of $\mathcal{G}(\mathcal{A})$-invariance by the condition of {\em asymptotic strong $\mathcal{G}(\mathcal{A})$-invariance} that we are going to define.

\begin{notation}
Let $(O_{t})_{t \in \mathcal{T}}$ be a family of random matrices of size $N$. Let $k$ be an integer, let $\mathbb{T} = (t_{1}, ... t_{k})$ be a $k$-tuple of elements of $\mathcal{T}$ and $\mathbb{I}=(i_1, i_{1'}, ..., i_{k}, i_{k'} )$ be a $2k$-tuple of $\{ 1,...,N\}^{2k}$. We denote $ (O_{t_1})^{i_1}_{i_{1'}} ... (O_{t_k})^{i_k}_{i_{k'}}$ by $O_{\mathbb{I}, \mathbb{T}}$. We recall that the kernel ${\sf Ker}(\mathbb{I})$ is the partition in $\mathcal{P}_k$ such that $u$ and $v$ are in the same bloc if and only if $i_{u}= i_{v}$.  
\end{notation}

\begin{definition}
The family $(L^{N}_j)_{j \in \mathcal{J}}$ is asymptotically strongly $\mathcal{G}(\mathcal{A})$-invariant if it is asymptotically $\mathcal{G}(\mathcal{A})$-invariant and for any integer $k$, any $p \in \mathcal{P}_k$ and any $k$-tuple $\mathbb{J} = (j_{1}, ... j_{k})$ of elements of $\mathcal{J}$: 
\begin{align*}
\sup_{\mathbb{I}, \mathbb{I'} \in \{ 1,...,N\}^{2k} \mid {\sf Ker}(\mathbb{I})= {\sf Ker}(\mathbb{I'}) = p} N^{{\sf nc}(p )- {\sf nc}(p \vee {\mathrm{id}_k})}| \mathbb{E}\left[ L^{N}_{\mathbb{I}, \mathbb{J}} - L^{N}_{\mathbb{I'}, \mathbb{J}}\right]| \underset{N \to \infty}{\longrightarrow} 0. 
\end{align*}
\end{definition}

\begin{theorem}
\label{Main1}
Let us suppose that the two families $(M_i^{N})_{i \in \mathcal{I}}$ and $(L_j^{N})_{j \in \mathcal{J}}$ converge in $\mathcal{P}$-distribution and that $(L^{N}_j)_{j\in \mathcal{J}}$ is asymptotically strongly $\mathcal{G}(\mathcal{A})$-invariant. If for every positive integer $N$, the two families $(M_i^{N})_{i \in \mathcal{I}}$ and $(L_j^{N})_{j \in \mathcal{J}}$ are independent then the two families $(M_i^{N})_{i \in \mathcal{I}}$ and $(L_j^{N})_{j \in \mathcal{J}}$ are asymptotically $A$-free.
\end{theorem}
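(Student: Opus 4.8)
The plan is to follow the same strategy as the proof of Theorem \ref{Lemain}, but to replace the exact algebraic identity coming from $\mathcal{G}(\mathcal{A})$-invariance by an asymptotic estimate coming from asymptotic strong $\mathcal{G}(\mathcal{A})$-invariance. First I would fix integers $k_1, k_2$, indices $(i_1,\dots,i_{k_1}) \in \mathcal{I}^{k_1}$ and $(j_1,\dots,j_{k_2}) \in \mathcal{J}^{k_2}$, and work with $N \geq 2(k_1+k_2)$ so that all finite-dimensional cumulants are defined via Theorem \ref{duality}. Using Theorem \ref{th:freeasymptoautre}, it suffices to prove that for every $p \in \mathcal{P}_{k_1+k_2}$,
\begin{align*}
\lim_{N\to\infty}\mathbb{E}m^{\mathcal{A}}_{p^{c}}\big[M^{N}_{i_1},\dots,M^{N}_{i_{k_1}},L^{N}_{j_1},\dots,L^{N}_{j_{k_2}}\big] = \delta_{p^{l}_{k_1}\otimes p^{r}_{k_1}\sqsupset p}\, \mathbb{E}m^{\mathcal{A}}_{(p^{l}_{k_1})^{c}}\big[(M_{i_n})\big]\,\mathbb{E}m^{\mathcal{A}}_{(p^{r}_{k_1})^{c}}\big[(L_{j_n})\big].
\end{align*}
I would express $\mathbb{E}m^{\mathcal{A}}_{p^{c}}$ of the joint family in terms of the entries using Theorem \ref{th:entrees} (combined with the $\mathfrak{S}$-invariance that is part of $\mathcal{G}(\mathcal{A})$-invariance): the exclusive moment is, up to the explicit power of $N$, the expectation of a product of entries $M^{N}_{\mathbb{I},\mathbb{I_M}}L^{N}_{\mathbb{J},\mathbb{I_L}}$ where the combined kernel is fixed.

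The key step is the factorization of this expectation. By independence of $(M^{N}_i)_{i\in\mathcal{I}}$ and $(L^{N}_j)_{j\in\mathcal{J}}$, $\mathbb{E}[M^{N}_{\mathbb{I_M}}L^{N}_{\mathbb{I_L}}] = \mathbb{E}[M^{N}_{\mathbb{I_M}}]\,\mathbb{E}[L^{N}_{\mathbb{I_L}}]$, so the only remaining coupling between the two blocks is through the shared index values recorded by the kernel partition $p$: a block of $p$ that meets both the $M$-columns and the $L$-columns forces an index identification across the two families. Here is where asymptotic strong $\mathcal{G}(\mathcal{A})$-invariance enters. Because $(L^{N}_j)$ is asymptotically strongly $\mathcal{G}(\mathcal{A})$-invariant, the value of $N^{{\sf nc}(q)-{\sf nc}(q\vee\mathrm{id})}\mathbb{E}[L^{N}_{\mathbb{I_L},\mathbb{J}}]$ depends, up to $o(1)$ uniformly over tuples with a given kernel $q$, only on $q = {\sf Ker}(\mathbb{I_L})$ and not on the actual index values chosen; the same is true on the $M$-side for free (it converges, and by $\mathfrak{S}$-invariance depends only on the kernel). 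Therefore, up to $o(1)$, one may choose the index values on the $M$-columns and on the $L$-columns to be drawn from disjoint ranges whenever $p$ does not already force them together, i.e. one may decouple the two blocks exactly when $p$ restricted to the two halves is $p^{l}_{k_1}\otimes p^{r}_{k_1}$; and if $p$ has a block straddling both halves, the forced identification changes the normalizing exponent and the term is killed in the limit — this is precisely the condition $p^{l}_{k_1}\otimes p^{r}_{k_1}\sqsupset p$. Combining the decoupling with Theorem \ref{th:entrees} applied separately to each family then yields the product of asymptotic $\mathcal{A}$-exclusive moments, and in particular the finite-$N$ joint cumulants converge, so by Theorem \ref{th:lienlimite} the union converges in $\mathcal{A}$-distribution.

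The main obstacle I anticipate is controlling the error terms uniformly: asymptotic strong invariance gives a $\sup$ over index tuples with a fixed kernel going to zero, but in Theorem \ref{th:entrees} one sums over the roughly $N^{{\sf nc}(\cdot)}$ choices of index tuples, so one must check that the $o(1)$ from the strong-invariance hypothesis, multiplied by the combinatorial count and divided by the normalizing powers of $N$, still tends to zero. The bookkeeping is to match, for each kernel $p$, the exponent ${\sf nc}({\sf Ker}(\mathcal{N})\vee\mathrm{id}) - {\sf nc}({\sf Ker}(\mathcal{N}))$ appearing in Theorem \ref{th:entrees} against the normalization $N^{{\sf nc}(p)-{\sf nc}(p\vee\mathrm{id}_k)}$ in the definition of asymptotic strong $\mathcal{G}(\mathcal{A})$-invariance; these are designed to be compatible, so the uniform $o(1)$ survives. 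A secondary point requiring care is that the hypothesis is stated for $\mathcal{P}_k$-kernels and for $\boxtimes$-type products of entries of a single family $L$, whereas the joint product mixes $M$'s and $L$'s; one resolves this by first conditioning on (equivalently, integrating out) the $M$-family using independence, reducing to a statement purely about the $L$-entries with a fixed external kernel, to which the definition of asymptotic strong $\mathcal{G}(\mathcal{A})$-invariance applies directly.
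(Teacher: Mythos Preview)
Your overall strategy coincides with the paper's: express the joint exclusive moment as a sum over index tuples with prescribed kernel, factor by independence, replace the $L$-contribution by something depending only on ${\sf Ker}(\mathbb{I}_2)$ using strong invariance, then sum over the $M$-indices and count. However, two concrete points in your write-up would make the argument fail as stated.

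First, you work throughout with $\mathcal{A}$-exclusive moments and invoke Theorem~\ref{th:entrees}. That theorem requires the family to be $\mathcal{G}(\mathcal{A})$-invariant, which neither $(M_i^{N})$ nor the joint family $(M_i^{N})\cup(L_j^{N})$ is assumed to be. The paper avoids this by first reducing to $\mathcal{P}$-freeness: since $(L_j^{N})$ is asymptotically $\mathcal{G}(\mathcal{A})$-invariant, Theorem~\ref{th:asympinvetliberte} says that asymptotic $\mathcal{P}$-freeness of the two families implies asymptotic $\mathcal{A}$-freeness. One then works with the finite-dimensional $\mathcal{P}$-exclusive moment, which for any family (no invariance needed) is literally
\[
\mathbb{E}m_{p^{c}}\big[(B^{N}_t)_t\big]=\frac{1}{N^{{\sf nc}(p\vee\mathrm{id}_k)}}\sum_{\mathbb{I}:\ {\sf Ker}(\mathbb{I})=p}\mathbb{E}\big[B^{N}_{\mathbb{I},\mathcal{T}}\big],
\]
because $\rho_N({}^{t}p^{c})$ commutes with the $\mathfrak{S}(N)$-averaging. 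This replaces your appeal to Theorem~\ref{th:entrees}.

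Second, you claim the $M$-side ``depends only on the kernel by $\mathfrak{S}$-invariance''. The family $(M_i^{N})$ has no invariance hypothesis at all. What actually happens is that after substituting the strong-invariance estimate for $\mathbb{E}[L^{N}_{\mathbb{I}_2,\mathbb{J}}]$, one \emph{sums} the remaining factor $\mathbb{E}[M^{N}_{\mathbb{I}_1,(i_1,\dots,i_{k_1})}]$ over all $\mathbb{I}_1$ with ${\sf Ker}(\mathbb{I}_1)=p^{l}_{k_1}$; that summation, by the formula above, \emph{is} $N^{{\sf nc}(p^{l}_{k_1}\vee\mathrm{id}_{k_1})}\mathbb{E}m_{(p^{l}_{k_1})^{c}}[(M^{N}_{i_n})]$. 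The counting of admissible $\mathbb{I}_2$ for fixed $\mathbb{I}_1$ (asymptotically $N^{{\sf nc}(p)-{\sf nc}(p^{l}_{k_1})}$) then produces exactly the exponent whose sign yields $\delta_{p^{l}_{k_1}\otimes p^{r}_{k_1}\sqsupset p}$, and your concern about uniformity is handled precisely because the $o(1)$ from strong invariance carries the factor $N^{{\sf nc}(p^{r}_{k_1}\vee\mathrm{id}_{k_2})-{\sf nc}(p^{r}_{k_1})}$, matching the normalization.
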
 

\begin{proof}
Let us suppose that the  two families $(M^{N}_i)_{i \in \mathcal{I}}$ and $(L_j^{N})_{j \in \mathcal{J}}$ satisfy the hypotheses stated in the theorem and that, for every positive integer $N$, they are independent. By definition, $(L^{N}_j)_{j\in \mathcal{J}}$ is asymptotically $\mathcal{G}(\mathcal{A})$-invariant. By Theorem \ref{th:asympinvetliberte}, the asymptotic $\mathcal{P}$-freeness of $(M^{N}_i)_{i \in \mathcal{I}}$ and $(L_j^{N})_{j \in \mathcal{J}}$ would imply the asymptotic $\mathcal{A}$-freeness of the two families. Thus, we only have to prove that the two families are asymptotically $\mathcal{P}$-free. 

Let $k_1$ and $k_2$ be two integers, $p$ be in $\mathcal{P}_k$ where $k=k_1+k_2$, let $(i_1,...,i_{k_1})$ be in $\mathcal{I}^{k_1}$ and $(j_1,...,j_{k_2})$ be in $\mathcal{J}^{k_2}$. By Theorem \ref{th:freeasymptoautre}, we need to show that the exclusive moment $\mathbb{E}m_{p^{c}}(M^{N}_{i_1},...,M^{N}_{i_{k_1}}, L^{N}_{j_1}, ..., L^{N}_{j_{k_2}})$ converges to: 
\begin{align*}
\delta_{p^{l}_{k_1}\otimes p^{r}_{k_1} \sqsupset p} \mathbb{E}m_{(p_{k_1}^{l})^{c}}\left[M_{i_1}, ..., M_{i_{k_1}}\right] \mathbb{E}m_{(p_{k_1}^{r})^{c}}\left[L_{j_1}, ..., L_{j_{k_2}}\right]. 
\end{align*}
Let $N$ be a positive integer. For sake of clarity, we will use the following notation: $B^{N}_t = M^{N}_{t}$ if $t \in \{i_1,...,i_{k_1}\}$, $B^{N}_t = L^{N}_{t}$ if $t \in \{j_{1}, ..., j_{k_2}\}$ and $\mathcal{T} =  (i_1,...,i_{k_1} , j_{1}, ..., j_{k_2})$. By definition, $\mathbb{E}m_{p^{c}}(M^{N}_{i_1},...,M^{N}_{i_{k_1}}, L^{N}_{j_1}, ..., L^{N}_{j_{k_2}})$ is equal to: 
\begin{align*}
\mathbb{E}m_{p^{c}}\left((B_{t}^{N})_{t \in \mathcal{T}}\right) =\frac{1}{N^{{\sf nc}(p \vee \mathrm{id}_k)}} \sum_{\mathbb{I} \in \{1,...,N\}^{2k} \mid {\sf Ker}(\mathbb{I}) = p} \mathbb{E} \left[ B_{ \mathbb{I}, {\mathcal{T}}}^{N} \right]. 
\end{align*}
Any $\mathbb{I} \in \{1,...,N\}^{2k}$ can be writen as the concatenation of $\mathbb{I}_1 \in \{1,...,N\}^{2k_1}$ and $\mathbb{I}_2 \in \{1,...,N\}^{2k_2}$, that we denote by $\mathbb{I}_1\mathbb{I}_2$. Using the independence property, the right hand side can be written as: 
\begin{align*}
\frac{1}{N^{{\sf nc}(p \vee {\mathrm{id}_k})}} \sum_{\mathbb{I}_1 \in \{1,...,N\}^{2k_1}, \mathbb{I}_2 \in \{1,...,N\}^{2k_2} \mid {\sf Ker}(\mathbb{I}_1\mathbb{I}_2) = p} \mathbb{E} \left[ B_{ \mathbb{I}_1,(i_1,...,i_{k_1})}^{N} \right] \mathbb{E} \left[ B_{\mathbb{I}_2, (j_1,...,j_{k_2})}^{N} \right].
 \end{align*}
Let $\mathbb{I}_2$ be in $ \{1,...,N\}^{2k_2}$. Since $(L^{N}_j)_{j\in \mathcal{J}}$ is asymptotically strongly $\mathcal{G}(\mathcal{A})$-invariant:
\begin{align*}
\mathbb{E}\left[ B_{\mathbb{I}_2, (j_1,...,j_{k_2})}^{N} \right] = N^{{\sf nc}({\sf Ker}(\mathbb{I}_2) \vee {\mathrm{id}_{k_2}}) - {\sf nc}({\sf Ker}(\mathbb{I}_2))} \left[\mathbb{E}m_{{\sf Ker}(\mathbb{I}_2)^{c}}\left[B_{j_1}^{N},...,B_{j_{k_2}}^{N}\right] + o(1)\right]
\end{align*}
where the $o(1)$ is uniform in $\mathbb{I}_2$. Besides, if $\mathbb{I}_1 \in \{1,...,N\}^{2k_1}$ and $\mathbb{I}_2 \in \{1,...,N\}^{2k_2}$ satisfy ${\sf Ker}(\mathbb{I}_1\mathbb{I}_2) = p$, then ${\sf Ker}(\mathbb{I}_1) = p_{k_1}^{l}$ and ${\sf Ker}(\mathbb{I}_2) = p_{k_1}^{r}$. This implies that the exclusive moment  $\mathbb{E}m_{p^{c}}\left((B_{t}^{N})_{t \in \mathcal{T}}\right)$ is equal to: 
\begin{align*}
\frac{1}{N^{{\sf nc}(p \vee \mathrm{id}_k)}}\!\! \sum_{\mathbb{I}_1,\mathbb{I}_2  \mid {\sf Ker}(\mathbb{I}_1\mathbb{I}_2) = p} \!\!\!\!\!\!\!\!\!\!\!\!\!\!\!\!\!\!N^{{\sf nc}(p_{k_1}^{r} \vee \mathrm{id}_{k_2}) - {\sf nc}(p_{k_1}^{r})} \mathbb{E} \left[ B_{\mathbb{I}_1, (i_1,...,i_{k_1})}^{N} \right]\left[\mathbb{E}m_{(p_{k_1}^{r})^{c}}\left[(B_{j_n}^{N})_{n=1}^{k_2}\right] + o(1)\right]
\end{align*}
or to: 
\begin{align*}
&N^{-{\sf nc}(p \vee  \mathrm{id}_k) + {\sf nc}(p_{k_1}^{r} \vee  \mathrm{id}_{k_2}) - {\sf nc}(p_{k_1}^{r})} \\&\ \ \ \ \sum_{\mathbb{I}_1\mid {\sf Ker}(\mathbb{I}_1) = p_{k_1}^{l}} \mathbb{E} \left[ B_{ \mathbb{I}_1,(i_1,...,i_{k_1})}^{N} \right]\sum_{\mathbb{I}_2\mid {\sf Ker}(\mathbb{I}_2) = p_{k_1}^{r}, {\sf Ker}(\mathbb{I}_1\mathbb{I}_2) = p}\left[\mathbb{E}m_{(p_{k_1}^{r})^{c}}\left[(B_{j_n}^{N})_{n=1}^{k_2}\right] + o(1)\right]. 
\end{align*}
When $\mathbb{I}_1 \in \{1,...,N\}^{2k_1}$ is fixed, there exists asymptotically $N^{{\sf nc}(p ) - {\sf nc}(p_{k_1}^{l})}$ elements $\mathbb{I}_2$ in $\{1,...,N\}^{2k_2}$ such that ${\sf Ker}(\mathbb{I}_2) = p_{k_1}^{r}$ and ${\sf Ker}(\mathbb{I}_1 \mathbb{I}_2) = p$. We can go on our calculations and $\mathbb{E}m_{p^{c}}\left((B_{t}^{N})_{t \in \mathcal{T}}\right)$ is equal asymptotically to:  
\begin{align*}
&N^{-{\sf nc}(p \vee {\mathrm{id}_k}) + {\sf nc}(p_{k_1}^{r} \vee \mathrm{id}_{k_2}) - {\sf nc}(p_{k_1}^{r}) + {\sf nc}(p) - {\sf nc}(p_{k_1}^{l})} \\& \ \ \ \ \ \ \ \ \ \ \ \ \ \ \ \ \ \ \ \ \ \ \ \ \ \ \ \ \ \ \ \ \ \ \ \ \ \ \ \left[\mathbb{E}m_{(p_{k_1}^{r})^{c}}\left[(B_{j_n}^{N})_{n=1}^{k_2}\right] + o(1)\right] \sum_{\mathbb{I}_1\mid {\sf Ker}(\mathbb{I}_1) = p_{k_1}^{l}} \mathbb{E} \left[ B_{ \mathbb{I}_1,(i_1,...,i_{k_1})}^{N} \right]\\
\end{align*}
or: 
\begin{align*}
&\!\!\!\!N^{\left({\sf nc}(p)-{\sf nc}(p \vee {\mathrm{id}_k})\right)- \left({\sf nc}(p_{k_1}^{l} \otimes p_{k_1}^{r}) - {\sf nc}((p_{k_1}^{l} \otimes p_{k_1}^{r}) \vee {\mathrm{id}_k})\right)} \\ &\ \ \ \ \ \ \ \ \ \ \ \ \ \ \ \ \ \ \ \ \ \ \ \ \ \ \ \ \ \ \ \ \ \ \left[\mathbb{E}m_{(p_{k_1}^{r})^{c}}\left[L_{j_1}^{N},...,L_{j_{k_2}}^{N}\right] + o(1)\right] \mathbb{E}m_{(p_{k_1}^{l})^{c}}\left[M_{i_1}^{N},...,M_{i_{k_1}}^{N}\right].
\end{align*}
Since $p_{l}^{g} \otimes p_{l}^{d}$ is finer than $p$, by definition of the order $\sqsupset$:
\begin{align*}
N^{\left({\sf nc}(p)-{\sf nc}(p \vee {\mathrm{id}_k})\right)- \left({\sf nc}(p_{k_1}^{l} \otimes p_{k_1}^{r}) - {\sf nc}((p_{k_1}^{l} \otimes p_{k_1}^{r}) \vee {\mathrm{id}_k})\right)} &\underset{N \to \infty}{\longrightarrow} \delta_{p_{k_1}^{l} \otimes p_{k_1}^{r} \sqsupset p}.
\end{align*}
This allows us to conclude the proof of the theorem. 
  \end{proof}
 
 \begin{remark}
 Unlike Theorem \ref{Lemain}, this new Theorem \ref{Main1} is general enough to be used in order to recover the freeness of general  real and compex  Wigner matrices. Yet, let us remark that Theorem \ref{Main1} is not a generalization of Theorem \ref{Lemain} since in this latter, we need that the families of random matrices converge in $\mathcal{P}$-distribution.
 \end{remark}

\section{General theorems for convergence of L\'{e}vy processes }
\label{sec:Levy}
In this section, we give a general theorem about convergence in $\mathcal{P}$-distribution for sequences of $\mathcal{G}(\mathcal{A})$-invariant matricial L\'{e}vy processes. 
\subsection{Generalities about L\'{e}vy processes}
Let $G$ be a topological group, let us recall the notion of (right-) L\'{e}vy processes (\cite{Liao}). 
\begin{definition}
Let $\left(X_{t}\right)_{t \geq 0}$ be a cadlàg process in $G$ which is stochastically continuous such that $X_0$ is the neutral element of $E$. The process $(X_t)_{t \geq 0}$ is a L\'{e}vy process if for any $0<t<s$, $X_{s}X_{t}^{-1}$ is independent of $(X_u)_{u \leq t}$ and $X_{s}X_{t}^{-1}$ has the same law as $X_{s-t}$. 
\end{definition}

From now on, we will only consider groups of matrices. Let $N$ be a positive integer. If $G$ is a subgroup of $(\mathcal{M}_N(\mathbb{C}),+)$, the L\'{e}vy processes are called additive L\'{e}vy processes. If $G$ is a subgroup of $\mathcal{G}l(N)$, the L\'{e}vy processes are called multiplicative L\'{e}vy processes. In order to prove the next statement, we need the insertion operation. 

\begin{definition}
\label{def:insertion}
Let $I=\{i_1,...,i_l\} \subset \{1,...,k\}$ with $i_1 < ...< i_l$. The permutation  $\sigma_I$ is the one which sends $i_j$ on $j$ for any $j \in \{1,...,l\}$ and $i \notin I$ on $l+i-\#\{n,i_n<i\}$. For any  $A \in {\sf End}\left((\mathbb{C}^{N})^{\otimes l}\right)$, $B \in  {\sf End}\left((\mathbb{C}^{N})^{\otimes k-l}\right)$, we define: 
\begin{align*}
\mathcal{I}_{I}\left(A,B\right) = \rho_N\left(\sigma_{I}^{-1} \right) \left(A\otimes B\right) \rho_N\left(\sigma_{I}\right).
\end{align*}
\end{definition}

Let us remark that for any $l \leq k$, any subset $I \subset \{1,...,k\}$ of cardinal $l$ and any partitions $(p,p') \in \mathcal{P}_l \times \mathcal{P}_{k-l}$,\ $\mathcal{I}_{I} (\rho_{N}( p), \rho_{N} (p')) = \rho_{N} \left( \sigma_{I}^{-1} (p \otimes p')  \sigma_{I} \right).$ We will use often this remark without referring to it. 

Let $(X_t)_{t \geq 0}$ be an additive or multiplicative L\'{e}vy process in $\mathcal{M}_N(\mathbb{C})$. From now on, we only consider L\'{e}vy processes such that for any $t \geq 0$, $X_t$ is in $L^{\infty^{-}}\otimes \mathcal{M}_N(\mathbb{C})$. For any $k \in \mathbb{N}$, let:  
\begin{align*}
G_k = \frac{d}{dt}_{\mid t=0} \mathbb{E}\left[X_t^{\otimes k}\right].
\end{align*} 
The family $(G_k)_{k \in \mathbb{N}}$ is the only data one needs in order to compute $\mathbb{E}[X_t^{\otimes k}]$ for any positive real $t$.

\begin{lemma}
\label{geneentemps}
For any positive integer $k$ and any real $t_0 \geq 0$, 
\begin{enumerate}
\item if $\left(X_t\right)_{t \geq 0}$ is an additive L\'{e}vy process: 
\begin{align*}
\frac{d}{dt}_{\mid t=t_0} \mathbb{E}\left[X_t^{\otimes k}\right] = \sum_{l=0}^{k-1} \sum_{I \subset \{1,...,k\}, \#I = l} \mathcal{I}_I\left[\mathbb{E}\left[X_{t_0}^{\otimes l}\right], G_{k-l} \right],
\end{align*}
\item if $\left(X_t\right)_{t \geq 0}$ is a multiplicative L\'{e}vy process:
\begin{align*}
\frac{d}{dt}_{\mid t=t_0} \mathbb{E}\left[X_t^{\otimes k}\right] = G_k \mathbb{E}\left[X_{t_0}^{\otimes k}\right]. 
\end{align*}
\end{enumerate}
\end{lemma}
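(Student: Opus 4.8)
The statement is a direct computation of a right derivative, so the plan is to differentiate $\mathbb{E}[X_t^{\otimes k}]$ by writing $X_{t_0+s}$ in terms of the increment. The key structural fact is that $X_{t_0+s} = Y_s X_{t_0}$ (multiplicative case) or $X_{t_0+s} = X_{t_0} + Y_s$ (additive case), where $Y_s := X_{t_0+s}X_{t_0}^{-1}$, resp. $Y_s := X_{t_0+s}-X_{t_0}$, is independent of $X_{t_0}$ and has the same law as $X_s$, by the defining property of a L\'evy process. First I would treat the multiplicative case: using independence, $\mathbb{E}[X_{t_0+s}^{\otimes k}] = \mathbb{E}[Y_s^{\otimes k}]\,\mathbb{E}[X_{t_0}^{\otimes k}]$ as an identity in ${\sf End}((\mathbb{C}^N)^{\otimes k})$, and since $\mathbb{E}[Y_s^{\otimes k}] = \mathbb{E}[X_s^{\otimes k}]$ with $\mathbb{E}[X_0^{\otimes k}] = {\mathrm{Id}}$, differentiating at $s=0$ gives $G_k\,\mathbb{E}[X_{t_0}^{\otimes k}]$, which is exactly the claimed formula. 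The only technical point is to justify that differentiation and expectation commute and that $s\mapsto\mathbb{E}[X_s^{\otimes k}]$ is differentiable at $0$; this follows from the stochastic continuity, the ${L}^{\infty^-}$ hypothesis on $X_t$, and the standard semigroup argument for L\'evy processes (one can first establish that $s \mapsto \mathbb{E}[X_s^{\otimes k}]$ is a one-parameter multiplicative semigroup, hence of the form $e^{sG_k}$, so the derivative at any $t_0$ is $G_k e^{t_0 G_k} = G_k\,\mathbb{E}[X_{t_0}^{\otimes k}]$).

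For the additive case, I would expand $(X_{t_0}+Y_s)^{\otimes k} = \bigotimes_{j=1}^k (X_{t_0}^{(j)} + Y_s^{(j)})$ and distribute, indexing the resulting $2^k$ terms by the subset $I\subset\{1,\dots,k\}$ of tensor slots carrying a factor of $X_{t_0}$. Taking expectations and using independence of $Y_s$ from $X_{t_0}$, the term indexed by $I$ with $\#I=l$ equals the operator obtained by placing $\mathbb{E}[X_{t_0}^{\otimes l}]$ in the slots $I$ and $\mathbb{E}[Y_s^{\otimes(k-l)}]$ in the complementary slots — which is precisely $\mathcal{I}_I(\mathbb{E}[X_{t_0}^{\otimes l}], \mathbb{E}[Y_s^{\otimes(k-l)}])$ by Definition \ref{def:insertion}, since the permutation $\sigma_I$ is exactly the reshuffling that moves the $I$-slots to the front. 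Now differentiate at $s=0$: the term $l=k$ gives $\mathcal{I}_{\{1,\dots,k\}}(\mathbb{E}[X_{t_0}^{\otimes k}],\mathbb{E}[Y_0^{\otimes 0}]) = \mathbb{E}[X_{t_0}^{\otimes k}]$, which is a constant in $s$ and contributes $0$ to the derivative; every term with $l<k$ contains the factor $\mathbb{E}[Y_s^{\otimes(k-l)}]$ with $k-l\geq 1$, and $\tfrac{d}{ds}_{|s=0}\mathbb{E}[Y_s^{\otimes(k-l)}] = \tfrac{d}{ds}_{|s=0}\mathbb{E}[X_s^{\otimes(k-l)}] = G_{k-l}$ while $\mathbb{E}[Y_0^{\otimes(k-l)}] = {\mathrm{Id}}$; by the Leibniz rule applied slot-wise the derivative of the $I$-term is $\mathcal{I}_I(\mathbb{E}[X_{t_0}^{\otimes l}], G_{k-l})$. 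Summing over $l$ from $0$ to $k-1$ and over all $I$ of size $l$ yields the stated formula.

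I expect the main obstacle to be purely analytic rather than combinatorial: namely, making rigorous the interchange of $\tfrac{d}{ds}$ with $\mathbb{E}$ and with the finite tensor sum, i.e. the existence and continuity of the generator data $(G_k)_{k}$. This is where the ${L}^{\infty^-}$ assumption and the stochastic continuity of the process are used; concretely, one shows that for each fixed entry, $s\mapsto\mathbb{E}[(X_s)_{i_1,j_1}\cdots(X_s)_{i_k,j_k}]$ is a matrix element of a norm-continuous (in fact differentiable at $0$) semigroup on the finite-dimensional space ${\sf End}((\mathbb{C}^N)^{\otimes k})$, so that $G_k := \tfrac{d}{dt}_{|t=0}\mathbb{E}[X_t^{\otimes k}]$ genuinely exists. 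Once this is in place, both formulas are immediate consequences of independence of increments and the stationarity $\mathbb{E}[Y_s^{\otimes m}] = \mathbb{E}[X_s^{\otimes m}]$; I would present the multiplicative case first since it is the cleaner semigroup statement, then the additive case by the binomial expansion above.
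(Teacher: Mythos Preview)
Your proposal is correct and follows essentially the same route as the paper's proof: write the process at time $t_0+s$ via its increment (additively as $X_{t_0}+Y_s$, multiplicatively as $Y_s X_{t_0}$), use independence and stationarity to factor the expectation, then differentiate at $s=0$. One small slip: in the additive case you write $\mathbb{E}[Y_0^{\otimes(k-l)}]={\mathrm{Id}}$, but in fact $Y_0=0$ so this tensor is the zero endomorphism; this is harmless since the other factor $\mathbb{E}[X_{t_0}^{\otimes l}]$ is constant in $s$, so only the $Y_s$-factor contributes to the derivative anyway.
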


\begin{proof}
Let $(H_t)_{t \geq 0}$ and $(U_t)_{t \geq 0}$ be two matrix-valued L\'{e}vy processes which are respectively additive and multiplicative. Let us define for any integer~$k$: 
\begin{align*}
G^{H}_k =  \frac{d}{dt}_{\mid t=0} \mathbb{E}\left[H_t^{\otimes k}\right],\ 
G^{U}_k =  \frac{d}{dt}_{\mid t=0} \mathbb{E}\left[U_t^{\otimes k}\right]. 
\end{align*}
For any $t_0 \geq 0$:
\begin{align*}
\frac{d}{dt}_{\mid t=t_0} \mathbb{E}\left[H_t^{\otimes k}\right]
&=  \lim_{s\to 0}\frac{\mathbb{E}\left[H_{t_0+s}^{\otimes k}\right] - \mathbb{E}\left[H_{t_0}^{\otimes k}\right] }{s} \\
&= \lim_{s\to 0}\frac{\mathbb{E}\left[(H_{t_0}+(H_{t_0+s}-H_{t_0}))^{\otimes k}\right] - \mathbb{E}\left[H_{t_0}^{\otimes k}\right] }{s} \\
&=\lim_{ s \to 0} \frac{\sum_{l=0}^{k-1}\sum_{I \subset \{1,...,k\}, \#I = l} \mathcal{I}_I\left[\mathbb{E}\left[H_{t_0}^{\otimes l}\right], \mathbb{E}\left[H_{s}^{\otimes k-l}\right]\right]}{s}\\
&= \sum_{l=0}^{k-1} \sum_{I \subset \{1,...,k\}, \#I = l} \mathcal{I}_I\left[\mathbb{E}\left[H_{t_0}^{\otimes l}\right],\lim_{ s \to 0} \frac{\mathbb{E}\left[H_{s}^{\otimes k-l}\right]}{s}\right]\\
&= \sum_{l=0}^{k-1} \sum_{I \subset \{1,...,k\}, \#I = l} \mathcal{I}_I\left[\mathbb{E}\left[H_{t_0}^{\otimes l}\right], G^{H}_{k-l} \right],
\end{align*}
and if $(U'_t)_{t \geq 0}$ is a process which has the same law as $(U_t)_{t \geq 0}$ and which is independent of $(U_t)_{t \geq 0}$, we also have: 
\begin{align*}
\frac{d}{dt}_{\mid t=t_0} \mathbb{E}\left[U_t^{\otimes k}\right] = \lim_{s \to 0} \frac{\mathbb{E}\left[U_{t_0+s}^{\otimes k}\right] - \mathbb{E}\left[U_{t_0}^{\otimes k}\right]}{s} &= \lim_{s \to 0}  \frac{\mathbb{E}\left[\left(U'_{s}U_{t_0}\right)^{\otimes k}\right] - \mathbb{E}\left[\left(U_{t_0}^{\otimes k}\right)\right]}{s}\\
&= \lim_{s \to 0}  \frac{\mathbb{E}\left[(U'_{s})^{\otimes k} U_{t_0}^{\otimes k}\right] - \mathbb{E}\left[\left(U_{t_0}^{\otimes k}\right)\right]}{s}\\
&=\lim_{s \to 0}  \frac{\mathbb{E}\left[(U'_{s})^{\otimes k}\right]  - Id^{\otimes k}}{s} \mathbb{E}\left[ U_{t_0}^{\otimes k}\right]\\
&= G^{U}_k \mathbb{E}\left[U_{t_0}^{\otimes k}\right]. 
\end{align*}
In these equalities, we used intensively the independence and stationarity properties of the two processes. 
  \end{proof}

Later on, we will use the more general fact that for any multiplicative matrix-valued L\'{e}vy process $(X_t)_{t \geq 0}$, any integers $k$ and $l$, the family $\left(\mathbb{E}\left[X^{\otimes k} \otimes \overline{X}^{\otimes l}\right]\right)_{t \geq 0}$ is a semi-group and, using the same arguments as for the proof of Lemma \ref{geneentemps}, for any $t_0\geq 0$, 
\begin{align}
\label{semimultibar}
\frac{d}{dt}_{\mid t=t_0} \mathbb{E}\left[X_{t}^{\otimes k} \otimes \overline{X_{t}}^{\otimes l}\right] =\left( \frac{d}{dt}_{\mid t = 0} \mathbb{E}\left[X_t^{\otimes k}\otimes \overline{X_{t}}^{\otimes l}\right]\right)\mathbb{E}\left[X_{t_0}^{\otimes k}\otimes \overline{X_{t_0}}^{\otimes l}\right]\!. 
\end{align}

\subsection{General theorem for the convergence of L\'{e}vy processes}
\subsubsection{Convergence in $\mathcal{P}$-distribution}
Using Definition \ref{invariancedef}, we define the notion of $\mathcal{G}(\mathcal{A})$-invariant L\'{e}vy processes. Let $\left((X_t^{N})_{t \geq 0}\right)_{N \geq 0}$ be a sequence of $\mathcal{G}(\mathcal{A})$-invariant L\'{e}vy processes which are either all additive or multiplicative. For any integers $k$ and $N$, let:  
\begin{align*}
G^{N}_k = \frac{d}{dt}_{\mid t=0} \mathbb{E}\left[(X^{N}_t)^{\otimes k}\right].
\end{align*} 
Let us suppose that $N$ is greater than $2k$. The endomorphism $G^{N}_k$ commutes with the tensor action of $\mathcal{G}(\mathcal{A})$ on $\left(\mathbb{C}^{N}\right)^{\otimes k}$. Using the duality stated in Theorem \ref{duality}, $G^{N}_{k}$ belongs to $\mathbb{C}\left[\rho_{N}^{\mathcal{A}_k}\right]$ and since $N\geq 2k$, $\rho_{N}$ is injective: we can consider $G^{N}_{k}$ as an element of $\mathbb{C}[\mathcal{A}_k(N)]$. The cumulants $\left(\kappa_{p}\left(G^{N}_{k}\right)\right)_{p \in \mathcal{A}_k}$ of $G^{N}_{k}$ are defined using Definition 5.3 of \cite{Gab1}. Recall Section $5$ of \cite{Gab1} where the notion of convergence for elements of $\prod_{N \in \mathbb{N}}\mathbb{C}[\mathcal{A}_k(N)]$ was defined and studied. From now on, the sequence $\left(G_k^{N}\right)_{N \geq 2k}$ will always be considered as an element of $\prod_{N \geq 2k} \mathbb{C}[\mathcal{A}_k(N)]$. Thus, for any integer $k$, the sequence $\left(G_k^{N}\right)_{N \in \mathbb{N}}$ converges if $\left(G_k^{N}\right)_{N \geq 2k}$ seen as an element of $\prod_{N \geq 2k} \mathbb{C}[\mathcal{A}_k(N)]$ converges. 

\begin{remark}
\label{rq:convergenceG}
Using Theorem 5.2 and 5.5 in \cite{Gab1}, the three following assertions are equivalent to the convergence of $(G_k^{N})_{N \in \mathbb{N}}$: 
\begin{enumerate}
\item for any $p \in \mathcal{A}_k$, $\kappa_{p}(G_N)$ converges, 
\item for any $p \in \mathcal{A}_k$, $m_{p}(G_N)$ converges, 
\item for any $p \in \mathcal{P}_k$, $m_{p^c}(G_N)$ converges. 
\end{enumerate}
\end{remark}

Recall the notion of $\boxdot$-character defined in Definition 4.8 of \cite{Gab1}.

\begin{definition}\label{definitioncondens}
Let us suppose that for any integer $k$, $\left(G_{k}^{N}\right)_{N \in \mathbb{N}}$ converges. The $\mathcal{R}$-transform of $G$, denoted by $\mathcal{R}(G) $, is the linear form in $\left(\bigoplus_{k=0}^{\infty} \mathbb{C}[\mathcal{P}_k / \mathfrak{S}_k]\right)^{*}$ which sends $p \in \mathcal{P}_k$ on $\kappa_{p}(G_k) := \lim_{N \to \infty} \kappa_{p}\left(G_k^{N}\right)$. We say that $\left(G_k^{N}\right)_{k}$  {\em condensates} (resp. {\em weakly condensates}) if $\mathcal{R}(G)$ is a $\boxplus$-infinitesimal character (resp. a $\boxtimes$-infinitesimal character). 
\end{definition}

\begin{remark}
\label{rq:condens}
Using a slight generalization of Section 4.3.2 of \cite{Gab1}, we have three criterions in order to know if $\left(G_k^{N}\right)_{k}$ condensates or weakly condensates. Indeed, because of Theorem 4.2 and 4.3 of \cite{Gab1}, the three following assertions are equivalent: 
\begin{enumerate}
\item $\left(G_k^{N}\right)_{k}$ condensates, 
\item for any integer $k$, any $p \in \mathcal{A}_k$, $m_{p}(G_k) = 0$ if $p$ is not irreducible, 
\item for any  integer $k$, any $p \in \mathcal{P}_k$, $m_{p^{c}}(G_k) = 0$ if $p$ is not irreducible, 
\end{enumerate}
and the three following assertions are also equivalent: 
\begin{enumerate}
\item $\left(G_k^{N}\right)_{k}$ weakly condensates, 
\item for any integers $k_1$ and $k_2$, any $p_1 \in \mathcal{A}_{k_1}$ and any $p_2 \in \mathcal{A}_{k_2}$, $m_{p_1 \otimes p_2}(G_{k_1+k_2}) = m_{p_1}(G_{k_1})+ m_{p_2}(G_{k_2})$,
\item for any  integer $k$, any $p \in \mathcal{P}_k$, $m_{p^{c}}(G_k) = 0$ if $p$ is not exclusive-irreducible (the definition is given in Definition 3.3 of \cite{Gab1}: this means that there exists a cycle $c_0$ of $p$ such that any other cycle of $p$ is equal to a partition of the form ${\sf 0}_l$)
\end{enumerate}
\end{remark}

We can now state the main result about convergence in $\mathcal{P}$-distribution of L\'{e}vy processes. 

\begin{theorem}\label{convergencegenerale}
Let $\left((X_t^{N})_{t \geq 0}\right)_{N \geq 0}$ be a sequence of $\mathcal{G}(\mathcal{A})$-invariant L\'{e}vy processes which are either all additive or multiplicative. Let $\boxdot$ be either $\boxplus$ in the additive case or $\boxtimes$ in the multiplicative case. For any integers $k$ and $N$, let: 
\begin{align*}
G_k^{N} = \frac{d}{dt}_{\mid t=0} \mathbb{E}\left[\left(X_t^{N}\right)^{\otimes k}\right]. 
\end{align*}
 If for any positive integer $k$, the sequence $\left(G_k^{N}\right)_{N \geq 0}$ converges, then the process $(X_t^{N})_{t \geq 0}$ converges in $\mathcal{P}$-distribution toward the $\mathcal{P}$-distribution of a $\mathcal{G}(\mathcal{A})$-invariant $\boxdot$-$\mathcal{A}$-L\'{e}vy process.  For any real $t_0\geq 0$,  
\begin{align}
\label{equationverifie}
\mathcal{R}[X_{t_0}] = e^{\boxdot t_0\mathcal{R}(G)}
\end{align}
In the additive case, respectively multiplicative case, if $(G_k^{N})_k$ condensates, respectively weakly condensates, then the asymptotic $\mathcal{P}$-factorization property holds for $(X_t^{N})_{t \geq 0}$. 
\end{theorem}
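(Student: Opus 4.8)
The plan is to reduce the convergence in $\mathcal{P}$-distribution to the convergence of the finite-dimensional $\mathcal{A}$-cumulants via Theorem~\ref{th:lienlimite}, and to control those finite-dimensional cumulants by running a Duhamel/semi-group argument built on Lemma~\ref{geneentemps}. First I would fix $t_0 \geq 0$ and an integer $k$, take $N \geq 2k$, and view $\mathbb{E}[(X_t^N)^{\otimes k}]$ as a curve in $\mathbb{C}[\mathcal{A}_k(N)]$: by $\mathcal{G}(\mathcal{A})$-invariance and Theorem~\ref{duality} this endomorphism commutes with the tensor action, and $\rho_N$ is injective for $N \geq 2k$. In the multiplicative case Lemma~\ref{geneentemps} gives the linear ODE $\frac{d}{dt}\mathbb{E}[(X_t^N)^{\otimes k}] = G_k^N\,\mathbb{E}[(X_t^N)^{\otimes k}]$ inside $\mathbb{C}[\mathcal{A}_k(N)]$, whose solution is $\mathbb{E}[(X_{t_0}^N)^{\otimes k}] = e^{t_0 G_k^N}$ (product in the algebra $\mathbb{C}[\mathcal{A}_k(N)]$); in the additive case the same lemma gives the recursion with the insertion operators $\mathcal{I}_I[\,\cdot\,,G_{k-l}^N]$, which one solves by induction on $k$ since the right-hand side only involves $\mathbb{E}[(X_{t_0}^N)^{\otimes l}]$ for $l < k$ together with the $G_{k-l}^N$.

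Next I would invoke the hypothesis that each $(G_k^N)_{N}$ converges as an element of $\prod_{N \geq 2k} \mathbb{C}[\mathcal{A}_k(N)]$, and combine it with the continuity of the algebra operations of $\mathbb{C}[\mathcal{A}_k(N)]$ and of the insertion maps $\mathcal{I}_I$ with respect to the notion of convergence studied in Section~$5$ of \cite{Gab1}. By Remark~\ref{rq:convergenceG} this convergence of $(G_k^N)_N$ is equivalent to the convergence of all $\kappa_p(G_k^N)$, $p \in \mathcal{A}_k$. In the multiplicative case, $e^{t_0 G_k^N}$ is a convergent power series in a convergent sequence, hence converges; in the additive case one propagates convergence through the solved recursion, using that finite sums, the algebra product, and the maps $\mathcal{I}_I$ all preserve convergence. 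In either case $\mathbb{E}[(X_{t_0}^N)^{\otimes k}]$ converges in $\prod_N \mathbb{C}[\mathcal{A}_k(N)]$, so by Theorems~$5.1$/$5.2$ of \cite{Gab1} (as packaged in Theorem~\ref{th:lienlimite}) all finite-dimensional $\mathcal{A}$-cumulants $\mathbb{E}\kappa_p^{\mathcal{A}}(X_{t_0}^N,\dots,X_{t_0}^N)$ converge; since this holds for every $t_0$ and every $k$, and more generally for mixed moments in $X_{t_1}^N,\dots$ by the same ODE argument applied to the increments together with independence/stationarity, the family $(X_t^N)_{t \geq 0}$ converges in $\mathcal{P}$-distribution. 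The limit is visibly a $\mathcal{G}(\mathcal{A})$-invariant $\boxdot$-$\mathcal{A}$-L\'evy process because increments are asymptotically $\mathcal{A}$-free (Theorem~\ref{Lemain}) and the increment laws depend only on $s-t$. Passing to the limit $N \to \infty$ in $e^{t_0 G_k^N}$, respectively in the solved additive recursion, and matching with the $\boxdot$-exponential from Lemma~\ref{lemme:semi} and the definition of $\mathcal{R}(G)$, yields $\mathcal{R}[X_{t_0}] = e^{\boxdot t_0 \mathcal{R}(G)}$, which is Equation~(\ref{equationverifie}).

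Finally, for the factorization statement: if $(G_k^N)_k$ condensates (additive case), then by Remark~\ref{rq:condens} $m_{p}(G_k) = 0$ for every non-irreducible $p$, i.e. $\mathcal{R}(G)$ is a $\boxplus$-infinitesimal character; then $\mathcal{R}[X_{t_0}] = e^{\boxplus t_0 \mathcal{R}(G)}$ is a $\boxplus$-character by Theorem~$4.1$ of \cite{Gab1}, hence by Lemma~\ref{lemme:detercumu} the limiting $\mathcal{P}$-cumulants factor over tensor products, which is exactly the asymptotic $\mathcal{P}$-factorization property for $(X_{t}^N)_{t \geq 0}$. The multiplicative case is identical with ``condensates'' replaced by ``weakly condensates'' and $\boxplus$ by $\boxtimes$, using the corresponding equivalences in Remark~\ref{rq:condens} and the $\boxtimes$-version of Theorem~$4.1$ of \cite{Gab1}.

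**Main obstacle.** The delicate point is not the formal ODE solving but justifying that the notion of convergence of Section~$5$ of \cite{Gab1} in $\prod_N \mathbb{C}[\mathcal{A}_k(N)]$ is stable under all the operations used — the internal product of $\mathbb{C}[\mathcal{A}_k(N)]$ (whose structure constants involve powers of $N$), the exponential series, and the insertion maps $\mathcal{I}_I$ — uniformly enough to pass to the limit; and, in the additive case, book-keeping the induction on $k$ so that the limiting objects assemble into a genuine $\mathcal{R}$-transform satisfying the semi-group law. The rest is a matter of carefully quoting Theorem~\ref{th:lienlimite}, Lemma~\ref{geneentemps}, and Remark~\ref{rq:condens}.
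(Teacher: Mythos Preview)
Your proposal is correct and follows essentially the same route as the paper: solve the ODE from Lemma~\ref{geneentemps} inside $\mathbb{C}[\mathcal{A}_k(N)]$, pass to the limit using the convergence machinery of Section~5 of \cite{Gab1}, then upgrade to the full process via Theorem~\ref{Lemain} on the increments, and deduce factorization from the infinitesimal-character property.

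Two small points where the paper is slightly slicker. In the additive case, rather than inducting on $k$ at the level of endomorphisms and insertion maps, the paper immediately packages the finite-$N$ cumulants into a linear form $\mathcal{R}_{(N)}[X_t^N]$ and observes that the recursion from Lemma~\ref{geneentemps} is exactly the statement $\mathcal{R}_{(N)}[X_{t_0}^N] = e^{\boxplus t_0 \mathcal{R}_{(N)}(G^N)}$ at finite $N$; convergence is then just pointwise convergence of linear forms, which sidesteps your ``main obstacle'' entirely in that case. In the multiplicative case the paper quotes Theorem~5.10 of \cite{Gab1} as a black box for the stability of the exponential under the convergence notion, which is precisely the content of the obstacle you flag. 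For the factorization, the paper invokes Theorem~\ref{th:deterministicsemigroup} directly (which already covers the algebra generated by \emph{all} times $(a_t)_{t\geq 0}$), whereas your argument as written literally only gives determinism of each marginal $X_{t_0}$; to get the full family you need to combine with Lemma~\ref{lem:deterministic} and the asymptotic $\mathcal{A}$-freeness of increments you already established.
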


\begin{remark}
\label{rem:conver}
A consequence of Equation (\ref{equationverifie}) is that, in the additive case, for any real $t_0\geq 0$, any integer $k$ and any irreducible partition $p \in \mathcal{P}_k$,  
\begin{align*}
\mathcal{R}\left[X_{t_0}\right](p) = t_0 (\mathcal{R}\left[G\right] (p)).
\end{align*}
Besides, using Proposition $4.3$ of \cite{Gab1}, Equation (\ref{equationverifie}) implies, in the multiplicative case, that for any integer $k$, any $p \in \mathcal{P}_k$ and any real $t_0\geq 0$: 
\begin{align*}
\frac{d}{dt}_{\mid t=t_0}\mathbb{E}m_{p} (X_t) &= \sum_{p_1 \in \mathcal{A}_k | p_1\leq p} \kappa_{p_1} (G_k) \mathbb{E}m_{\!\!\text{ }^{t}p_1 \circ p}(X_{t_0}). 
\end{align*}
\end{remark}

\begin{proof}
Let us use the same notations as in Theorem \ref{convergencegenerale} and let us suppose that for any integer $k$, the sequence $(G_k^{N})_{N \geq 0}$ converges: for any $p \in \mathcal{P}_k$, $\kappa_{p}(G_k^{N})$ converges. According to the case we consider: 
\begin{enumerate}
\item {\textbf{in the additive case,}} using Lemma \ref{geneentemps}, for any integers $N$ and $k$ and any $t_0 \geq 0$: 
 \begin{align}
\label{lequationadditive}
\frac{d}{dt}_{\mid t=t_0} \mathbb{E}\left[(X_t^{N})^{\otimes k}\right] = \sum_{l=0}^{k-1} \sum_{I \subset \{0,...,k\}, \#I = l} \mathcal{I}_I\left[\mathbb{E}\left[(X_{t_0}^{N})^{\otimes l}\right], G_{k-l}^{N} \right].
\end{align}
Let $\mathcal{R}_{(N)}(X_{t}^{N})$, respectively $\mathcal{R}_{(N)}(G^{N})$, be the linear form in $\left(\bigoplus_{k=0}^{\infty} \mathbb{C}[\mathcal{P}_k / \mathfrak{S}_k]\right)^{*}$ which sends any partition $p$ on $\mathbb{E}\kappa_{p}(X_t^{N})$, respectively on $\kappa_{p}(G_k^{N})$. The last system of equations  implies that for any $t_0 \geq 0$:
\begin{align*}
\mathcal{R}_{(N)}[X_{t_0}^{N}] = e^{\boxplus t_0 \mathcal{R}_{(N)}(G^{N})}
\end{align*}
Since  $\mathcal{R}_{(N)}(G^{N})$ converges to $\mathcal{R}(G)$, for any real $t_0 \geq 0$, $\mathcal{R}_{(N)}[X_{t_0}^{N}]$ converges to $e^{\boxplus t_0 \mathcal{R}(G)}$. By Theorem \ref{th:lienlimite}, for any $t_0 \geq 0$, $X_{t_0}^{N}$ converges in $\mathcal{P}$-distribution.  
\item {\textbf{in the multiplicative case,}}  using Lemma \ref{geneentemps}, for any integers $N$ and $l$ and any $t_0\geq 0$: 
\begin{align*}
\frac{d}{dt}_{\mid t=t_0} \mathbb{E}\left[\left(X_t^{N}\right)^{\otimes k}\right] = G^{N}_k \mathbb{E}\left[\left(X_{t_0}^{N}\right)^{\otimes k}\right]. 
\end{align*}
Thus, when $N \geq 2k$, $ \mathbb{E}\left[(X_t^{N})^{\otimes k}\right]$ can be seen as a semi-group in $\mathbb{C}\left[\mathcal{A}_{k}(N)\right]$. The Theorem $5.10$ of \cite{Gab1} allows us to conclude that for any $t_0 \geq 0$, $X_{t_0}^{N}$ converges in $\mathcal{P}$-distribution and $\mathcal{R}[X_{t_0}] = e^{\boxtimes t_0 \mathcal{R}(G)}$. 
\end{enumerate}

In a nutshell, in the two cases, for any $t_0 \geq 0$, $X_{t_0}^{N}$ converges in $\mathcal{P}$-distribution.  Let $s \geq t \geq 0$, and let us define the increment $\mathcal{X}^{N}_{s,t}$ by $X^{N}_s - X^{N}_t$ in the additive case and $X_s^{N}(X_{t}^{N})^{-1}$ in the multiplicative case. Let $0 \leq t_1 <t_2< ... < t_n$ be an increasing sequence of non negative reals. The convergence in $\mathcal{P}$-distribution of $(X_{t_i}^{N})_{i=1}^{n}$ is equivalent to the convergence in $\mathcal{P}$-distribution of $(\mathcal{X}_{t_{i+1},t_{i}}^{N})_{i=0}^{n-1}$, with the convention that $t_0=0$. Since $(X_t^{N})_{t \geq 0}$ is a $\mathcal{G}(\mathcal{A})$-invariant L\'{e}vy process, $(\mathcal{X}_{t_{i+1},t_{i}}^{N})_{i=0}^{n}$ is a vector of independent $\mathcal{G}(\mathcal{A})$-invariant random matrices and  for any $i \in \{0,...,n-1\}$, $\mathcal{X}_{t_{i+1},t_{i}}^{N}$ has the same law as $X^{N}_{t_{i+1}-t_{i}}$. Using Theorem \ref{Lemain}, the family $(\mathcal{X}_{t_{i+1},t_{i}}^{N})_{i=0}^{n-1}$ converges in $\mathcal{P}$-distribution toward the $\mathcal{P}$-distribution a vector of $\mathcal{A}$-free elements. Using again the stationarity of the process $(X^{N}_t)_{t \geq 0}$ and also Theorem \ref{th:invarianceasymp}, this proves that the process $(X_t^{N})_{t \geq 0}$ converges in $\mathcal{P}$-distribution  toward the $\mathcal{P}$-distribution of a $\mathcal{G}(\mathcal{A})$-invariant $\boxdot$-$\mathcal{A}$-L\'{e}vy process. 

The last assertion about the condensation of the generator and the asymptotic $\mathcal{P}$-factorization property is a direct consequence of Theorem \ref{th:deterministicsemigroup}.
  \end{proof}

\subsubsection{$*$-Convergence in $\mathcal{P}$-distribution}
\label{sec:*convergence}
If for any $t \geq 0$ and any integer $N$, the matrices $X_t^{N}$ are complex-valued, the asymptotic $\mathcal{P}$-factorization poperty does not imply the convergence in probability in $\mathcal{P}$-distribution. Using Theorem \ref{theoremconvprob}, in order to show convergence in probability, we need to prove that the family $(X_t)_{t \geq 0} \cup (X_t^{*})_{t \geq 0}$ or $(X_t)_{t \geq 0} \cup (\overline{X_t})_{t \geq 0}$  converge in $\mathcal{P}$-distribution and satisfies the asymptotic $\mathcal{P}$-factorization property. The complex-va\-lued additive L\'{e}vy processes considered in next sections will be Hermitian: the convergence and asymptotic factorization properties of $(X_t)_{t \geq 0} \cup (X_t^{*})_{t \geq 0}$ is a direct consequence of the convergence and  asymptotic factorization properties of $(X_t)_{t \geq 0}$. From now on, until the end of the section, we will suppose that for any integer $N$, $(X_t^{N})_{t \geq 0}$ is a multiplicative L\'{e}vy process. 

Recall the operation ${\sf S}_k$ defined in Definition \ref{def:Sk}. Let $k$, $l$ be two integers, let $N$ be greater than $2(k+l)$ and let $t \geq 0$. The endomorphism $ \mathbb{E}\left[\left(X_t^{N}\right)^{\otimes k}\otimes \left((X_t^{N})^{*}\right)^{\otimes l}\right]$ commutes with the tensor action of $\mathcal{G}(\mathcal{A})(N)$ on $\left(\mathbb{C}^{N}\right)^{\otimes (k+l)}$. According to Theorem \ref{duality}, it is an element of $\mathbb{C}\left[\rho_N^{\mathcal{A}_{k+l}}\right]$, or with a slight abuse of notation it is an element of $\mathbb{C}[\mathcal{A}_{k+l}(N)]$. This implies that $\mathbb{E}\left[\left(X_t^{N}\right)^{\otimes k}\otimes \left(\overline{X_t^{N}}\right)^{\otimes l}\right]$ which is equal to ${\sf S}_{k}\left[ \mathbb{E}\left[\left(X_t^{N}\right)^{\otimes k}\otimes \left((X_t^{N})^{*}\right)^{\otimes l}\right]\right]$ can be seen as an element of $\mathbb{C}[\mathcal{P}_{k+l}(N)]$.

According to Equation (\ref{semimultibar}), $\left(\mathbb{E}\left[(X^{N}_{t})^{\otimes k} \otimes \left(\overline{X_{t}^{N}}\right)^{\otimes l}\right]\right)_{t \geq 0}$ is a semi-group of endomorphisms. For any integers $k$, $l$ and $N$, let: 
\begin{align}
\label{def:genekl}
G_{k,l}^{N} = \frac{d}{dt}_{\mid t=0} \mathbb{E}\left[\left(X_t^{N}\right)^{\otimes k}\otimes \left(\overline{X_t^{N}}\right)^{\otimes l}\right]. 
\end{align}
The endomorphism $G_{k,l}^{N}$ can be seen as an element of $\mathbb{C}[\mathcal{P}_{k+l}(N)]$. The sequence $(G_{k,l}^{N})_{N \in \mathbb{N}}$ converges if $(G_{k,l}^{N})_{N \geq 2(k+l)}$, seen as an element of $\prod_{N \geq 2(k+l)} \mathbb{C}[\mathcal{P}_k(N)]$, converges. 

\begin{definition}
Let us suppose that for any integer $k$ and $l$,  $(G_{k,l}^{N})_{N \in \mathbb{N}}$ converges. We define $\mathcal{R}_{k,l}(G)$ as the linear form on $\mathbb{C}[\mathcal{P}_{k+l}]$ which sends the partition $p$ on $\kappa_p(G_{k,l})\!\!= \lim_{N \to \infty} \kappa_{p}(G_{k,l}^{N}).$

If $(( X_t^{N}, (X_{t}^{N})^{*})_{t \geq 0})_{N \in \mathbb{N}}$ converges in $\mathcal{P}$-distribution, for any positive integers $k$ and $l$ and any real $t \geq 0$, $\mathcal{R}_{k,l}(X_t)$ is the linear form on $\mathbb{C}[\mathcal{P}_{k+l}]$ which sends $p$ on the cumulant $\mathbb{E}\kappa_p( X_{t}, ...,X_{t}, \overline{X_t}, ..., \overline{X_t} )$, where we wrote $k$ times $X_t$ and $l$ times $\overline{X_t}$.

We say that $(G_{k,l}^{N})_{k,l}$ {\em weakly condensates} if for any integers $k$ and $l$, any $p \in \mathcal{P}_{k+l}$, if $p$ is not weakly irreducible, $\kappa^{p}(G_{k,l}) = 0,$ and $$\kappa^{{\mathrm{id}}_{k+l}}(G_{k,l}) = k \kappa^{\mathrm{id}_{1}}(G_{1,0}) + l \kappa^{\mathrm{id}_1}(G_{0,1}).$$
\end{definition}
 
Let us remark that for any integer $k$, the convolution $\boxtimes$ can be restricted to $\left(\left(\mathbb{C}[\mathcal{P}_k / \mathfrak{S}_k]\right)^{*} \right)^{\otimes 2}$ then extended to $\left( \left(\mathbb{C}[\mathcal{P}_k]\right)^{*}\right)^{\otimes 2}$. The proof of the following theorem is similar to Theorem \ref{convergencegenerale}, thus we will omit it.

\begin{theorem}\label{convergence*levy}\label{autreconvergence*levy}
Let $\left((X_t^{N})_{t \geq 0}\right)_{N \geq 0}$ be a sequence of $\mathcal{G}(\mathcal{A})$-invariant multiplicative L\'{e}vy processes. For any integers $k$, $l$ and $N$, let: 
\begin{align*}
G_{k,l}^{N} = \frac{d}{dt}_{\mid t=0} \mathbb{E}\left[\left(X_t^{N}\right)^{\otimes k}\otimes \left(\overline{X_t^{N}}\right)^{\otimes l}\right]. 
\end{align*}
If for any positive integers $k$ and $l$, the sequence $(G_{k,l}^{N})_{N \geq 0}$ converges, then the family $(X_t^{N})_{t \geq 0} \cup ((X_{t}^{N})^{*})_{t \geq 0}$ converges in $\mathcal{P}$-distribution. Let $k$ and $l$ be two integers, for any real $t_0 \geq 0$: 
\begin{align*}
\mathcal{R}_{k,l}(X_{t_0}) = e^{\boxtimes t_0 \mathcal{R}_{k,l}(G)}, 
\end{align*}
and for any $p \in \mathcal{P}_{k+l}$: 
\begin{align*}
\frac{d}{dt}_{\mid t=t_0}\!\!\!\!\!\!\mathbb{E}m_{p}(X_t,...,X_t,\overline{X_t}, ..., \overline{X_t})&=\!\!\!\!\! \sum_{p_1 \in \mathcal{P}_{k+l} | p_1\leq p} \kappa_{p_1} (G_{k,l})\mathbb{E}m_{\!\!\text{ }^{t}p_1 \circ p}(X_{t_0},...,X_{t_0},\overline{X_{t_0}}, ..., \overline{X_{t_0}}), 
\end{align*}
 where we wrote $k$ times $X_{t_0}$ and $l$ times $\overline{X_{t_0}}$.

If $(G_{k,l}^{N})_{k,l}$ weakly condensates then the asymptotic $\mathcal{P}$-factorization property holds for $(X_t)_{t \geq 0} \cup (X_t^{*})_{t \geq 0}$ and the $\mathcal{P}$-moments of $(X^{N}_t)_{t \geq 0} \cup ((X^{N}_t)^{*})_{t \geq 0}$ converge in probability to the limit of their expectation.

\end{theorem}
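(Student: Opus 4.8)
The statement to prove is Theorem~\ref{convergence*levy}, which parallels Theorem~\ref{convergencegenerale} but tracks the joint distribution of $(X_t^N)_{t\geq 0}$ together with its adjoints via the observables $G^N_{k,l}$. The plan is to imitate the proof of Theorem~\ref{convergencegenerale} step by step, replacing the semi-group $(\mathbb{E}[(X_t^N)^{\otimes k}])_{t\geq 0}$ in $\mathbb{C}[\mathcal{A}_k(N)]$ by the semi-group $(\mathbb{E}[(X_t^N)^{\otimes k}\otimes(\overline{X_t^N})^{\otimes l}])_{t\geq 0}$, which by Equation~(\ref{semimultibar}) is indeed a semi-group, and which — after applying ${\sf S}_k$ and using the duality of Theorem~\ref{duality} together with the injectivity of $\rho_N$ for $N\geq 2(k+l)$ — can be regarded as a semi-group in $\mathbb{C}[\mathcal{P}_{k+l}(N)]$ whose infinitesimal generator is ${\sf S}_k(G^N_{k,l})$. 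First I would record that, by Equation~(\ref{semimultibar}), for each fixed $k,l$ the family $\big(\mathbb{E}[(X_t^N)^{\otimes k}\otimes(\overline{X_t^N})^{\otimes l}]\big)_{t\geq 0}$ solves a linear ODE with constant generator $G^N_{k,l}$, hence equals $e^{tG^N_{k,l}}$ acting on the identity; this is the multiplicative analogue handled by Theorem~5.10 of \cite{Gab1}.

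Next I would invoke the convergence hypothesis: for every $k,l$ the sequence $(G^N_{k,l})_{N\in\mathbb{N}}$, viewed in $\prod_{N\geq 2(k+l)}\mathbb{C}[\mathcal{P}_{k+l}(N)]$, converges. By the $\mathbb{C}[\mathcal{P}_k(N)]$-convergence theory recalled in Section~5 of \cite{Gab1} (Theorems~5.2 and 5.5, and the semi-group statement Theorem~5.10), convergence of the generators implies convergence of the semi-groups $\mathbb{E}[(X_{t_0}^N)^{\otimes k}\otimes(\overline{X_{t_0}^N})^{\otimes l}]$ for every fixed $t_0\geq 0$, with limit $e^{\boxtimes t_0 \mathcal{R}_{k,l}(G)}$ in the appropriate sense. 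Translating this back through the identities $\mathbb{E}m_p(\dots)$ = moments of the generator's exponential and through Theorem~\ref{th:lienlimite} (in the form adapted to the joint family $(X_t^N)\cup((X_t^N)^*)$, which is legitimate since these matrices are $\mathcal{G}(\mathcal{A})$-invariant and the relevant endomorphisms all lie in $\mathbb{C}[\rho_N^{\mathcal{A}_{k+l}}]$), one obtains convergence in $\mathcal{P}$-distribution of $(X_t^N)_{t\geq 0}\cup((X_t^N)^*)_{t\geq 0}$ for each $t_0$, and the stated formula $\mathcal{R}_{k,l}(X_{t_0}) = e^{\boxtimes t_0\mathcal{R}_{k,l}(G)}$. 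The differential identity for $\frac{d}{dt}_{|t=t_0}\mathbb{E}m_p(X_t,\dots,\overline{X_t})$ then follows from this exponential formula exactly as in Remark~\ref{rem:conver}, using Proposition~4.3 of \cite{Gab1} applied to $\mathbb{C}[\mathcal{P}_{k+l}]$.

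To upgrade pointwise convergence to convergence of the whole process toward the $\mathcal{P}$-distribution of a $\mathcal{G}(\mathcal{A})$-invariant $\boxtimes$-$\mathcal{A}$-Lévy process, I would copy the last paragraph of the proof of Theorem~\ref{convergencegenerale}: for an increasing sequence $0\leq t_1<\dots<t_n$, the increments $X^N_{t_{i+1}}(X^N_{t_i})^{-1}$ are independent, $\mathcal{G}(\mathcal{A})$-invariant, and stationary, so Theorem~\ref{Lemain} gives asymptotic $\mathcal{A}$-freeness of the increments and Theorem~\ref{th:invarianceasymp} propagates the $\mathcal{P}$-convergence; since the same argument applies verbatim to the family enlarged by the adjoints (the adjoint of a product of increments is a product of adjoint increments, and adjunction commutes with conjugation by $\mathcal{G}(\mathcal{A})(N)$), one gets joint convergence of $(X_t^N)_{t\geq 0}\cup((X_t^N)^*)_{t\geq 0}$ as a process. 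Finally, for the weak-condensation clause I would note that weak condensation of $(G^N_{k,l})_{k,l}$ says precisely that $\mathcal{R}_{k,l}(G)$ is supported on weakly irreducible partitions with the additive normalization on ${\mathrm{id}}_{k+l}$, which is the $\boxtimes$-infinitesimal-character condition adapted to this bigraded setting; by the generalization of Theorem~\ref{th:deterministicsemigroup} (itself resting on Lemmas~\ref{lemme:detercumu} and \ref{lem:deterministic} and Theorem~4.1 of \cite{Gab1}) the limiting algebra generated by $(X_t)_{t\geq 0}\cup(X_t^*)_{t\geq 0}$ is deterministic, i.e.\ the asymptotic $\mathcal{P}$-factorization property holds, and then Theorem~\ref{theoremconvprob} (applicable because the family is stable under the adjoint operation) gives convergence in probability. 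The main obstacle I anticipate is purely bookkeeping: making precise that ${\sf S}_k$ conjugates the ``$\otimes$ with bar'' semi-group into a genuine semi-group in $\mathbb{C}[\mathcal{P}_{k+l}(N)]$ and that all the $\mathbb{C}[\mathcal{P}_k(N)]$-convergence machinery of \cite{Gab1} — stated for a single grading $k$ — transfers to the bigraded family $(k,l)$ without change; this is why the paper can legitimately say the proof ``is similar to Theorem~\ref{convergencegenerale}, thus we will omit it.''
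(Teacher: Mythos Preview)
Your proposal is correct and follows exactly the strategy the paper itself indicates: mimic the proof of Theorem~\ref{convergencegenerale} with the semi-group $\big(\mathbb{E}[(X_t^N)^{\otimes k}\otimes(\overline{X_t^N})^{\otimes l}]\big)_{t\geq 0}$ (via Equation~(\ref{semimultibar}) and Theorem~5.10 of \cite{Gab1}), then the increments argument with Theorems~\ref{Lemain} and~\ref{th:invarianceasymp}, and finally the weak-condensation clause via the analogue of Theorem~\ref{th:deterministicsemigroup} together with Theorem~\ref{theoremconvprob}. One bookkeeping slip: $G^N_{k,l}$ as defined (with bars) already lives in $\mathbb{C}[\mathcal{P}_{k+l}(N)]$ and is itself the generator of the bar semi-group --- it is ${\sf S}_k$ applied to the \emph{starred} expectation that puts you in $\mathcal{A}_{k+l}$, not the other way around --- but you correctly flagged this as the only delicate point.
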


\section{Some examples of L\'{e}vy processes and consequences}
Theorems \ref{convergencegenerale} and \ref{convergence*levy} assert that the convergence of the generator of a L\'{e}vy process implies the convergence, possibly in probability, in $\mathcal{P}$-distribution. In the Brownian case, one can compute the finite-dimensional cumulants of the generator: it implies the convergence in probability in $\mathcal{P}$-distribution of Brownian motions but also a matricial Wick formula. In the general L\'{e}vy case, the finite-dimensional cumulants of the generator are complicated to compute, but we can compute the $\mathcal{P}$-moments of the generator and their limits. This section illustrates also the fact that convergence of additive and multiplicative L\'{e}vy processes can be handle in the same setting: the convergence of multicative L\'{e}vy processes are actually implied by the convergence of their additive counterpart. 

\subsection{Brownian processes}
In this section, we apply Theorems \ref{convergencegenerale} and \ref{convergence*levy} to recover and extend in the setting of $\mathcal{P}$-distributions the known convergence of Hermitian and unitary Brownian motions. A matricial Wick formula is given as a consequence of Equation (\ref{equationverifie}) for the Hermitial Brownian motion. We also show that any central $\mathcal{P}$-Gaussian (Definition \ref{def:Pgaussian}) can be approximated by the $\mathcal{P}$-distribution of random matrices.  

\subsubsection{Convergence of Brownian motions}
\label{sec:convergencebrownien}
Following the presentation of L\'{e}vy in \cite{Levymaster}, we define some useful space of matrices. Let $\mathbb{K}$ be either $\mathbb{R}$ or $\mathbb{C}$, let $N$ be a positive integer. The spaces of skew-symmetric and symmetric real matrices of size $N$ are respectively $\mathfrak{a}_{N} = \{M \in \mathcal{M}_N(\mathbb{R}),\!\text{ }^{t}M = -M\} \text{ and } 
\mathfrak{s}_N = \{M \in \mathcal{M}_N(\mathbb{R}),\!\text{ }^{t}M = M\}.$
The space of skew-Hermitian matrices of size $N$ is $\mathfrak{u}_N = \{M \in \mathcal{M}_N(\mathbb{C}),M^{*} = -M\}.$ The space of Hermitian matrices of size $N$ is $i\mathfrak{u}_N$. We will use the conventions: 
$$
 \mathfrak{u}(N,\mathbb{K})  = \left\{
   \begin{array}{ll}
      \mathfrak{a}_N& \text{ if } \mathbb{K} = \mathbb{R}, \\
      \mathfrak{u}_N& \text{ if } \mathbb{K} = \mathbb{C}. 
    \end{array}
\right.
$$
and:  
$$
 \mathfrak{h}(N,\mathbb{K})  = \left\{
   \begin{array}{ll}
      \mathfrak{s}_N& \text{ if } \mathbb{K} = \mathbb{R} ,\\
      i\mathfrak{u}_N& \text{ if } \mathbb{K} = \mathbb{C}. 
    \end{array}
\right.
$$ 
More generally, we will consider: 
\begin{align*}
 \mathfrak{g}^{\epsilon} (N,\mathbb{K}) = \left\{
   \begin{array}{ll}
      \mathfrak{u}(N, \mathbb{K}) & \text{ if } \epsilon = -1, \\
      \mathfrak{h}(N, \mathbb{K})& \text{ if } \epsilon=1. 
    \end{array}
    \right.
\end{align*}
Besides, we will also use the following notation: 
\begin{align*}
\beta_{\mathbb{K}} = \left\{
   \begin{array}{ll}
      1& \text{ if } \mathbb{K} = \mathbb{R}, \\
      2& \text{ if } \mathbb{K} = \mathbb{C}. 
    \end{array}
\right.
\end{align*}
Let us consider $\epsilon \in \{-1,1\}$. On $\mathfrak{g}^{\epsilon}(N,\mathbb{K})$, we consider the  scalar product: 
\begin{align}
\label{eq:scalar}
<\!X,Y\!> = \frac{\beta_{\mathbb{K}}N}{2} Tr(X^{*}Y). 
\end{align}
Besides, we set: 
\begin{align*}
U(N,\mathbb{K}) = \left\{
   \begin{array}{ll}
      O(N) & \text{ if } \mathbb{K} = \mathbb{R}, \\
      U(N) & \text{ if } \mathbb{K} = \mathbb{C}. 
    \end{array}
    \right.
\end{align*}
The group $U(N,\mathbb{K})$ is a Lie group whose Lie algebra is $\mathfrak{u}(N,\mathbb{K})$. Let us recall a result of L\'{e}vy in \cite{Levymaster} which will be needed later. 
\begin{definition}
\label{casimircal}
Let $d$ be the dimension of $\mathfrak{g}^{\epsilon}(N,\mathbb{K})$. Let $(X_i)_{i=1}^{d}$ be an orthonormal basis of $\mathfrak{g}^{\epsilon}(N,\mathbb{K})$. The Casimir of $\mathfrak{g}^{\epsilon}(N,\mathbb{K})$ is equal to: 
\begin{align*}
C_{\mathfrak{g}^{\epsilon}(N,G)} = \sum_{i=1}^{d} X_i \otimes X_i = \frac{1}{N} \rho_{N} \big( \epsilon (1,2) + (2-\beta_{\mathbb{K}}) [1,2]\big), 
\end{align*}
where $(1,2) = \{ \{1,2'\},\{2,1'\}\}$ and $[1,2] = \{\{1,1'\},\{2,2'\}\}$. A consequence is that
\begin{align*} 
\sum_{i=1}^{d} X_i X_i = c_{\mathfrak{g}^{\epsilon}(N,\mathbb{K})} {\mathrm Id}_N
\end{align*} 
with $c_{\mathfrak{g}^{\epsilon}(N,\mathbb{K})} = \epsilon + \frac{2-\beta_{\mathbb{K}}}{N}.$
\end{definition}

As L\'{e}vy did in \cite{Levymaster}, we now define the notion of Brownian motion on $\mathfrak{g}^{\epsilon}(N,\mathbb{K})$.  Let $d$ be the dimension of $\mathfrak{g}^{\epsilon}(N,\mathbb{K})$, and let $(X_i)_{i=1}^{d}$ be an orthonormal basis of $\mathfrak{g}^{\epsilon}(N,\mathbb{K})$.

 \begin{definition}
\label{brownien}
Let $\left((B_t^{i})_{t \geq 0}\right)_{i=1}^{d}$ be a $d$-tuple of independent real Brownian motions. The process $(H_t = \sum_{i=1}^{d} B_t^{i} X_i)_{t \geq 0}$ is a Brownian motion on $\mathfrak{g}^{\epsilon}(N,\mathbb{K})$. Any process which has the same law as $(H_t)_{t \geq 0}$ is called a Brownian motion on $g^{\epsilon}(N,\mathbb{K})$. 
\end{definition} 

The law of the process $(H_t)_{t \geq 0}$ does not depend on the choice of the orthonormal basis of $\mathfrak{g}^{\epsilon}(N,\mathbb{K})$. This fact implies the $U(N,\mathbb{K})$-invariance property for $(H_t)_{t \geq 0}$. Any Brownian motion in $\mathfrak{g}^{\epsilon}(N,\mathbb{K})$ is an additive L\'{e}vy process. Besides, we can compute the bracket of $H_t$ with itself: 
\begin{align}
\label{bracket}
dH_t \otimes dH_t &= \sum_{i,j=1}^{d}dB_t^{i}dB_t^{j} X_i \otimes X_j  = \left(\sum_{i=1}^{d} X_i \otimes X_i \right)dt = C_{\mathfrak{g}^{\epsilon}(N,\mathbb{K})} dt, 
\end{align}
due to the fact that for any $i,j \in \{0,...,d\}$, $dB_t^{i}dB_t^{j} = \delta_{i,j} dt$. 

\begin{remark}
Recall the operation ${\sf S}_k$ in Definition \ref{def:Sk}. One can easily compute the bracket of $H_t$ with $^{t}H_t$ and $^{t}H_t$ with itself since $dH_{t} \otimes\ \!\! ^{t}dH_t = {\sf S}_{1}\left[dH_{t} \otimes dH_{t}\right]$ and $^{t}dH_{t} \otimes\ \!\! ^{t}dH_t = {\sf S}_{0}\left[dH_{t} \otimes dH_{t}\right]$. Thus, when $\mathbb{K} = \mathbb{C}$ and $\epsilon =-1$: 
\begin{align}
\label{prodavect} dH_{t} \otimes\ \!\! ^{t}dH_t = -\frac{1}{N} \rho_{N}\big([1,2]\big)dt \ \ \ \ \ \text{ and }\ \ \ \ \ ^{t}dH_{t} \otimes\ \!\! ^{t}dH_t = -\frac{1}{N}  \rho_{N}(1,2)dt.
\end{align}
\end{remark}

Using Itô's formula, one has the easy following lemma.

\begin{lemma}
\label{geneadd}
Let $(H_{t})_{t \geq 0}$ be a Brownian motion on $\mathfrak{g}^{\epsilon}(N,\mathbb{K})$. For any integer $k \neq 2$, $G_k =\frac{d}{dt}_{\mid t=0} \mathbb{E}\left[ H_t^{\otimes k} \right]= 0$ and: 
\begin{align*}
G_2 = \frac{d}{dt}_{\mid t=0} \mathbb{E}\left[ H_t^{\otimes 2} \right] = C_{\mathfrak{g}^{\epsilon}(N,\mathbb{K})}.  
\end{align*}
\end{lemma}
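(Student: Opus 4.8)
The plan is to use It\^o's formula on the semimartingale $H_t$, which is a Brownian motion on $\mathfrak{g}^{\epsilon}(N,\mathbb{K})$, and to exploit the fact that $dH_t\otimes dH_t = C_{\mathfrak{g}^{\epsilon}(N,\mathbb{K})}\,dt$ (Equation~(\ref{bracket})) together with $\mathbb{E}[dH_t]=0$. First I would write $H_t^{\otimes k}$ as a multilinear function of the $k$ independent copies of $H_t$ living in the $k$ tensor legs, and apply the multidimensional It\^o formula: the differential of $H_t^{\otimes k}$ is a sum of a ``drift'' term coming from the second-order (quadratic variation) part and a ``martingale'' term coming from the first-order part. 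The martingale part is a sum over single tensor legs $i$ of $\mathcal{I}_{\{i\}}(H_t^{\otimes k-1}, dH_t)$ (in the notation of Definition~\ref{def:insertion}), and since $\mathbb{E}[dH_t]=0$ this contributes nothing to $\frac{d}{dt}\mathbb{E}[H_t^{\otimes k}]$.

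The second-order part is a sum over pairs of distinct tensor legs $\{i,j\}\subset\{1,\dots,k\}$ of the insertion of the bracket $dH_t\otimes dH_t$ into legs $i$ and $j$, with $H_t^{\otimes k-2}$ in the remaining legs; that is, $\sum_{\{i,j\}}\mathcal{I}_{\{i,j\}}(H_t^{\otimes k-2}, C_{\mathfrak{g}^{\epsilon}(N,\mathbb{K})})\,dt$. Taking expectations and then $\frac{d}{dt}$ at $t=0$, every factor $H_0=0$, so the only surviving term is the one with $k-2=0$, i.e.\ $k=2$: in that case $G_2 = \frac{d}{dt}_{|t=0}\mathbb{E}[H_t^{\otimes 2}] = C_{\mathfrak{g}^{\epsilon}(N,\mathbb{K})}$, and for $k\neq 2$ (including $k=0,1$, where there is no pair of legs at all, and $k\geq 3$, where the surviving term still carries at least one factor $H_0=0$) we get $G_k=0$.

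Concretely, I would phrase the It\^o computation as follows. For $k\geq 2$,
\begin{align*}
d\bigl(H_t^{\otimes k}\bigr) = \sum_{i=1}^{k} \mathcal{I}_{\{i\}}\bigl(H_t^{\otimes k-1}, dH_t\bigr) + \sum_{1\leq i<j\leq k} \mathcal{I}_{\{i,j\}}\bigl(H_t^{\otimes k-2}, dH_t\otimes dH_t\bigr),
\end{align*}
and then substitute $dH_t\otimes dH_t = C_{\mathfrak{g}^{\epsilon}(N,\mathbb{K})}\,dt$ from Equation~(\ref{bracket}), take $\mathbb{E}$, use $\mathbb{E}[\mathcal{I}_{\{i\}}(H_t^{\otimes k-1},dH_t)] = \mathcal{I}_{\{i\}}(\mathbb{E}[H_t^{\otimes k-1}], \mathbb{E}[dH_t]) = 0$ since the increment $dH_t$ is independent of $H_t$ and centered, and finally differentiate at $t=0$ where $H_0=0$. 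The cases $k=0$ and $k=1$ are immediate ($H_t^{\otimes 0}$ is constant and $\mathbb{E}[H_t]=0$).

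I do not expect any genuine obstacle here: the statement is essentially a bookkeeping exercise with It\^o's formula, and all the ingredients (the bracket formula~(\ref{bracket}), the insertion operation, the centeredness and independence of Brownian increments) are already in place. The only mild care needed is to make sure the combinatorial description of the It\^o correction terms via $\mathcal{I}_{\{i,j\}}$ is stated correctly — this is the same structure that appears in Lemma~\ref{geneentemps} for additive L\'evy processes — and to note that for $k\geq 3$ the correction terms, though nonzero as processes, vanish after applying $\frac{d}{dt}_{|t=0}$ because they retain a factor $H_0^{\otimes k-2}=0$.
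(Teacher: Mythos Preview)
Your proposal is correct and is precisely the approach the paper has in mind: the paper simply states ``Using It\^o's formula, one has the easy following lemma'' without giving details, and your computation via $d(H_t^{\otimes k})$, the bracket identity~(\ref{bracket}), and evaluation at $t=0$ with $H_0=0$ is exactly the intended argument. One minor phrasing point: rather than writing $\mathbb{E}[\mathcal{I}_{\{i\}}(H_t^{\otimes k-1},dH_t)] = \mathcal{I}_{\{i\}}(\mathbb{E}[H_t^{\otimes k-1}],\mathbb{E}[dH_t])$, it is cleaner to say that the stochastic integral $\int_0^t \mathcal{I}_{\{i\}}(H_s^{\otimes k-1},dH_s)$ is a martingale and hence has zero expectation --- but the conclusion is the same.
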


Let us now define the $U(N,\mathbb{K})$ Brownian Motion. 
\begin{definition}
\label{definitionbrowni}
Let $(H_t)_{t \geq 0}$ be a Brownian motion on $\mathfrak{u}(N,\mathbb{K})$. The solution of the Stratonovich stochastic equation: 
$$
 \left\{
    \begin{array}{ll}
      dU_t &= dH_t \circ U_t,  \\
      U_0 &= I_N,  
    \end{array}
\right.
$$ 
is a Brownian motion on $U(N,\mathbb{K})$. Any process which has the same law as $(U_t)_{t \geq 0}$ is a Brownian motion on $U(N,\mathbb{K})$. 
\end{definition}

\begin{remark}
We can reformulate this Stratonovich stochastic equation in terms of an Itô stochastic equation: 
$$
 \left\{
    \begin{array}{ll}
      dU_t &= dH_t U_t +\frac{c_{\mathfrak{u}(N,\mathbb{K})}}{2} U_t dt,  \\
      U_0 &= I_N.
    \end{array}
\right.
$$ 
\end{remark}

Any Brownian motion on $U(N,\mathbb{K})$ is $U(N,\mathbb{K})$-invariant: this is a consequence of the fact that the linear Brownian motion is also $U(N,\mathbb{K})$-invariant. Let us compute the bracket of $U_t$ with itself. Let $(H_t)_{t \geq 0}$ be a Brownian motion on $\mathfrak{u}(N,\mathbb{K})$ and $(U_t)_{t \geq 0}$ be the Brownian motion on $U(N,\mathbb{K})$ associated with $(H_t)_{t \geq 0}$. Using Equation (\ref{bracket}): 
\begin{align*}
dU_t \otimes dU_t =\! dH_t U_t\otimes dH_t U_t =\! (dH_t \otimes dH_t)(U_t \otimes U_t) =\!  C_{\mathfrak{u}(N,\mathbb{K})}(U_t \otimes U_t) dt. 
\end{align*}
Using this equality, the Itô's formula and Lemma \ref{casimircal}, we can compute the action of the infinitesimal generator on the tensor product, already given in \cite{Levymaster}. Recall the insertion operator $\mathcal{I}_I$ defined in Definition \ref{def:insertion}.
\begin{lemma}
\label{genemult} Let $(U_t)_{t \geq 0}$ a Brownian motion on $U(N,\mathbb{K})$. For any integer $k$:
\begin{align*}
G_k^{N} =\frac{d}{dt}_{\mid t=0} \mathbb{E}\left[U_{t}^{\otimes k}\right] = \frac{k c_{\mathfrak{u}_{N,\mathbb{K}}}}{2} Id^{\otimes k}+  \sum_{1\leq i<j\leq k} \mathcal{I}_{\{i,j\}}\left[C_{\mathfrak{u}(N,\mathbb{K})}, Id^{\otimes k-2}\right]. 
\end{align*}
\end{lemma}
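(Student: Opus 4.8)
The plan is to derive the formula for $G_k^N = \frac{d}{dt}_{|t=0}\mathbb{E}[U_t^{\otimes k}]$ directly from It\^o's formula applied to the tensor power $U_t^{\otimes k}$, using the It\^o stochastic differential equation $dU_t = dH_t\, U_t + \frac{c_{\mathfrak{u}(N,\mathbb{K})}}{2} U_t\, dt$ that was recalled just before the statement, together with the bracket computation $dU_t\otimes dU_t = C_{\mathfrak{u}(N,\mathbb{K})}(U_t\otimes U_t)\,dt$. First I would write $d(U_t^{\otimes k})$ by Leibniz: a sum over the $k$ tensor slots of the ``drift'' contribution $\frac{c_{\mathfrak{u}(N,\mathbb{K})}}{2}$ (one per slot, giving the $\frac{kc_{\mathfrak{u}(N,\mathbb{K})}}{2}\mathrm{Id}^{\otimes k}$ term), plus a sum over unordered pairs $\{i,j\}$ of the second-order It\^o correction coming from $dU_t^{(i)}\otimes dU_t^{(j)}$ in slots $i$ and $j$, plus a martingale part (the terms linear in $dH_t$) whose expectation vanishes.

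The key step is then to identify the pair contribution. For a fixed pair $\{i,j\}$ with $i<j$, the bracket $dH_t\otimes dH_t$ acting in slots $i,j$ produces exactly the operator $C_{\mathfrak{u}(N,\mathbb{K})}$ placed in those two slots and $\mathrm{Id}$ elsewhere, multiplied by $U_t^{\otimes k}$ on the right. Placing $C_{\mathfrak{u}(N,\mathbb{K})}$ in slots $i,j$ and $\mathrm{Id}^{\otimes k-2}$ in the remaining slots is precisely what the insertion operator $\mathcal{I}_{\{i,j\}}$ does, by the definition in Definition~\ref{def:insertion}: conjugating $C_{\mathfrak{u}(N,\mathbb{K})}\otimes \mathrm{Id}^{\otimes k-2}$ by $\rho_N(\sigma_{\{i,j\}})$ moves the first two tensor factors to positions $i$ and $j$. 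Taking expectations, using that the martingale term has zero expectation and that $\mathbb{E}[U_t^{\otimes k}] \to \mathrm{Id}^{\otimes k}$ as $t\to 0$, differentiating at $t=0$ yields
\begin{align*}
G_k^N = \frac{kc_{\mathfrak{u}(N,\mathbb{K})}}{2}\mathrm{Id}^{\otimes k} + \sum_{1\le i<j\le k}\mathcal{I}_{\{i,j\}}\left[C_{\mathfrak{u}(N,\mathbb{K})},\mathrm{Id}^{\otimes k-2}\right],
\end{align*}
as claimed.

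I would also need to handle the bookkeeping that in the It\^o expansion of $d(U_t^{\otimes k})$ one collects, in addition to the terms above, cross terms where $dH_t$ acts in a single slot paired against the explicit drift in another slot or against another $dH_t$ — but terms with an odd total number of $dH_t$ factors vanish in expectation, and any term with two or more factors of $dt$ is $o(dt)$. So only the single-slot drift terms and the pair-bracket terms survive, matching exactly the two sums in the statement. The main obstacle, really the only subtlety, is making the identification of ``$C_{\mathfrak{u}(N,\mathbb{K})}$ inserted in slots $i,j$'' with $\mathcal{I}_{\{i,j\}}[C_{\mathfrak{u}(N,\mathbb{K})},\mathrm{Id}^{\otimes k-2}]$ fully precise, i.e. checking that the permutation $\sigma_{\{i,j\}}$ of Definition~\ref{def:insertion} indeed sends positions $1,2$ to $i,j$ while keeping the relative order of the others; this is a routine but slightly fiddly index check, and everything else is a standard stochastic-calculus computation. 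Since the paper calls this lemma ``easy'', I would keep the proof to the outline above rather than writing out all the It\^o terms.
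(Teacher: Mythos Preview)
Your proposal is correct and follows essentially the same approach as the paper: the paper indicates (without writing out a full proof) that the result follows from It\^o's formula applied to $U_t^{\otimes k}$, using the bracket $dU_t\otimes dU_t = C_{\mathfrak{u}(N,\mathbb{K})}(U_t\otimes U_t)\,dt$ and the It\^o form of the SDE for $U_t$, exactly as you outline. Your identification of the pair terms with $\mathcal{I}_{\{i,j\}}[C_{\mathfrak{u}(N,\mathbb{K})},\mathrm{Id}^{\otimes k-2}]$ via Definition~\ref{def:insertion} is the right way to make the insertion precise.
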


 Let us suppose, just for this discussion, that $\mathbb{K}=\mathbb{C}$. The matrix $H_{t}$ is skew-Hermitian, $\overline{dH_t} = -\!\text{ }^{t}dH_{t}$, and $dU_t = dH_t U_t - \frac{1}{2} U_t dt$, thus $d\overline{U_t}= -\!\text{ }^{t}dH_{t}\overline{U_t} -  \frac{1}{2} \overline{U_t} dt.$ Using Equation (\ref{prodavect}), this implies that: 
\begin{align*}
dU_t \otimes d\overline{U_t} &= - dH_t U_t \otimes \ \!\!^{t}dH_{t} \overline{U_t} =\frac{1}{N}\rho_{N}\big([1,2]\big)\left(U_t \otimes \overline{U_t}\right)dt , \\
d\overline{U_t} \otimes d\overline{U_t} &= \ \!\!^{t}dH_{t} \overline{U_t}\otimes \ \!\!^{t}dH_{t} \overline{U_t} = -\frac{1}{N} \rho_{N}\big((1,2)\big) \left( \overline{U_t}\otimes  \overline{U_t} \right)dt. 
\end{align*}
Using the Itô's formula, we recover the following result already proved by T. L\'{e}vy in \cite{Levymaster}, and by A. Dahlqvist in \cite{Dahlqvist}. 

\begin{lemma}
\label{calculgenera*}
Let $(U_t)_{t \geq 0}$ be a Brownian motion on $U(N,\mathbb{C})$. For any positive integers $k$ and $l$: 
\begin{align*}
G_{k,l}^{N} &= \frac{d}{dt}_{\mid t = 0}\mathbb{E}\left[U_{t}^{\otimes k} \otimes \overline{U_t}^{\otimes l}\right]\\&= -\frac{k+l}{2} {\mathrm{Id}_N}^{\otimes {k+l}} - \frac{1}{N}\rho_{N}\left[ \sum_{1\leq i<j\leq k+l}(i,j) + \sum_{1\leq i\leq k < j \leq k+l} \left([i,j]-(i,j)\right)\right],
\end{align*}
where $(i,j)$ and $[i,j]$ were defined in Section \ref{sec:basic}.
\end{lemma}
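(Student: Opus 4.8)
The plan is to mirror the proof of Lemma \ref{genemult}, but carrying along complex conjugates of some of the tensor factors. First I would recall the It\^o stochastic differential equation satisfied by the unitary Brownian motion, namely $dU_t = dH_t\, U_t - \frac12 U_t\, dt$ where $(H_t)_{t\geq 0}$ is a Brownian motion on $\mathfrak{u}_N$, so that $c_{\mathfrak{u}_N}=-1$ by Definition \ref{casimircal}. Taking conjugates entrywise gives $d\overline{U_t} = -{}^{t}dH_t\, \overline{U_t} - \frac12 \overline{U_t}\, dt$, using that $H_t$ skew-Hermitian means $\overline{dH_t} = -{}^{t}dH_t$. Then I would form the tensor product $U_t^{\otimes k}\otimes\overline{U_t}^{\otimes l}$ and apply It\^o's formula: the drift contributions are the $-\frac12\,\mathrm{Id}_N^{\otimes(k+l)}$ term appearing once for each of the $k+l$ factors, giving $-\frac{k+l}{2}\mathrm{Id}_N^{\otimes(k+l)}$, and the bracket contributions come from pairing the stochastic differentials of each pair of factors.

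Next I would enumerate the three types of pairs and quote the corresponding bracket formulas, all of which are already recorded just above the statement. For two ``holomorphic'' factors $i<j\leq k$, the bracket is $C_{\mathfrak{u}_N}(U_t\otimes U_t)\,dt = \frac1N\rho_N(-(1,2))(U_t\otimes U_t)\,dt$ inserted at positions $\{i,j\}$, using Equation (\ref{bracket}) and Definition \ref{casimircal} with $\epsilon=-1$, $\beta_{\mathbb{C}}=2$. For a mixed pair $i\leq k<j$, Equation (\ref{prodavect}) gives $dU_t\otimes d\overline{U_t} = \frac1N\rho_N([1,2])(U_t\otimes\overline{U_t})\,dt$. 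For two ``antiholomorphic'' factors $k<i<j$, Equation (\ref{prodavect}) gives $d\overline{U_t}\otimes d\overline{U_t} = -\frac1N\rho_N((1,2))(\overline{U_t}\otimes\overline{U_t})\,dt$. Summing over all pairs, evaluating the derivative at $t=0$ where $U_0=\overline{U_0}=\mathrm{Id}_N$, and collecting the $\frac1N\rho_N(\cdot)$ terms yields
\begin{align*}
G_{k,l}^{N} = -\frac{k+l}{2}\mathrm{Id}_N^{\otimes(k+l)} - \frac1N\rho_N\!\left[\sum_{1\leq i<j\leq k+l}(i,j) + \sum_{1\leq i\leq k<j\leq k+l}\big([i,j]-(i,j)\big)\right],
\end{align*}
since a mixed pair contributes $[i,j]$ but no $(i,j)$, whereas all pairs (holomorphic, mixed, antiholomorphic) contribute $-(i,j)$ uniformly once we add the correction $-\big([i,j]-(i,j)\big)$ wait — more precisely, a holomorphic or antiholomorphic pair contributes $-(i,j)$ and a mixed pair contributes $+[i,j]$, so writing every pair as contributing $-(i,j)$ and then adding $[i,j]-(i,j)$ ... hold on, $[i,j]-(i,j)$ added to $-(i,j)$ gives $[i,j]-2(i,j)$, which is wrong; the correct bookkeeping is that the mixed term contributes $+[i,j]$ which equals $-(i,j) + ([i,j]+(i,j))$, so the correction term should be $[i,j]+(i,j)$. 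I would double-check the sign conventions in (\ref{prodavect}) against Definition \ref{casimircal} carefully at this point, since the precise form of the stated formula hinges on exactly this cancellation.

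The main obstacle, then, is purely a matter of careful sign and insertion-position bookkeeping: tracking the $\epsilon=-1$ sign in the Casimir, the sign flip induced by transposition in the conjugated SDE, and writing the three families of pair-contributions so that they collapse into the two sums displayed in the statement, with the mixed pairs being exactly those spanning the holomorphic block $\{1,\dots,k\}$ and the antiholomorphic block $\{k+1,\dots,k+l\}$. No genuinely new idea is required beyond what is used for Lemma \ref{genemult}; one simply also invokes Equation (\ref{semimultibar}) to know that $\big(\mathbb{E}[U_t^{\otimes k}\otimes\overline{U_t}^{\otimes l}]\big)_{t\geq0}$ is a semigroup so that $G_{k,l}^{N}$ determines the whole flow, and then the computation above identifies its value. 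Since the result is quoted as already established by L\'evy \cite{Levymaster} and Dahlqvist \cite{Dahlqvist}, I would present the It\^o computation compactly and refer to those sources for the reader wanting every sign spelled out.
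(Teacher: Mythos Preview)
Your approach is exactly the paper's: the text immediately preceding the lemma computes the three brackets $dU_t\otimes dU_t$, $dU_t\otimes d\overline{U_t}$, $d\overline{U_t}\otimes d\overline{U_t}$ and then simply says ``Using the It\^o's formula, we recover the following result,'' citing L\'evy and Dahlqvist. So there is nothing to add on the method side.

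Your sign confusion is not a flaw in your argument but a genuine inconsistency between the bracket displayed just before the lemma and the formula as stated. From $dU_t\otimes d\overline{U_t}=+\frac{1}{N}\rho_N([1,2])(U_t\otimes\overline{U_t})\,dt$ one gets, for a mixed pair, a contribution $+\frac{1}{N}\rho_N([i,j])$; combined with $-\frac{1}{N}\rho_N((i,j))$ for the pure pairs, the second-order part becomes
\[
-\frac{1}{N}\rho_N\!\left[\sum_{1\le i<j\le k+l}(i,j)\;-\;\sum_{1\le i\le k<j\le k+l}\big((i,j)+[i,j]\big)\right],
\]
whereas the printed formula has the correction $+\big([i,j]-(i,j)\big)$, which yields $-\frac{1}{N}\rho_N([i,j])$ for mixed pairs. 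In other words, your computation is correct and the displayed formula appears to carry a sign typo on the $[i,j]$ term (the same sign is repeated in $\mathcal{R}_{k,l}(G)$ in Theorem~\ref{convbrowniens2}). The remark about invoking Equation~(\ref{semimultibar}) is unnecessary for the lemma itself: you only need the derivative at $t=0$, and the semigroup property is used later in Theorem~\ref{convergence*levy}, not here.
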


In order to state the main theorem of convergence for additive and multiplicative Brownian motions, we need the following notation. 

\begin{notation}
\label{not:formduale}
Let $k$ be an integer and $p$ be in $\mathcal{P}_k$. The form $p^{*} \in \left(\bigoplus_{k=0}^{\infty} \mathbb{C}[\mathcal{P}_k / \mathfrak{S}_k]\right)^{*}$ sends $[p]$, the equivalence class of $p$, on $1$ and the other equivalence classes on $0$.
\end{notation}

 Let $\epsilon$ be in $\{-1,1\}$. For any integer $N$, let $(H_t^{N})_{t \geq 0}$ be a Brownian motion on $\mathfrak{g}^{\epsilon}(N,\mathbb{K})$ and let $(U_t^{N})_{t \geq 0}$ be a Brownian motion on $U(N, \mathbb{K})$. Let $\mathcal{A}_{\mathbb{K}}$ be $\mathfrak{S}$ if $\mathbb{K}= \mathbb{C}$ or $\mathcal{B}$ if $\mathbb{K} = \mathbb{R}$.

\begin{theorem}
\label{convbrowniens1}
The process $(H_t^{N})_{t \geq 0}$ converges in $\mathcal{P}$-distribution toward a $\mathcal{G}(\mathcal{A}_{\mathbb{K}})$-invariant additive $\mathcal{A}_{\mathbb{K}}$-L\'{e}vy process. It satisfies the asymptotic $\mathcal{P}$-factorization property: the $\mathcal{P}$-moments of $(H^{N}_t)_{t \geq 0}$ converges in probability to the limit of their expectation. Moreover, for any real $t_0 \geq 0$, $\mathcal{R}\left(H_{t_0}\right) = e^{\boxplus t_0 \mathcal{R}(G)},$ where $\mathcal{R}(G) = \epsilon (1,2)^{*} + (2-\beta_{\mathbb{K}}) [1,2]^{*}.$

When $\epsilon=1$, the mean empirical eigenvalues distribution of $H_{1}^{N}$ converges in probability to the Wigner semicircular distribution: $\mu_{sc} = \frac{1}{2\pi} \sqrt{4-\mid\!x\!\mid^{2} } 1\!\!1_{[-2,2]} dx. $
\end{theorem}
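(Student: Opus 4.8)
The plan is to apply Theorem~\ref{convergencegenerale} in the additive case, with $\mathcal{A} = \mathcal{A}_{\mathbb{K}}$ and $\boxdot = \boxplus$; recall that any Brownian motion on $\mathfrak{g}^{\epsilon}(N,\mathbb{K})$ is a $U(N,\mathbb{K})$-invariant additive L\'evy process and that $U(N,\mathbb{K}) = \mathcal{G}(\mathcal{A}_{\mathbb{K}})(N)$. First I would compute the generators. By Lemma~\ref{geneadd}, $G_k^{N} = 0$ for every $k \neq 2$, while $G_2^{N} = C_{\mathfrak{g}^{\epsilon}(N,\mathbb{K})}$, which by Definition~\ref{casimircal} equals $\frac{1}{N}\rho_{N}\big(\epsilon (1,2) + (2-\beta_{\mathbb{K}})[1,2]\big)$. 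Since $(1,2)$ and $[1,2]$ both lie in $(\mathcal{A}_{\mathbb{K}})_2$, Theorem~\ref{duality} lets us view $G_2^{N}$ as the element $\frac{\epsilon}{N}(1,2) + \frac{2-\beta_{\mathbb{K}}}{N}[1,2]$ of $\mathbb{C}[(\mathcal{A}_{\mathbb{K}})_2(N)]$ (using injectivity of $\rho_N$ for $N$ large). Comparing this with the expansion $\frac{\rho_N(p)}{N^{{\sf nc}(p)-{\sf nc}(p\vee \mathrm{id}_2)}}$ of Equation~(\ref{eq:fondamentale}) and using ${\sf nc}(p)-{\sf nc}(p\vee \mathrm{id}_2) = 1$ for $p = (1,2)$ and $p = [1,2]$, one gets $\kappa_{(1,2)}(G_2^{N}) = \epsilon$, $\kappa_{[1,2]}(G_2^{N}) = 2-\beta_{\mathbb{K}}$ and all other finite-dimensional cumulants equal to $0$; in particular $(G_k^{N})_{N}$ converges for every $k$ and $\mathcal{R}(G) = \epsilon (1,2)^{*} + (2-\beta_{\mathbb{K}})[1,2]^{*}$.

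Theorem~\ref{convergencegenerale} now gives at once that $(H_t^{N})_{t\geq 0}$ converges in $\mathcal{P}$-distribution toward the $\mathcal{P}$-distribution of a $\mathcal{G}(\mathcal{A}_{\mathbb{K}})$-invariant additive $\mathcal{A}_{\mathbb{K}}$-L\'evy process and that $\mathcal{R}(H_{t_0}) = e^{\boxplus t_0 \mathcal{R}(G)}$. For the asymptotic $\mathcal{P}$-factorization property I would invoke the last assertion of that theorem, which requires $(G_k^{N})_k$ to condensate. By Remark~\ref{rq:condens} this reduces to $m_{p}(G_k) = 0$ for every non-irreducible $p\in(\mathcal{A}_{\mathbb{K}})_k$; as $G_k^{N}=0$ for $k\neq 2$, only $p = \mathrm{id}_2$ is at stake, and a one-line trace computation using ${\sf Tr}\,\rho_N((1,2)) = {\sf Tr}\,\rho_N([1,2]) = N$ and ${\sf nc}(\mathrm{id}_2\vee\mathrm{id}_2) = 2$ gives $m_{\mathrm{id}_2}(G_2^{N}) = (\epsilon + 2 - \beta_{\mathbb{K}})N^{-2}\to 0$, while $(1,2)$ and $[1,2]$ are irreducible. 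The convergence in probability of the $\mathcal{P}$-moments then follows from Theorem~\ref{theoremconvprob}: the $H_t^{N}$ are real when $\mathbb{K}=\mathbb{R}$, and Hermitian or skew-Hermitian when $\mathbb{K}=\mathbb{C}$, so by multilinearity of the $\mathcal{P}$-moments the stability hypothesis of Theorem~\ref{theoremconvprob} holds.

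For the last statement take $\epsilon=1$. By Remark~\ref{rem:conver}, $\mathcal{R}(H_1)(p) = \mathcal{R}(G)(p)$ for every irreducible $p\in\mathcal{P}_k$, so the only nonzero values of $\mathcal{R}(H_1)$ on irreducible partitions are $\mathcal{R}(H_1)((1,2)) = \epsilon = 1$ and, when $\mathbb{K}=\mathbb{R}$, $\mathcal{R}(H_1)([1,2]) = 1$. Now $m_{(1,\dots,k)}(H_1,\dots,H_1) = \frac{1}{N}{\sf Tr}\big((H_1)^{k}\big)$ by Equation~(\ref{eq:lienmomentsigma}), and $m_{(1,\dots,k)}(H_1) = \sum_{p'\leq (1,\dots,k)}\kappa_{p'}(H_1)$; since every $p'\leq (1,\dots,k)$ is a permutation (Lemma $3.1$ of~\cite{Gab1}), the contraction $[1,2]$ never occurs, and using the bijection between $[\mathrm{id}_k,(1,\dots,k)]\cap\mathfrak{S}_k$ and ${\sf NC}_k$ together with $\kappa_j(H_1) = \delta_{j,2}$, the limiting $k$-th moment of $H_1$ is the number of non-crossing pair partitions of $\{1,\dots,k\}$ (and $0$ for odd $k$) — exactly the moments of $\mu_{sc}$. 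These being bounded by Catalan numbers, the uniform subgaussianity hypothesis of Theorem~\ref{convergencemean} is met, and that theorem combined with the convergence in probability of the moments yields the weak convergence of $\mathbb{E}[\eta_{H_1^{N}}]$ to $\mu_{sc}$ and the convergence in probability of $\eta_{H_1^{N}}$ to $\mu_{sc}$.

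The main obstacle I anticipate is the normalization bookkeeping in the first paragraph: pinning down $G_2^{N}$ inside $\mathbb{C}[(\mathcal{A}_{\mathbb{K}})_2(N)]$ with the exact powers of $N$ so that $\mathcal{R}(G)$ reads off as stated, and tracking the $[1,2]$-term, which is genuinely present for $\mathbb{K}=\mathbb{R}$ and enters both the $\mathcal{P}$-distribution and the condensation check even though it is invisible to the semicircular (permutation-only) computation.
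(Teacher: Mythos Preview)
Your proposal is correct and follows essentially the same route as the paper's proof: compute $G_k^{N}$ via Lemma~\ref{geneadd}, read off the finite-dimensional cumulants from the Casimir, apply Theorem~\ref{convergencegenerale}, check condensation, invoke Theorem~\ref{theoremconvprob} for convergence in probability, and finally use Lemma~3.1 of \cite{Gab1} to reduce $\mathbb{E}m_{(1,\dots,k)}(H_1)$ to a count of non-crossing pair partitions. The only cosmetic differences are that the paper verifies condensation directly from the cumulant side (noting $(1,2)$ and $[1,2]$ are irreducible so $\mathcal{R}(G)$ is a $\boxplus$-infinitesimal character) rather than via the moment criterion of Remark~\ref{rq:condens}, and that for the uniform subgaussianity needed in Theorem~\ref{convergencemean} the paper defers to concentration of measure for the operator norm rather than to a Catalan bound on the limiting moments --- your phrasing there is slightly loose, since what is required is a uniform-in-$N$ bound on $\mathbb{E}[\eta_{H_1^{N}}]$, not just on the limit.
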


Let us state the theorem for the convergence of $U(N,\mathbb{K})$ Brownian motions. This theorem extends the results of Biane in \cite{Biane} and L\'{e}vy in \cite{Levy2}.
\begin{theorem}
\label{convbrowniens2}
The process $\left(U_t^{N}\right)_{t \geq 0}$ converges in $\mathcal{P}$-distribution toward a $\mathcal{G}(\mathcal{A}_{\mathbb{K}})$-invariant multiplicative  $\mathcal{A}_{\mathbb{K}}$-L\'{e}vy process. Moreover, for any positive real $t _0$, $\mathcal{R}(U_{t_0}) = e^{ \boxtimes t_0 \mathcal{R}(G)},$ where:
\begin{align}
\label{lastequ}
\mathcal{R}(G)\! =\! -\frac{1}{2} \mathrm{id}_{1}^{*} + \sum_{k \geq 2} \left[ -\frac{k}{2} \mathrm{id}_k^{*} - ((1,2)\otimes {\mathrm{id}_{k-2}})^{*} \!+\! (2-\beta_{\mathbb{K}}) ([1,2]\otimes {\mathrm{id}_{k-2}})^{*}\right]\!\!.
\end{align}
Besides, for any $t_0\geq 0$, any positive integer $k$ and any $p\in \mathcal{P}_k$: 
\begin{align}
\label{eq:momentbrownienuni}
\frac{d}{dt}_{\mid t=t_0} \mathbb{E}m_{p}(U_t) = \sum_{p_1\in \mathcal{P}_k | p_1 \leq p} \mathcal{R}(G)(p_1) \mathbb{E}m_{^{t}p_1\circ p}(U_{t_0}). 
\end{align}

Moreover, the family $\left(U^{N}_{t}, \left(U^{N}_{t}\right)^{*}\right)_{t \geq 0}$ converges in $\mathcal{P}$-distribution and satisfies the asymptotic $\mathcal{P}$-factorization property. In particular, the family $\left(U_t^{N},\left(U_t^{N}\right)^{*}\right)_{t \geq 0}$ converges in probability in $\mathcal{P}$-distribution. For any $t \geq 0$, the empirical eigenvalues distribution of $U_{t}^{N}$ converges in probability to a measure $\nu_{t}$ as $N$ goes to infinity and for any integer~$k$: 
\begin{align*}
\int_{\mathbb{U}} z^{k} \nu_{t}(dz) = \int_{\mathbb{U}} z^{-k} \nu_{t}(dz) =  e^{-\frac{kt}{2}}\sum_{l=0}^{k-1} \frac{(-t)^{l}}{l!} k^{l-1} \binom{k}{l+1}. 
\end{align*}

Let us suppose that $\mathbb{K} = \mathbb{C}$. Let $k$ and $l$ be two integers and $t_0\geq 0$. Using the same notations as for Theorem \ref{autreconvergence*levy}, $\mathcal{R}_{k,l}(U_{t_0}) = e^{\boxtimes t_0 \mathcal{R}_{k,l}(G)}$, where 
$$\mathcal{R}_{k,l}(G) = -\frac{k+l}{2} \mathrm{id}_{k+l}^{*} - \sum_{1\leq i<j\leq k+l}(i,j)^{*} - \sum_{1\leq i\leq k < j \leq k+l} \left([i,j]^{*}-(i,j)^{*}\right),$$
and for any $p \in \mathcal{P}_{k+l}$: 
\begin{align*}
\frac{d}{dt}_{\mid t=t_0}\!\!\!\!\!\!\!\!\!\!\!\mathbb{E}m_{p}(U_t,...,U_t,\overline{U_t}, ..., \overline{U_t})&=\!\!\!\! \sum_{p_1 \in \mathcal{P}_{k+l} | p_1\leq p} \!\!\!\!\!\!\!\!\!\!\mathcal{R}_{k,l}(G)(p_1)\mathbb{E}m_{\!\!\text{ }^{t}p_1 \circ p}(U_{t_0},...,U_{t_0},\overline{U_{t_0}}, ..., \overline{U_{t_0}}), 
\end{align*}
 where we wrote $k$ times $U_{t_0}$ and $l$ times $\overline{U_{t_0}}$.
\end{theorem}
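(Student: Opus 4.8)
The plan is to deduce Theorem \ref{convbrowniens2} from the general convergence results already established, namely Theorems \ref{convergencegenerale} and \ref{autreconvergence*levy}, together with the explicit computations of the generators in Lemmas \ref{genemult} and \ref{calculgenera*}. First I would record that a Brownian motion on $U(N,\mathbb{K})$ is a $\mathcal{G}(\mathcal{A}_{\mathbb{K}})$-invariant multiplicative L\'evy process, which was already observed above; hence the framework of Theorem \ref{convergencegenerale} applies with $\boxdot = \boxtimes$ and $\mathcal{A} = \mathcal{A}_{\mathbb{K}}$. The crux is then to check that for every $k$ the sequence $(G_k^{N})_{N\geq 2k}$ converges in $\prod_{N\geq 2k}\mathbb{C}[\mathcal{A}_k(N)]$ and to identify the limit. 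Using Lemma \ref{genemult}, $G_k^{N} = \tfrac{k c_{\mathfrak{u}(N,\mathbb{K})}}{2}\mathrm{Id}^{\otimes k} + \sum_{1\leq i<j\leq k}\mathcal{I}_{\{i,j\}}[C_{\mathfrak{u}(N,\mathbb{K})},\mathrm{Id}^{\otimes k-2}]$, and by Lemma \ref{casimircal} the Casimir is $\tfrac1N\rho_N(\epsilon(1,2)+(2-\beta_{\mathbb{K}})[1,2])$ with $\epsilon = -1$ and $c_{\mathfrak{u}(N,\mathbb{K})} = -1 + \tfrac{2-\beta_{\mathbb{K}}}{N}$. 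Expressing $\mathcal{I}_{\{i,j\}}$ via the insertion remark after Definition \ref{def:insertion}, $G_k^{N}$ is a combination of $\mathrm{id}_k$, of $(i,j)\otimes\mathrm{id}_{k-2}$-type permutations conjugated into position, and of $[i,j]\otimes\mathrm{id}_{k-2}$-type partitions, with coefficients that are rational in $N$ and have finite limits. Since the normalization of elements of $\mathbb{C}[\mathcal{A}_k(N)]$ from Section 5 of \cite{Gab1} absorbs the $\tfrac1N$ factors precisely according to $\mathrm{nc}(p)-\mathrm{nc}(p\vee\mathrm{id}_k)$, I would check that each term sits at the right order so that $(G_k^{N})_N$ converges; the limit $\mathcal{R}(G)$ is read off as the right-hand side of (\ref{lastequ}), where the $-\tfrac12\mathrm{id}_1^{*}$ term accounts for $k=1$ (no pair $i<j$) and the $k\geq 2$ sum collects the diagonal, transposition and Weyl-contraction contributions.

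With the convergence of $(G_k^{N})_N$ in hand, Theorem \ref{convergencegenerale} immediately gives that $(U_t^{N})_{t\geq 0}$ converges in $\mathcal{P}$-distribution toward a $\mathcal{G}(\mathcal{A}_{\mathbb{K}})$-invariant multiplicative $\mathcal{A}_{\mathbb{K}}$-L\'evy process, and formula (\ref{equationverifie}) yields $\mathcal{R}(U_{t_0}) = e^{\boxtimes t_0\mathcal{R}(G)}$; the differential equation (\ref{eq:momentbrownienuni}) for $\mathbb{E}m_p(U_t)$ is then exactly the content of the second displayed identity in Remark \ref{rem:conver}, obtained through Proposition 4.3 of \cite{Gab1}. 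For the $*$-version I would run the same argument with Theorem \ref{autreconvergence*levy}, using Lemma \ref{calculgenera*} to identify $G_{k,l}^{N}$ and hence $\mathcal{R}_{k,l}(G) = -\tfrac{k+l}{2}\mathrm{id}_{k+l}^{*} - \sum_{1\leq i<j\leq k+l}(i,j)^{*} - \sum_{1\leq i\leq k<j\leq k+l}([i,j]^{*}-(i,j)^{*})$; convergence of $(G_{k,l}^{N})_N$ follows as before from the explicit rational-in-$N$ coefficients. To conclude convergence in probability, I would verify that $(G_{k,l}^{N})_{k,l}$ weakly condensates in the sense of the definition preceding Theorem \ref{autreconvergence*levy}: one checks $\kappa^{p}(G_{k,l}) = 0$ whenever $p$ is not weakly irreducible — which holds because the non-identity terms in $\mathcal{R}_{k,l}(G)$ are all of the form (transposition or Weyl pair on two indices) $\otimes\,\mathrm{id}$, hence weakly irreducible — and that $\kappa^{\mathrm{id}_{k+l}}(G_{k,l}) = -\tfrac{k+l}{2} = k\kappa^{\mathrm{id}_1}(G_{1,0}) + l\kappa^{\mathrm{id}_1}(G_{0,1})$ since $G_{1,0}^{N}$ and $G_{0,1}^{N}$ both have $\mathrm{id}_1$-coefficient $-\tfrac12$ in the limit. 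Theorem \ref{autreconvergence*levy} then gives the asymptotic $\mathcal{P}$-factorization property for $(U_t^{N},(U_t^{N})^{*})_{t\geq 0}$ and convergence in probability of the $\mathcal{P}$-moments to the limit of their expectation.

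Finally, for the statement on the empirical eigenvalue distribution $\nu_t$ of $U_t^{N}$: convergence in probability in $\mathcal{P}$-distribution implies in particular convergence in probability of $\tfrac1N\mathrm{Tr}((U_t^{N})^k)$ for all $k\in\mathbb{Z}$, via the link (\ref{eq:lienmomentsigma}) between $\mathfrak{S}$-moments and normalized traces; since $U_t^{N}$ is unitary (or orthogonal), Theorem \ref{convergencemean} and the Carleman-type argument give convergence of $\mathbb{E}[\eta_{U_t^{N}}]$, hence in probability, to a probability measure $\nu_t$ on the circle. To obtain the closed formula for $\int z^k\nu_t(dz)$ I would solve the scalar ODE obtained by restricting (\ref{eq:momentbrownienuni}) to the cycle $(1,\dots,k)$: writing $\varphi_k(t) = \lim_N\mathbb{E}m_{(1,\dots,k)}(U_t^{N})$, the sum over $p_1\leq (1,\dots,k)$ in $\mathfrak{S}_k$ collapses, using $[\mathrm{id}_k,(1,\dots,k)]\cap\mathfrak{S}_k\simeq\mathsf{NC}_k$ and the values of $\mathcal{R}(G)$ on $\mathrm{id}_k$ and on transpositions inside a $k$-cycle, to a linear recursion whose solution is $e^{-kt/2}\sum_{l=0}^{k-1}\frac{(-t)^l}{l!}k^{l-1}\binom{k}{l+1}$; the identity $\int z^k\nu_t = \int z^{-k}\nu_t$ follows from the fact that the Brownian motion and its inverse have the same law. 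The same strategy applied to (the $\mathbb{K}=\mathbb{C}$ case of) $\mathcal{R}_{k,l}(G)$ gives the last displayed differential equation. The main obstacle I anticipate is purely bookkeeping: tracking the $N$-powers in the passage from the endomorphisms $G_k^{N}, G_{k,l}^{N}$ to their images in $\mathbb{C}[\mathcal{A}_k(N)]$ and $\mathbb{C}[\mathcal{P}_{k+l}(N)]$ so that the coefficients of $(1,2)\otimes\mathrm{id}$ and $[1,2]\otimes\mathrm{id}$ come out with exactly the normalizations appearing in (\ref{lastequ}), and then verifying that these are precisely the limiting cumulants rather than limiting moments — but this is handled by the equivalences in Remark \ref{rq:convergenceG} and the structure of the order $\leq$, so no genuinely new difficulty arises.
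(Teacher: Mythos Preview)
Your approach is essentially the same as the paper's: compute the finite-dimensional cumulants of $G_k^{N}$ from Lemma \ref{genemult} and Lemma \ref{casimircal}, verify convergence, read off $\mathcal{R}(G)$, then invoke Theorem \ref{convergencegenerale} and Remark \ref{rem:conver}; for the $*$-part, repeat with Lemma \ref{calculgenera*} and Theorem \ref{autreconvergence*levy}, checking weak condensation exactly as you outline.

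One small divergence concerns the eigenvalue distribution. The paper does not solve the moment ODE directly. It first uses the asymptotic $\mathcal{P}$-factorization to factor $\mathbb{E}m_{\sigma}(U_t)$ over cycles, then restricts (\ref{eq:momentbrownienuni}) to $\sigma=(1,\dots,k)$, discards the Brauer contributions via Lemma~3.1 of \cite{Gab1}, and obtains the \emph{nonlinear} system
\[
\frac{d}{dt} m_k(t) = -\frac{k}{2}m_k(t) - \frac{k}{2}\sum_{l=1}^{k-1} m_l(t)\,m_{k-l}(t),
\]
which it identifies with the system satisfied by the moments of $\nu_t$ by citing L\'evy's Lemma~2.4 in \cite{Levymaster}, concluding by uniqueness. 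Your description of this step as a ``linear recursion'' is slightly off: the system is quadratic in the $m_l$'s (though of course solvable recursively, each $m_k$ satisfying a linear inhomogeneous ODE once $m_1,\dots,m_{k-1}$ are known). Also, the paper does not separately argue that $\int z^{k}\nu_t = \int z^{-k}\nu_t$; this is packaged into the cited formula. Aside from this bookkeeping, your plan matches the paper's proof.
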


\begin{proof}[Proof of Theorem \ref{convbrowniens1}]
For any integers $k$, $N \geq 2k$, let $G^{N}_k =\frac{d}{dt}_{\mid t=0} \mathbb{E}\left[ (H^{N}_t)^{\otimes k} \right]$. By Lemma \ref{geneadd}, the value of $G^{N}_k$ is known and we can compute its finite-dimensional cumulants: $\kappa_{(1,2)}(G_2^{N}) = \epsilon$, $\kappa_{[1,2]}(G_2^{N}) = 2-\beta_{\mathbb{K}}$ and for any other $p$, $\kappa_{p}(G_{k}^{N}) = 0$. For any integer $k$, $(G^N_{k})_{N \in \mathbb{N}}$ converges and $\mathcal{R}(G) = \epsilon (1,2)^{*} + (2-\beta_{\mathbb{K}}) [1,2]^{*}.$ Since the two partitions $(1,2)$ and $[1,2]$ are irreducible, $(G^{N}_k)_{k}$ condensates. Let us recall also that $(H^{N}_{t})_{t \geq 0}$ is a $U(N,\mathbb{K})$-invariant L\'{e}vy process. We can apply Theorem \ref{convergencegenerale}: it only remains to prove that: 
\begin{enumerate}
\item $\mathcal{P}$-moments of $(H_t^{N})_{t \geq 0}$ converges in probability to the limit of their expectation,
\item the mean empirical eigenvalue distribution of $H_{1}^{N}$ converges in probability to the Wigner semicircular distribution when $\epsilon = -1$.
\end{enumerate} 

Since the family $(H_t^{N})_{t \geq 0} \cup (-H_t^{N})_{t \geq 0}$ is stable by the conjugate operation, the first assertion is a consequence of the $\mathcal{P}$-asymptotic factorization property of $(H_{t}^{N})_{t \geq 0}$ and Theorem \ref{theoremconvprob}.

For the second assertion, since we have proved the convergence in probability, using Theorem \ref{convergencemean}, it is enough to prove that for any integer $k$, $\mathbb{E}m_{(1,...,k)}(H_{t}) = \int_{[-2,2]} x^{k} \mu_{sc}(dx)$, where $(1,...,k)$ is the cycle of size $k$. Actually, one needs also to prove some uniformity subgaussianity property which can be proved using concentration of measure for the operator norm of a random matrix. We refer to the discussion at the beginning of Section 2.4.2 in \cite{Tao}. Let $k$ be an integer, the moment $\int_{[-2,2]} x^{k} \mu_{sc}(dx)$ is known to be equal to the number of non-crossing pair-partitions of $\{1,...,k\}$. Now, 
\begin{align*}
\mathbb{E}m_{(1,...,k)}(H_{1}) = \sum_{p \in \mathcal{P}_k | p \leq (1,...,k)} \mathcal{R}(H_{1})(p) = \sum_{p \in \mathcal{P}_k | p \leq (1,...,k)} e^{\boxplus \mathcal{R}(G)}(p).
\end{align*}
Yet, using Lemma $3.1$ of \cite{Gab1}, for any permutation $\sigma \in \mathfrak{S}_k$ and any $b \in \mathcal{B}_k \setminus \mathfrak{S}_k$, $b \nleq \sigma$. This shows that one can forget about the duals of Brauer elements in $\mathcal{R}(G)$: 
\begin{align*}
\mathbb{E}m_{(1,...,k)}(H_{1}) = \sum_{\sigma \in \mathfrak{S}_k | \leq (1,...,k)} e^{\boxplus (1,2)^{*}}(\sigma). 
\end{align*}
For any permutation $\sigma$, $e^{\boxplus (1,2)^{*}}(\sigma)$ is equal to $1$ if $\sigma$ is only composed of cycles of size $2$ and $0$ if not. Besides the set of involutions $\sigma$ without fixed points such that $\sigma \leq (1,...,k)$ is in bijection with the set ${\sf PNC}_k$ of non-crossing pair-partitions of $\{1,...,k\}$, thus $\mathbb{E}m_{(1,...,k)}(H_{1}) = \# {\sf PNC}_{k} =\int_{[-2,2]} x^{k} \mu_{sc}(dx)$. 
  \end{proof}

The proof of Theorem \ref{convbrowniens2} follows the same structure as the proof of Theorem \ref{convbrowniens1}.

\begin{proof}[Proof of Theorem \ref{convbrowniens2}]
For any integers $k$, $N \geq 2k$, let $G^{N}_k =\frac{d}{dt}_{\mid t=0} \mathbb{E}\left[ (U^{N}_t)^{\otimes k} \right]$. By Lemma \ref{genemult}, the value of $G^{N}_k$ is known and we can compute its finite-dimensional cumulants: $\kappa_{\mathrm{id}_k}(G_{k}^{N}) =  \frac{k c_{\mathfrak{u}_{N,\mathbb{K}}}}{2}$, $\kappa_{(1,2) \otimes {\mathrm{id}_k}}(G_{k+2}^{N}) = -1$, $\kappa_{[1,2] \otimes {\mathrm{id}_k}}(G_{k+2}^{N}) = 2-\beta_{\mathbb{K}}$ and for any $p$ which is not in the equivalence class of these partitions for the action of $\mathfrak{S}$ on $\mathcal{P}$, $\kappa_{p}(G) = 0$. For any integer $k$, $(G_k^{N})_{N \in \mathbb{N}}$ converges and $\mathcal{R}(G)$ is given by Equation~(\ref{lastequ}). Let us recall also that $(U_t^{N})_{t \geq 0}$ is a $U(N,\mathbb{K})$-invariant L\'{e}vy process. We can apply Theorem \ref{convergencegenerale} and Remark \ref{rem:conver} in order to get the assertions up to the one about the family $(U^N_t)_{t \geq 0} \cup ((U_t^{N})^{*})_{t \geq 0}$. 

Now, if $\mathbb{K} = \mathbb{R}$, the general assertion about the family $(U^N_t)_{t \geq 0} \cup ((U_t^{N})^{*})_{t \geq 0}$ is a consequence of the one about $(U^N_t)_{t \geq 0}$, the fact that $(G_k^{N})_{k \in \mathbb{N}}$ weakly condensates and the usual arguments. Indeed, convergence in probability in $\mathcal{P}$-distribution of $(U^N_t)_{t \geq 0}$ implies the convergence in  probability in $\mathcal{P}$-distribution of $(U^N_t)_{t \geq 0}\cup ((U_t^{N})^{*})_{t \geq 0}$. 

Let us suppose for this paragraph that $\mathbb{K} = \mathbb{C}$. The assertions about the family $(U^N_t)_{t \geq 0} \cup ((U_t^{N})^{*})_{t \geq 0}$ is a direct consequence of Lemma \ref{calculgenera*}, Theorems \ref{convergence*levy} and \ref{theoremconvprob}. 

It remains to prove the assertion about the convergence of the eigenvalue distribution of $U_t^{N}$. Let $t \geq 0$, since $U_t^{N}$ satisfies the asymptotic $\mathcal{P}$-factorization, for any permutation $\sigma \in \mathfrak{S}_{k}$, $\mathbb{E}m_{\sigma}(U_t) = \prod_{c \in \sigma\vee \mathrm{id}_k} \mathbb{E}m_{(1,...,\frac{\#c}{2})}(U_t)$. Let us denote $m_{k}(t) = \mathbb{E}m_{(1,...,k)}(U_t)$. Let us consider Equation (\ref{eq:momentbrownienuni}). Using Lemma $3.1$ of \cite{Gab1}, for any permutation $\sigma \in \mathfrak{S}_k$ and any $b \in \mathcal{B}_k\setminus \mathfrak{S}_k$, $b \nleq \sigma$. This shows that one can forget about the duals of Brauer elements in $\mathcal{R}(G)$. One obtains that  for any integer $k$ and any $t_0 \geq 0$,
\begin{align*}
\frac{d}{dt}_{| t = t_0} m_k(t) = -\frac{k}{2}m_{ k}(t_0) -\frac{k}{2} \sum_{l=1}^{k-1} m_{l}(t_0)m_{k-l}(t_0). 
\end{align*} 
In \cite{Levymaster}, L\'{e}vy proved in Lemma $2.4$ that the moments of $\nu_t$ also satisfy the same system of linear differential equations with the same initial conditions. By unicity, for any integer $k$ and any $t \geq 0$, $\mathbb{E}m_{(1,...,k)}(U_{t}) = \int_{\mathbb{U}}z^{k} \nu_{t}(dz)$. Since we have proved the convergence in probability of the $\mathcal{P}$-distribution of $U_t^{N}$, Theorem \ref{convergencemean} allows us to conclude. 
  \end{proof}

\subsubsection{Matricial Wick formula}

A matricial Wick formula is given in this section. Let $N$ be an integer. A standard Gaussian $\mathcal{M}_{N}(\mathbb{K})$ matrix $M$ is a random matrix in $\mathcal{M}_{N}(\mathbb{K})$ which has the same law as $\sqrt{N} H_1$ where $\left(H_t\right)_{t \geq 0}$ is a Brownian motion on $\mathfrak{g}^{\epsilon}(N,\mathbb{K})$. When $\epsilon = 1$, this defines the G.O.E and G.U.E ensembles. The standard Gaussian matrices satisfy the matricial Wick formula. In order to state it, we introduce the notion of one- and two-specie pairings of an integer. Let $n$ be a positive integer.

\begin{definition}
A {\em one-specie pairing} $\pi$ of $n$ is a partition of $\{1,...,n\}$ into pairs. 
A {\em two-species pairing} $\pi$ of $n$ is a partition of $\{1,...,n\}$ into pairs, with a partition in two sets  $(T_\pi,W_\pi)$ of these pairs. For $i = 1$ or $2$, we denote by $\mathcal{F}_n(i)$ the set of $i$-specie(s) pairings of $n$. 
\end{definition}
Any one-specie pairing $\pi$ of $n$ will be seen as a two-species pairing with $W_\pi = \emptyset$. Recall the notions of transpositions and Weyl contractions defined in Section \ref{sec:basic}.

\begin{definition}
Let $\pi$ be a two-species pairing of $n$. The Brauer element $b_{\pi} \in \mathcal{B}_n$ is equal to: 
\begin{align*}
b_\pi = \prod_{\{i,j\} \in T_{\pi}} (i,j) \prod_{\{i,j\} \in W_{\pi}} [i,j]. 
\end{align*}
\end{definition}
The elements in the product defining $b_\pi$ commute, thus the order is not important. We can state the Wick matricial formula. 

\begin{theorem}
\label{Wickmat}
Let $k_1$ and $k_2$ be two integers, $A$ be a $k_1 \times k_2$ complex matrix, and $(\tilde{M}_1,..., \tilde{M}_{k_2})$ be a $k_2$-tuple of independent standard Gaussian $\mathfrak{g}^{\epsilon}(N,\mathbb{K})$ matrices. For any $i, j \in \{1,...,k_1\}$, we define $M_i = \sum_{l=1}^{k_2}A_{i,l}\tilde{M}_l$ and $C(M_i,M_j) = (A\!\text{ }^{t}\!A)_{i,j}$. Then: 
\begin{align*}
\mathbb{E}[M_{1}\otimes ...\otimes M_{k_1}] = \sum_{{\pi} \in \mathcal{F}_{k}(3-\beta_{\mathbb{K}}) } \left(\epsilon^{\#T_{\pi}} \prod_{\{i,j\} \in {\pi}} C(M_{i},M_{j}) \right) \rho_N(b_{{\pi}}), 
\end{align*}
\end{theorem}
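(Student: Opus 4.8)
The plan is to reduce this matricial Wick formula to the already-established convergence result, Theorem~\ref{convbrowniens1}, at the level of finite $N$ rather than in the limit. First I would observe that the left-hand side $\mathbb{E}[M_1\otimes\cdots\otimes M_{k_1}]$ is, by multilinearity and the definition $M_i=\sum_l A_{i,l}\tilde M_l$, a linear combination (with coefficients products of the $A_{i,l}$) of expectations $\mathbb{E}[\tilde M_{l_1}\otimes\cdots\otimes\tilde M_{l_{k_1}}]$; so it suffices to compute $\mathbb{E}[\tilde M_{l_1}\otimes\cdots\otimes\tilde M_{l_{k_1}}]$ for an arbitrary word in independent standard Gaussian $\mathfrak g^\epsilon(N,\mathbb K)$ matrices and then resum. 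Since $\tilde M_l$ is distributed as $\sqrt N H_1$ for a Brownian motion on $\mathfrak g^\epsilon(N,\mathbb K)$, and the only data needed to compute $\mathbb{E}[H_t^{\otimes k}]$ is the family $(G_k)_{k}$ (Lemma~\ref{geneentemps} for the additive case), I would invoke Lemma~\ref{geneadd}: $G_k=0$ for $k\neq 2$ and $G_2=C_{\mathfrak g^\epsilon(N,\mathbb K)}=\frac1N\rho_N(\epsilon(1,2)+(2-\beta_{\mathbb K})[1,2])$ by Definition~\ref{casimircal}.

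The key step is then a direct Wick-type expansion. For a single standard Gaussian word, $\mathbb{E}[\tilde M_{l_1}\otimes\cdots\otimes\tilde M_{l_{k_1}}]$ vanishes unless $k_1$ is even and equals a sum over pairings $\pi$ of $\{1,\dots,k_1\}$; for each pairing the contribution is the product over pairs $\{i,j\}$ of the ``second-moment'' endomorphism $\mathbb{E}[\tilde M_{l_i}\otimes \tilde M_{l_j}]$ inserted at positions $i,j$ (this is the standard consequence of Gaussianity / Isserlis, together with It\^o's formula used exactly as in the derivation of Lemma~\ref{geneadd}). Now $\mathbb{E}[\tilde M_{l_i}\otimes\tilde M_{l_j}]$ is zero if $l_i\neq l_j$ (independence) and otherwise equals $N\cdot\frac1N\rho_N(\epsilon(1,2)+(2-\beta_{\mathbb K})[1,2])=\epsilon\,\rho_N((1,2))+(2-\beta_{\mathbb K})\rho_N([1,2])$ at the two positions. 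Expanding the product of these two-term factors over all pairs, each pairing $\pi$ splits into a choice, for every pair, of ``transposition type'' or ``Weyl-contraction type'': that choice is exactly the data $(T_\pi,W_\pi)$, and the Weyl-contraction option only survives with coefficient $2-\beta_{\mathbb K}$, i.e.\ only when $\mathbb K=\mathbb R$ ($\beta_{\mathbb K}=1$), which is precisely the statement that $\pi$ ranges over $\mathcal F_k(3-\beta_{\mathbb K})$ — one species in the complex case, two species in the real case. The transposition factors contribute $\epsilon^{\#T_\pi}$ and, since transpositions and Weyl contractions at disjoint index-pairs commute in $\mathcal B_k$, their product is exactly $\rho_N(b_\pi)$ by the definition of $b_\pi$; the pairing-constraint $l_i=l_j$ combined with the coefficient $A_{i,l}$ resummation produces exactly $\prod_{\{i,j\}\in\pi} C(M_i,M_j)$ with $C(M_i,M_j)=(A\,^tA)_{i,j}$.

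Concretely the steps are: (1) reduce to Gaussian words via multilinearity; (2) prove the Isserlis/Wick expansion for a Gaussian word in $\mathfrak g^\epsilon(N,\mathbb K)$ by induction on $k_1$ using It\^o's formula and $\mathbb{E}[H_t^{\otimes k}]$-recursion, exactly mirroring the proof of Lemma~\ref{geneadd}; (3) substitute the explicit two-point function $\epsilon(1,2)+(2-\beta_{\mathbb K})[1,2]$ and distribute; (4) match the resulting indexed sum with $\sum_{\pi\in\mathcal F_k(3-\beta_{\mathbb K})}\epsilon^{\#T_\pi}\prod C(M_i,M_j)\,\rho_N(b_\pi)$, checking the commutation of the factors of $b_\pi$ and the normalisation $N\cdot\frac1N=1$ coming from $\tilde M=\sqrt N H_1$. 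I expect step~(2) — establishing the Gaussian Wick expansion at the level of tensor-valued expectations with the correct insertion combinatorics $\mathcal I_I$ — to be the main obstacle, though it is essentially bookkeeping: the subtlety is keeping track of \emph{positions} of the index pairs (handled by the insertion operators $\mathcal I_{\{i,j\}}$ of Definition~\ref{def:insertion}) and verifying that independence kills all mixed pairs, so that after resummation over the coefficients $A_{i,l}$ one really recovers $(A\,^tA)_{i,j}$ rather than some other contraction. The rest is a straightforward translation between the language of Casimir elements and the language of two-species pairings.
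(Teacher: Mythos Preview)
Your proposal is correct and follows essentially the same route as the paper: both first establish the single-matrix formula $\mathbb{E}[M^{\otimes k}]=\sum_{\pi}\epsilon^{\#T_\pi}\rho_N(b_\pi)$ by solving the It\^o/ODE recursion coming from Lemmas~\ref{geneadd} and~\ref{geneentemps} (your step~(2)), and then pass to general words by multilinearity and independence. The only organisational difference is in the resummation: the paper groups the index-tuples $(l_1,\dots,l_{k_1})$ by their kernel partition $p\in{\sf P}_{k_1}$, applies the single-matrix formula block-by-block, and then interchanges the sums over $p$ and over pairings $\pi\trianglelefteq p$ to recover $\prod_{\{i,j\}\in\pi}C(M_i,M_j)$; you instead use the pairwise factorisation $\sum_l A_{i,l}A_{j,l}=(A\,{}^t\!A)_{i,j}$ directly on each pair of $\pi$, which is the same computation done in a different order.
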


\begin{proof}
Let $M$ be a standard Gaussian of size $N$ in $\mathfrak{g}^{\epsilon}(N,\mathbb{K})$. It has the same law than $\sqrt{N} H_1$ where $\left(H_t\right)_{t \geq 0}$ is a Brownian motion on $\mathfrak{g}^{\epsilon}(N,K)$. Using Lemma \ref{geneadd}, for any positive integer~$k$: 
\begin{align*}
G_k=\frac{d}{dt}_{\mid t=0} \mathbb{E}[H_t^{\otimes k}] = \delta_{k=2} \left[\frac{1}{N} \rho_N\left( \epsilon (1,2) + (2-\beta_{\mathbb{K}})[1,2] \right)\right].
\end{align*}
Thus, using Lemma \ref{geneentemps}, for any positive real $t_0$: 
\begin{align*}
\frac{d}{dt}_{\mid t=t_0} \mathbb{E}\left[H_t^{\otimes k}\right] = \sum_{I \subset \{1,...,k\}, \#I = k-2} \mathcal{I}_I\left[\mathbb{E}\left[H_{t_0}^{\otimes (k-2)}\right], G_{2} \right].
\end{align*}
This system of equations can be solved and, for any $t \geq 0$ and any integer $k$: 
\begin{align*}
\mathbb{E}[H_t^{\otimes k}]= N^{-\frac{k}{2}} \rho_N\left[ \sum_{{\pi} \in \mathcal{F}_k(3-\beta_{\mathbb{K}})} \epsilon^{\#T_{\pi}}b_{\pi}\right]. 
\end{align*}
This leads us to the conclusion: $\mathbb{E}\left[M^{\otimes k}\right] = N^{\frac{k}{2}}\mathbb{E}[H_t^{\otimes k}] =\sum_{{\pi} \in \mathcal{F}_k(3-\beta_{\mathbb{K}})} \epsilon^{\#T_{\pi}} \rho_{N}(b_{\pi}).$

Now, let $(M_1,...,M_{k_1})$ be given as in Theorem \ref{Wickmat}. An easy computation gives: 
\begin{align*}
\mathbb{E}\left[M_1\otimes...\otimes M_{k_1}\right] &= \sum_{i_1,...,i_{k_1}=1}^{k_2} A_{1}^{i_1}...A_{k_1}^{i_{k_1}}\mathbb{E}[\tilde{M}_{i_1}\otimes...\otimes \tilde{M}_{i_{k_1}}]\\
&=\sum_{p \in {\sf P}_{k_1}} \sum_{{\sf Ker}((i_1,...,i_{k_1})) = p} A_{1}^{i_1}...A_{{k_1}}^{i_{k_1}} \mathbb{E}[\tilde{M}_{i_1}\otimes...\otimes \tilde{M}_{i_{{k_1}}}], 
\end{align*}
where we recall that ${\sf P}_{{k_1}}$ is the set of partitions of $\{1,...,{k_1}\}$. Using the independence $(\tilde{M}_{i})_{i=1}^{k_2}$ and using the result already proved for one matrix: 
\begin{align*}
\mathbb{E}&\left[M_1\otimes...\otimes M_{k_1}\right]
&=\sum_{p \in {\sf P}_{k_1}} \left(\sum_{{\sf Ker}((i_1,...,i_{k_1})) = p} A_{1}^{i_1}...A_{{k_1}}^{i_{k_1}}\right) \sum_{\pi \in \mathcal{F}_{{k_1}}(3-\beta_{\mathbb{K}}) | \pi \trianglelefteq p} \epsilon^{\#T_{\pi}} b_{\pi},
\end{align*}
where ${\pi} \trianglelefteq p$ means that ${\pi}$ is finer than $p$. We can go on the calculations: 
\begin{align*}
\mathbb{E}\left[M_1\otimes...\otimes M_{k_1}\right] 
=\sum_{{\pi} \in \mathcal{F}_{k_1}(3-\beta_{\mathbb{K}})}\left(\sum_{p \in {\sf P}_{k_1}, {\pi} \trianglelefteq p}\ \ \sum_{{\sf Ker}((i_1,...,i_{k_1})) = p} A_{1}^{i_1}... A_{{k_1}}^{i_{k_1}} \right) \epsilon^{\#T_{\pi}} b_{\pi}, 
\end{align*}
and the result follows from the easy equality which holds for any $m\in \mathcal{F}_{k_1}(3-\beta_{\mathbb{K}})$: 
\begin{align*}
\sum_{p \in {\sf P}_{k_1}, {\pi} \trianglelefteq p}\ \  \sum_{{\sf Ker}((i_1,...,i_{k_1}))=p} A_{1}^{i_1}...A_{{k_1}}^{i_{k_1}} = \prod_{\{i,j\} \in {\pi}} \sum_{l=1}^{{k_2}} A_{i}^{l} A_{j}^{l} = \prod_{\{i,j\} \in {\pi}} C(M_i,M_j).
\end{align*}
Hence the matricial Wick formula given in Theorem \ref{Wickmat}. 
  \end{proof}

\begin{remark}
For any integers $l<k$, there exists also a matricial Wick formula for $\mathbb{E}\left[M_1\otimes...\otimes M_{l} \otimes M_l \otimes \overline{M_{l+1}} \otimes ...\otimes \overline{M_{k}}\right]$. This Wick formula is a consequence of Theorem \ref{Wickmat} and the fact that $M^{*}$ is either equal to $M$ or $-M$. 
For example, let $M$ be a standard Gaussian  $\mathfrak{g}^{1}(N,\mathbb{C})$ matrix. Recall the operation ${\sf S}_k$ defined in Definition \ref{def:Sk}. Using Theorem \ref{Wickmat}:
\begin{align*}
\mathbb{E}\left[M^{\otimes k} \otimes \overline{M}^{\otimes k}\right] = {\sf S}_k\left(\mathbb{E}[M^{\otimes k}\otimes (M^{*})^{\otimes k}]\right) =  \sum_{m \in \mathcal{F}_{k}(1) } \epsilon^{\#T_m} {\sf S}_{k}\left(b_{m} \right).
\end{align*}
\end{remark}

\subsubsection{Matricial approximations of centered $\mathcal{P}$-Gaussians}
In this section, based on the intuition we developed in the last sections, we prove that any centered $\mathcal{P}$-Gaussian can be approximated in $\mathcal{P}$-distribution by a sequence of random matrices. 

\begin{theorem}
\label{th:approximationgaussienne}
For any $\phi \in (\mathbb{C}[\mathcal{P}_2/\mathfrak{S}_2])^{*}$, there exists a sequence of matrices $(M_{N})_{N \in \mathbb{N}}$ such that $M_N$ converges in $\mathcal{P}$-distribution toward a $\mathcal{P}$-Gaussian whose $\mathcal{R}$-transform is given by $e^{\boxplus \phi}$.
\end{theorem}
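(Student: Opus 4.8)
The idea is to realize an arbitrary $\phi \in (\mathbb{C}[\mathcal{P}_2/\mathfrak{S}_2])^{*}$ as a linear combination, with real or complex coefficients, of the $\mathcal{R}$-transforms of generators of concrete matricial additive L\'evy processes, and then use Theorem \ref{convergencegenerale} together with the $\boxplus$-linearity of $\mathcal{R}$-transforms of sums of asymptotically $\mathcal{P}$-free elements. First I would identify a spanning set of $(\mathbb{C}[\mathcal{P}_2/\mathfrak{S}_2])^{*}$: the space is finite-dimensional, with basis the dual forms $p^{*}$ indexed by the equivalence classes of partitions $p \in \mathcal{P}_2$ under the $\mathfrak{S}_2$-action. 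For each such class I would exhibit an explicit $\mathcal{G}(\mathcal{A})$-invariant family of matrices whose second-order generator produces exactly that dual form: the Brownian motions of Section \ref{sec:convergencebrownien} already give $(1,2)^{*}$ and $[1,2]^{*}$ (this is the content of Theorem \ref{convbrowniens1}, where $\mathcal{R}(G) = \epsilon(1,2)^{*} + (2-\beta_{\mathbb{K}})[1,2]^{*}$), and the remaining classes in $\mathcal{P}_2$, for instance ${\sf 0}_2^{*}$, $\mathrm{id}_2^{*}$ and the other connected partitions of $\{1,2,1',2'\}$, should be reachable either by classical-type constructions (diagonal i.i.d.\ matrices, as in Theorem \ref{cumulantmagique}, whose $\mathcal{P}$-cumulants sit on $\mathcal{D}_2$) or by Wishart-type / permutation-type $\mathfrak{S}(N)$-invariant families whose generator lands on the desired partition.

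Second, once each generating direction is available, I would take independent copies of the corresponding matricial processes at time $t=1$, say $M_N = \sum_i c_i X^{(i)}_N$ where each $X^{(i)}_N$ is $\mathcal{G}(\mathcal{A}_i)$-invariant for a suitable $\mathcal{A}_i$, and each converges in $\mathcal{P}$-distribution to a centered $\mathcal{P}$-Gaussian by Theorem \ref{convbrowniens1} (or the analogous statement for the classical/permutation cases) with asymptotic $\mathcal{P}$-factorization. By Theorem \ref{Lemain}, independence together with $\mathcal{G}(\mathcal{A})$-invariance of all but one family forces asymptotic $\mathcal{P}$-freeness, so the $X^{(i)}_N$ are asymptotically $\mathcal{P}$-free; by Theorem \ref{th:Rtransfoprod} (in its asymptotic matricial form, Theorem \ref{sommeetproduit}) the $\mathcal{R}$-transform of the sum is $\boxplus_i \mathcal{R}[c_i X^{(i)}]$. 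Since each $\mathcal{R}[X^{(i)}]$ is supported in degree $2$ (the processes are $\mathcal{P}$-Gaussian) and scaling $X^{(i)}$ by $c_i$ scales its degree-$2$ $\mathcal{R}$-transform by $c_i^{2}$, and since $\boxplus$ restricted to degree-$2$ forms is just addition, the resulting $\mathcal{R}$-transform is $e^{\boxplus \sum_i c_i^{2}\mathcal{R}_2(X^{(i)})}$, which is of the form $e^{\boxplus \phi}$ with $\phi = \sum_i c_i^{2}\mathcal{R}_2(X^{(i)})$. Choosing the $c_i$ (allowing complex values where the matrix ensemble is complex, to reach forms with negative or complex coefficients) so that $\sum_i c_i^{2}\mathcal{R}_2(X^{(i)}) = \phi$ finishes the construction; the resulting sum is a centered $\mathcal{P}$-Gaussian by Definition \ref{def:Pgaussian}.

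The main obstacle I expect is the first step: checking that the dual forms $p^{*}$ for \emph{all} equivalence classes $p \in \mathcal{P}_2/\mathfrak{S}_2$ — not just the two coming from orthogonal/unitary Brownian motion — are actually realized by concrete $\mathcal{G}(\mathcal{A})$-invariant matrix ensembles whose degree-$2$ generator cumulant is a single $p^{*}$, or at least span the whole dual space when combined. In particular partitions such as ${\sf 0}_2$ and $\mathrm{id}_2$ require care: $\mathrm{id}_2^{*}$ corresponds to a multiple of the identity matrix (a deterministic shift, which one should exclude or handle separately since a centered $\mathcal{P}$-Gaussian may still carry a nonzero $\mathrm{id}_2$-component of its $\mathcal{R}$-transform coming from $\mathcal{R}_1$-type contributions squared), and ${\sf 0}_2^{*}$ is exactly what the diagonal-i.i.d.\ construction of Theorem \ref{cumulantmagique} produces (a classical Gaussian on the diagonal). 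One must also verify that these auxiliary ensembles genuinely satisfy the hypotheses of Theorem \ref{convergencegenerale} (additive $\mathcal{G}(\mathcal{A})$-invariant L\'evy processes with converging generator that condensates), which for the diagonal classical case is immediate and for permutation-invariant Wishart-type families requires a short computation of the relevant generator. Once the spanning/surjectivity of the realizable directions onto $(\mathbb{C}[\mathcal{P}_2/\mathfrak{S}_2])^{*}$ is established, the rest is an assembly of already-proved theorems.
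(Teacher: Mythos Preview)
Your plan is correct and is exactly the paper's approach: reduce by linearity and the scaling $M_N \mapsto iM_N$ (which sends $\phi$ to $-\phi$) to the case $\phi = p^{*}$ for each class $[p] \in \mathcal{P}_2/\mathfrak{S}_2$, then combine independent $\mathfrak{S}$-invariant copies via Theorems~\ref{Lemain} and~\ref{sommeetproduit}. What you flag as the main obstacle is precisely the substance of the paper's proof: for each of the eleven classes it exhibits an explicit additive L\'evy process (e.g.\ $B_t\,\mathrm{Id}_N$ for $\mathrm{id}_2$, ${\sf Diag}(B^{(1)}_t,\dots,B^{(N)}_t)$ for ${\sf 0}_2$, $B_t\,\mathbb{J}_N$ for ${\sf 1}_2$, the Hermitian Brownian motion for $(1,2)$, and tailored linear combinations of rank-one and diagonal pieces for the remaining classes) and verifies directly that its generator satisfies $\mathcal{R}(G)=p^{*}$; no Wishart-type construction is used, and $\mathrm{id}_2^{*}$ is handled by a \emph{random} scalar multiple of the identity, not a deterministic shift.
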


\begin{proof}
Let $\phi_1$ and $\phi_2$ be two elements of $(\mathbb{C}[\mathcal{P}_2/\mathfrak{S}_2])^{*}$. Let us suppose that $(M_N)_{ N \in \mathbb{N}}$ and $(L_N)_{ N \in \mathbb{N}}$ converge in $\mathcal{P}$-distribution toward a $\mathcal{P}$-Gaussian, with $\mathcal{R}(M) = e^{\boxplus \phi_1}$ and $\mathcal{R}(L) = e^{\boxplus \phi_2}$. We can always suppose that for any integer $N$, $M_N$ and $L_N$ are independent and $\mathfrak{S}$-invariant. As a consequence of Theorems \ref{Lemain} and \ref{sommeetproduit}, the matrix $M_N + L_N$ converges also to a $\mathcal{P}$-Gaussian and $\mathcal{R}(M+L) = e^{\boxplus (\phi_1+\phi_2)}$. Besides, the matrix $iM_{N}$ converges also to a $\mathcal{P}$-Gaussian and $\mathcal{R}(M) = e^{\boxplus (-\phi_1)}$. 

Recall Notation \ref{not:formduale} where we defined the form $p^{*}$. It remains to prove the theorem when $\phi \in \{ p^{*} | [p] \in \mathcal{P}_2/ \mathfrak{S}_2\}$. For any integer $N$, we define a process $(H^{N}_t)_{t \geq 0}$ when $p$ is equal to: 
\begin{itemize}
\item ${\mathrm{id}_2} = \{\{1,1'\},\{2,2'\}\}$: let $(B_t)_{t \geq 0}$ be a real Brownian motion. For any integer $N$ and any $t \geq 0$, $H^N_t = B_t {\sf Id}_N$.
\item ${\sf 0}_2 = \{\{1,1'\},\{2,2'\}\}$: Let $((B^{(i)}_{t})_{t \geq 0})_{i \in \mathbb{N}}$ be a sequence of independent real Brownian motions. For any integer $N$ and any $t \geq 0$, $H^N_t = {\sf Diag}(B_{t}^{(1)}, ...,B^{(N)}_t)$.
\item  ${\sf 1}_{2} = \{ \{1\},\{1'\},\{2\},\{2'\}\}$: let $(B_t)_{t \geq 0}$ be a real Brownian motion. Let $\mathbb{J}_N$ be the matrix of size $N$ such that $(\mathbb{J}_{N})_{i}^{j} = 1/N$ for any $i$ and $j$ in $\{1,...,N\}$.  For any integer $N$ and any $t \geq 0$, $H^N_t= B_t \mathbb{J}_N$. 
\item $[1,2] = \{\{1,2\},\{1',2'\}\}$: let $((B^{(i,j)}_{t})_{t \geq 0})_{i,j \in \mathbb{N}}$ be independent real Brownian motions. For any integer $N$ and any $t \geq 0$, $H^N_t = (\frac{1}{\sqrt{N}}B_{t}^{(i,j)})_{i,j =1}^{N}$. 
\item $(1,2) = \{\{1,2'\},\{1',2\}\}$: for any integer $N$, we consider $(H_t^{N})_{t \geq 0}$ the Hermitian Brownian motion.
\item $\{\{1'\},\{2'\}, \{1,2\}\}$: let $((B_t^{(j)})_{t \geq 0})_{j \in \mathbb{N}}$ be a sequence of independent real Brownian motion. For any integer $N$, any $t \geq 0$ and any $i,j \in \{1,...,N\}$, $(H^N_t)_{i,j} = \frac{1}{N}B_{t}^{(j)}$.
\item $\{\{1\},\{2\}, \{1',2'\}\}$: let $((B_t^{(i)})_{t \geq 0})_{i \in \mathbb{N}}$ be a sequence of independent real Brownian motions. For any integer $N$, any $t \geq 0$ and any $i,j \in \{1,...,N\}$, $(H^{N}_t)_{i,j} = \frac{1}{N}B_{t}^{(i)}$.
\item $\{ \{1,2'\}, \{2\}, \{1'\}\}$: let $((B^{(i)}_{1,t})_{t \geq 0})_{i \in \mathbb{N}}$, $((B^{(i)}_{2,t})_{t \geq 0})_{i \in \mathbb{N}}$, $((B^{(i)}_{3,t})_{t \geq 0})_{i \in \mathbb{N}}$ be three independent sequences of  independent real Brownian motions. For any integer $N$, any $t \geq 0$ and any $i_0,j_0 \in \{1,...,N\}$, $(H^N_t)_{i_0,j_0} = \frac{1}{N}\left[B_{1,t}^{(i_0)} + B_{1,t}^{(j_0)} - i B_{2,t}^{(i_0)} - i B_{3,t}^{(j_0)}\right]$. The matrix $H^N_t$ can also be written as: 
\begin{align*}
H^N_t =\frac{1}{N} \left[\sum_{i_0,j_0=1}^{N} B_{1,t}^{(i_0)}(E_{i_0}^{j_0} + E_{j_0}^{i_0}) - i  \sum_{i_0,j_0=1}^{N} B_{2,t}^{(i_0)} E_{i_0}^{j_0} - i  \sum_{i_0,j_0=1}^{N} B_{3,t}^{(j_0)} E_{i_0}^{j_0} \right].
\end{align*}
\item $\{\{1,2,2'\},\{1'\}\}$: let $((B^{(i)}_{1,t})_{t \geq 0})_{i \in \mathbb{N}}$, $((B^{(i)}_{2,t})_{t \geq 0})_{i \in \mathbb{N}}$ be two independent sequences of independent real Brownian motions. For any integer $N$, any $t \geq 0$ and any $i_0,j_0 \in \{1,...,N\}$, $(H^N_t)_{i_0,j_0} =  \delta_{i_0 = j_0} (B_{1,t}^{(i_0)} - i B_{2,t}^{(i_0)}) + \frac{1}{N}B_{1,t}^{(j_0)} $. The matrix $H^N_t$ can also be written as: 
\begin{align*}
H^N_t = \sum_{i_0,j_0 = 1}^{N} \frac{B_{1,t}^{(j_0)}}{N} (E_{j_0}^{j_0} + E_{i_0}^{j_0}) - i \sum_{i_0=1}^{N} B_{2,t}^{(i_0)} E_{i_0}^{i_0}.
\end{align*}

\item $\{\{1,2,1'\},\{2\}\}$: we consider the transpose of the matrices used for the partition $\{\{1,2,2'\},\{1'\}\}$. 
\item $\{\{1,1'\},\{2\},\{2'\}\}$:  let $((B_{i}^{(1)})_{t \geq 0})_{i=1}^{3}$ be three independent real Brownian motions. For any integer $N$, any $t \geq 0$ and any $i_0,j_0 \in \{1,...,N\}$, we consider $(H^N_t)_{i_0,j_0}$ equal to $\delta_{i_0=j_0} (B_{t}^{(1)} - i B_{t}^{(2)}) + \frac{1}{N} (B_{t}^{(1)} - i B_{t}^{(3)})$. The matrix $H^N_t$ can also be written as: 
\begin{align*}
H^N_t= B_{t}^{(1)} (\mathrm{Id}_{N} + \mathbb{J}_N) - i B_{t}^{(2)} \mathrm{Id}_N - iB_t^{(3)} \mathbb{J}_{N}. 
\end{align*}
\end{itemize}
For any integer $N$, the process $(H_t^{N})_{t \geq 0}$ is an additive L\'{e}vy process and its generator $(G^{N}_k)_{k \in \mathbb{N}}$ can be computed: it converges and in each case $\mathcal{R}[G] = p^{*}$ where $p$ is the partition considered. Thus if, for any integer $N$, we set $M_N = H_1^{N}$, we have a sequence of random matrices which converges in $\mathcal{P}$-distribution toward a centered $\mathcal{P}$-Gaussian whose $\mathcal{R}$-transform is given by $e^{\boxplus p^{*}}$.

Let us illustrate the convergence of $(G_{k}^{N})_{k \in \mathbb{N}}$ by an example: let us consider the case $p = \{\{1,2,2'\}, \{1'\} \}$. For any $k \neq 2$, $(dH_t^{N})^{\otimes k} = 0$: this implies that $G_k^{N} = 0$. Let us consider $k=2$, then: 
\begin{align*}
G_{2}^{N} = \frac{1}{dt}dH_t^{N} \otimes dH_t^{N} &= \frac{1}{N^{2}} \sum_{i_0,j_0, i_1=1}^{N}(E_{j_0}^{j_0} + E_{i_0}^{j_0}) \otimes (E_{j_0}^{j_0} + E_{i_1}^{j_0}) - \sum_{i_0=1}^{N} E_{i_0}^{i_0} \otimes E_{i_0}^{i_0}\\
&=\frac{1}{N^{2}}\left[ N^{2} {\sf 0}_2 + N p + N p' + p_2 \right] - {\sf 0}_2 = \frac{1}{N} (p+p') + \frac{1}{N^{2}} p_2,\\
\end{align*}
where $p' = \{\{1,2,2'\}, \{1'\}\}$ and $p_2 = \{\{1,2\}, \{1'\}, \{2'\}\}$. The partition $p'$ is in the equivalence class of $p$ in $\mathcal{P}_{2}/\mathfrak{S}_2$. Thus $\kappa_{p}(G_{2}^{N}) = 1$ and for any partition $\tilde{p}$ which are not in the equivalence class of $p$ in $\mathcal{P}_{2}/\mathfrak{S}_2$, $\lim_{N \to \infty}\kappa_{p}(G_{2}^{N})=0$. Thus for any $k$, $(G_k^{N})_{N \in \mathbb{N}}$ converges and $\mathcal{R}(G) = p^{*}$.
  \end{proof}

\subsection{L\'{e}vy processes and approximation of free $\boxdot$-infinitely divisible probability measures}
\label{sec:infinitelydivisible}
\subsubsection{Generalities about free $\boxdot$-infinitely divisible probability measures }

In the articles \cite{Flor}, \cite{Caba} and \cite{Cebron2}, it is shown that there exists a matricial approximation for any semi-group of free $\boxdot$-infinitely divisible measures either by Hermitian or unitary L\'{e}vy processes. The proofs are different in the additive an multiplicative cases, using either Fourier transform or Weingarten calculus, and the convergence in probability of the approximations used concentration of measures. 

In the next section, we will see that these results are consequences of Theorems \ref{convergencegenerale}  and \ref{convergence*levy}. Besides, the convergence in probability of the approximations can be dealt using only the elementary notion of $\mathcal{P}$-factorization. At last, the arguments generalize easily in order to show that approximations by symmetric and orthogonal L\'{e}vy processes also exist. 

In order to explain the results of  \cite{Flor}, \cite{Caba} and \cite{Cebron2}, let us first introduce free $\boxdot$-infinitely divisible measures. If $\mu$ is a measure on a compact set of $\mathbb{C}$, let $\mathcal{M}(\mu)$ be the character of $\bigoplus_{k}\mathbb{C}[\mathfrak{S}_k]$ which, for any integer $k$, sends any cycle of size $k$ on $\int_{\mathbb{C}}z^{k} \mu(dz)$. The $\mathcal{R}$-transform of $\mu$, denoted by $\mathcal{R}(\mu)$, is equal to $\mathcal{R}_{\mathfrak{S}}(\mathcal{M}(\mu))$ where $\mathcal{R}_{\mathfrak{S}}$ is defined in Definition 4.13 of \cite{Gab1}. Actually, from now on, we will see $\mathcal{R}(\mu)$ as an element of $(\bigoplus_{k} \mathbb{C}[\mathcal{P}_k/\mathfrak{S}_k])^{*}$. In the following, $\mathcal{M}_{\boxdot}$ denotes either the set of measures supported by $\mathbb{R}$ when $\boxdot = \boxplus$, or by the unit circle $\mathbb{U}$ when  $\boxdot = \boxtimes$. For any probability measures $\mu$ and $\nu$ in $\mathcal{M}_{\boxdot}$, there exists a measure in $\mathcal{M}_{\boxdot}$, denoted by $\mu \boxdot \nu$ such that $\mathcal{R}(\mu \boxdot \nu) = \mathcal{R}(\mu) \boxdot \mathcal{R}(\nu)$: it is the free $\boxdot$-convolution of $\mu$ with $\nu$. The free $\boxdot$-convolution is a bilinear continuous operation (\cite{Voicu1}, \cite{voicu2}).

\begin{remark}
Let us suppose that $\mu$ and $\nu$ are compactly supported probability measures. Let $(X_i)_{i \in \mathbb{N}}$ (resp. $(Y_i)_{i \in \mathbb{N}}$) be a sequence of independent random variables of law $\mu$ (respectively of law $\nu$), for any integer $N$ let $M_N = {\sf Diag}(X_1,...,X_n)$ (respectively $L_N = {\sf Diag}(Y_1,...,Y_n)$) and $U_N$ be a Haar random matrix on $U(N)$. Let us suppose that for any integer $N$, $M_N$, $L_N$ and $U_N$ are independent. Using Theorems \ref{Lemain} and \ref{convergencemean}, if $\mu, \nu \in \mathcal{M}_{\boxplus}$ (resp. $\mu, \nu \in \mathcal{M}_{\boxtimes}$), the eigenvalues distribution of $M_N + U_N L_N U_N^{*}$  (resp. $M_N U_N L_N U_N^{*}$) converges to $\mu\boxplus \nu$ (resp. $\mu \boxtimes \nu$). This can be taken as a definition for the convolution $\boxdot$ for compactly supported probability measures. 
\end{remark}

A probability measure $\mu \in \mathcal{M}_{\boxdot}$ is a free $\boxdot$-infinitely divisible probability measure if for any integer $n \geq 1$ there exists a probability measure $\mu_{\frac{1}{n}} \in \mathcal{M}_{\boxdot}$ such that $\mu = \mu_{\frac{1}{n}}^{\boxdot n}.$ Let $\lambda_{\mathbb{U}}$ be the uniform probability measure on $\mathbb{U}$. Let us denote by $e_{\boxdot}$ the number $0$ if $\boxdot=\boxplus$ and $1$ if $\boxdot = \boxtimes$. Let $\mu$ be a probability measure in $\mathcal{M}_{\boxdot} \setminus \{ \lambda_{\mathbb{U}}\}$ be a free $\boxdot$-infinitely divisible probability measure. There exists a continuous one parameter semi-group of probability measures $(\mu_t)_{t \geq 0}$ in $\mathcal{M}_{\boxdot}$ for the $\boxdot$-convolution such that $\mu = \mu_1$ and $\mu_0 = \delta_{e_{\boxdot}}$. This semi-group is unique in the additive case and in the multiplicative case there is a canonical way to construct it. By definition, $\left(\mathcal{R}(\mu_t)\right)_{t \geq 0}$ is a continuous one parameter semi-group in $\left( (\bigoplus_{k}\mathbb{C}[\mathcal{P}_k/\mathfrak{S}_k])^{*}, \boxdot \right)$. There exists $\mathcal{LR}_{\boxdot}(\mu)$ such that for any $t_0 \geq 0$: 
\begin{align}
\label{semiadd}\frac{d}{dt}_{\mid t = t_0} \mathcal{R}(\mu_{t}) &= \mathcal{LR}(\mu) \boxdot \mathcal{R}(\mu_{t_0}).
\end{align}
Thus, the probability measure $\mu$ is characterized by $\mathcal{LR}(\mu)$ which is a $\boxdot$-infinitesi\-mal character on $\mathbb{C}[\mathfrak{S}_k/\mathfrak{S}_k]$ (here $\mathfrak{S}_k/\mathfrak{S}_k$ is the notation for the equivalence classes of the action of $\mathfrak{S}_k$ by conjugation on itself). 
\begin{remark}\label{rq:LretR}
In the additive case, $\mathcal{LR}(\mu)$ is the unique $\boxplus$-infinitesimal character such that for any irreducible permutation $\sigma$, $\mathcal{LR}(\mu)(\sigma) = \mathcal{R}(\mu)(\sigma)$.
\end{remark}

Let us recall that a measure $\rho$ in $\mathcal{M}_{\boxdot}$ is a L\'{e}vy measure if $\rho(\{e_{\boxdot}\}) = 0$ and $\int \min(|z-e_{\boxdot}|^{2},1) \rho(dx) <\infty. $ In \cite{Gab1}, Lemma 4.3, it was explained that the set of characters of $\bigoplus_{k}\mathbb{C}[\mathfrak{S}_k]$ is isomorphic to the affine space $\mathbb{C}_{1}[[z]]$ of formal series with constant coefficient equal to $1$.  Let $S$ be an element of $\mathbb{C}_{1}[[z]]$ and let us suppose that for a given complex number $z$ the evaluation of this formal series converges then we denote this evaluation by $S(z)$. The linear form $\mathcal{LR}(\mu)$ was actually described in \cite{BarnNiel}, \cite{BV} and \cite{Cebron2}.
\begin{theorem}
\label{caractaddfreeinf}
Let $\mu$ be a free $\boxplus$-infinitely divisible probability measure. For any complex number $z$ such that $Im(z) <0$, $\mathcal{R}(\mu) (z)$ is defined, and there exist $\eta \in \mathbb{R}$, $a \in \mathbb{R}^{+}$ and $\rho$ a L\'{e}vy measure on $\mathbb{R}$ which are unique, such that: 
\begin{align}
\label{Rtransfoinfini}
\mathcal{R}(\mu) (z)= 1+ \eta z + a z^{2} + \int_{\mathbb{R}} \left(\frac{1}{1-tz} - 1 - tz 1\!\!1_{[-1,1]}(t) \right) \rho (dt). 
\end{align}
Conversely, for any $\eta \in \mathbb{R}$, $a \in \mathbb{R}^{+}$ and any L\'{e}vy measure $\rho$ on $\mathbb{R}$, there exists a free $\boxplus$-infinitely divisible probability measure $\mu \in \mathcal{M}_{\boxplus}$ such that for any complex number $z$ such that $Im(z) <0$,  Equality (\ref{Rtransfoinfini}) is satisfied. The  triplet $(\eta, a, \rho)$ is the $\boxplus$-characteristic triplet of $\mu$.
\end{theorem}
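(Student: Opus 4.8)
The plan is to recognize that Theorem \ref{caractaddfreeinf} is, modulo the change of variables from the classical Lévy--Khintchine parametrization to the free one, exactly the free Lévy--Khintchine formula of Bercovici--Voiculescu, and to deduce it from the combinatorial semigroup machinery developed in this paper together with the analytic identification already recorded in \cite{BarnNiel}, \cite{BV} and \cite{Cebron2}. So the first step is to set up the dictionary: recall that $\mathcal{R}(\mu)$, viewed in $(\bigoplus_k \mathbb{C}[\mathcal{P}_k/\mathfrak{S}_k])^{*}$, is determined by its restriction to $\mathfrak{S}$ (since $\mu$ is classical data seen through a deterministic $\mathfrak{S}$-tracial algebra), and that by Lemma 4.3 of \cite{Gab1} the character $\mathcal{M}(\mu)$ of $\bigoplus_k \mathbb{C}[\mathfrak{S}_k]$ is encoded by the formal series $R_{\mu}(z) = 1 + \sum_{k\geq 1} \mathcal{R}(\mu)((1,\dots,k)) z^{k}$ whose coefficients are the free cumulants of $\mu$. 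Under this encoding the evaluation $\mathcal{R}(\mu)(z)$ in the statement is precisely this generating series, which converges for $\mathrm{Im}(z)<0$ because $\mu$ is a probability measure on $\mathbb{R}$.

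Next I would establish the forward direction. Given a free $\boxplus$-infinitely divisible $\mu$, let $(\mu_t)_{t\geq 0}$ be its (unique) $\boxplus$-convolution semigroup with $\mu_0=\delta_0$, and let $\mathcal{LR}(\mu) = \frac{d}{dt}_{|t=0}\mathcal{R}(\mu_t)$ be its infinitesimal generator, which by the discussion preceding the theorem is a $\boxplus$-infinitesimal character on $\bigoplus_k \mathbb{C}[\mathfrak{S}_k/\mathfrak{S}_k]$; by Remark \ref{rq:LretR} it agrees with $\mathcal{R}(\mu)$ on irreducible permutations, i.e. on the cycles $(1,\dots,k)$. The key analytic input, from \cite{BarnNiel}, \cite{BV}, \cite{Cebron2}, is that the generating function $\phi(z) = 1 + \sum_{k\geq 1}\mathcal{LR}(\mu)((1,\dots,k)) z^{k}$ of this infinitesimal character admits the integral representation
\begin{align*}
\phi(z) = 1 + \eta z + a z^{2} + \int_{\mathbb{R}}\left(\frac{1}{1-tz} - 1 - tz\,1\!\!1_{[-1,1]}(t)\right)\rho(dt)
\end{align*}
for a unique $(\eta,a,\rho)$ with $\eta\in\mathbb{R}$, $a\geq 0$ and $\rho$ a Lévy measure. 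The point is then simply that $\mathcal{R}(\mu)(z)$, as the coefficientwise exponential $e^{\boxplus\, \mathcal{LR}(\mu)}$ of $\mathcal{LR}(\mu)$ (Equation (\ref{semiadd}) at $t=1$ with the semigroup $\mathcal{R}(\mu_t)=e^{\boxplus t\,\mathcal{LR}(\mu)}$, cf. Theorem 4.1 of \cite{Gab1}), has exactly the same irreducible/free-cumulant coefficients as $\mathcal{LR}(\mu)$ — because the $\boxplus$-exponential only adds the disconnected (reducible) contributions, which for the generating series in the variable $z$ tracking cycle length are absorbed — so $R_{\mu}(z)$ and $\phi(z)$ literally coincide when restricted to the data recorded by $z$. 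This yields (\ref{Rtransfoinfini}). For the converse, start from a triplet $(\eta,a,\rho)$, define $\phi(z)$ by the right-hand side, check (using the cited analytic results) that the associated infinitesimal character is $\boxplus$-infinitesimal, exponentiate via $\boxplus$ to get a semigroup $(\mathcal{R}_t)_{t}$, invoke Theorem \ref{convbrowniens1}-type reasoning or directly the Bercovici--Voiculescu realization to produce the measures $\mu_t \in \mathcal{M}_{\boxplus}$ with $\mathcal{R}(\mu_t)=\mathcal{R}_t$, and set $\mu = \mu_1$; infinite divisibility is then immediate from $\mu_1 = \mu_{1/n}^{\boxplus n}$, and uniqueness of the triplet follows from uniqueness in the integral representation.

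I expect the main obstacle to be purely bookkeeping: making the correspondence between the formal-series encoding of $(\bigoplus_k\mathbb{C}[\mathcal{P}_k/\mathfrak{S}_k])^{*}$-elements used throughout this paper and the analytic $\mathcal{R}$-transform $\mathcal{R}(\mu)(z)$ of Bercovici--Voiculescu completely rigorous — in particular verifying that the $\boxplus$-exponential in the sense of Definition 4.8 / Theorem 4.1 of \cite{Gab1} restricts, on the cycle-indexed coefficients, to the identity so that $\mathcal{R}(\mu)$ and $\mathcal{LR}(\mu)$ share the representation, and that convergence of the series for $\mathrm{Im}(z)<0$ is inherited correctly. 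Once that translation is pinned down, the existence, the explicit formula, and the uniqueness of $(\eta,a,\rho)$ are all quotations of \cite{BarnNiel}, \cite{BV}, \cite{Cebron2}; the contribution here is only to phrase them inside the $\mathcal{P}$-tracial/$\mathcal{R}$-transform formalism so that the L\'{e}vy-process convergence theorems of the next subsection can be applied to them.
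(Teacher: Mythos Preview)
The paper does not prove this theorem at all: it is stated as a known result, introduced by the sentence ``The linear form $\mathcal{LR}(\mu)$ was actually described in \cite{BarnNiel}, \cite{BV} and \cite{Cebron2}'' and then simply recorded for later use. So there is no ``paper's own proof'' to compare against; the statement is a quotation of the free L\'evy--Khintchine formula of Bercovici--Voiculescu, rephrased in the generating-series notation set up via Lemma~4.3 of \cite{Gab1}.

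Your proposal correctly identifies this, and your final paragraph says as much. But the route you take to get there is more elaborate than needed and contains a step that does not quite parse. You pass through $\mathcal{LR}(\mu)$ and the $\boxplus$-exponential, arguing that $\mathcal{R}(\mu)(z)$ and $\phi(z)$ coincide because the $\boxplus$-exponential ``only adds the disconnected (reducible) contributions, which for the generating series in the variable $z$ tracking cycle length are absorbed''. This is a roundabout way of restating Remark~\ref{rq:LretR}: in the additive case $\mathcal{LR}(\mu)$ and $\mathcal{R}(\mu)$ agree on the cycles $(1,\dots,k)$, and those values are exactly the coefficients of the generating series, so the two series are equal termwise --- no exponential needs to be unpacked. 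The detour through $e^{\boxplus\,\mathcal{LR}(\mu)}$ is correct in spirit but adds a layer that the paper neither needs nor provides, and your phrase ``are absorbed'' is not a precise argument. The cleaner statement is: Theorem~\ref{caractaddfreeinf} is the Bercovici--Voiculescu representation of the free cumulant generating function of a $\boxplus$-infinitely divisible law, transported verbatim through the identification of characters with $\mathbb{C}_1[[z]]$; everything is a citation.
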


In particular, using Remark \ref{rq:LretR}, if the L\'{e}vy measure of $\mu$ is compactly supported then there exist $\eta \in \mathbb{R}$, $a \in \mathbb{R}^{+}$ and $\rho$ a L\'{e}vy measure on $\mathbb{R}$ which are unique, such that $\mathcal{LR}(\mu)$ is characterized by the fact that for any integer $k$: 

 $$\mathcal{LR}(\mu)((1,...,k)) =  \left\{
   \begin{array}{ll}
      \eta& \text{ if } k=1, \\
      a+ \int_{\mathbb{R}} x^{2} \rho(dx)& \text{ if } k=2, \\
      \int_{\mathbb{R}} x^{n} \rho(dx)& \text{ if } k \geq 3. 
    \end{array}
\right.$$
 Let us state the characterization of  free $\boxtimes$-infinitely divisible probability measures.

\begin{theorem}
\label{caractmultfreeinf}
Let $\eta$ be a free $\boxtimes$-infinitely divisible probability measure. There exist $\omega \in \mathbb{U}$, $b \in \mathbb{R}^{+}$ and $\nu$ a L\'{e}vy measure on $\mathbb{U}$ which are unique and such that for any integer $k$:
\begin{align}
\label{LRtransfoinfini}
 \mathcal{LR}(\mu)((1,...,k)) =  \left\{
   \begin{array}{ll}
       i {\sf arg}(\omega)- \frac{b}{2} + \int_{\mathbb{U}} (\Re(\zeta)-1) \nu(d\zeta),& \text{ if } k=1, \\
      -b + \int_{\mathbb{U}} (\zeta-1)^{2} \nu (d\zeta), & \text{ if } k=2, \\
       \int_{U}(\zeta-1)^{n} \nu(d\zeta)& \text{ if } k \geq 3. 
    \end{array}
\right.
\end{align}
Conversely, for any $\boxtimes$-infinitesimal character $\mathcal{LR}$ which satisfies (\ref{LRtransfoinfini}), there exists a free $\boxtimes$-infinitely divisible probability measure $\mu \in \mathcal{M}_{\boxtimes}$ such that $\mathcal{LR}(\mu) = \mathcal{LR}$. The triplet $(\omega, b, \nu)$ is the $\boxtimes$-characteristic triplet of $\nu$. 
\end{theorem}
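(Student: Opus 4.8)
The plan is to mirror the structure of the additive case (Theorem \ref{caractaddfreeinf}) but in the multiplicative setting, translating the known Lévy--Khintchine-type decomposition of $\boxtimes$-infinitely divisible measures (due to Bercovici--Voiculescu and, in the present normalization, to \cite{Cebron2}) into the language of $\boxtimes$-infinitesimal characters on $\bigoplus_k \mathbb{C}[\mathfrak{S}_k/\mathfrak{S}_k]$. First I would recall that, by the discussion preceding the statement, a $\boxtimes$-infinitely divisible $\mu \in \mathcal{M}_{\boxtimes}\setminus\{\lambda_{\mathbb{U}}\}$ determines a continuous semigroup $(\mu_t)_{t\geq 0}$ for $\boxtimes$ with $\mu_0 = \delta_1$, hence a well-defined $\boxtimes$-infinitesimal character $\mathcal{LR}(\mu)$ via Equation (\ref{semiadd}); so the content of the theorem is really the explicit parametrization of such characters and the bijection with triplets $(\omega,b,\nu)$.

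The key steps, in order, would be: (1) Use the classical free probability description of the multiplicative free convolution semigroup on $\mathbb{U}$ via the $\Sigma$-transform / the generator of the associated free multiplicative Lévy process, as in \cite{BV} and \cite{Cebron2}, to write the infinitesimal generator of $(\mu_t)$ acting on moments; this gives an expression of the form $\frac{d}{dt}\big|_{t=0}\int z^k\mu_t(dz)$ as a polynomial in lower moments with coefficients governed by a drift $i\,{\sf arg}(\omega)$, a Gaussian part $b$, and an integral against a Lévy measure $\nu$ on $\mathbb{U}$ of the "increment" $(\zeta-1)$. (2) Translate this into the statement that $\mathcal{LR}(\mu)$, evaluated on the $k$-cycle $(1,\dots,k)$, equals $i\,{\sf arg}(\omega)-\tfrac b2 + \int_{\mathbb{U}}(\Re(\zeta)-1)\,\nu(d\zeta)$ for $k=1$, $-b+\int_{\mathbb{U}}(\zeta-1)^2\nu(d\zeta)$ for $k=2$, and $\int_{\mathbb{U}}(\zeta-1)^k\nu(d\zeta)$ for $k\geq 3$, by comparing the generator of the semigroup $(\mathcal{R}(\mu_t))$ in $\big((\bigoplus_k\mathbb{C}[\mathcal{P}_k/\mathfrak{S}_k])^*,\boxtimes\big)$ with the values on cycles; here one uses that $\mathcal{R}(\mu)(\sigma)$ for a permutation is the free cumulant and that the generator restricted to irreducible permutations recovers $\mathcal{LR}(\mu)$ on cycles, exactly the multiplicative analogue of Remark \ref{rq:LretR}. (3) For the converse, given any triplet $(\omega,b,\nu)$ define $\mathcal{LR}$ by the displayed formulas on cycles, extend it to a $\boxtimes$-infinitesimal character on $\bigoplus_k\mathbb{C}[\mathfrak{S}_k/\mathfrak{S}_k]$, exponentiate it for $\boxtimes$ to obtain a one-parameter semigroup $(\mathcal{R}_t)_{t\geq 0}$, and check — using positivity of $b$ and the Lévy-measure condition $\int\min(|\zeta-1|^2,1)\,\nu(d\zeta)<\infty$ together with the classical result of Bercovici--Voiculescu — that each $\mathcal{R}_t$ is the $\mathcal{R}$-transform of an actual probability measure $\mu_t\in\mathcal{M}_{\boxtimes}$, so that $\mu:=\mu_1$ is $\boxtimes$-infinitely divisible with $\mathcal{LR}(\mu)=\mathcal{LR}$. (4) Uniqueness of the triplet follows by inverting the formulas: $b$ is read off from the $k=2$ value minus the integral term, $\nu$ is determined by the higher cycle values since the functions $\zeta\mapsto(\zeta-1)^k$, $k\geq 2$, separate Lévy measures on $\mathbb{U}$, and finally $\omega$ is recovered from the $k=1$ value.

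The main obstacle I expect is step (3): showing that the formally-exponentiated character $\mathcal{R}_t = e^{\boxtimes t\,\mathcal{LR}}$ genuinely comes from a probability measure on $\mathbb{U}$ for every $t$, i.e.\ the "positivity/admissibility" direction of the multiplicative Lévy--Khintchine correspondence. This is not a combinatorial identity about partitions but an analytic fact about the $\Sigma$-transform; the cleanest route is to invoke the Bercovici--Voiculescu characterization of $\boxtimes$-infinitely divisible laws on $\mathbb{U}$ (as recalled in \cite{BV} and \cite{Cebron2}) rather than to reprove it, and then to verify only that our $(\omega,b,\nu)$-parametrized generator matches theirs — which reduces step (3) to a bookkeeping check that the normalization of the Gaussian term $b$ and the truncation-free integrand $(\Re(\zeta)-1)$ in the $k=1$ slot are consistent with the convention $\mathcal{LR}(\mu)(\sigma)=\mathcal{R}(\mu)(\sigma)$ on irreducible permutations. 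Since this theorem is stated as a recollection of existing results, the proof will in effect be: cite the analytic classification, set up the dictionary between $\Sigma$-transforms and $\mathcal{R}$-transforms on $(\bigoplus_k\mathbb{C}[\mathcal{P}_k/\mathfrak{S}_k])^*$, and compute the generator on cycles.
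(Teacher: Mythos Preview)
The paper does not give a proof of this theorem at all: it is stated as a recollection of known results from the literature, with the attribution ``The linear form $\mathcal{LR}(\mu)$ was actually described in \cite{BarnNiel}, \cite{BV} and \cite{Cebron2}'' appearing just before the additive and multiplicative statements. Your proposal correctly recognizes this in its last sentence, and the plan you outline --- cite the Bercovici--Voiculescu / C\'ebron analytic classification and translate it into the language of $\boxtimes$-infinitesimal characters via the generator on cycles --- is exactly the right way to flesh out such a citation into a proof; there is no discrepancy to report.
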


In the articles \cite{Flor}, \cite{Caba}, Benaych-Georges and Cabanal-Duvillard showed that for any free $\boxplus$-infinitely divisible measure $\mu$, if $(\mu_{t})_{t \geq 0}$ is the continuous semi-group associated with $\mu$, there exists a sequence of Hermitian U-invariant L\'{e}vy processes $((X_{t}^{N})_{t \geq 0})_{N \in \mathbb{N}}$ such that the empirical eigenvalues distribution of $X_t^{N}$ converges in probability to $\mu_t$ as $N$ goes to infinity. In \cite{Cebron2}, C\'{e}bron extended this result to free $\boxtimes$-infinitely divisible probability measures $\mu \ne \lambda_{\mathbb{U}}$. If  $(\mu_{t})_{t \geq 0}$ is the canonical continuous semi-group associated with $\mu$, there exists a sequence of unitary U-invariant L\'{e}vy processes $((X_{t}^{N})_{t \geq 0})_{N \in \mathbb{N}}$ such that the empirical eigenvalues distribution of $X_t^{N}$ converges in probability to $\mu_t$ as $N$ goes to infinity.

In the additive case, we show that the L\'{e}vy process $(X_{t}^{N})_{t \geq 0}$ can be a symmetric $O$-invariant L\'{e}vy process, and in the multiplicative case, if $(\omega, b, \eta)$ is the characteristic triplet of $\mu$, if $\omega = 0$ and if for any continuous function $f: \mathbb{U} \to \mathbb{R}$, one has:  
\begin{align}
\label{eq:paire}
\int_{\mathbb{U}} f(\overline{z}) \nu(dz) = \int_{\mathbb{U}}f(z) \nu(dz), 
\end{align}
then the L\'{e}vy process $(X_{t}^{N})_{t \geq 0}$ can be an orthogonal $O$-invariant L\'{e}vy process. These results are consequences of the results in Section \ref{sec:convLevyex}.

\subsubsection{Generalities about L\'{e}vy processes: the generator}
In Section \ref{sec:convergencebrownien}, the convergence of the Brownian motions was proved by computing, using Itô's formula, the action of the generator at time $t=0$ on the application $M \mapsto M^{\otimes k}$ (Lemmas \ref{geneadd} and  \ref{genemult}). There exists a general result in order to compute the generator of a L\'{e}vy process given by Theorem $31.5$ in \cite{Sato} and Hunt's Theorem, Theorem $1.1$ in \cite{Liao}. The first theorem applies to additive L\'{e}vy processes. 

\begin{theorem}
\label{generateuradd}
Let $E$ be a finite dimensional vector space of dimension $d$, $(Y_i)_{i=1}^{d}$ be a basis of $E$ and $(X_t)_{t \geq 0}$ be an additive L\'{e}vy process in $E$. There exist: 
\begin{enumerate}
\item $Y_0 \in E$, 
\item a symmetric positive semidefinite matrix $(y_{i,j})_{i,j = 1}^{d}$, 
\item a L\'{e}vy measure $\Pi$ on $E$, that is a measure on $E$ such that $\Pi(\{0\})=0$ and such that, if $B$ is the ball of center $0$ and radius $1$ in $E$: 
\begin{align*}
\int_{B} \mid\mid x \mid\mid_{E}^{2} \Pi(dx) \leq \infty \text{ and } \Pi(B^{c})< \infty, 
\end{align*}
\end{enumerate}
such that the generator $G$ of $(X_t)_{t \geq 0}$ is given for any $f \in C^{2}_{0}(E)$ and any $y\in E$ by $Gf(y)= \frac{d}{dt}_{\mid t=0} \mathbb{E}[f(y+X_t)]$ which is equal to: 
\begin{align*}
  \partial_{X_0}f(y) + \frac{1}{2} \sum_{i,j=1}^{N^{2}} y_{i,j} \partial_{Y_i}\partial_{Y_j} f(y) + \int_{E}\left[ f(y+x)-f(y)-1\!\!1_{B}(x) \partial_{x}f(y)\right] \Pi(dx). 
\end{align*}
Conversely, every operator of this form is the generator of a unique L\'{e}vy process $(X_t)_{t \geq 0}$. 

Besides, let us suppose that $E$ is equal to the Lie algebra $\mathfrak{g}$ of a compact Lie group $G$. Let $H$ be a Lie subgroup of $G$. Let us suppose that $Y_0$, the operator $ \sum_{i,j=1}^{N^{2}} y_{i,j} \partial_{Y_i}\partial_{Y_j}$ and $\Pi$ are invariant by conjugation by any element of $H$, then the L\'{e}vy process associated is invariant by conjugation by $H$. 
\end{theorem}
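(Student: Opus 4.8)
The statement to prove is Theorem \ref{generateuradd}, which is essentially the Lévy–Khintchine / Lévy–Itô decomposition for additive Lévy processes on a finite-dimensional vector space, together with the remark that conjugation-invariance of the generator's three ingredients forces conjugation-invariance of the process. Let me think about the plan.

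The plan is to treat this as a citation-driven result: the first part is exactly Theorem 31.5 in \cite{Sato}, after identifying $E$ with $\mathbb{R}^d$ via the basis $(Y_i)_{i=1}^d$, so the real content is the translation and the last paragraph about $H$-invariance.

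Proof plan:

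First I would identify $E \cong \mathbb{R}^d$ via the chosen basis $(Y_i)$, so that an additive Lévy process in $E$ becomes a classical $\mathbb{R}^d$-valued Lévy process. Then I would invoke Theorem 31.5 of \cite{Sato}, which gives the Lévy–Khintchine formula: the characteristic function of $X_t$ is $\exp(t\psi)$ where $\psi(\xi) = i\langle Y_0,\xi\rangle - \tfrac12\langle \xi, y\xi\rangle + \int_E (e^{i\langle x,\xi\rangle} - 1 - i\langle x,\xi\rangle 1_B(x))\,\Pi(dx)$, with $Y_0\in E$, $y=(y_{i,j})$ symmetric positive semidefinite, and $\Pi$ a Lévy measure. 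Differentiating $\mathbb{E}[f(y+X_t)]$ at $t=0$ for $f\in C_0^2(E)$ — equivalently reading off the generator from $\psi$ via the Fourier transform, exactly as in Sato — gives the stated formula for $Gf(y)$. The converse (every operator of this form generates a unique Lévy process) is again Theorem 31.5 of \cite{Sato}. This part I would present essentially as a restatement; no new computation is needed.

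Then for the invariance claim, I would argue as follows. Suppose $E=\mathfrak{g}$ and $H\le G$ acts on $\mathfrak{g}$ by the adjoint action ${\rm Ad}_h$, $h\in H$, and suppose $Y_0$, the second-order operator $L_2 = \sum y_{i,j}\partial_{Y_i}\partial_{Y_j}$, and $\Pi$ are all ${\rm Ad}_H$-invariant. For $h\in H$ fixed, the process $({\rm Ad}_h X_t)_{t\ge 0}$ is again an additive Lévy process in $\mathfrak{g}$, with generator $G^h f = G(f\circ {\rm Ad}_h)\circ {\rm Ad}_{h}^{-1}$; a direct computation (change of variables in the integral term, linearity of ${\rm Ad}_h$ for the drift, and the invariance of $L_2$ for the diffusion term) shows $G^h$ has characteristic triplet $({\rm Ad}_h Y_0, {\rm Ad}_h \cdot y \cdot {\rm Ad}_h^{*}, ({\rm Ad}_h)_*\Pi)$, which by hypothesis equals $(Y_0,y,\Pi)$. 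By the uniqueness part of the first statement, $({\rm Ad}_h X_t)_{t\ge0}$ has the same law as $(X_t)_{t\ge 0}$, i.e. the process is invariant by conjugation by $H$. I should be a little careful that "invariant by conjugation" here means the additive action ${\rm Ad}_h X_t$ rather than group conjugation, since $X_t$ lives in the Lie algebra; I would just make this identification explicit.

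The main obstacle is not really a mathematical difficulty but a matter of care in the translation: making precise that the ${\rm Ad}_H$-invariance hypotheses on the drift, the quadratic form, and the Lévy measure are exactly the conditions under which the triplet is fixed, and that uniqueness of the triplet (hence of the law) then yields invariance of the process. There is also a minor point that $C^2_0(E)$ is a core for the generator and the stated formula determines $G$ and hence the process uniquely — this is again inside Sato's theorem, so I would cite it rather than reprove it. I would keep the whole argument short, leaning on \cite{Sato} for the hard analytic content and supplying only the invariance bookkeeping.

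\begin{proof}
Identify $E$ with $\mathbb{R}^{d}$ through the basis $(Y_i)_{i=1}^{d}$; an additive L\'evy process in $E$ is then a classical $\mathbb{R}^{d}$-valued L\'evy process. The existence of the triplet $(Y_0,(y_{i,j}),\Pi)$, the formula for the generator $G$ on $C^{2}_{0}(E)$, and the converse statement that every operator of the displayed form is the generator of a unique L\'evy process are precisely the content of Theorem $31.5$ of \cite{Sato} (the generator being read off from the L\'evy--Khintchine exponent via the Fourier transform, with $C^{2}_{0}(E)$ a core).

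It remains to prove the invariance assertion. Suppose $E=\mathfrak{g}$ is the Lie algebra of the compact Lie group $G$, let $H\le G$ act on $\mathfrak{g}$ by the adjoint action, and assume that $Y_0$, the operator $L_2=\sum_{i,j}y_{i,j}\partial_{Y_i}\partial_{Y_j}$ and $\Pi$ are invariant by conjugation by every $h\in H$. Fix $h\in H$ and write $\varphi={\sf Ad}_h\colon \mathfrak{g}\to\mathfrak{g}$, a linear isomorphism. The process $(\varphi(X_t))_{t\ge 0}$ is again an additive L\'evy process in $\mathfrak{g}$, and for $f\in C^{2}_{0}(\mathfrak{g})$ and $y\in\mathfrak{g}$,
\begin{align*}
\frac{d}{dt}_{\mid t=0}\mathbb{E}[f(y+\varphi(X_t))] = G(f\circ\varphi)\big(\varphi^{-1}(y)\big).
\end{align*}
Expanding the right-hand side with the displayed formula for $G$ and using the linearity of $\varphi$ in the drift and first-order terms, the hypothesis that $L_2$ is $\varphi$-invariant in the second-order term, and the change of variables $x\mapsto\varphi^{-1}(x)$ in the integral term, one finds that this equals the generator of a L\'evy process with triplet $(\varphi(Y_0),\varphi\cdot y\cdot\varphi^{*},\varphi_{*}\Pi)$. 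By the invariance hypotheses this triplet equals $(Y_0,(y_{i,j}),\Pi)$, so $(\varphi(X_t))_{t\ge 0}$ has the same generator, hence by the uniqueness part of the theorem the same law, as $(X_t)_{t\ge 0}$. As $h\in H$ was arbitrary, $(X_t)_{t\ge 0}$ is invariant by conjugation by $H$.
\end{proof}
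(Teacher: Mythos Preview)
Your proposal is correct and matches the paper's approach exactly: the paper states this theorem without proof, presenting it as a direct citation of Theorem~31.5 in \cite{Sato} (announced in the sentence immediately preceding the statement). Your write-up of the invariance part is a welcome addition, since the paper does not spell out that argument either; it simply includes the invariance claim in the statement and relies on the reader to accept it as routine.
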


\begin{remark}
\label{rm:voisinage}
One can change $1\!\!1_{B}$ in the form of the generator given by the last theorem by anything of the form $1\!\!1_{V}$ where $V$ is a neighborhood of $0$. This operation only changes the drift $X_0$. This remark will be important latter as we will work with a L\'{e}vy measure which is compactly supported: it is then easier to suppose that ${\sf Supp}(\Pi) \subset B$. 
\end{remark}

A similar result exists for compact Lie groups. Let $G$ be a compact Lie group, let $\mathfrak{g}$ be the Lie algebra of $G$ and ${\mathrm{Id}}$ be the neutral element of $G$. Let $A: G \to \mathfrak{g}$ be a smooth mapping such that $A(\mathrm{Id}) = 0\text{ and } d_{\mathrm{Id}}A ={\mathrm{ id}}_{\mathfrak{g}}.$ We recall also that any element $X$ in the Lie algebra $\mathfrak{g}$ induces a right invariant vector field which is defined for any $g$ in $\mathfrak{g},$ by $X^{r}(g) = DR_{g}(X),$ with $DR_{g}$ being the diffential map of the right multiplication operation $R_g : h \mapsto gh. $ Hunt's theorem, see \cite{Liao}, allows to compute the generator of a L\'{e}vy process in a compact Lie group. 
\begin{theorem}
\label{genemultilie}
Let $(X_t)_{t \geq 0}$ be a L\'{e}vy process on $G$, let $d$ be the dimension of the Lie algebra $\mathfrak{g}$ and $(Y_i)_{i=1}^{d}$ be a basis of $\mathfrak{g}$. There exist: 
\begin{enumerate}
\item $Y_0 \in \mathfrak{g}_N$, 
\item a symmetric positive semidefinite matrix $(y_{i,j})_{i,j = 1}^{d}$, 
\item a L\'{e}vy measure $\Pi$ on $G$, that is a measure on $G$ such that $\Pi(\{Id\})=0$ and for any neighborhood $V$ of $Id$ in $G$, we have: 
\begin{align*}
\int_{V} \mid\mid\! A(x)\! \mid\mid_{\mathfrak{g}}^{2} \Pi(dx) \leq \infty \text{ and } \Pi(V^{c}) < \infty, 
\end{align*}
\end{enumerate}
such that the generator $G$ of $(X_t)_{t \geq 0}$ is given for any $f \in C^{2}(G)$ and any $h\in {U}_N$ by $Gf(h) = \frac{d}{dt}_{\mid t=0} \mathbb{E}[f(X_th)]$ which is equal to:
\begin{align*}
Y_0^{r}f(h) + \frac{1}{2} \sum_{i,j=1}^{d} y_{i,j} Y_i^{r}Y_j^{r} f(h) + \int_{G}\left[ f(gh)-f(h)- A(g)^{r}f(h)\right] \Pi(dg). 
\end{align*}
Conversely, every operator of this form is the generator of a unique L\'{e}vy process $(X_t)_{t \geq 0}$. 

Let $H$ be a Lie subgroup of $G$. Let us suppose that $Y_0$, the measure $\Pi$ and the operator $\sum_{i,j=1}^{d} y_{i,j}Y_{i}^{l} Y_{j}^{l}$ are invariant by conjugation by any element of $H$, then the L\'{e}vy process associated is invariant by conjugation by $H$. 
\end{theorem}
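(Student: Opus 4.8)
The statement is Hunt's theorem for (right) L\'evy processes on a compact Lie group, so the plan is to follow the proof in \cite{Liao}. The first step is to pass from the probabilistic to the analytic side: a right L\'evy process $(X_t)_{t \geq 0}$ on $G$ with $X_0 = \mathrm{Id}$ produces a convolution semigroup of probability measures $\mu_t = \mathrm{Law}(X_t)$, and the operators $T_t f(h) = \mathbb{E}[f(X_t h)] = \int_G f(gh)\, \mu_t(dg)$ form a strongly continuous semigroup of positivity-preserving contractions on $C(G)$ (strong continuity comes from stochastic continuity of the process, contractivity is immediate, and since $G$ is compact one may work on all of $C(G)$). Writing $\mathcal{L}$ for its infinitesimal generator, the heart of the matter is to show that $C^2(G)$ lies in the domain of $\mathcal{L}$ and that on $C^2(G)$ the operator $\mathcal{L}$ has the form displayed in the statement, for a unique triple $(Y_0, (y_{i,j}), \Pi)$ of the asserted type.

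To carry out this step I would fix the coordinate map $A : G \to \mathfrak{g}$ with $A(\mathrm{Id}) = 0$ and $d_{\mathrm{Id}}A = \mathrm{id}_{\mathfrak{g}}$, pick a relatively compact neighbourhood $V$ of $\mathrm{Id}$, and decompose $t^{-1}(T_t f(h) - f(h))$ into the contributions of $\mu_t$ restricted to $V^{c}$ and to $V$. On $V^{c}$ the limit as $t \to 0$ is $\int_{V^{c}} (f(gh) - f(h))\, \Pi(dg)$, where the L\'evy measure $\Pi$ is obtained as the vague limit of $t^{-1}\mu_t$ on $G \setminus \{\mathrm{Id}\}$; the mass bound $\Pi(V^{c}) < \infty$ and the square-integrability $\int_V \|A(x)\|_{\mathfrak{g}}^{2}\, \Pi(dx) < \infty$ follow from uniform tightness estimates on $(\mu_t)$ near $\mathrm{Id}$. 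On $V$ one writes $A(g) = \sum_i A(g)_i Y_i$ and Taylor-expands $f(gh)$ to second order in the $A(g)_i$: the first-order term contributes the drift vector $Y_0 \in \mathfrak{g}$ after compensating the truncated first moments of the jumps, and the second-order term contributes both the compensated small-jump part of the jump integral and, in the limit, the operator $\frac{1}{2}\sum_{i,j} y_{i,j} Y_i^{r} Y_j^{r}$ with $(y_{i,j})$ symmetric. Positive semidefiniteness of $(y_{i,j})$ is forced by the positive maximum principle satisfied by the Markov generator $\mathcal{L}$, and uniqueness of $(Y_0, (y_{i,j}), \Pi)$ follows by evaluating $\mathcal{L}$ on coordinate functions and their squares. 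The main obstacle is exactly this extraction of the Gaussian part: one must control the small-jump limit and separate it cleanly from the genuinely diffusive contribution, and this is where the uniform second-moment bounds on $\mu_t$ over $V$ are indispensable. For the converse, an operator of the displayed form generates a Feller semigroup by Hille--Yosida --- the positive maximum principle holds precisely because $(y_{i,j})$ is positive semidefinite and $\Pi \geq 0$ --- and the associated Markov process has stationary independent right increments, hence is a L\'evy process, unique in law by uniqueness of its generator.

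For the invariance assertion, fix $h \in H$ and set $X_t^{h} = h X_t h^{-1}$. Since conjugation $c_h : g \mapsto h g h^{-1}$ is a group automorphism of $G$ fixing $\mathrm{Id}$, the process $(X_t^{h})_{t \geq 0}$ is again a L\'evy process on $G$, and its generator is obtained from $\mathcal{L}$ by the transformation induced by $c_h$ on $C^2(G)$; using that $c_h$ carries right-invariant vector fields to right-invariant vector fields (twisting $\mathfrak{g}$ by $\mathrm{Ad}_h$) and $\Pi$ to $(c_h)_{*}\Pi$, one reads off that the characteristic triple of $(X_t^h)_{t\geq 0}$ is the $c_h$-transform of $(Y_0, (y_{i,j}), \Pi)$. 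The hypotheses say exactly that $Y_0$, the operator $\sum_{i,j} y_{i,j} Y_i^{l} Y_j^{l}$ and the measure $\Pi$ are invariant under conjugation by $H$, so this transformed triple equals $(Y_0, (y_{i,j}), \Pi)$; by the uniqueness part established above, $(X_t^{h})_{t \geq 0}$ has the same law as $(X_t)_{t \geq 0}$. As this holds for every $h \in H$, the process is invariant by conjugation by $H$, which finishes the plan.
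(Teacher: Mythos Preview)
The paper does not give its own proof of this theorem: it is stated as Hunt's theorem and referenced to \cite{Liao}, with no argument supplied beyond the citation. Your sketch follows the standard proof from that reference (passage to the Feller semigroup, vague limit $t^{-1}\mu_t\to\Pi$ away from the identity, second-order Taylor expansion near the identity to isolate drift and diffusion, positive maximum principle for semidefiniteness, Hille--Yosida for the converse, and transport of the characteristic triple under conjugation for the invariance claim), so there is nothing to compare: your approach \emph{is} the one the paper defers to, and it is correct at the level of detail you give.
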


Recall the notations in Section \ref{sec:convergencebrownien}. As explained by G. C\'{e}bron in \cite{Cebron2}, one can use on $U(N,\mathbb{K})$ the mapping: 
\begin{align*}
A: M \mapsto \frac{M - M^{*}}{2}. 
\end{align*}

We will denote this application $i\Im$, even when we are working on the group $O(N)$. It is invariant by conjugation by $U(N,\mathbb{K})$ since for any $U \in U(N, \mathbb{K})$, for any $M \in \mathcal{M}(N,\mathbb{K})$: $U(i\Im(M))U^{-1}\! =\! i\Im(UMU^{-1}).$ We also define $\Re (M) = \frac{M+M^{*}}{2}$. 

In Section \ref{sec:convergencebrownien}, we defined a scalar product on the Lie algebras $\mathfrak{g}^{\epsilon}(N,\mathbb{K})$ (Equation~(\ref{eq:scalar})). From now on, when $\mathfrak{g} = \mathfrak{g}^{\epsilon}(N,\mathbb{K})$, we will always assume that the basis $(Y_i)_{i=1}^{d}$ used in Theorems \ref{generateuradd} and \ref{genemultilie} is an orthonormal basis for this scalar product and we will not specify it anymore.

\subsubsection{Approximation of $\boxplus$-infinitely divisible probability measures and convergence of additive L\'{e}vy processes }
\label{sec:convLevyex}

Recall the notations taken in Section \ref{sec:convergencebrownien}, in particular~$\mathcal{A}_{\mathbb{K}}$.

\begin{theorem}
\label{convergenceLevy}
Let $\mu$ be a free $\boxplus$-infinitely divisible probability measure on $\mathbb{R}$ with associated semi-group $(\mu_t)_{t \geq 0}$ and associated characteristic triplet $(\eta, a,\rho)$. Let us suppose that the measure $\rho$ has a compact support. Let $d_{\mathfrak{h}(N,\mathbb{K})}$ be the dimension of $\mathfrak{h}(N,\mathbb{K})$. Let us define for any integer $N$: 
\begin{align*}
a_N &=a {\mathrm{Id}}_{d_{\mathfrak{h}(N,\mathbb{K})}}, \\
\rho_N (f) &= N \int_{\mathbb{R}} \int_{U(N,\mathbb{K})} f \left( g\begin{pmatrix}
  x & 0& \cdots &  0 \\
  0 & 0 & \ddots & \vdots  \\
 \vdots  & \ddots & \ddots &0  \\
  0 &  \cdots & 0& 0
 \end{pmatrix}\!\!\text{ }g^{*}\right) dg \rho(dx). 
\end{align*}
For any positive integer $N$, let $(X^{N}_t)_{t \geq 0}$ be a L\'{e}vy process on $\mathfrak{h}(N,\mathbb{K})$ with characteristic triplet $\big(\eta {\mathrm{Id}}_N, a_N, \rho_{N}\big)$. The process $(X^{N}_t)_{t \geq 0}$ converges in $\mathcal{P}$-distribution toward a $U(N,\mathbb{K})$-invariant additive $\mathcal{A}_{\mathbb{K}}$-L\'{e}vy process. It satisfies the asymptotic $\mathcal{P}$-factorization property: the $\mathcal{P}$-moments of $(H_{t}^{N})_{t \geq 0}$ converge in probability to the limit of their expectation. Besides, for any integer $k$ and any $t \geq 0$: 
\begin{align*}
m_{(1,...,k)}[X_t] =  \int_{\mathbb{R}} x^{k}  \mu_t(dx). 
\end{align*}
\end{theorem}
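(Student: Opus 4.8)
The plan is to apply Theorem \ref{convergencegenerale} to the sequence $\big((X_t^N)_{t \geq 0}\big)_{N \geq 0}$, which by construction consists of $\mathcal{G}(\mathcal{A}_{\mathbb{K}})$-invariant additive L\'evy processes, so $\boxdot = \boxplus$ and $\mathcal{A} = \mathcal{A}_{\mathbb{K}}$. The only hypothesis to verify is that for each positive integer $k$ the sequence of generators $\big(G_k^N\big)_{N \geq 0}$, with $G_k^N = \frac{d}{dt}_{|t=0}\mathbb{E}[(X_t^N)^{\otimes k}]$, converges. Once this is done, Theorem \ref{convergencegenerale} immediately gives convergence in $\mathcal{P}$-distribution toward a $\mathcal{G}(\mathcal{A}_{\mathbb{K}})$-invariant additive $\mathcal{A}_{\mathbb{K}}$-L\'evy process, and the asymptotic $\mathcal{P}$-factorization property will follow once one checks that $(G_k^N)_k$ condensates, i.e. that $\mathcal{R}(G)$ is a $\boxplus$-infinitesimal character (equivalently, by Remark \ref{rq:condens}, that $m_p(G_k) = 0$ whenever $p$ is not irreducible). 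Then Theorem \ref{theoremconvprob} upgrades this to convergence in probability (using that $(X_t^N)_{t\geq 0}\cup(-X_t^N)_{t\geq 0}$ is stable under the conjugate operation, as the matrices are Hermitian or symmetric).

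The computation of $G_k^N$ is the technical heart. First I would use Theorem \ref{generateuradd}: the generator of $(X_t^N)_{t\geq0}$ acting on $f: \mathfrak{h}(N,\mathbb{K}) \to \mathbb{C}$, $y \mapsto \mathbb{E}[f(y + X_t^N)]$ at $t=0$, decomposes into a drift part coming from $\eta\,{\mathrm{Id}}_N$, a Gaussian part coming from $a_N = a\,{\mathrm{Id}}_{d_{\mathfrak{h}(N,\mathbb{K})}}$, and a jump part coming from $\rho_N$. Applying this to $f(y) = y^{\otimes k}$ and extracting $G_k^N = \frac{d}{dt}_{|t=0}\mathbb{E}[(X_t^N)^{\otimes k}]$: the drift $\eta\,{\mathrm{Id}}_N$ contributes only to $k=1$ (namely $\eta\,{\mathrm{Id}}_N$, which as an element of $\mathbb{C}[\mathcal{A}_k(N)]$ is $\eta\,\mathrm{id}_1$); the Gaussian part $a_N$ contributes only to $k=2$ and, exactly as in Lemma \ref{geneadd} and Definition \ref{casimircal}, gives $a\, C_{\mathfrak{h}(N,\mathbb{K})} = \frac{a}{N}\rho_N(\epsilon(1,2) + (2-\beta_{\mathbb{K}})[1,2])$ with $\epsilon = 1$; the jump part contributes $\int_{\mathbb{R}}\int_{U(N,\mathbb{K})} (g D_x g^*)^{\otimes k}\,dg\,\rho_N$-type integrals. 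The key point for the jump part is that $\rho_N$ is supported on rank-one conjugates of $\mathrm{diag}(x,0,\dots,0)$ scaled by the extra factor $N$, so $\int_{U(N,\mathbb{K})} (gD_xg^*)^{\otimes k}\,dg$ is, by Schur–Weyl/Brauer duality (Theorem \ref{duality}), an element of $\mathbb{C}[\mathcal{A}_{\mathbb{K},k}(N)]$ whose $\mathcal{A}_{\mathbb{K}}$-cumulants can be computed explicitly; combined with the prefactor $N\int_{\mathbb{R}} x^k\,\rho(dx)$ one finds that $\kappa_p(G_k^N)$ converges, with the limit being $\int_{\mathbb{R}} x^k\rho(dx)$ on the appropriate irreducible partition (the $k$-cycle for $\mathcal{A}_{\mathbb{K}} = \mathfrak{S}$, and an analogous Brauer element for $\mathcal{A}_{\mathbb{K}} = \mathcal{B}$) and $0$ elsewhere, together with the $\eta$ and $a$ contributions at $k=1,2$. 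In particular every such partition is irreducible, so $(G_k^N)_k$ condensates.

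The main obstacle is precisely controlling the jump term: one must show that the integral $N\int_{\mathbb{R}}\int_{U(N,\mathbb{K})}(gD_xg^*)^{\otimes k}\,dg\,\rho(dx)$, viewed in $\mathbb{C}[\mathcal{A}_{\mathbb{K},k}(N)]$, has convergent $\mathcal{A}_{\mathbb{K}}$-cumulants with the stated limit. Here one uses that $D_x$ is rank one, so $(gD_xg^*)^{\otimes k} = x^k (ge_1e_1^* g^*)^{\otimes k}$, and the Weingarten-type average of $(ge_1e_1^*g^*)^{\otimes k}$ over $U(N,\mathbb{K})$ is, up to normalization $N^{-(\text{something})}$, the symmetrizer/Brauer projector; the factor $N$ in $\rho_N$ exactly cancels the leading power of $N$ so that the cumulant of the "$0_k$-type" partition (a single block) survives in the limit as $\int x^k\rho(dx)$. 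One could alternatively bypass the explicit Weingarten computation by noting that $(X_t^N)$ is, in distribution, a sum of a Hermitian/symmetric Brownian motion and a compound Poisson process whose jumps are $N$-fold-at-rate rank-one conjugated perturbations, and computing moments directly; either way the combinatorics reduces to the same partition-counting. Finally, the last displayed formula $m_{(1,\dots,k)}[X_t] = \int_{\mathbb{R}} x^k\mu_t(dx)$ follows from Equation (\ref{equationverifie}), $\mathcal{R}[X_{t}] = e^{\boxplus t\mathcal{R}(G)}$, together with Remark \ref{rem:conver} and the description of $\mathcal{LR}(\mu)$ in Remark \ref{rq:LretR} and Theorem \ref{caractaddfreeinf}: restricting to permutations (using Lemma $3.1$ of \cite{Gab1} to discard genuine Brauer elements), $\mathcal{R}(G)$ agrees on irreducible permutations with $\mathcal{LR}(\mu)$, hence $\mathcal{R}[X_t]$ restricted to $\mathfrak{S}$ equals $\mathcal{R}(\mu_t)$, which is $\mathcal{M}(\mu_t)$ on $k$-cycles by definition of $\mathcal{R}_{\mathfrak{S}}$.
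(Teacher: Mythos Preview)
Your proposal is correct and follows essentially the same architecture as the paper: apply Theorem \ref{convergencegenerale}, compute $G_k^N$ via Theorem \ref{generateuradd}, verify convergence and condensation, then identify $\mathcal{R}(G)_{|\mathfrak{S}}$ with $\mathcal{LR}(\mu)$ and use Lemma 3.1 of \cite{Gab1} to discard non-permutation contributions when evaluating $m_{(1,\dots,k)}$.

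Two places where the paper is more economical than your sketch. First, for the jump term the paper does \emph{not} go through Weingarten or compute cumulants directly: it simply computes the $\sigma$-\emph{moment} $m_\sigma\big(\int g^{\otimes k}\rho_N(dg)\big)$, which for the rank-one matrix $gD_xg^*$ is elementary --- each cycle of $\sigma$ contributes a factor $\mathrm{Tr}((gD_xg^*)^{|\text{cycle}|}) = x^{|\text{cycle}|}$, so the normalized moment is $N^{1-{\sf nc}(\sigma\vee\mathrm{id}_k)}\int x^k\rho(dx)$, converging to $\delta_{\sigma\in[(1,\dots,k)]}\int x^k\rho(dx)$. This bypasses the Weingarten computation entirely and immediately gives both convergence and condensation (via Remark \ref{rq:condens}, criterion 2). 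Second, for $\mathbb{K}=\mathbb{R}$ the paper does not treat Brauer elements separately: since $X_t^N$ is symmetric, convergence of $\mathfrak{S}$-moments automatically gives convergence of $\mathcal{B}$-moments, so only the permutation case needs to be checked. Your route would work too, but these two shortcuts make the argument considerably lighter.
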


\begin{proof}
Let us consider the L\'{e}vy process $(X_t^{N})_{t \geq 0}$ specified in the theorem. Using the last assertion of Theorem \ref{generateuradd}, for any positive integer $N$, $(X^{N}_t)_{t \geq 0}$ is $U(N,\mathbb{K})$-invariant. Using Remark \ref{rm:voisinage}, we can suppose, without loss of generality, that for any $x\in {\sf Supp}(\rho)$, the matrix: $$\begin{pmatrix}
  x & 0& \cdots &  0 \\
  0 & 0 & \ddots & \vdots  \\
 \vdots  & \ddots & \ddots &0  \\
  0 &  \cdots & 0& 0
 \end{pmatrix}$$ is in the ball of center 0 and radius 1. For any integers $k$ and $N$, using Theorem \ref{generateuradd}, we have: 
  $$G_{k}^{N} =  \frac{d}{dt}_{| t=0} \mathbb{E}\left[(X_{t}^{N})^{\otimes k}\right] = \left\{
   \begin{array}{ll}
      \eta {\mathrm{Id}_N}& \text{ if } k=1, \\
      {a}C_{\mathfrak{h}(N,\mathbb{K})} + \int_{\mathfrak{h}(N,\mathbb{K})} g^{\otimes 2} \rho_{N}(dg), & \text{ if } k=2, \\
      \int_{\mathfrak{h}(N,\mathbb{K})}g^{\otimes k} \rho_{N}(dg) & \text{ if } k \geq 3. 
    \end{array}
\right.$$
According to Theorem \ref{convergencegenerale}, we have to show that $G_{k}^{N} \in \mathbb{C}[\rho_{N}(\mathcal{A}_{\mathbb{K},k})]$ converges for any integer $k$ as $N$ goes to infinity. Using Remark \ref{rq:convergenceG}, it is enough to prove that for any $p \in \mathcal{A}_{\mathbb{K}}$, the $p$-moment of $G_{k}^{N}$ converges. When $\mathbb{K} = \mathbb{R}$, $X_{t}^{N}$ is symmetric for any $t \geq 0$ and any integer $N$: the convergence of the $\mathfrak{S}$-moments of $G_{k}^{N}$ implies the convergence of the $\mathcal{B}$-moments of $G_{k}^{N}$. Thus, in the two cases, $\mathbb{K} = \mathbb{C}$ or $\mathbb{R}$, it remains to prove that the $\mathfrak{S}$-moments of $G_{k}^{N}$ converges to infinity in order to prove that $(X_t^{N})_{t \geq 0}$ converges in $\mathcal{P}$-distribution.

The convergence of $G_{1}^{N}$ is obvious and we have already understood the convergence of $C_{\mathfrak{h}(N,\mathbb{K})}$ in Section \ref{sec:convergencebrownien}. Let $k$ be an integer greater than $2$ and let $\sigma \in \mathfrak{S}_k$, the moment $m_{\sigma}\left( \int_{\mathfrak{h}(N,\mathbb{K})} g^{\otimes k} \rho_{N}(dg)\right)$ is equal to: 
\begin{align*}
\frac{1}{N^{{\sf nc}(\sigma \vee \mathrm{id}_k)}} {\sf Tr}\left[  \left(\int_{\mathfrak{h}(N,\mathbb{K})} g^{\otimes k} \rho_{N}(dg)\right) \!\text{ }^{t}\sigma\right] = \frac{1}{N^{{\sf nc}(\sigma \vee \mathrm{id}_k)-1}} \int_{\mathbb{R}} x^{k} \rho(dx). 
\end{align*}
where we have omit to write the representation $\rho_{N}$ in order not to confuse the reader. The number of cycles of $\sigma$ is ${\sf nc}(\sigma \vee \mathrm{id}_k)$, thus, 
\begin{align}
\label{convergencedesG}
m_{\sigma}\left( \int_{\mathfrak{h}(N,\mathbb{K})} g^{\otimes k} \rho_{N}(dg)\right) \underset{N \to \infty}{\longrightarrow} \delta_{\sigma \in [(1,...,k)]} \int_{\mathbb{R}}x^{k} \rho(dx), 
\end{align}
where $[(1,...,k)]$ is the set of all $k$-cycles in $\mathfrak{S}_k$. This implies that for any integer $k$, $\left(G_k^{N}\right)_{N \in \mathbb{N}}$ converges. Besides, for any integer $k$, the $k$-cycles are the only irreducible permutations in $\mathfrak{S}_k$. Again, using the fact that, when $\mathbb{K} = \mathbb{R}$, the matrices are symmetric, this proves in the two cases, $\mathbb{K} = \mathbb{C}$ or $\mathbb{R}$, that $(G_{k}^{N})_{N \in\mathbb{N}}$ condensates for any integer $k$. By  Theorem \ref{convergencegenerale}, the process $(X_t^{N})_{t \geq 0}$ converges in $\mathcal{P}$-distribution toward a $U(N,\mathbb{K})$-invariant additive $\mathcal{A}_{\mathbb{K}}$-L\'{e}vy process. It satisfies the asymptotic $\mathcal{P}$-factorization, and since $(X_t^{N})_{t \geq 0}$ is stable by the adjoint operation, by Theorem \ref{theoremconvprob} the $\mathcal{P}$-moments of $(H_{t}^{N})_{t \geq 0}$ converge in probability to the limit of their expectation. 

Let $k$ be an integer and $t \geq 0$. In order to prove that $m_{(1,...,k)}[X_t] = \int_{\mathbb{R}}z^{k} \mu_{t}(dz)$ we only have to prove that $\mathcal{R}[\mu_t] = \mathcal{R}[X_t]$. Using the relation between cumulants and moments, Theorem 5.6 of \cite{Gab1}, we can compute $\mathcal{R}[G]$. We already know that it is a $\boxplus$-infinitesimal character and: 
\begin{align*}
\mathcal{R}[G]( (1,...,k)) = \left\{
   \begin{array}{ll}
      \eta & \text{ if } k=1, \\
      a + \int_{\mathbb{R}} x^{2} \rho(dx),  & \text{ if } k=2, \\
     \int_{\mathbb{R}} x^{k} \rho(dx) & \text{ if } k\geq 3,  
    \end{array}
\right.
\end{align*}
Using Theorem \ref{caractaddfreeinf}, the restriction of $\mathcal{R}[G]$ to $\bigoplus_{k} \mathbb{C}[\mathfrak{S}_k/\mathfrak{S}_k]$, denoted by $\mathcal{R}[G]_{| \mathfrak{S}}$ is equal to $\mathcal{LR}(\mu)$. Using Theorem \ref{convergencegenerale}, for any real $t \geq 0$, $\mathcal{R}[X_t] = e^{\boxplus t \mathcal{R}[G]}$. Using Lemma $3.1$ of \cite{Gab1}, for any permutation $\sigma \in \mathfrak{S}_k$ and any $b \in \mathcal{B}_k\setminus \mathfrak{S}_k$, $b \nleq \sigma$. Thus: 
\begin{align*}
m_{(1,...,k)}(X_t) = \sum_{\sigma \in \mathfrak{S}_k | \sigma \leq (1,...,k)} \mathcal{R}[X_t](\sigma) &= \sum_{\sigma \in \mathfrak{S}_k | \sigma \leq (1,...,k)} e^{\boxplus t \mathcal{R}[G]}(\sigma)  \\&= \sum_{\sigma \in \mathfrak{S}_k | \sigma \leq (1,...,k)} e^{\boxplus t \mathcal{R}[G]_{| \mathfrak{S}}}(\sigma) \\
&=\sum_{\sigma \in \mathfrak{S}_k | \sigma \leq (1,...,k)} e^{\boxplus t \mathcal{LR}(\mu)}(\sigma) \\
&= \sum_{\sigma \in \mathfrak{S}_k | \sigma \leq (1,...,k)} \mathcal{R}(\mu_t)(\sigma) = \int_{\mathbb{R}} x^{k} \mu_t(dx). 
\end{align*}
This allows us to conclude the proof. 
  \end{proof}

\subsubsection{Approximation of $\boxtimes$-infinitely divisible probability measures and convergence of multiplicative L\'{e}vy processes }
\begin{theorem}
\label{convergenceunitaire}
Let $\mu$ be a free $\boxtimes$-infinitely divisible probability measure on the circle $\mathbb{U}$ with associated canonical semi-group $(\mu_t)_{t \geq 0}$ and associated characteristic triplet $(\omega, b, \nu)$. If $\mathbb{K} = \mathbb{R}$, let us suppose that $\nu$ satisfies Equation (\ref{eq:paire}) for any continuous function $f : \mathbb{U} \to \mathbb{R}$ and that $\omega = 0$. Let $d_{{\mathfrak{u}(N,\mathbb{K})}}$ be the dimension of $\mathfrak{u}(N,\mathbb{K})$. Let us define for any integer $N$: 
\begin{align*}
b_{N} &= b \mathrm{Id}_{d_{{\mathfrak{u}(N,\mathbb{K})}}},\\
\nu_{N}(f) &=  \left\{
   \begin{array}{ll}
      N\int_{\mathbb{U}}  \int_{U(N)} f\left(g \begin{pmatrix}
  \zeta & 0& \cdots &  0 \\
  0 & 1 & \ddots & \vdots  \\
 \vdots  & \ddots & \ddots &0  \\
  0 &  \cdots & 0& 1
 \end{pmatrix} g^{*} \right)dg \nu(d\zeta) & \text{ if } \mathbb{K}=\mathbb{C}, \\
       N\int_{[-\pi,\pi]}  \int_{O(N)} f\left(g  \begin{pmatrix}
  cos\theta & -sin\theta & 0 & \cdots & 0 \\
  sin\theta & cos\theta & 0 &  \cdots &0\\
  0&0&1&\ddots & \vdots \\
  \vdots  & \vdots  & \ddots & \ddots & 0  \\
  0 & 0 & \cdots &  0& 1
 \end{pmatrix}\!\!\text{ }^{t}\!g \right)dg \nu(d\theta), & \text{ if } \mathbb{K}=\mathbb{R}, \\
    \end{array}
\right. \end{align*}
where in the last equality, we considered $\nu$ as a measure on $[-\pi,\pi]$.

For any positive integer $N$, let $(Y_t^{N})_{t \geq 0}$ be a L\'{e}vy process on $U(N,\mathbb{K})$ with characteristic triplet $(i {\sf arg}(\omega){\mathrm{Id}}_N, b_N, \nu_{N})$. The process $(Y^{N}_t)_{t \geq 0}$ converges in $\mathcal{P}$-distribution toward a $U(N,\mathbb{K})$-invariant $\mathcal{A}_{k}$-L\'{e}vy process. Moreover, the family $(Y_t^{N}, (Y_{t}^{N})^{*})_{t \geq 0}$ converges in $\mathcal{P}$-distribution and satisfies the asymptotic $\mathcal{P}$-factorization property. In particular, it converges in probability in $\mathcal{P}$-distribution toward its $\mathcal{P}$-distribution. For $t\geq 0$ and any $k \in \mathbb{N}$,
\begin{align*}
m_{(1,...,k)}[Y_t] = \int_{\mathbb{U}} z^{k} d \mu_t.
\end{align*}
\end{theorem}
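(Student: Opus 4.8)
The plan is to mimic the proof of Theorem~\ref{convergenceLevy} for the additive case, adapting it to the multiplicative setting via Theorem~\ref{convergencegenerale} and Theorem~\ref{convergence*levy}. First I would invoke Hunt's theorem (Theorem~\ref{genemultilie}) to confirm that $(Y_t^N)_{t\ge 0}$ is indeed $U(N,\mathbb{K})$-invariant: by construction the drift $i\,{\sf arg}(\omega)\mathrm{Id}_N$, the diffusion matrix $b_N$, and the L\'evy measure $\nu_N$ are all invariant under conjugation by $U(N,\mathbb{K})$ (the measure $\nu_N$ is defined as a pushforward of a $U(N,\mathbb{K})$-average, hence manifestly invariant; when $\mathbb{K}=\mathbb{R}$ the condition~(\ref{eq:paire}) together with $\omega=0$ is what makes the rotation-block construction well-defined and $O(N)$-compatible). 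Using Remark~\ref{rm:voisinage} I would shrink the cutoff neighborhood $V$ so that the one-point-spectrum matrices in the support of $\nu_N$ lie in $V$, which removes the $A(g)^r f$ correction term from the generator on the relevant test functions.

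Next I would compute the generators $G_k^N = \frac{d}{dt}_{|t=0}\mathbb{E}[(Y_t^N)^{\otimes k}]$. By Theorem~\ref{genemultilie} applied to $f(h) = h^{\otimes k}$, $G_k^N$ is a sum of three pieces: a drift piece proportional to $\mathrm{Id}^{\otimes k}$, a Brownian piece which was already analyzed in Lemma~\ref{genemult} (up to scaling by $b$), and a jump piece $\int_{U(N,\mathbb{K})}(g^{\otimes k} - \mathrm{Id}^{\otimes k})\,\nu_N(dg)$. The key computation is the $\mathfrak{S}$-moments of this jump piece: for $\sigma\in\mathfrak{S}_k$, $m_\sigma\big(\int g^{\otimes k}\nu_N(dg)\big)$ reduces, after applying the definition~(\ref{eq:normalized}) and the $U(N,\mathbb{K})$-averaging, to $N^{1-{\sf nc}(\sigma\vee\mathrm{id}_k)}\int_{\mathbb{U}}(\zeta-1)^{?}\cdots$-type integrals, exactly paralleling~(\ref{convergencedesG}) but with $\zeta-1$ replacing $x$ because of the ``$1$'' on the diagonal of the jump block. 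Thus only the $k$-cycles survive in the limit, $\left(G_k^N\right)_{N\in\mathbb{N}}$ converges, and $\mathcal{R}(G)$ restricted to $\bigoplus_k\mathbb{C}[\mathfrak{S}_k/\mathfrak{S}_k]$ matches the characterization of $\mathcal{LR}(\mu)$ given in Theorem~\ref{caractmultfreeinf} (with the Brauer duals killed by Lemma~$3.1$ of~\cite{Gab1} when $\mathbb{K}=\mathbb{R}$). Since all $k$-cycles are irreducible, $(G_k^N)_k$ weakly condensates, and Theorem~\ref{convergencegenerale} gives convergence in $\mathcal{P}$-distribution to a $U(N,\mathbb{K})$-invariant multiplicative $\mathcal{A}_{\mathbb{K}}$-L\'evy process together with the asymptotic $\mathcal{P}$-factorization for $(Y_t^N)_{t\ge 0}$.

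To get convergence in probability, I would additionally treat the pair $(Y_t^N,(Y_t^N)^*)_{t\ge 0}$: when $\mathbb{K}=\mathbb{R}$ this is immediate since $(Y_t^N)^* = {}^tY_t^N$ and convergence in $\mathcal{P}$-distribution of $(Y_t^N)_{t\ge 0}$ together with weak condensation already forces it (as in the $\mathbb{K}=\mathbb{R}$ paragraph of the proof of Theorem~\ref{convbrowniens2}); when $\mathbb{K}=\mathbb{C}$ I would compute the mixed generators $G_{k,l}^N$ of $\mathbb{E}[(Y_t^N)^{\otimes k}\otimes\overline{Y_t^N}^{\otimes l}]$ using~(\ref{semimultibar}), observe they converge by the same $\mathfrak{S}$-moment computation applied to $\mathcal{P}_{k+l}$, check weak condensation in the $(k,l)$-sense, and apply Theorem~\ref{convergence*levy} followed by Theorem~\ref{theoremconvprob}. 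Finally, for the eigenvalue moments, I would argue $\mathcal{R}(Y_t) = e^{\boxtimes t\mathcal{R}(G)}$ by~(\ref{equationverifie}), restrict to permutations (Brauer duals being irrelevant by Lemma~$3.1$ of~\cite{Gab1}), and identify $\mathcal{R}(G)_{|\mathfrak{S}} = \mathcal{LR}(\mu)$, so that $m_{(1,\dots,k)}[Y_t] = \sum_{\sigma\le(1,\dots,k)}e^{\boxtimes t\mathcal{LR}(\mu)}(\sigma) = \mathcal{R}(\mu_t)((1,\dots,k)) = \int_{\mathbb{U}}z^k\,d\mu_t$.

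The main obstacle I anticipate is the explicit evaluation of the jump-part generator's moments: one must carefully verify that the $U(N,\mathbb{K})$-average of the single-jump block $\mathrm{diag}(\zeta,1,\dots,1)$ (or the rotation block when $\mathbb{K}=\mathbb{R}$) produces precisely the element of $\mathbb{C}[\mathcal{A}_{\mathbb{K},k}(N)]$ whose $k$-cycle moment is $\int(\zeta-1)^k\,\nu(d\zeta)$ plus lower-order-in-$N$ terms, and that the combinatorial factors $N^{{\sf nc}(\sigma\vee\mathrm{id}_k)}$ cancel as claimed. In the real case there is the extra subtlety that the rotation-block parametrization and the symmetry condition~(\ref{eq:paire}) must combine to give a genuine $O(N)$-invariant L\'evy measure on $O(N)$ with the correct limiting generator; this is where the hypothesis $\omega=0$ and the evenness of $\nu$ are essential, and checking it requires relating the measure $\nu$ on $[-\pi,\pi]$ to cumulants of $G_{k}^N$ through the duality $(O,\mathcal{B})$ rather than $(U,\mathfrak{S})$. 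Everything else is a routine transcription of the additive argument.
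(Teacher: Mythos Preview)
Your outline has the right architecture, but there is a genuine gap in the treatment of the jump part of the generator. The claim that Remark~\ref{rm:voisinage} lets you ``remove the $A(g)^r f$ correction term'' is wrong on two counts. First, that remark concerns only the additive generator of Theorem~\ref{generateuradd}; the multiplicative Hunt generator of Theorem~\ref{genemultilie} carries no cutoff indicator to manipulate, so the compensator $A(g)^r f$ (here $i\Im(M)$) cannot be absorbed into the drift. Second, and more seriously, the uncompensated jump piece $\int_{U(N,\mathbb{K})}(g^{\otimes k}-\mathrm{Id}^{\otimes k})\,\nu_N(dg)$ does \emph{not} have the property ``only $k$-cycles survive''. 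With $D=\mathrm{diag}(\zeta,1,\dots,1)$ and $\sigma=(1,2)(3,4)\in\mathfrak{S}_4$ one finds
\[
m_\sigma(D^{\otimes 4}-\mathrm{Id}^{\otimes 4})=\Bigl(1+\tfrac{\zeta^2-1}{N}\Bigr)^{2}-1=\tfrac{2(\zeta^2-1)}{N}+O(N^{-2}),
\]
so after the factor $N$ from $\nu_N$ this contributes $2\int(\zeta^2-1)\,\nu(d\zeta)$ in the limit, which is nonzero and, for a general L\'evy measure $\nu$ (only $\int|\zeta-1|^2\,\nu(d\zeta)<\infty$ is assumed), need not even be finite.

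The missing step is C\'ebron's decomposition, which the paper uses explicitly: write $M^{\otimes k}=((M-\mathrm{Id})+\mathrm{Id})^{\otimes k}$ and expand into insertions of $(M-\mathrm{Id})^{\otimes m}$. For $m\ge 2$, since $D-\mathrm{Id}=\mathrm{diag}(\zeta-1,0,\dots,0)$ has $\mathrm{Tr}((D-\mathrm{Id})^j)=(\zeta-1)^j$ for every $j$, one gets $m_\sigma\bigl((D-\mathrm{Id})^{\otimes m}\bigr)=N^{-{\sf nc}(\sigma\vee\mathrm{id})}(\zeta-1)^m$, and after the factor $N$ only single $m$-cycles survive with limit $\int(\zeta-1)^m\,\nu(d\zeta)$ --- exactly the entries of $\mathcal{LR}(\mu)$ in Theorem~\ref{caractmultfreeinf}. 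The $m=1$ terms do not vanish individually, but they combine with the compensator via $M-\mathrm{Id}-i\Im(M)=\Re(M)-\mathrm{Id}$, which is $O(|\zeta-1|^2)$ hence $\nu$-integrable and produces the drift contribution $\int(\Re(\zeta)-1)\,\nu(d\zeta)$. Once you insert this decomposition, the rest of your plan (invariance via Hunt, the Brownian part from Lemma~\ref{genemult}, weak condensation, Theorem~\ref{convergencegenerale}, the mixed generators $G_{k,l}^N$ and Theorem~\ref{convergence*levy} for $\mathbb{K}=\mathbb{C}$, and the identification $\mathcal{R}(G)_{|\mathfrak{S}}=\mathcal{LR}(\mu)$ via Lemma~3.1 of~\cite{Gab1}) matches the paper's proof.
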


\begin{proof}
The difference between the two cases $\mathbb{K} = \mathbb{C}$ and $\mathbb{K} = \mathbb{R}$ are handled using a similar argument as in the proof of Theorem \ref{convergenceLevy}: we saw before the Remark \ref{rq:convsigma} that for a sequence of orthogonal matrix the convergence in $\mathfrak{S}$-moments implies the convergence in $\mathcal{B}$-moments. 

Thus, we will focus on the case $\mathbb{K} = \mathbb{C}$ which is the more complicated in our setting since we will have to prove the assertion about the family $(Y^{N}_t,(Y^{N}_t)^{*})_{t \geq 0}$. The structure of the proof follows the one of Theorem \ref{convergenceLevy}. Let us consider the L\'{e}vy process $(Y_t^{N})_{t \geq 0}$ specified in the theorem. Using the last assertion of Theorem \ref{genemultilie}, for any positive integer $N$, $(Y^{N}_t)_{t \geq 0}$ is a L\'{e}vy process invariant by conjugation by $U(N)$.  Let $k$, $N$ be two integers. Applying Theorem \ref{genemultilie}: 
\begin{align*}
G_k^{N} &= \frac{d}{dt}_{\mid t = 0} \mathbb{E}\left[\left(Y_t^{N}\right)^{\otimes k}\right] \\&= k i {\sf arg}(w)\mathrm{Id}^{\otimes k} + \frac{b}{2} k  c_{\mathfrak{g}} \mathrm{Id}^{\otimes k} +\frac{1}{2} \sum_{i,j=1}^{k} \mathcal{I}_{\{i,j\}} (C_{\mathfrak{g}^{1}(N,\mathbb{K})}, \mathrm{Id}^{\otimes k-2}) \\
&+ \int_{U(N)} \left(M^{\otimes k} - \mathrm{Id}^{\otimes k} - \sum_{i=1}^{k} \mathcal{I}_{\{i\}} (i \Im(M), \mathrm{Id}^{\otimes k-1})\right)\nu_{N}(dM). 
\end{align*}
G. C\'{e}bron noticed in \cite{Cebron2} that one should use, in the last term, the following equality: 
$$M^{\otimes k}\!=\! (M-\mathrm{Id} +\mathrm{Id})^{\otimes k} = \mathrm{Id}^{\otimes k} + \sum_{m=1}^{k}\sum_{1\leq i_1 < i_2 < ... <i_m\leq k} \!\!\mathcal{I}_{\{i_1,...,i_m\}}\left[(M-\mathrm{Id})^{\otimes m}, \mathrm{Id}^{\otimes k-m}\right]\!\!.$$
Since $M - \mathrm{Id} - i \Im(M) = \Re(M) - \mathrm{Id}$, one gets: 
\begin{align*}
G_k^{N} & = k i {\sf arg}(w)\mathrm{Id}^{\otimes k} + \sum_{i=1}^{k} \mathcal{I}_{\{i\}}\left[\int_{U(N)}(\Re(M)-\mathrm{Id}) \nu_{N} (dM), \mathrm{Id}^{\otimes k-1} \right] \\
&+ \frac{b}{2} k  c_{\mathfrak{g}} \mathrm{Id}^{\otimes k}+  \frac{b}{2} \sum_{i\neq j, i,j=1}^{k} \mathcal{I}_{\{i,j\}} (C_{\mathfrak{g}^{1}(N,\mathbb{K})}, \mathrm{Id}^{\otimes k-2})\\
&+ \sum_{2\leq m \leq l} \sum_{1 \leq i_1 < ... <i_m \leq k}\!\!\! \mathcal{I}_{\{i_1,..., i_m\}}\left[\int_{U(N)} (M-\mathrm{Id})^{\otimes m} \nu_{N}(dM), \mathrm{Id}^{\otimes k-m}\right]\!\!, 
\end{align*} 
result obtained in Proposition 5.3. of \cite{Cebron2}. The proof now differs from the one of G. C\'{e}bron and this is what allows us to have the result for orthogonal matrices at no cost. According to Theorem \ref{convergencegenerale}, we have to show that $G_k^{N} \in \mathbb{C}[\mathfrak{S}_k]$ converges for any integer $k$. Using  Remark \ref{rq:convergenceG}, it is enough to prove that for any permutation $\sigma \in \mathfrak{S}_k$, the $p$-moment of $G_k^{N}$ converges. 

Using our work on the Brownian motion and on the Casimir element, it remains to understand, for any  $m  \in \{2,..., k\}$ and any $1 \leq i_1 < ... <i_m \leq k$, the limit of the $\mathfrak{S}$-moments of: 
\begin{align*}
\left(A_N = \sum_{i=1}^{k} \mathcal{I}_{\{i\}}\left[\int_{U(N)}(\Re(M)-\mathrm{Id}) \nu_{N} (dM), \mathrm{Id}^{\otimes k-1} \right] \right)_{N \in \mathbb{N}}
\end{align*}
and 
\begin{align*}
\left(B_N=\mathcal{I}_{\{i_1,..., i_m\}}\left[\int_{U(N)} (M-\mathrm{Id})^{\otimes m} \nu_{N}(dM), \mathrm{Id}^{\otimes k-m}\right] \right)_{N \in \mathbb{N}}.
\end{align*}
 Let $\sigma$ be in $\mathfrak{S}_k$: $m_{\sigma}(A_N) = k \int_{\mathbb{U}} (\Re(\zeta)-1) \nu(d\zeta)$. Thus $A_N$ converges and using the cumulant-moment relation, $\kappa_{\sigma}(A) =\left(k  \int_{\mathbb{U}} (\Re(\zeta)-1) \nu(d\zeta) \right) \delta_{\sigma = id_k}.$ Let $m$ be a positive integer. In order to study the convergence of $(B_N)_{N \in \mathbb{N}}$, we only need to study the sequence $(\tilde{B}_{N} = \int_{U(N)}  (M-\mathrm{Id})^{\otimes m} \nu_{N}(dM) )_{N \in \mathbb{N}}.$ Since: 
 \begin{align*}
m_{\sigma}(\tilde{B}_N) = \frac{N}{N^{{\sf nc}(\sigma\vee id)}}\int_{\mathbb{U}} ( \zeta-1)^{m} \nu(d\zeta), 
\end{align*}
the $\sigma$-moment of $ \tilde{B}_N$ converge to  $m_{\sigma}( \tilde{B})=\delta_{\sigma \in [(1,...,m)]}\int_{\mathbb{U}} (\zeta - 1)^{m} \nu(d\zeta),$ where we recall that $ [(1,...,m)]$ is the set of $m$-cycles in $\mathfrak{S}_m$. Thus $\tilde{B}_N$ converges and using the cumulant-moment relation, for any $\sigma \in \mathfrak{S}_m$, $\kappa_{\sigma}(\tilde{B})  =  \delta_{\sigma \in [(1,...,m)]}\int_{\mathbb{U}} (\zeta - 1)^{m} \nu(d\zeta)$.

This discussion allows us to asser that for any integer $k$, $G_k^{N}$ converges as $N$ goes to infinity. Besides, we have already computed $\mathcal{R}[G]$: 
\begin{align*}
\mathcal{R}[G] = &\sum_{k \geq 1} k\left( i {\sf arg}(\omega) - \frac{b}{2} + \int_{\mathbb{U}} (\Re(\zeta)-1) \nu(d\zeta) \right)\mathrm{id}_k^{*} \\ &- \sum_{k \geq 2} b ((1,2)\otimes \mathrm{id}_{k-2} )^{*} + \sum_{2 \leq m \leq k} \left(\int_{\mathbb{U}} (\zeta - 1)^{m} \nu(d\zeta)\right)( (1,...,m)\otimes \mathrm{id}_{k-m})^{*}, 
\end{align*}
where we recall that $(1,...,m)$ is the usual $m$-cycle in $\mathfrak{S}_m$. By Theorem \ref{convergencegenerale}, the process $\left(Y_t^{N}\right)_{t \geq 0}$ converges in $\mathcal{P}$-distribution toward a $U(N,\mathbb{K})$-invariant multiplicative $\mathcal{A}_{\mathbb{K}}$-L\'{e}vy process. Besides, for any $t \geq 0$, $e^{\boxplus t \mathcal{R}[G]} = \mathcal{R}[Y_{t}]$. Using Theorem \ref{caractmultfreeinf}, we see that $\mathcal{R}[G] = \mathcal{LR}(\mu)$: for any $t \geq 0$, $\mathcal{R}[Y_{t}] = \mathcal{R}[\mu_t]$, and then $\mathbb{E}m_{(1,...,k)}[Y_t] = \int_{\mathbb{U}}z^{k} \mu_t(dz)$ for any integer $k$. Using Theorem \ref{theoremconvprob}, the assertion about $m_{(1,...,k)}[Y_t]$ is a consequence of this discussion and the assertion about the family $(Y_t^{N})_{t \geq 0} \cup (Y_t^{N})^{*})_{t \geq 0}$.

It remains to prove the assertion about the family $(Y_t^{N})_{t \geq 0} \cup (Y_t^{N})^{*})_{t \geq 0}$. Equivalently, we need to prove that the family  $(Y_t^{N})_{t \geq 0} \cup (\overline{Y_t^{N}}))_{t \geq 0}$ converges in $\mathcal{P}$-distribu\-tion and satisfies the asymptotic $\mathcal{P}$-factorization property. Let $k$ and  $k'$ be two integers. Let us define:
\begin{align*}
G^{N}_{k,k'} = \frac{d}{dt}_{\mid t = 0} \mathbb{E}\left[ (Y_{t}^{N})^{\otimes k} \otimes (\overline{Y_{t}^{N}})^{\otimes k'}\right].
\end{align*}
Let us remark that $G_{k,k'}^{N}$ is actually in $\mathbb{C}\left[\rho_{N}^{\mathcal{B}_{k+k'}}\right]$ since for any permutation $\sigma \in \mathfrak{S}_{k+k'}$, ${\sf S}_{k}(\sigma) \in \mathcal{B}_{k+k'}$. Using Theorem \ref{genemultilie}, we can compute $G^{N}_{k,k'}$ and we can see that it is composed of three parts: a drift and Brownian parts which were already studied, and a third part which remains to be understood:  
\begin{align*}
\int_{U(N)} \bigg( M^{\otimes k} \otimes \overline{M}^{\otimes k'} - \mathrm{Id}^{\otimes k+k'} &- \sum_{i=1}^{k} \mathcal{I}_{\{i\}} (i \Im(M), \mathrm{Id}^{\otimes k+k'-1}) \\&+ \sum_{i=k+1}^{k+k'} \mathcal{I}_{(i)}(i ^{t}\Im(M), \mathrm{Id}^{\otimes k+k'-1})\bigg) \nu_{N}(dM), 
\end{align*}
where we used the fact that $\overline{i \Im(M)} = - ^{t}(i\Im(M))$. Using the usual argument, we know that $M^{\otimes k} \otimes \overline{M}^{\otimes k'} - \mathrm{Id}^{\otimes k+k'}$ is equal to: 
\begin{align*}
\sum_{m=1, l=1}^{k,k'}\sum_{1\leq i_1<...<i_m\leq k < j_1 \leq ... \leq j_l \leq k+k'} \mathcal{I}&_{\{i_1,...,i_m, j_1,...,j_l \}} \\&\left[ (M-\mathrm{Id})^{\otimes m} \otimes (\overline{M}-\mathrm{Id})^{\otimes l}, \mathrm{Id}^{\otimes k+k' - m- l}\right].
\end{align*}
Let us remark also that $\overline{M} +\ \!\! ^{t}(i\Im(M)) =\ \!\! ^{t} \Re(M)$. As for the study of $G_k^{N}$, after some simple calculations, we see that we need to study, for any $(m,l) \in \{1,...,k\}\times\{1,...,k'\}$, the limit~of: 
\begin{align*}
&\int_{U(N)} (\Re(M) - \mathrm{Id}) \nu_{N} (dM), \\
&\int_{U(N)}( { }^{t}\Re(M) - \mathrm{Id}) \nu_{N} (dM),\\
&\int_{U(N)} (M-\mathrm{Id})^{\otimes m} \otimes \left(\overline{M}-\mathrm{Id}\right)^{\otimes l} \nu_{N} (d M).
\end{align*}
The two first sequences are easy to study, let us focus only on the last one. Let $(m,l)$ in $\{1,...,k\}\times\{1,...,k'\}$ and let us study: $$C_N = \int_{U(N)} (M-\mathrm{Id})^{\otimes m} \otimes \left(\overline{M}-\mathrm{Id}\right)^{\otimes l} \nu_{N} (d M). $$
Recall the operation ${\sf S}_{k}$ defined in Definition \ref{def:Sk}. With an obvious abuse of notations, the element ${\sf S}_m(C_N)$ is equal to $\int_{U(N)} (M-\mathrm{Id})^{\otimes m} \otimes (M^{*}-\mathrm{Id})^{\otimes l} \nu_{N}(dM)$ which commutes with the tensor action of $U(N)$: it is an element of $\mathbb{C}[\mathfrak{S}_{m+l}(N)]$.  Since the convergence of ${\sf S}_{m}(C_N)$ is equivalent to the convergence of $C_N$, the convergence of the $\mathfrak{S}$-moments of ${\sf S}_{m}(C_N)$ implies the convergence of $C_N$. For any $\sigma \in \mathfrak{S}_{m+l}$, it is easy to see that $m_{\sigma} ({\sf S}_{m}(C_N))$ converges and: 
\begin{align*}
m_{\sigma}({\sf S}_{m}(C)) = \delta_{\sigma \in [(1,...,m+l)]} \int_{\mathbb{U}} (\zeta-1)^{m}(\overline{\zeta}-1)^{l} \nu(d\zeta),
\end{align*}
where we remind that $[(1,...,m+l)]$ is the set of $m+l$ cycles in ${\mathfrak{S}_{m+l}}$. Using the cumulant-moment relation,  for any $\sigma \in \mathfrak{S}_{m+l}$: 
\begin{align*}
 \kappa_{\sigma}({\sf S}_m(C)) = \delta_{\sigma \in [(1,...,m+l)]}\int_{\mathbb{U}} (\zeta-1)^{m}(\overline{\zeta}-1)^{l} \nu(d\zeta).
\end{align*}
Hence for any $\sigma \in \mathfrak{S}_{m+l}$: 
\begin{align*}
\kappa_{\sigma}(C) =\delta_{\sigma \in {\sf S}_{m}([(1,...,m+l)])} \left(\int_{\mathbb{U}} (\zeta-1)^{m}(\overline{\zeta}-1)^{l} \nu(d\zeta)\right).
\end{align*}
At the end, we see that for any integer $k$ and $k'$, $G^{N}_{k,k'}$ converges as $N$ goes to infinity and $(G^{N}_{k,k'})_{k,k'}$ weakly condensates. Using Theorem \ref{convergence*levy}, this implies that the family $\left(Y_{t}^{N}, (Y_t^{N})^{*}\right)_{t\geq 0}$ converges in $\mathcal{P}$-distribution as $N$ goes to infinity and the $\mathcal{P}$-asymptotic factorization property holds for this family. In particular, the $\mathcal{P}$-moments of $\left(Y_{t}^{N}, (Y_t^{N})^{*}\right)_{t\geq 0}$ converges in probability to the limit of their expectation. 
  \end{proof}

Using the up-coming Theorem \ref{theoremconvprob2} and the fact that the defect of any permutation with any Brauer element is an even integer, one can easily show that all the convergences we proved on Brownian an L\'{e}vy processes hold almost surely.

\section{Algebraic fluctuations}
\label{sec:algfluctu}
\subsection{Basic definitions}
For any integer $N$, let $(M_i^{N})_{i \in \mathcal{I}}$ be a family of random matrices which converges in $\mathcal{A}$-distribution.  In this section, we study the asymptotic developments of the $\mathcal{A}$-distribution of $(M_i^{N})_{i \in \mathcal{I}}$. We will often ommit the proof of the results since they are similar to their $0$-order counterpart.  

\begin{definition}
The family $(M_i^{N})_{i \in \mathcal{I}}$ converges in $\mathcal{A}$-distribution up to order $n$ of fluctuations if for any integer $k$, any $p \in \mathcal{A}_k$, any $i \in \{0,...,n\}$ and any $(i_1,...,i_k) \in \mathcal{I}^k$, there exists a real $\mathbb{E}m_{p}^{i}(M_{i_1},...,M_{i_k})$, called the $i^{th}$-order fluctuations of the $p$-moment, such that: 
\begin{align*}
N^{n}\left( \mathbb{E}m_p(M^{N}_{i_1},...,M^{N}_{i_k})- \sum_{i=0}^{n-1} \frac{ \mathbb{E}m_p^{i}(M_{i_1},...,M_{i_k})}{N^{i}} \right) \underset{N \to \infty}{\longrightarrow} \mathbb{E}m^{n}_{p}( M_{i_1},...,M_{i_k} ).
\end{align*}
\end{definition}

Let us remark that the convergence in  $\mathcal{A}$-distribution up to order $n$ of fluctuations of $(M_i^{N})_{i \in \mathcal{I}}$ or of the algebra generated by $(M_i^{N})_{i \in \mathcal{I}}$ are equivalent. Recall the notion of defect ${\sf df}(p',p)$ defined in Equation (1) of \cite{Gab1} and recall the notion of finite dimensional $\mathcal{A}$-cumulants. The following theorem is a generalization of part of Theorem \ref{th:lienlimite} and is a consequence of Theorem 6.1 of \cite{Gab1}.

\begin{theorem}\label{ma-cum-mom-fluctu}
The family $(M_i^{N})_{i \in \mathcal{I}}$ converges in $\mathcal{A}$-distribution up to order $n$ of fluctuations if and only if for any integer $k$, any $p \in \mathcal{A}_k$, any $i \in \{0,...,n\}$ and any $(i_1,...,i_k) \in \mathcal{I}^k$, there exists a real $\mathbb{E}\kappa_{p}^{i,\mathcal{A}}(M_{i_1},...,M_{i_k})$, called the $i^{th}$-order fluctuations of the $p$-$\mathcal{A}$-cumulant, such that: 
\begin{align*}
N^{n}\left( \mathbb{E}\kappa_p^{\mathcal{A}}(M^{N}_{i_1},...,M^{N}_{i_k})- \sum_{i=0}^{n-1} \frac{ \mathbb{E}\kappa_p^{i,\mathcal{A}}(M_{i_1},...,M_{i_k})}{N^{i}} \right) \underset{N \to \infty}{\longrightarrow} \mathbb{E}\kappa^{n,\mathcal{A}}_{p}( M_{i_1},...,M_{i_k} ).
\end{align*}
Let us suppose that $(M_i^{N})_{i \in \mathcal{I}}$ converges in $\mathcal{A}$-distribution up to order $n$ of fluctuations, then, for any integer $k$, any $p \in \mathcal{A}_k$, any $i \in \{0,...,n\}$ and any $(i_1,...,i_k) \in \mathcal{I}^{k}$: 
\begin{align*}
\mathbb{E}m^{i}_{p}\left(M_{i_1},...,M_{i_k}\right) = \sum_{p' \in \mathcal{A}_k, {\sf df}(p',p)\leq i} \mathbb{E}\kappa_{p'}^{i-{\sf df}(p',p),\mathcal{A}}\left[M_{{i_1}},..., M_{{i_k}}\right]. 
\end{align*}
\end{theorem}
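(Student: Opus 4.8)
The plan is to establish the cumulant-side statement first, deduce the moment-side corollary, and throughout rely on the finite-dimensional machinery of Theorem \ref{th:lienlimite} together with the expansion result Theorem 6.1 of \cite{Gab1}. More precisely, since the finite-dimensional $\mathcal{A}$-cumulant $\mathbb{E}\kappa_p^{\mathcal{A}}(M^N_{i_1},\ldots,M^N_{i_k})$ is by definition the cumulant of the endomorphism $E_N^{\mathcal{G}(\mathcal{A})}$ viewed as an element of $\mathbb{C}[\mathcal{A}_k(N)]$, and since $\mathbb{E}m_p(M^N_{i_1},\ldots,M^N_{i_k})$ is the corresponding $p$-moment of that same endomorphism, the passage between moments and cumulants is governed entirely by the combinatorial relation $m_p = \sum_{p'\leq p}\kappa_{p'}$ and its inverse, both of which are finite linear relations with coefficients independent of $N$. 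Thus an asymptotic expansion in powers of $1/N$ of one family is equivalent to an asymptotic expansion of the other, and the equivalence of the two convergence notions in the statement follows by a term-by-term comparison.

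First I would record the base case: the equivalence of convergence in $\mathcal{A}$-distribution (order $0$) with convergence of the finite-dimensional $\mathcal{A}$-cumulants is exactly Theorem \ref{th:lienlimite}. Then I would run an induction on $n$. Assuming both expansions exist up to order $n-1$ and agree in the sense that the coefficients $\mathbb{E}m^i_p$ and $\mathbb{E}\kappa^{i,\mathcal{A}}_{p'}$ are already defined and related for $i\le n-1$, I would use Theorem 6.1 of \cite{Gab1} (the $1/N$-expansion of cumulants versus moments for elements of $\mathbb{C}[\mathcal{A}_k(N)]$, keyed by the defect function ${\sf df}$) applied to the sequence of endomorphisms $(E_N^{\mathcal{G}(\mathcal{A})})_N$. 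That theorem tells us that the order-$n$ coefficient in the moment expansion is obtained from the cumulant coefficients $\mathbb{E}\kappa^{i-{\sf df}(p',p),\mathcal{A}}_{p'}$ summed over all $p'\in\mathcal{A}_k$ with ${\sf df}(p',p)\le i$, which is precisely the displayed formula; conversely the existence of the moment expansion to order $n$ forces, by triangularity of the $\leq$ order and a downward induction on the partitions $p'$, the existence of the order-$n$ cumulant coefficient $\mathbb{E}\kappa^{n,\mathcal{A}}_p$. This closes the induction and simultaneously proves both the equivalence and the explicit moment-cumulant identity.

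The final displayed identity $\mathbb{E}m^i_p(M_{i_1},\ldots,M_{i_k}) = \sum_{p'\in\mathcal{A}_k,\ {\sf df}(p',p)\le i}\mathbb{E}\kappa_{p'}^{i-{\sf df}(p',p),\mathcal{A}}[M_{i_1},\ldots,M_{i_k}]$ then comes directly from substituting the expansion of each $\mathbb{E}\kappa^{\mathcal{A}}_{p'}(M^N_{i_1},\ldots,M^N_{i_k})$ into the finite-$N$ relation $\mathbb{E}m_p = \sum_{p'\leq p}\mathbb{E}\kappa^{\mathcal{A}}_{p'}$, writing $\leq$ in terms of the defect (recall $p'\leq p$ corresponds to ${\sf df}(p',p)=0$, while the genuinely subleading contributions carry positive defect), collecting powers of $1/N$, and identifying the coefficient of $N^{-i}$. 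The only subtlety is to make sure the relevant finite linear combination has coefficients that do not depend on $N$ and that the relation indeed descends to the quotient $\mathbb{C}[\mathcal{P}_k(N)]$-picture used to define finite-dimensional cumulants; this is handled by the injectivity of $\rho_N$ for $N\ge 2k$ and the definitions set in Section \ref{duali}.

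The main obstacle I anticipate is bookkeeping the interplay between the order index $i$ and the defect shift ${\sf df}(p',p)$ when running the induction: one must check that the downward induction on partitions in $(\mathcal{A}_k,\leq)$ used to solve for $\mathbb{E}\kappa^{n,\mathcal{A}}_p$ from the $\mathbb{E}m^n_p$ terminates and produces a well-defined real number, and that no order-$n$ cumulant coefficient is inadvertently needed to define an order-$(n-1)$ moment coefficient. This is exactly the content of Theorem 6.1 of \cite{Gab1}, so after invoking it the remaining work is genuinely routine; I would therefore keep the written proof short, citing Theorem \ref{th:lienlimite} for the base case and Theorem 6.1 of \cite{Gab1} for the inductive step, and spell out only the substitution yielding the final displayed formula.
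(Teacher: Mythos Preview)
Your approach matches the paper's: the paper gives no detailed proof and simply states that the theorem ``is a consequence of Theorem 6.1 of \cite{Gab1}'', which is exactly the external result you invoke for the inductive step, with Theorem \ref{th:lienlimite} as the base case.

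One point of confusion worth correcting: in your first paragraph you assert that the finite-$N$ moment--cumulant relation is $m_p = \sum_{p'\le p}\kappa_{p'}$ with coefficients \emph{independent of $N$}. This is the \emph{asymptotic} relation, not the finite-$N$ one. In $\mathbb{C}[\mathcal{A}_k(N)]$ the moment--cumulant relation involves \emph{all} $p'\in\mathcal{A}_k$ and carries $N$-dependent weights of the form $N^{-{\sf df}(p',p)}$; the partitions with $p'\le p$ are exactly those with ${\sf df}(p',p)=0$, contributing at leading order, while the remaining $p'$ contribute at subleading orders in $1/N$. This is precisely why the displayed formula has the shift $i-{\sf df}(p',p)$ and why Theorem 6.1 of \cite{Gab1} is needed rather than a trivial term-by-term argument. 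Your later paragraphs implicitly use this correctly (you speak of ``genuinely subleading contributions carry positive defect''), so the muddle is only in the opening explanation; once you defer to Theorem 6.1 of \cite{Gab1} the argument is sound.
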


From now on, let us suppose that $(M_i^{N})_{i \in \mathcal{I}}$ converges in $\mathcal{A}$-distribution up to order $n$ of fluctuations.  

\begin{remark}
\label{remarque:fluctuhigher}
Let us denote by $\mathbb{E}\kappa_{p}^{n,\mathcal{A}}(M_{i_1}^{N}, ..., M_{i_k}^{N})$  the difference  $ \mathbb{E}\kappa_p^{\mathcal{A}}(M^{N}_{i_1},...,M^{N}_{i_k})- \sum_{i=0}^{n-1} \frac{ \mathbb{E}\kappa_p^{i,\mathcal{A}}(M_{i_1},...,M_{i_k})}{N^{i}} $. By definition, $\int_{\mathcal{G}(\mathcal{A})(N)} g^{\otimes k} \mathbb{E}[M_{i_1} \otimes...\otimes M_{i_k}] (g^{*})^{\otimes k} $ is equal to:
\begin{align*}
\sum_{p \in \mathcal{A}_k}  \left[ \sum_{i=1}^{n-1} \frac{\mathbb{E}\kappa_{p}^{i, \mathcal{A}}(M_{i_1}, ..., M_{i_k})}{N^{i}} + \frac{\mathbb{E}\kappa_{p}^{n, \mathcal{A}}(M^{N}_{i_1}, ..., M^{N}_{i_k})}{N^{n}}\right] \rho_N(p).
\end{align*}
\end{remark}

Let $i_0$ be in $\mathcal{I}$. We can define the $\mathcal{R}^{(n)}_{\mathcal{A}}$-transform of $(M_{i_0}^{N})_{N \in \mathbb{N}}$.

\begin{definition}
The $\mathcal{R}_{\mathcal{A}}^{(n)}$-transform of $(M_{i_0}^{N})_{N \in \mathbb{N}}$ is the linear form which sends $(p,i) \in \mathcal{A}\times \{0,...,n\}$ on $\mathbb{E}\kappa_{p}^{i,\mathcal{A}}[M_{i_0},...,M_{i_0}]$.
\end{definition}

Let us remark that Theorem \ref{theoremconvprob} can be generalized in order to have almost sure convergence. For any integer $N$, let $(M^{N}_i)_{i \in \mathcal{I}}$ be a family of random matrices which converges in $\mathcal{A}$-distribution up to order $1$ of fluctuations. 
\begin{definition}
The family  $(M^{N}_i)_{i \in \mathcal{I}}$ satisfies the strong asymptotic $\mathcal{A}$-factorization property if it satisfies the asymptotic $\mathcal{A}$-factorization property and for any integers $k_1$ and $k_2$, any $(i_1,...,i_{k_1+k_2}) \in \mathcal{I}^{k_1+k_2}$, any partitions $p_1\in \mathcal{P}_{k_1}$ and $p_2\in \mathcal{P}_{k_2}$:
\begin{align*}
\mathbb{E}m^{1}_{p_1 \otimes p_2} (M_{i_1},...,M_{i_{k_1+k_2}}) = \sum_{i=0}^{1}\mathbb{E}m^{i}_{p_1} (M_{i_1},...,M_{i_{k_1}}) \mathbb{E}m^{1-i}_{p_2} (M_{i_{k_1+1}},...,M_{i_{k_1+k_2}}).
\end{align*} 
\end{definition}
With this definition, we can state the generalization of Theorem \ref{theoremconvprob}, whose proof is totaly similar, except that one can prove that the variances are summable hence the almost-sure convergence result. 

\begin{theorem}
\label{theoremconvprob2}
For any integer $N$, let $(M^{N}_i)_{i \in \mathcal{I}}$ be a family of {\em real} random matrices which converges in $\mathcal{A}$-distribution. If $(M^{N}_i)_{i \in \mathcal{I}}$ satisfies the strong asymptotic $\mathcal{A}$-factorization property then it converges almost surely in $\mathcal{A}$-distribution and for any integer $k$, any $p \in \mathcal{A}_k$, any $i_1,...,i_k \in I$, $m_{p}(M_{i_1}, ..., M_{i_k}) = \mathbb{E}m_{p}(M_{i_1}, ..., M_{i_k})$.  

If for any integer $N$, $(M^{N}_i)_{i \in \mathcal{I}}$ is a family of complex random matrices, the same result holds if we suppose that $(M^{N}_i)_{i \in \mathcal{I}}$ is stable by the conjugate or adjoint operations.
\end{theorem}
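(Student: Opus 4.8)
The plan is to follow exactly the structure of the proof of Theorem \ref{theoremconvprob}, upgrading the estimate on the variance from ``tends to zero'' to ``is summable in $N$''. Fix an integer $k$, a partition $p \in \mathcal{A}_k$ and a tuple $(i_1,\dots,i_k) \in \mathcal{I}^{k}$. As in the earlier proof, the variance $\mathbb{V}\mathrm{ar}\big[m_{p}(M_{i_1}^{N}\otimes\cdots\otimes M_{i_k}^{N})\big]$ equals
\begin{align*}
\mathbb{E}m_{p\otimes p}\big(M_{i_1}^{N},\dots,M_{i_k}^{N},M_{i_1}^{N},\dots,M_{i_k}^{N}\big) - \big[\mathbb{E}m_{p}\big(M_{i_1}^{N},\dots,M_{i_k}^{N}\big)\big]^{2},
\end{align*}
using that the matrices are real (so no conjugation is needed; in the complex case one uses stability by conjugate/adjoint exactly as before). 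Now both $\mathbb{E}m_{p\otimes p}$ and $(\mathbb{E}m_{p})^{2}$ admit an asymptotic expansion up to order $1$ of fluctuations: the hypothesis of convergence in $\mathcal{A}$-distribution up to order $1$ applies to $p\otimes p$ and to $p$, so each of the two terms is of the form $c_{0} + c_{1}/N + o(1/N)$.

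The key step is the cancellation of the order-$0$ and order-$1$ coefficients of this difference. The order-$0$ cancellation is precisely the asymptotic $\mathcal{A}$-factorization property, as in the proof of Theorem \ref{theoremconvprob}: $\mathbb{E}m^{0}_{p\otimes p} = \mathbb{E}m^{0}_{p}\,\mathbb{E}m^{0}_{p}$. The order-$1$ cancellation is exactly what the extra clause in the definition of strong asymptotic $\mathcal{A}$-factorization provides: taking $k_1 = k_2 = k$, $p_1 = p_2 = p$ and the tuple $(i_1,\dots,i_k,i_1,\dots,i_k)$, we get
\begin{align*}
\mathbb{E}m^{1}_{p\otimes p} = \mathbb{E}m^{0}_{p}\,\mathbb{E}m^{1}_{p} + \mathbb{E}m^{1}_{p}\,\mathbb{E}m^{0}_{p} = 2\,\mathbb{E}m^{0}_{p}\,\mathbb{E}m^{1}_{p},
\end{align*}
which is exactly the order-$1$ coefficient of $(\mathbb{E}m_{p})^{2} = (\mathbb{E}m^{0}_{p} + \mathbb{E}m^{1}_{p}/N + o(1/N))^{2}$. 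Hence the difference of the two expansions has vanishing constant and vanishing $1/N$ term, so $\mathbb{V}\mathrm{ar}[m_{p}(M^{N}_{i_1}\otimes\cdots\otimes M^{N}_{i_k})] = O(1/N^{2})$, which is summable.

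Once summability of the variances is established, the Borel--Cantelli argument (together with Chebyshev's inequality) gives that $m_{p}(M^{N}_{i_1},\dots,M^{N}_{i_k}) - \mathbb{E}m_{p}(M^{N}_{i_1},\dots,M^{N}_{i_k}) \to 0$ almost surely; since the expectation converges (the family converges in $\mathcal{A}$-distribution), one gets almost sure convergence of $m_{p}(M^{N}_{i_1},\dots,M^{N}_{i_k})$, with limit $\mathbb{E}m_{p}(M_{i_1},\dots,M_{i_k})$, and the claimed identity $m_{p}(M_{i_1},\dots,M_{i_k}) = \mathbb{E}m_{p}(M_{i_1},\dots,M_{i_k})$ follows. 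Finally, since $\mathcal{A}$ is countable (each $\mathcal{A}_k$ is finite and $\mathcal{I}$ and the tuple length range over countable sets), a countable union of null sets is null, giving almost sure convergence in $\mathcal{A}$-distribution. The complex case is handled verbatim using the assumed stability under conjugation or adjunction, exactly as in Theorem \ref{theoremconvprob}. The main obstacle is purely bookkeeping: one must check carefully that the $1/N$-expansions of $\mathbb{E}m_{p\otimes p}$ and $\mathbb{E}m_{p}^{2}$ line up term by term, i.e.\ that the order-$1$ clause of the strong factorization definition is precisely the identity needed — but this is immediate from expanding the square, so there is no real analytic difficulty beyond that.
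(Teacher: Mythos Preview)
Your approach matches the paper's own sketch exactly: the paper says only that the proof is ``totally similar'' to that of Theorem~\ref{theoremconvprob} ``except that one can prove that the variances are summable, hence the almost-sure convergence result'', and you have spelled out precisely that argument, including the Borel--Cantelli step and the countable-union remark.

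There is, however, one slip in your bookkeeping worth flagging. Convergence in $\mathcal{A}$-distribution up to order $1$ of fluctuations, as defined just above the theorem, gives expansions of the form $c_0 + c_1/N + o(1/N)$, \emph{not} $c_0 + c_1/N + O(1/N^{2})$. After the order-$0$ and order-$1$ cancellations you correctly identify via the strong factorization property, the variance is therefore only $o(1/N)$, and $o(1/N)$ is not summable in general (think of $1/(N\log N)$). Your line ``has vanishing constant and vanishing $1/N$ term, so $\mathbb{V}\mathrm{ar}[\cdots] = O(1/N^{2})$'' is thus an unjustified jump. The paper's one-line proof sketch does not address this point either, so you have faithfully reproduced the intended argument; in the applications the paper actually cares about (the L\'evy processes of Section~\ref{sec:Levy}, cf.\ the remark following the theorem), the expansions hold to all orders and the issue disappears. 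But as a proof of the theorem under the stated hypotheses alone, this step needs either a stronger assumption (convergence up to order $2$) or an additional argument.
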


\begin{remark}
Using the forthcoming Theorem \ref{convergencegeneralefluctua}, all the examples of L\'{e}vy processes we considered in this paper satisfy the strong asymptotic $\mathcal{A}$-factorization property. This is due to the fact that for any permutation $\sigma$,  and any Brauer element $b$, the defect ${\sf df}(b,\sigma) \in 2\mathbb{N}$: this implies that for any L\'{e}vy processes we considered the $1$-order fluctuation of the $\mathfrak{S}$-moments were equal to zero: the strong asymptotic $\mathfrak{S}$-factorization property holds.
\end{remark}

When $(M_{i}^{N})_{i \in \mathcal{I}}$ is $\mathcal{G}(\mathcal{A})$-invariant, there exists a generalization of Theorem \ref{th:invarianceasymp}.

\begin{theorem}
\label{AentrainePfluctu}
Let us suppose that  $(M_{i}^{N})_{i \in \mathcal{I}}$ is $\mathcal{G}(\mathcal{A})$-invariant and converges in $\mathcal{A}$-distribution up to order $n$ of fluctuations, then it converges in $\mathcal{P}$-distribution up to order $n$ of fluctuations. For any integer $k$, any $p \in \mathcal{P}_k$, any $i\in \{0,...,n\}$ and any $(i_1,...,i_k) \in \mathcal{I}^{k}$: 
\begin{align*}
 \mathbb{E}m_{p}^{i}(M_{i_1},..., M_{i_k}) = \sum_{p' \in \mathcal{A}_k, {\sf df}(p', p)\leq i}  \mathbb{E}\kappa^{i-{{\sf df}(p',p)}, \mathcal{A}}_{p'}\left(M_{i_1},..., M_{i_k}\right). 
\end{align*}
More generally for any $\mathcal{A}'$ such that $\mathcal{A} \subset \mathcal{A}'$, for any integer $k$, any $p \in \mathcal{A}'_k$, any $(i_1,...,i_k) \in \mathcal{I}^{k}$ and any $i\in \{0,...,n\}$:
\begin{align*}
\mathbb{E}\kappa_p^{i,\mathcal{A}'} [M_{i_1}, ..., M_{i_k}]= \delta_{p \in \mathcal{A}} \mathbb{E}\kappa_{p}^{i,\mathcal{A}} [M_{i_1}, ..., M_{i_k}].
\end{align*} 
\end{theorem}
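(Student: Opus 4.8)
The plan is to mimic, at the level of higher-order fluctuations, the proof of Theorem \ref{th:invarianceasymp}, replacing the qualitative ``converges'' statements by their quantitative $1/N$-expansions and keeping careful track of the powers of $N$ coming from the defect ${\sf df}$. First I would fix $\mathcal{A}'\supset\mathcal{A}$ and start from the fundamental identity for $\mathcal{G}(\mathcal{A})$-invariant families, namely the refinement of Equation (\ref{eq:fondamentale}) recorded in Remark \ref{remarque:fluctuhigher}: for $N$ large,
\begin{align*}
\int_{\mathcal{G}(\mathcal{A})(N)} g^{\otimes k} \mathbb{E}[M_{i_1}\otimes\cdots\otimes M_{i_k}](g^{*})^{\otimes k}\,dg = \sum_{p\in\mathcal{A}_k}\Bigl(\sum_{i=0}^{n-1}\frac{\mathbb{E}\kappa_p^{i,\mathcal{A}}(M_{i_1},\dots,M_{i_k})}{N^i} + \frac{\mathbb{E}\kappa_p^{n,\mathcal{A}}(M^N_{i_1},\dots,M^N_{i_k})}{N^n}\Bigr)\rho_N(p).
\end{align*}
Since the family is $\mathcal{G}(\mathcal{A})$-invariant, the left-hand side is just $\mathbb{E}[M_{i_1}\otimes\cdots\otimes M_{i_k}]$ itself, so this is an honest expansion of that endomorphism in the basis $(\rho_N(p))_{p\in\mathcal{A}_k}$ with coefficients that are polynomials in $1/N$ up to a controlled remainder. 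The key point is then that the finite-dimensional $\mathcal{A}'$-cumulants of $\mathbb{E}[M_{i_1}\otimes\cdots\otimes M_{i_k}]$, computed via Definition 5.3 of \cite{Gab1} in $\mathbb{C}[\mathcal{A}'_k(N)]$, are obtained from this expansion by the same combinatorial procedure as in the $0$-order case, and the $\mathcal{A}$-cumulants sit inside $\mathbb{C}[\mathcal{A}_k]\subset\mathbb{C}[\mathcal{A}'_k]$: this gives the displayed equality $\mathbb{E}\kappa_p^{i,\mathcal{A}'}[M_{i_1},\dots,M_{i_k}]=\delta_{p\in\mathcal{A}}\mathbb{E}\kappa_p^{i,\mathcal{A}}[M_{i_1},\dots,M_{i_k}]$ for every order $i\in\{0,\dots,n\}$, exactly as Equation (\ref{eq:finitedim}) gives it for $i=0$.

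Next I would deduce convergence in $\mathcal{P}$-distribution up to order $n$. Applying the previous step with $\mathcal{A}'=\mathcal{P}$, the finite-dimensional $\mathcal{P}$-cumulants $\mathbb{E}\kappa_p^{\mathcal{P}}(M^N_{i_1},\dots,M^N_{i_k})$ are, for $p\in\mathcal{A}_k$, equal to the $\mathcal{A}$-cumulants which by hypothesis admit an expansion to order $n$, and they vanish for $p\notin\mathcal{A}_k$ (so trivially admit such an expansion). By the finite-dimensional version of Theorem \ref{ma-cum-mom-fluctu} (Theorem 6.1 of \cite{Gab1}) applied to the sequence $\bigl(\mathbb{E}[M^N_{i_1}\otimes\cdots\otimes M^N_{i_k}]\bigr)_N$ in $\prod_N\mathbb{C}[\mathcal{P}_k(N)]$, this is equivalent to convergence of the $\mathcal{P}$-moments to order $n$, i.e.\ $(M^N_i)_{i\in\mathcal{I}}$ converges in $\mathcal{P}$-distribution up to order $n$ of fluctuations, and the limiting $i$-th order $\mathcal{P}$-cumulants are the $\mathbb{E}\kappa_p^{i,\mathcal{A}}$. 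Finally, for the moment formula I would invoke the second part of Theorem \ref{ma-cum-mom-fluctu} in the limiting $\mathcal{P}$-tracial algebra: since the $\mathcal{P}$-cumulants are concentrated on $\mathcal{A}_k$ (with $i$-th fluctuation $\mathbb{E}\kappa_{p'}^{i,\mathcal{A}}$), the expansion $\mathbb{E}m^i_p = \sum_{p'\in\mathcal{P}_k,\,{\sf df}(p',p)\le i}\mathbb{E}\kappa_{p'}^{i-{\sf df}(p',p),\mathcal{P}}$ collapses to the sum over $p'\in\mathcal{A}_k$ with ${\sf df}(p',p)\le i$, which is precisely the asserted formula.

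The main obstacle I anticipate is bookkeeping the error terms rather than any conceptual difficulty: one must verify that passing from the $1/N$-expansion of $\mathbb{E}[M^N_{i_1}\otimes\cdots\otimes M^N_{i_k}]$ in the partition basis to the $1/N$-expansion of its (finite-dimensional) cumulants is legitimate order by order, which is exactly what Theorem 6.1 and Proposition 5.2 of \cite{Gab1} are designed to handle, but one should check that the remainder $\mathbb{E}\kappa_p^{n,\mathcal{A}}(M^N_{i_1},\dots,M^N_{i_k})$ stays bounded uniformly in $N$ so that the change of basis does not destroy the expansion; this boundedness is immediate from the definition of convergence up to order $n$ of fluctuations. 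Everything else is a routine transcription of the proof of Theorem \ref{th:invarianceasymp}, with the geodesic-defect identity ${\sf df}$ of \cite{Gab1} supplying the shift between cumulant order and moment order, so I would present the argument compactly and refer to the $0$-order statements for the combinatorial details.
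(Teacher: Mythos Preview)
Your proposal is correct and follows exactly the approach the paper intends: the paper omits the proof of this theorem, stating at the start of Section~\ref{sec:algfluctu} that the proofs are similar to their $0$-order counterparts, and your argument is precisely the higher-order transcription of the proof of Theorem~\ref{th:invarianceasymp}, replacing Equation~(\ref{eq:finitedim}) by its refinement from Remark~\ref{remarque:fluctuhigher} and Theorem~\ref{th:lienlimite} by Theorem~\ref{ma-cum-mom-fluctu}. The bookkeeping concern you raise about the remainder term is indeed routine, as you note.
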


\subsection{Freeness up to higher order of fluctuations}
A notion of asymptotic $\mathcal{A}$-freeness up to higher order of fluctuations can be defined for families which converge in $\mathcal{A}$-distribution up to order $n$ of fluctuations. For any integer $N$, let $(M^{N}_i)_{i \in \mathcal{I}}$ and $(L^{N}_j)_{j \in \mathcal{J}}$ be two families of random matrices. We recall that we always supposed, for sake of simplicity, that $\mathcal{I}\cap \mathcal{J} = \emptyset$. Let us suppose that the family  $(M^{N}_i)_{i \in \mathcal{I}} \cap (L^{N}_j)_{j \in \mathcal{J}}$ converges in $\mathcal{A}$-distribution up to order $n$ of fluctuations.

\begin{definition}
The families $(M^{N}_i)_{i \in \mathcal{I}}$ and $(L^{N}_j)_{j \in \mathcal{J}}$ are asymptotically $\mathcal{A}$-free up to order $n$ of fluctuations if the two following conditions hold: 
\begin{description}
\item[$\bullet$ \textbf{compatibility condition: }] for any integer $k$, any $i \in \{0,...,n\}$, any $p \in \mathcal{A}_k$, any $(B^{N}_1, ...,B^{N}_k) \in \left((M_i^{N})_{i \in \mathcal{I}}\cup (L_j^{N})_{j \in \mathcal{J}}\right)^{k}$, if there exists $i$ and $j$ in the same block of $p$ such that $\{ B_i^{N},B_j^{N}\} \subset (M_i^{N})_{i \in \mathcal{I}}$ of $\{ B_i^{N},B_j^{N}\} \subset (L_j^{N})_{i \in \mathcal{J}}$, then:
\begin{align*}
\mathbb{E}\kappa_{p}^{i,\mathcal{A}}(B_1,...,B_k)=0, 
\end{align*}

\item[$\bullet$ \textbf{factorization property: }] for any integers $k$ and $k'$, any $i \in \{0,...,n\}$, any $p_1 \in \mathcal{P}_{k}$, any $p_2 \in \mathcal{P}_{k'}$, any $(i_1,...,i_{k}) \in \mathcal{I}^{k}$ and any $(j_{1},...,j_{k'}) \in \mathcal{J}^{k'}$, 
\begin{align*}
\mathbb{E}\kappa_{p_1\otimes p_2}^{i, \mathcal{A}}(M_{i_1},...,M_{i_{k}}, L_{j_1}, ..., L_{j_{k'}})\!\!=\!\! \sum_{i'=0}^{i}\!\mathbb{E}\kappa_{p_1}^{i', \mathcal{A}}(M_{i_1},...,M_{i_{k}}) \mathbb{E}\kappa_{p_2}^{i-i', \mathcal{A}}(L_{j_1},...,L_{j_{k'}}).
\end{align*} 
\end{description}
\end{definition}

Actually, it is easy to see that the  families $(M^{N}_i)_{i \in \mathcal{I}}$ and $(L^{N}_j)_{j \in \mathcal{J}}$ are asymptotically $\mathcal{A}$-free up to order $n$ of fluctuations if and only if  the algebras generated by $(M^{N}_i)_{i \in \mathcal{I}}$ and by $(L^{N}_j)_{j \in \mathcal{J}}$ are asymptotically $\mathcal{A}$-free up to order $n$ of fluctuations. All the theorems we proved in the zero order case can be easily generalized for the notion of $\mathcal{A}$-freeness up to order $n$ of fluctuations. The generalization of Theorem \ref{sommeetproduit} is given by the following theorem. Recall the notion of $\prec$-defect, denoted by $\eta$, defined in Definition 3.13 of \cite{Gab1}, and the notion of defect, denoted by ${\sf df}$, defined in Equation $(1)$ of \cite{Gab1}.  Recall the notions of $\boxplus$ and $\boxtimes$ convolutions in Notation 6.2 of~\cite{Gab1}.

\begin{theorem}
Let us suppose that $(M^{N}_i)_{i \in \mathcal{I}}$ and $(L^{N}_j)_{j \in \mathcal{J}}$ are asymptotically $\mathcal{A}$-free up to order $n$ of fluctuations. Let $k$ be an integer, $p$ be in $\mathcal{A}_k$, $(i_1,...,i_k)$ be in $\mathcal{I}^{k}$, $(j_1,...,j_{k})$ be in $\mathcal{J}^{k}$ and $i\in \{0,...,n\}$, then: \\
$\bullet$ $\mathbb{E}\kappa_{p}^{i,A}[M_{i_1}+L_{j_1},...,M_{i_k}+L_{j_k}]$ is equal to: 
\begin{align*}
\sum_{(p_1, p_2, I) \in \mathfrak{F}_{2}(p )} \sum_{i'=0}^{i} \mathbb{E}\kappa^{i',\mathcal{A}}_{p_1}(M_{i_1},...,M_{i_k}) \mathbb{E}\kappa^{i-i',\mathcal{A}}_{p_2}(L_{j_1},...,L_{j_k}),
\end{align*}
$\bullet$ $\mathbb{E}\kappa_{p}^{i,A}[M_{i_1}L_{j_1},...,M_{i_k}L_{j_k}]$ is equal to:
\begin{align*}
\sum_{p', p'' \in \mathcal{A}_k, i', j' |p'\circ p''=p,\ i'+j'+\eta(p',p'') =i} \mathbb{E}\kappa^{i',\mathcal{A}}_{p'}(M_{i_1},...,M_{i_k}) \mathbb{E}\kappa_{p''}^{j',\mathcal{A}}(L_{j_1},...,L_{j_k}),
\end{align*}
$\bullet$ $\mathbb{E}m_{p}^{i}[B_1C_1,...,B_kC_k]$ is equal to:
\begin{align*}
 \sum_{p' \in \mathcal{A}_k, i', j' | i'+j'+{\sf df}(p', p) = i} \mathbb{E}\kappa^{i',\mathcal{A}}_{p'}(M_{i_1},...,M_{i_k})\mathbb{E}m_{^{t}p'\circ p}^{j'}(L_{j_1},...,L_{j_k}).
\end{align*}

Thus, for any $i \in \mathcal{I}$ and $j\in \mathcal{J}$, 
\begin{align*}
\mathcal{R}_{\mathcal{\mathcal{A}}}^{(n)}\left[M_{i}+L_j\right] = \mathcal{R}^{(n)}_{\mathcal{A}}\left[M_i \right] \boxplus \mathcal{R}^{(n)}_{\mathcal{A}} \left[L_j\right] \ \ \text{   and   }\ \ 
\mathcal{R}_{\mathcal{A}}^{(n)}\left[M_iL_j\right]  = \mathcal{R}^{(n)}_\mathcal{A}\left[M_i\right]  \boxtimes \mathcal{R}^{(n)}_\mathcal{A} \left[L_j\right]. 
\end{align*} 
\end{theorem}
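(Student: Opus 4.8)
The strategy is to lift the zero-order computations of Theorems \ref{th:calculloi} and \ref{sommeetproduit} to the level of asymptotic expansions, using the finite-dimensional cumulants as the bookkeeping device. First I would fix integers $k$, $p \in \mathcal{A}_k$, families $(i_1,\dots,i_k) \in \mathcal{I}^k$ and $(j_1,\dots,j_k) \in \mathcal{J}^k$, and work with the finite-$N$ identity of Remark \ref{remarque:fluctuhigher}, which expresses $\int_{\mathcal{G}(\mathcal{A})(N)} g^{\otimes k}\mathbb{E}[M_{i_1}\otimes\cdots\otimes M_{i_k}](g^{*})^{\otimes k}\, dg$ as a sum over $\mathcal{A}_k$ of $\rho_N(p)$ weighted by the finite-dimensional $\mathcal{A}$-cumulants and their expansion in powers of $1/N$. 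The asymptotic $\mathcal{A}$-freeness up to order $n$ hypothesis says exactly that the mixed finite-dimensional $\mathcal{A}$-cumulants vanish order-by-order and the compatible ones factorize order-by-order, so one may substitute these facts into the finite-$N$ manipulations used to prove Theorem \ref{th:calculloi}.

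\textbf{Main steps.} I would proceed as follows. (1) \emph{Additive case.} Expand $\mathbb{E}\kappa_p^{\mathcal{A}}(M_{i_1}^N+L_{j_1}^N,\dots)$ by multilinearity into $2^k$ terms indexed by a subset $I\subset\{1,\dots,k\}$; each term is a mixed finite-dimensional $\mathcal{A}$-cumulant of $M$'s and $L$'s. By the compatibility condition it vanishes unless $p$ respects the partition of $\{1,\dots,k\}$ into $I$ and $I^c$, i.e.\ $p = p_1\otimes p_2$ up to a column permutation with $p_I = p_1$, $p_{I^c} = p_2$; the defect of nontriviality forces $(p_1,p_2,I)\in\mathfrak{F}_2(p)$. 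The factorization property then splits the cumulant into a convolution-type sum $\sum_{i'} \mathbb{E}\kappa^{i',\mathcal{A}}_{p_1}\mathbb{E}\kappa^{i-i',\mathcal{A}}_{p_2}$; collecting the $1/N^i$ coefficient gives the first displayed formula. (2) \emph{Multiplicative cumulant case.} Starting from the finite-$N$ analogue of Equation (\ref{eq:aprouver}) and applying Theorem $3.5$ of \cite{Gab1} (the identification $p_1\circ p_2 \leq p$, $p_2\in\mathsf{K}_p(p_1) \Leftrightarrow p_1\otimes p_2 \leq (p\otimes\mathrm{id}_k)\tau$), the extra powers of $N$ produced by the multiplication $pp' = N^{\kappa(p,p')}p\circ p'$ and by the $\mathcal{A}$-cumulant normalization combine into precisely the $\prec$-defect $\eta(p',p'')$ of Definition 3.13 of \cite{Gab1}; tracking this contribution alongside the orders $i'$, $j'$ of the two factor-cumulants gives the constraint $i'+j'+\eta(p',p'')=i$. (3) \emph{Multiplicative moment case.} This follows from (2) together with the equivalence of the first and second assertions of Theorem $3.5$ of \cite{Gab1} and the relation $m_p = \sum_{p'\leq p}\kappa_{p'}$ applied order-by-order, exactly as in the last display of the proof of Theorem \ref{th:calculloi}, now keeping the defect ${\sf df}(p',p)$ as the power of $N$ that relates $\kappa$-expansions to $m$-expansions (as in Theorem \ref{ma-cum-mom-fluctu}). (4) \emph{The $\mathcal{R}^{(n)}$-transform identities} are the specialization of the three formulas to $a_i = M_i$, $b_i = L_j$ for a single pair $(i,j)$, using Definition 6.2 of \cite{Gab1} for the graded convolutions $\boxplus$, $\boxtimes$ on the space of forms on $\mathcal{A}\times\{0,\dots,n\}$, which are defined precisely so that the sums over $\mathfrak{F}_2(p)$ (resp.\ over $p'\circ p''=p$ with the $\eta$-grading) become the coefficients of $\boxplus$ (resp.\ $\boxtimes$).

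\textbf{Expected obstacle.} The routine part is the multilinear expansion and the substitution of the freeness conditions; the delicate point is bookkeeping the powers of $N$ in the multiplicative case, i.e.\ showing that the $\prec$-defect $\eta$ and the ordinary defect ${\sf df}$ are exactly the exponents that appear, so that the order-$i$ coefficient on the left matches the sum over $i'+j'+\eta = i$ (resp.\ $i'+j'+{\sf df}=i$) on the right. This requires invoking the precise statements of Theorem $3.5$ and Section $6$ of \cite{Gab1} about how $\circ$, $\otimes$, the geodesic order $\leq$, and the normalizations interact, and checking that no cross-terms of intermediate order are lost when one passes from the finite-$N$ identity to the expansion; once that is in place the rest is formal. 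I would also need the convergence up to order $n$ of the joint family (hypothesized) together with Theorem \ref{ma-cum-mom-fluctu} to guarantee that all the coefficients $\mathbb{E}\kappa_p^{i,\mathcal{A}}$ and $\mathbb{E}m_p^i$ appearing on both sides actually exist, so that the equalities are between well-defined real numbers.
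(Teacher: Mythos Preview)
Your plan is sound and is precisely the ``combinatorial proof as we did for Theorem \ref{th:calculloi}'' that the paper itself mentions as one possible route. The additive case goes through exactly as you say (multilinearity plus the order-by-order vanishing/factorization conditions), and for the multiplicative cases your identification of the $\prec$-defect $\eta(p',p'')$ and the ordinary defect ${\sf df}(p',p)$ as the exponents of $N$ is the heart of the matter; these are indeed the results one needs from Section~6 and Theorem~3.4/3.5 of \cite{Gab1}.

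The paper, however, does not carry out this combinatorial argument. Instead it takes a shortcut: since both sides of each identity depend only on the marginal higher-order $\mathcal{A}$-cumulants together with the freeness relations, one may replace the given families by any concrete realization with the same marginals that is $\mathcal{A}$-free up to order $n$. Theorem \ref{Lemainfluct} furnishes such a realization---take the two families independent with one of them $\mathcal{G}(\mathcal{A})$-invariant. In that realization, at finite $N$ one has the exact tensor identity $E_N^{\mathcal{G}(\mathcal{A})}(ML)=E_N^{\mathcal{G}(\mathcal{A})}(M)\,E_N^{\mathcal{G}(\mathcal{A})}(L)$ in $\mathbb{C}[\mathcal{A}_k(N)]$ (and analogously for the sum), so the expansion of the cumulants of a product is reduced to Theorem~6.3 of \cite{Gab1} together with the finite-$N$ decomposition of Remark \ref{remarque:fluctuhigher}. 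The defect bookkeeping you flagged as the delicate point is thus outsourced to a ready-made statement in \cite{Gab1}.

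The trade-off is clear: your route is self-contained and makes the appearance of $\eta$ and ${\sf df}$ transparent, but you must redo by hand the power-of-$N$ analysis that \cite{Gab1} already packages. The paper's route is shorter and cleaner once Theorem \ref{Lemainfluct} is available, at the cost of a slightly forward reference and of obscuring the combinatorics behind a black-box citation.
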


\begin{proof}
One could do a combinatorial proof as we did for Theorem \ref{th:calculloi}, using results such as Theorem $3.4$ in \cite{Gab1}. Actually, using the up-coming Theorem \ref{Lemainfluct}, one can suppose that the families we consider satisfy the hypotheses of Theorem \ref{Lemainfluct}. Then the result we have to prove is a consequence of Remark \ref{remarque:fluctuhigher}, simple calculations and Theorem $6.3$ of~\cite{Gab1}.
  \end{proof}

\subsection{$\mathcal{G}(\mathcal{A})$-invariance and independence imply $\mathcal{A}$-freeness of higher order}

The following theorem allows us to construct examples of families of sequences of matrices which are asymptotically $\mathcal{A}$-free up to order $n$ of fluctuations: it is a generalization of Theorem~\ref{Lemain}.

\begin{theorem}
\label{Lemainfluct}
Let us suppose that $(M^{N}_i)_{i \in \mathcal{I}}$ and $(L^{N}_j)_{j \in \mathcal{J}}$ converge in $\mathcal{A}$-distribution up to order $n$ of fluctuations. Let us suppose that $(L^{N}_j)_{j \in \mathcal{J}}$ is $\mathcal{G}(\mathcal{A})$-invariant and that for every integer $N$, the two families $(M^{N}_i)_{i \in \mathcal{I}}$ and $(L^{N}_j)_{j \in \mathcal{J}}$ are independent. Then the two families $(M^{N}_i)_{i \in \mathcal{I}}$ and $(L^{N}_j)_{j \in \mathcal{J}}$ are asymptotically $\mathcal{A}$-free up to order $n$ of fluctuations. 
\end{theorem}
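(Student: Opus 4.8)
The plan is to mimic the proof of Theorem \ref{Lemain}, working directly with the finite-dimensional $\mathcal{A}$-cumulants and their asymptotic expansions, and then to read off the compatibility condition and the factorization property order by order in the $1/N$ expansion.

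First I would fix integers $k_1$, $k_2$, a $k_1$-tuple $(i_1,\dots,i_{k_1})$ in $\mathcal{I}^{k_1}$ and a $k_2$-tuple $(j_1,\dots,j_{k_2})$ in $\mathcal{J}^{k_2}$, and set $k=k_1+k_2$. For $N$ large enough so that the $N$-dimensional $\mathcal{A}$-cumulants are well defined ($N\geq 2k$), I would use independence together with the $\mathcal{G}(\mathcal{A})$-invariance of $(L^{N}_j)_{j\in\mathcal{J}}$ exactly as in the proof of Theorem \ref{Lemain}: the endomorphism
\begin{align*}
\int_{\mathcal{G}(\mathcal{A})(N)} M^{\otimes k}\, \mathbb{E}\!\left[M^{N}_{i_1}\otimes\cdots\otimes M^{N}_{i_{k_1}}\otimes L^{N}_{j_1}\otimes\cdots\otimes L^{N}_{j_{k_2}}\right](M^{-1})^{\otimes k}\,dM
\end{align*}
factors as the tensor product of $\int_{\mathcal{G}(\mathcal{A})(N)} M^{\otimes k_1}\,\mathbb{E}[M^{N}_{i_1}\otimes\cdots\otimes M^{N}_{i_{k_1}}](M^{-1})^{\otimes k_1}\,dM$ with $\mathbb{E}[L^{N}_{j_1}\otimes\cdots\otimes L^{N}_{j_{k_2}}]$. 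Decomposing both factors in the basis $(\rho_N(p))_{p\in\mathcal{A}}$ via Remark \ref{remarque:fluctuhigher} and using that $\rho_N(p_1\otimes p_2)=\rho_N(p_1)\otimes\rho_N(p_2)$, I obtain for every $p\in\mathcal{A}_k$ the exact identity
\begin{align*}
\mathbb{E}\kappa_{p}^{\mathcal{A}}\!\left[M^{N}_{i_1},\dots,M^{N}_{i_{k_1}},L^{N}_{j_1},\dots,L^{N}_{j_{k_2}}\right]
= \delta_{p=p_1\otimes p_2}\; \mathbb{E}\kappa_{p_1}^{\mathcal{A}}\!\left[(M^{N}_{i_n})_{n=1}^{k_1}\right]\,\mathbb{E}\kappa_{p_2}^{\mathcal{A}}\!\left[(L^{N}_{j_n})_{n=1}^{k_2}\right],
\end{align*}
where $\delta_{p=p_1\otimes p_2}$ means there exist (necessarily unique) $p_1\in\mathcal{A}_{k_1}$, $p_2\in\mathcal{A}_{k_2}$ with $p=p_1\otimes p_2$, and the right-hand side is understood to be $0$ otherwise. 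The same identity holds after conjugating by any permutation, which handles the general compatibility statement, not merely the block-split one.

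Next I would expand both sides in powers of $1/N$ using the hypothesis that $(M^{N}_i)_{i\in\mathcal{I}}$, $(L^{N}_j)_{j\in\mathcal{J}}$ and hence their union converge in $\mathcal{A}$-distribution up to order $n$ of fluctuations, together with Theorem \ref{ma-cum-mom-fluctu}: write $\mathbb{E}\kappa_{p}^{\mathcal{A}}[\cdot]=\sum_{i=0}^{n-1}\mathbb{E}\kappa_{p}^{i,\mathcal{A}}[\cdot]/N^{i}+O(1/N^{n})$ for each of the three cumulants above, substitute into the exact identity, and equate coefficients of $1/N^{i}$ for $i\in\{0,\dots,n\}$. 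The left factor gives the compatibility condition: if $p$ is not of the form $p_1\otimes p_2$ with $p_1$ involving only $\mathcal{I}$-indices and $p_2$ only $\mathcal{J}$-indices, then every coefficient vanishes, i.e. $\mathbb{E}\kappa_{p}^{i,\mathcal{A}}(B_1,\dots,B_k)=0$ for all $i$. When $p=p_1\otimes p_2$, the Cauchy product of the two expansions yields
\begin{align*}
\mathbb{E}\kappa_{p_1\otimes p_2}^{i,\mathcal{A}}(M_{i_1},\dots,M_{i_{k_1}},L_{j_1},\dots,L_{j_{k_2}})
= \sum_{i'=0}^{i}\mathbb{E}\kappa_{p_1}^{i',\mathcal{A}}(M_{i_1},\dots,M_{i_{k_1}})\,\mathbb{E}\kappa_{p_2}^{i-i',\mathcal{A}}(L_{j_1},\dots,L_{j_{k_2}}),
\end{align*}
which is exactly the factorization property up to order $n$ of fluctuations. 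Finally, convergence of each $\mathbb{E}\kappa_{p}^{i,\mathcal{A}}$ of the joint family (needed to even assert "converges in $\mathcal{A}$-distribution up to order $n$") follows because the right-hand sides above are finite sums of quantities known to converge by hypothesis, so by Theorem \ref{ma-cum-mom-fluctu} applied to the union the family $(M^{N}_i)_{i\in\mathcal{I}}\cup(L^{N}_j)_{j\in\mathcal{J}}$ converges in $\mathcal{A}$-distribution up to order $n$ of fluctuations; one then invokes the definition of asymptotic $\mathcal{A}$-freeness up to order $n$ to conclude.

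The main obstacle is bookkeeping rather than conceptual: I must be careful that the exact finite-$N$ identity truly holds for all $N\geq 2k$ (so that the asymptotic expansions can be compared coefficient by coefficient and no ``error term'' of intermediate order leaks in), and that the Cauchy-product rearrangement is legitimate up to order $n$; this is where one has to argue, as in the remark following Theorem \ref{Lemain}, that the compatibility and factorization relations already hold identically for large finite $N$, which makes the passage to fluctuation orders automatic. Everything else is a verbatim fluctuation-order upgrade of the proof of Theorem \ref{Lemain}.
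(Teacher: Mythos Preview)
Your proposal is correct and follows precisely the approach the paper intends: the paper omits the proof of Theorem \ref{Lemainfluct}, remarking at the start of Section \ref{sec:algfluctu} that proofs are ``similar to their $0$-order counterpart,'' and your argument is exactly the fluctuation-order upgrade of the proof of Theorem \ref{Lemain}, using the exact finite-$N$ factorization of cumulants together with a Cauchy-product expansion in $1/N$. The only thing to note is that the paper takes the finite-$N$ identity (valid for all $N\geq 2k$, as you observe) as the key input, so your careful remark about coefficient-by-coefficient comparison being legitimate is precisely the right point to emphasize.
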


\subsection{Convergence of L\'{e}vy processes in $\mathcal{P}$-distribution up to higher order of fluctuations}

In this section, one can find the generalization of Section \ref{sec:Levy} to the higher orders of fluctuations. Let $\left((X_t^{N})_{t \geq 0}\right)_{N \geq 0}$ be a sequence of $\mathcal{G}(\mathcal{A})$-invariant L\'{e}vy processes which are either all additive or multiplicative. For any integers $k$ and $N$, let:  
\begin{align*}
G^{N}_k = \frac{d}{dt}_{\mid t=0} \mathbb{E}\left[(X^{N}_t)^{\otimes k}\right].
\end{align*} 
Recall Section 6 of \cite{Gab1} where the convergence up to higher orders of fluctuations are defined for elements in $\prod_{N \in \mathbb{N}}\mathbb{C}[\mathcal{P}_k(N)]$. 
\begin{definition}
Let us suppose that for any integer $k$, $(G_k^{N})_{N \in \mathbb{N}}$, seen as an element of $\prod_{N \in \mathbb{N}}\mathbb{C}[\mathcal{P}_k(N)]$, converges up to order $n$ of fluctuations. The $\mathcal{R}^{(n)}$-transform of $G$, denoted by $\mathcal{R}^{(n)}(G)$, is the linear form which sends $(p,i) \in \mathcal{P}_k \times \{0,...,n\}$ on $\kappa_{p}^{i}(G)$.  
\end{definition}

Recall the notation $\boxdot$ which stands either for $\boxplus$ or $\boxtimes$.

\begin{theorem}
\label{convergencegeneralefluctua}
Let us suppose that for any integer $k$, $(G_k^{N})_{N \in \mathbb{N}}$ converges up to order $n$ of fluctuations. Then $(X_t^{N})_{t \geq 0}$ converges in $\mathcal{P}$-expectation up to order $n$ of fluctuations as $N$ goes to infinity. For any real $t_0 \geq 0$: 
\begin{align*}
\mathcal{R}^{(n)}[X_{t_0}] = e^{\boxdot t_0 \mathcal{R}^{(n)}(G)}.
\end{align*} 
Besides, in the multiplicative case, for any integer $k$, any $p \in \mathcal{P}_k$, any $t_0 \geq 0$ and any $i \in \{ 0,...,n\}$:
\begin{align*}
\frac{d}{dt}_{\mid t=t_0} \mathbb{E}m_{p}^{i} [X_t] &= \sum_{p_1 \in \mathcal{A}_k} \sum_{i+j+{\sf df}(p_1,p) = i_0} \kappa_{p_1}^{i}(G_k)\mathbb{E}m^{j}_{\!\!\text{ }^{t}p_1 \circ p}[X_{t_0}].
\end{align*}
\end{theorem}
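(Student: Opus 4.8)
The plan is to mimic the proof of Theorem~\ref{convergencegenerale} but at the level of the asymptotic expansions in powers of $1/N$. First I would fix $\mathcal{A}$, set $\boxdot$ according to the additive or multiplicative case, and for each $k$ introduce, exactly as in Definition~\ref{definitioncondens} and Remark~\ref{remarque:fluctuhigher}, the linear forms $\mathcal{R}^{(n)}_{(N)}(X_t^{N})$ and $\mathcal{R}^{(n)}_{(N)}(G^{N})$ on $\left(\bigoplus_{k=0}^{\infty} \mathbb{C}[\mathcal{P}_k/\mathfrak{S}_k]\right)^{*}\times\{0,\dots,n\}$ which encode the $i$-th order fluctuations of the finite-dimensional $p$-$\mathcal{A}$-cumulants of $X^N_t$ and of $G^N_k$ respectively. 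The hypothesis is precisely that $\mathcal{R}^{(n)}_{(N)}(G^{N})$ converges to $\mathcal{R}^{(n)}(G)$ as $N\to\infty$, in the sense of convergence up to order $n$ of fluctuations from Section~6 of \cite{Gab1}.

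Next I would translate Lemma~\ref{geneentemps} into a differential equation for $\mathbb{E}\left[(X_t^N)^{\otimes k}\right]$ seen, via the duality of Theorem~\ref{duality}, as an element of $\mathbb{C}[\mathcal{A}_k(N)]$ (valid for $N\ge 2k$). In the additive case this is the triangular system~(\ref{lequationadditive}); in the multiplicative case it is the semigroup equation $\frac{d}{dt}\mathbb{E}\left[(X_t^N)^{\otimes k}\right] = G^N_k\,\mathbb{E}\left[(X_t^N)^{\otimes k}\right]$. Exponentiating, $\mathcal{R}_{(N)}[X^N_{t_0}] = e^{\boxdot t_0 \mathcal{R}_{(N)}(G^N)}$ holds for each finite $N$, where $\boxdot$ denotes the convolution on $\left(\bigoplus_k\mathbb{C}[\mathcal{P}_k(N)]\right)^{*}$ recalled from Section~4.2 of \cite{Gab1} and Notation~6.2 of \cite{Gab1}. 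The key point is then that the operations $\boxdot$ and $p\mapsto e^{\boxdot t_0 p}$ are continuous for the notion of convergence up to order $n$ of fluctuations — this is the content of the higher-order structure in Section~6 of \cite{Gab1}, in particular Theorem~6.3 of \cite{Gab1} — so that the convergence $\mathcal{R}^{(n)}_{(N)}(G^N)\to\mathcal{R}^{(n)}(G)$ propagates through the exponential to give $\mathcal{R}^{(n)}_{(N)}[X^N_{t_0}]\to e^{\boxdot t_0\mathcal{R}^{(n)}(G)}$ for every fixed $t_0\ge 0$. By Theorem~\ref{ma-cum-mom-fluctu}, convergence of the fluctuations of the finite-dimensional $\mathcal{A}$-cumulants is equivalent to convergence in $\mathcal{A}$-distribution up to order $n$ of fluctuations, and since $X^N_t$ is $\mathcal{G}(\mathcal{A})$-invariant, Theorem~\ref{AentrainePfluctu} upgrades this to convergence in $\mathcal{P}$-distribution up to order $n$ of fluctuations, giving Equation~(\ref{equationverifie}) at the level of $\mathcal{R}^{(n)}$.

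For the last displayed formula in the multiplicative case, I would differentiate the semigroup relation $\mathbb{E}m_{p}(X_t) = \sum_{p_1\in\mathcal{A}_k}\kappa_{p_1}(G^N_k)\,\mathbb{E}m_{{}^{t}p_1\circ p}(X_{t_0})$ (the finite-$N$ version of Remark~\ref{rem:conver}, itself a consequence of Proposition~4.3 of \cite{Gab1}), then expand both sides in powers of $1/N$ and collect the coefficient of $N^{-i}$. Here I must keep track of the exponent bookkeeping: by Theorem~6.1 of \cite{Gab1} the $p$-moment of a product in $\mathbb{C}[\mathcal{P}_k(N)]$ picks up a factor $N^{-\kappa}$ where the relevant shift is governed by the defect ${\sf df}({}^{t}p_1\circ p, p)$, which is why the constraint $i+j+{\sf df}(p_1,p)=i_0$ appears; matching orders then yields the stated identity for $\frac{d}{dt}_{|t=t_0}\mathbb{E}m_p^{i}[X_t]$. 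The main obstacle I anticipate is precisely this order-counting in the product expansion — verifying that the convolution $e^{\boxdot t_0\cdot}$ interacts with the $1/N$-filtration exactly as encoded in the defect function, so that no cross-terms of the wrong order survive; once the continuity of $\boxdot$ up to order $n$ (Theorem~6.3 of \cite{Gab1}) and the defect identities of Theorem~3.4 and Theorem~6.1 of \cite{Gab1} are invoked, the rest is a routine transcription of the proof of Theorem~\ref{convergencegenerale}, which is why I would omit the fully detailed computation and simply cite those results.
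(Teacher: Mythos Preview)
Your proposal is correct and follows exactly the approach the paper intends: the paper does not give an explicit proof of this theorem, stating at the start of Section~\ref{sec:algfluctu} that proofs are omitted since they are similar to their $0$-order counterparts, and your outline is precisely the transcription of the proof of Theorem~\ref{convergencegenerale} to the higher-order setting, using Theorem~6.3 of \cite{Gab1} for the continuity of $\boxdot$ with respect to the $1/N$-filtration and Theorems~\ref{ma-cum-mom-fluctu} and~\ref{AentrainePfluctu} to pass from cumulant fluctuations to $\mathcal{P}$-distribution fluctuations.
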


The convergence of $(X_t^{N})_{t \geq 0} \cup ((X_t^{N})^{*})_{t \geq 0}$ can also be studied with the same tools. Let us suppose that we are in the multiplicative case, for any integers $k$, $l$ and $N$, let: 
\begin{align}
G_{k,l}^{N} = \frac{d}{dt}_{\mid t=0} \mathbb{E}\left[\left(X_t^{N}\right)^{\otimes k}\otimes \left(\overline{X_t^{N}}\right)^{\otimes l}\right]. 
\end{align}

\begin{theorem}
If for any positive integers $k$ and $l$, the sequence $(G_{k,l}^{N})_{N \in \mathbb{N}}$ converges up to order $n$ of fluctuations, then the family $(X_t^{N})_{t \geq 0} \cup ((X_t^{N})^{*})_{t \geq 0}$ converges in $\mathcal{P}$-distribution up to order $n$ of fluctuations.
\end{theorem}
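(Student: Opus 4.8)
The plan is to run, mutatis mutandis, the same argument used to prove Theorem~\ref{convergencegenerale} and Theorem~\ref{convergence*levy}, but keeping track of the full asymptotic expansion in powers of $\frac1N$ instead of only the leading term. First I would recall that, by Equation~(\ref{semimultibar}), the family $\left(\mathbb{E}\left[(X_t^N)^{\otimes k}\otimes(\overline{X_t^N})^{\otimes l}\right]\right)_{t\ge 0}$ is, for $N\ge 2(k+l)$, a continuous semi-group of endomorphisms lying (after applying ${\sf S}_k$) in $\mathbb{C}[\mathcal{P}_{k+l}(N)]$, with infinitesimal generator $G_{k,l}^N$; moreover $(X_t^N)_{t\ge0}$ being $\mathcal{G}(\mathcal{A})$-invariant forces $G_{k,l}^N$ into the commutant given by Theorem~\ref{duality}. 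Since by hypothesis $(G_{k,l}^N)_{N\in\mathbb{N}}$ converges up to order $n$ of fluctuations as an element of $\prod_N \mathbb{C}[\mathcal{P}_{k+l}(N)]$, one appeals to the machinery of Section~6 of \cite{Gab1} (the analogue of Theorem~5.10 of \cite{Gab1} for higher orders): the exponential of a semi-group generator that admits an $n$-th order expansion itself admits an $n$-th order expansion, and the expansion of the semi-group at any fixed $t_0$ is obtained by formally exponentiating the expansion of the generator. This yields, for each fixed $t_0\ge0$ and each pair $(k,l)$, that $\mathbb{E}\left[(X_{t_0}^N)^{\otimes k}\otimes(\overline{X_{t_0}^N})^{\otimes l}\right]$ converges up to order $n$ of fluctuations in $\mathbb{C}[\mathcal{P}_{k+l}(N)]$.

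Next I would translate this matrix-level statement into the language of $\mathcal{P}$-distributions up to order $n$ of fluctuations. This is exactly the content of Theorem~\ref{ma-cum-mom-fluctu} (the higher-order refinement of Theorem~\ref{th:lienlimite}): the convergence of the finite-dimensional cumulants $\mathbb{E}\kappa_p(G_{k,l}^N)$, and hence of $\mathbb{E}\kappa_p\big(\mathbb{E}[(X_{t_0}^N)^{\otimes k}\otimes(\overline{X_{t_0}^N})^{\otimes l}]\big)$, up to order $n$ is equivalent to the convergence in $\mathcal{P}$-distribution up to order $n$ of fluctuations of the family $(X_{t_0}^N)_{t_0\ge0}\cup((X_{t_0}^N)^*)_{t_0\ge0}$ evaluated at $t_0$. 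To upgrade from fixed times to the process, I would reproduce the last paragraph of the proof of Theorem~\ref{convergencegenerale}: for an increasing sequence $0\le t_1<\dots<t_m$, the increments $\mathcal{X}^N_{t_{i+1},t_i}=X^N_{t_{i+1}}(X^N_{t_i})^{-1}$ are independent, $\mathcal{G}(\mathcal{A})$-invariant, and distributed as $X^N_{t_{i+1}-t_i}$; then Theorem~\ref{Lemainfluct} (the higher-order analogue of Theorem~\ref{Lemain}) gives asymptotic $\mathcal{A}$-freeness up to order $n$ of fluctuations of the increments, and Theorem~\ref{AentrainePfluctu} converts $\mathcal{A}$-convergence up to order $n$ into $\mathcal{P}$-convergence up to order $n$. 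Reassembling the joint $\mathcal{P}$-distribution of $(X_{t_i}^N)_{i}$ from that of the increments finishes the argument, and the same bookkeeping applied to the conjugate process handles $((X_t^N)^*)_{t\ge0}$.

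I expect the main obstacle to be purely bookkeeping: making sure that exponentiating a generator with an $n$-th order expansion genuinely produces a semi-group with an $n$-th order expansion, with the coefficients being the formal $\boxdot$-exponential coefficients, and that the error term $\mathbb{E}\kappa_p^{n,\mathcal{A}}(\cdot^N)$ (in the notation of Remark~\ref{remarque:fluctuhigher}) stays $O(1)$ uniformly — this requires the stability of the relevant defect filtration under composition, i.e. the results of Section~6 of \cite{Gab1}, and a careful check that the sum defining the exponential has only finitely many terms contributing at each order because ${\sf df}$ is additive and non-negative along $\circ$-factorizations. Since the proof is declared in the excerpt to be ``totaly similar'' to that of Theorem~\ref{convergencegenerale}, and the only genuinely new input beyond the zero-order case is the already-established higher-order convergence calculus of \cite{Gab1} together with Theorems~\ref{ma-cum-mom-fluctu}, \ref{AentrainePfluctu} and \ref{Lemainfluct}, the write-up is short: one invokes Equation~(\ref{semimultibar}), the higher-order semi-group lemma, and then the three cited theorems in sequence. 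Hence I would simply present the proof as: ``The proof is identical to that of Theorem~\ref{convergencegenerale}, replacing Theorem~\ref{th:lienlimite} by Theorem~\ref{ma-cum-mom-fluctu}, Theorem~\ref{th:invarianceasymp} by Theorem~\ref{AentrainePfluctu}, Theorem~\ref{Lemain} by Theorem~\ref{Lemainfluct}, and Theorem~5.10 of \cite{Gab1} by its higher-order counterpart from Section~6 of \cite{Gab1}, and working with $G_{k,l}^N$ and the semi-group $\mathbb{E}[(X_t^N)^{\otimes k}\otimes(\overline{X_t^N})^{\otimes l}]$ in place of $G_k^N$ and $\mathbb{E}[(X_t^N)^{\otimes k}]$.''
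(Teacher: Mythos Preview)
Your proposal is correct and is precisely the argument the paper has in mind: the paper omits the proof entirely, stating at the start of Section~\ref{sec:algfluctu} that proofs ``are similar to their $0$-order counterpart'', and the $0$-order counterpart here is Theorem~\ref{convergence*levy}, whose proof was itself declared similar to that of Theorem~\ref{convergencegenerale}. Your list of substitutions (Theorem~\ref{th:lienlimite} $\to$ Theorem~\ref{ma-cum-mom-fluctu}, Theorem~\ref{th:invarianceasymp} $\to$ Theorem~\ref{AentrainePfluctu}, Theorem~\ref{Lemain} $\to$ Theorem~\ref{Lemainfluct}, Theorem~5.10 of \cite{Gab1} $\to$ its Section~6 analogue) is exactly the intended upgrade path.
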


One can use these theorems in order to prove that all the convergences, for the L\'{e}vy processes we considered, actually hold  up to any order of fluctuations. 

\section{Conclusion}

In the next article \cite{Gab3}, we apply these results to general random walks on the symmetric groups. This allows us to define the first $\mathcal{P}$-L\'{e}vy processes which can be approximated by random matrices and which are not free L\'{e}vy processes. We also prove that for general random walks on the symmetric group, the asymptotic factorization property does not hold in general: the moments $m_p$ can converge in law and not in probability to a random variable. This proves that, in general, the eigenvalues distributions can converge in law to a random probability measure. 

In the article \cite{GabCebronGuill} in preparation, we explain the link between the theory of $\mathcal{P}$-tracial algebras and the theory of traffics of C. Male described in \cite{Camille}. 

Let us finish with some open questions concerning $\mathcal{P}$-L\'{e}vy processes and fluctuations: 
\begin{enumerate}
\item A natural notion of positivity can be added to the theory of $\mathcal{P}$-tracial algebras in order to have $\mathcal{P}$-tracial probability algebras. Does it exist a characterization of  $\mathcal{P}$-L\'{e}vy processes in $\mathcal{P}$-tracial probability algebras like Theorem 13.16 of \cite{Speic}? 
\item The characterization of $\mathcal{P}$-L\'{e}vy processes in $\mathcal{P}$-tracial probability algebra might need the definition of a general Fock space: which structure would replace the usual Fock space ? 
\item Do asymptotic of moments of the entries, like the one given in Theorem \ref{th:entrees}, can provide some insight in order to understand approximations of non-commutative distributions by random matrices~? 
\item The classical cumulants can be computed as finite dimensional cumulants: this shows that there exists a Schur-Weyl interpretation of the fluctuations of the moments of random matrices. It is possible to recover the fluctuations of the unitary Brownian motion easily in this setting, but a general study has still to be done. Can we apply the combinatorial techniques used for the convergence of $\mathcal{P}$-distribution and obtain a theorem similar to Theorem 5.1 of \cite{Gab1} ?  
\item Does it exist a good notion which generalizes the notion of asymptotic factorization for higher orders, which would be stable by the asymptotic freeness property of higher orders and would imply probabilistic fluctuations of the moments when $N$ goes to infinity. One can prove the convergence of the fluctuations of Hermitian Brownian motions in this way, yet, a general notion is still missing. \\
\end{enumerate}

{\emph{Acknoledgements: }}The author would like to gratefully thank A. Dahlqvist, G. C\'{e}bron, C. Male, R. Speicher and J. Mingo for the useful discussions, his PhD advisor Pr. T. L\'{e}vy for his helpful comments and his postdoctoral supervisor, Pr. M. Hairer, for giving him the time to finalize this article. Also, he wishes to acknowledge the help provided by C. Male who encouraged the author to define the abstract concept of $\mathcal{P}$-tracial algebras, concept which was not present in the first version of the article and who brought also to his knowledge the first negative assertion in Theorem \ref{th:lesdifferentesliberte}. 

The first version of this work has been made during the PhD of the author at the university Paris 6 UPMC. This final version of the paper was completed during his postdoctoral position at the University of Warwick where the author is supported by the ERC grant, “Behaviour near criticality”, held by Pr. M. Hairer.

\bibliographystyle{plain}
\bibliography{biblio}

\end{document}